\documentclass[11pt]{amsart}
\usepackage[T1]{fontenc}
\usepackage[utf8]{inputenc}
\usepackage{lmodern}
\usepackage{amsmath,amssymb,amsthm,mathtools}
\usepackage{enumitem}

\numberwithin{equation}{section}

\newtheorem{theorem}{Theorem}[section]
\newtheorem{lemma}[theorem]{Lemma}
\newtheorem{proposition}[theorem]{Proposition}
\newtheorem{corollary}[theorem]{Corollary}
\theoremstyle{definition}
\newtheorem{example}[theorem]{Example}
\newtheorem{definition}[theorem]{Definition}
\newtheorem{question}[theorem]{Question}
\theoremstyle{remark}
\newtheorem{remark}[theorem]{Remark}

\DeclareMathOperator{\conv}{conv}
\DeclareMathOperator{\codim}{codim}
\DeclareMathOperator{\Ker}{Ker}
\DeclareMathOperator{\Int}{Int}
\DeclareMathOperator{\supp}{supp}
\DeclareMathOperator{\dist}{dist}

\newcommand{\R}{\mathbb{R}}
\newcommand{\C}{\mathbb{C}}
\newcommand{\N}{\mathbb{N}}
\newcommand{\T}{\mathbb{T}}
\newcommand{\D}{\mathbb{D}}
\def\cA{\mathcal A}
\newcommand{\ip}[2]{\langle #1,#2\rangle}
\newcommand{\e}{\varepsilon}
\newcommand{\We}{W_e}
\newcommand{\wto}{\xrightarrow{\mathrm w}}
\newcommand{\wstarto}{\xrightarrow{\mathrm w^*}}

\begin{document}

\title[Numerical and essential numerical ranges on $\ell_p$]
{Numerical and essential \\numerical ranges on $\ell_p$}
\author[V. M\"uller]{Vladimir M\"uller}
\address{Institute of Mathematics, Czech Academy of Sciences, \v{Z}itn\'a 25, 115 67 Prague, Czech Republic}
\email{muller@math.cas.cz}

\author[Y. Tomilov]{Yuri Tomilov}
\address{Institute of Mathematics, Polish Academy of Sciences, \'Sniadeckich 8, 00-656 Warsaw, Poland}
\address{Faculty of Mathematics and Computer Science, Nicolaus Copernicus University, Chopina 12/18, 87-100 Toru\'n, Poland}
\email{ytomilov@impan.pl}

\thanks{The research was supported by GA CR/NCN grant 25-15444K. The first author was supported by the Czech Academy of Sciences (RVO:67985840). The second author was supported by the NCN grant Weave-Unisono, 2024/06/Y/ST1/00044. The second author was also partially supported by the NCN grant UMO-2023/49/B/ST1/01961 and the NAWA/NSF grant BPN/NSF/2023/1/00001.}
\subjclass[2020]{47A12, 47A10, 46B20, 46B45}
\keywords{numerical ranges, $\ell_p$-spaces, convexity, Calkin algebras}
\date{}
\begin{abstract}
The paper offers the first systematic study of ordinary and essential numerical ranges of operators on $\ell_p$, $1<p<\infty$, as an atomic picture within a broader $L^p$ project. The paper begins with Banach-space foundations, including the finite-codimensional description of the essential numerical range and a Banach-space convex-hull inclusion for the essential spectrum. It then turns to finite-dimensional $\ell_p$ geometry, where one finds both positive star-shapedness phenomena and explicit $2\times2$ counterexamples. On $\ell_p$, we prove that the essential numerical range is compact and convex, identify it with the algebraic numerical range of the Calkin image, obtain a compact-perturbation formula, and show that, moreover, the closure of the numerical range is star-shaped, while points in the interior of the essential numerical range are exact star-centres of the numerical range.
The paper illustrates the developed theory with sequence-space examples, covering tridiagonal Toeplitz operators and the discrete Hilbert transform, and, after relating our study to a variant of the Crouzeix inequality, closes with a brief discussion of extensions to spaces of class $(P)$ and to joint essential numerical ranges.
\end{abstract}

\maketitle

\tableofcontents

\section*{Introduction}
The numerical range $W(T)$ of a linear operator $T$ on a Hilbert space $H$, and its generalisations, are of fundamental importance throughout operator theory, including spectral theory, functional calculi, dilation theory and operator-norm estimates. 
A good contemporary account of most of the important features of $W(T)$ can be found in \cite{GW}, and recent papers \cite{AglerLykovaYoung2024}, \cite{Bickel2020}, \cite{BHSVW2025},  \cite{Bogli2020}, \cite{LauLiPoonSze2018}
and \cite{Malman2025} can serve as pertinent illustrations.
 The applications of numerical ranges also extend to PDE theory, where they are used as spectral enclosures, to control theory and engineering, where they appear in the study of structured eigenvalue problems and convex and non-convex optimisation, and to mathematical physics and quantum information theory. A fundamental property of $W(T)$ is its convexity, discovered by Toeplitz and Hausdorff. This observation led to a surprisingly fruitful theory, to many useful generalisations, such as higher-rank, infinite, essential and joint numerical ranges, and to convexity phenomena in related contexts, see, in particular, \cite{GW} and \cite{GR}. At the same time, topological properties of $W(T)$ remain rather mysterious and are still not well understood. The situation is even more difficult on non-Hilbert spaces, where very little information is available.

Outside Hilbert space the situation changes drastically: the spatial numerical range is typically non-convex, duality becomes less rigid, and even basic geometric conclusions require additional arguments.
This is already visible on $\ell_p$ for $p\neq2$, see Section~\ref{secfinite}.

Numerical ranges on non-Hilbert spaces have remained a rather obscure subject, and only very limited results seem to be available in this direction. One reason is that the study of numerical ranges on such spaces is technically demanding even in low-dimensional situations: already the case of rank-two operators presents substantial difficulties. This stands in sharp contrast with the Hilbert space setting, where many basic structural facts about numerical ranges can be reduced to the analysis of numerical ranges of $2\times2$ matrices.

The purpose of the present paper is to isolate the atomic $\ell_p$ part of a broader $L^p$ project. We adopt the view that the atomic case should be treated first. It is rich enough to display the main non-Hilbertian phenomena, but still concrete enough to allow explicit calculations, exact examples, and a reasonably complete atomic picture. The non-atomic $L^p$ theory, with its genuinely different mechanisms, is better treated separately.

Since the paper is concerned mainly with $\ell_p:=\ell_p(\mathbb N)$, we record already here how the two numerical ranges should be read in that setting. Throughout this introductory discussion $1<p<\infty$ and $q$ is the conjugate exponent. We identify $(\ell_p)^*$ with $\ell_q$ by means of the pairing
\[
        \ip{x}{y}
        =\sum_{j\ge1}x_j\overline{y_j}, \qquad 
        x=(x_j) \in\ell_p,\qquad y=(y_j)\in\ell_q .
\]
For $x=(x_j)$ on the unit sphere of $\ell_p$, the unique norming functional is represented by
\[
        J(x)=(x_j|x_j|^{p-2})_{j\ge1}\in\ell_q,
        \qquad
        \|J(x)\|_q=1=\ip{x}{J(x)}.
\]
Thus $W(T)$, for $T\in\mathcal L(\ell_p)$, is obtained by testing $T$ on pairs $(x,J(x))$ with $x$ of norm one:
\[
        W(T)=\bigl\{\ip{Tx}{J(x)}:\ \|x\|_p=1\bigr\}.
\]
In the same concrete $\ell_p$ setting, the essential numerical range may be described sequentially: a complex number $\lambda$ belongs to $W_e(T)$ precisely when there are vectors $x_n\in\ell_p$ such that
\[
        \|x_n\|_p=1,
        \qquad x_n\wto0,
        \qquad \ip{Tx_n}{J(x_n)}\to\lambda .
\]
The formal Banach-space definitions, including the general net version, are recalled in Section~\ref{prelim}.

In this work, we analyse a number of examples showing that the ordinary numerical range on \(\ell_p\), \(p\neq 2\), retains several features familiar from its Hilbert-space counterpart, corresponding to \(p=2\). At the same time, some of these examples reveal substantial differences between the Hilbertian and non-Hilbertian cases. In particular, for \(p\neq 2\), convexity already fails in dimension two.
 Against this background it is natural to ask whether the essential
numerical range retains any of the Hilbert-space features. We show that, on
\(\ell_p\), it does: \(W_e(T)\) is convex, coincides with the algebraic numerical
range of the Calkin image \(\pi(T)\), contains \(\operatorname{conv}\sigma_e(T)\),
and has the expected compact-perturbation and star-shapedness consequences,
including its relation to $W(T).$
The ordinary numerical range on \(\ell_p\), \(p\ne2\), appears to be genuinely non-Hilbertian. Nevertheless, it retains some vestiges of convexity. In the infinite-dimensional \(\ell_p\) setting, \(W(T)\) is governed to a considerable extent by \(W_e(T)\), and \(\overline{W(T)}\) is star-shaped with every point of \(W_e(T)\) as a star-centre. It remains open whether \(W(T)\) itself is always star-shaped, even for operators on finite-dimensional \(\ell_p\), \(p\ne 2\).



This is close in spirit to the philosophy of \(L^p\)-operator algebras, where one
asks which Hilbert-space phenomena survive after replacing Hilbert spaces by
\(L^p\)-spaces, see e.g. \cite{Blecher2019,Boedihardjo2024}. Here the same question is pursued for
numerical ranges, with the essential numerical range playing the role of the
stable, Calkin-level object.

Along the way we also obtain several results on numerical ranges on general Banach spaces that may be of independent interest, and we identify large natural subsets of numerical ranges on $\ell_p$. These sets again point to traces of convexity in the geometry of general $\ell_p$-spaces.

A non-atomic counterpart is developed separately in~\cite{MTnonatomic}. Some weaker convexity phenomena survive there as well, but the arguments are considerably more delicate and rely on methods that have no atomic analogue, most notably disjoint-support and shrinking-support constructions on non-atomic measure spaces.

The paper is organised as follows. Section~\ref{prelim} motivates and elaborates the
background used throughout the paper. After a short discussion of
Hilbert space numerical ranges, we recall the ordinary and essential numerical
ranges on Banach spaces, obtain a new characterisation of
\(W_e(T)\) in terms of approximately biorthogonal nets, and
prove the Banach-space spectral inclusion
\[
        \conv\sigma_e(T)\subset W_e(T)
\]
for the essential spectrum $\sigma_e(T).$

Section~\ref{secfinite} turns to finite-dimensional \(\ell_p\)-spaces. The
main purpose of this section is to isolate the rigid \(2\times2\) geometry that
underlies the later atomic theory. We obtain an explicit parametrisation of the
numerical range, derive a non-convexity example which extends to
every finite-dimensional \(\ell_p\)-space of dimension at least two, prove that the convex hull of the diagonal entries
is always contained in the numerical range, establish star-shapedness for
triangular \(2\times2\) matrices, and present an explicit counterexample
showing that the trace centre need not be a star-centre for general
\(2\times2\) matrices.

Section~\ref{inflp} contains the atomic $\ell_p$ core of the
paper. After recalling the continuity properties of the duality map \(J\), we
show that the non-convex finite-dimensional geometry persists on \(\ell_p\).
We then prove that the numerical range contains all countable convex
combinations of the diagonal values \(\ip{Te_j}{J(e_j)}\), with respect to the
standard unit basis \((e_j)_{j\ge1}\) in \(\ell_p\). For the essential numerical
range we obtain a tail-space characterisation, a finite-codimensional
annihilator form, convexity, a Calkin-algebra description, and the
compact-perturbation formula. For
\(\lambda\in\Int W_e(T)\), we obtain a \(\lambda\)-diagonal compression statement,
thus matching its Hilbert space counterpart.
Finally, we prove that
\[
        \Int W_e(T)\subset W(T),
\]
that \(\overline{W(T)}\) is star-shaped with every point of \(W_e(T)\) as a
star-centre, and that, when \(\Int W_e(T)\ne\varnothing\), every point of
\(\Int W_e(T)\) is a star-centre of \(W(T)\).

Section~\ref{secexamp} collects sequence-space examples that still belong
naturally to the atomic side of the theory. We treat unilateral Toeplitz
operators and explicit tridiagonal examples on \(\ell_p(\mathbb N_0)\), the
discrete Hilbert transform on \(\ell_p(\mathbb Z)\), and the unilateral shift.
The section ends with a discussion of the failure of Crouzeix-type estimates
on \(\ell_p\) and with finite-dimensional shift examples leading to a natural
question about the correct order of finite-dimensional Crouzeix-type bounds.

Section~\ref{secexten} indicates two natural extensions of the present work:
spaces of class \((P)\) and joint essential numerical ranges. The appendix
contains the computational proof of the \(\ell_3^2\) counterexample used in
Section~\ref{secfinite}.

To keep the paper within reasonable limits, we do not elaborate here
numerical ranges on \(\ell_p\) for special classes of operators, such as
essentially Hermitian operators and related lifting questions, cf.
\cite{AllenWard79}, nor several further structural questions.

\section{Numerical and essential numerical ranges on Banach spaces}\label{prelim}
\subsection{Notation and conventions}

We collect the notation used throughout the paper. We write
\[
\D:=\{z\in\C: |z|<1\},
\qquad
\T:=\{z\in\C: |z|=1\}.
\]
Thus $\overline\D$ denotes the closed unit disc. For a subset $E\subset\C$ we write $\overline E$ for its closure, $\partial E$ for its boundary, $\Int E$ for its interior, and $\conv E$ for its convex hull.

If $X$ is a Banach space, then $\mathcal L(X)$ denotes the algebra of all bounded linear operators on $X$, and $X^*$ denotes the dual space. The duality pairing is written as $\ip{x}{x^*}$. For a subset $A\subset X$ we put
\[
A^\perp:=\{x^*\in X^*: \ip{a}{x^*}=0 \text{ for all } a\in A\}.
\]
We write $x_\alpha\wto x$ for weak convergence in $X$, and $x_\alpha^*\wstarto x^*$ for weak-* convergence in $X^*$. We also set
\[
\Pi(X):=\{(x,x^*)\in X\times X^*: \|x\|=\|x^*\|=1=\ip{x}{x^*}\}.
\]
Throughout, finite-codimensional subspaces are understood to be closed.
In expressions like ``$x_n \to x$ as $n \to \infty$'' we omit the range of the parameter
whenever the nature of the limit is clear.

If $T\in \mathcal L(X)$, then $\sigma(T)$ denotes its spectrum. We write $\mathcal K(X)$ for the ideal of compact operators on $X$.

Throughout the paper, \(\ell_p\), \(1<p<\infty\), stands for the usual complex sequence space
\(\ell_p(\mathbb N)\).
When the underlying index set is relevant, we write it explicitly, for example
\(\ell_p(\mathbb N_0)\) or \(\ell_p(\mathbb Z)\), where
\(\mathbb N_0=\mathbb N\cup\{0\}\). We denote by \(\ell_p^n\) the space
\(\mathbb C^n\) equipped with the canonical \(\|\cdot\|_p\)-norm.

On these sequence spaces we use the convention 
\[
        \ip{x}{y}=\sum_{j} x_j\overline{y_j},
        \qquad x=(x_j) \in\ell_p,\quad y=(y_j) \in\ell_q,
\]
under the standard identification \((\ell_p)^*=\ell_q\), where \(q\) is the conjugate exponent.

\subsection{Hilbert-space theory}

For orientation and to put our results into proper context, we recall the classical Hilbert-space picture.
It will be a pattern for the rest of the paper.
Let $H$ be a complex Hilbert space and let $T\in \mathcal L(H)$. Then
\[
W(T):=\{\ip{Tx}{x}: x\in H,\ \|x\|=1\}
\]
denotes the numerical range of $T$, and
\[
w(T):=\sup\{|\lambda|: \lambda\in W(T)\}
\]
denotes the numerical radius.

The numerical range is a fundamental concept in Hilbert-space operator
theory, with numerous applications. A concise account of its basic properties
can be found in \cite{GR}. For a more recent and comprehensive treatment,
see \cite{GW}. We now recall the facts from the Hilbert-space theory that
will serve as a reference point for the rest of the paper.


\begin{theorem}
Let $T\in \mathcal L(H)$. Then:
\begin{itemize}
\item[(i)] $W(T)$ is a convex subset of $\C$;
\item[(ii)] $\frac{\|T\|}{2}\le w(T)\le\|T\|$;
\item[(iii)] $\sigma(T)\subset\overline{W(T)}$;
\item[(iv)] $T$ is selfadjoint if and only if $W(T)\subset\R$. In this case, $\overline{W(T)}=\conv\sigma(T)$ and $w(T)=\|T\|$.
\end{itemize}
\end{theorem}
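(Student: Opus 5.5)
The plan is to prove the four items separately by standard Hilbert-space arguments, reducing each to a two-dimensional computation or a polarisation identity.

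For (i), fix $\lambda,\mu\in W(T)$, $\lambda=\ip{Tx}{x}$, $\mu=\ip{Ty}{y}$ with unit vectors $x,y$. If $x,y$ are linearly dependent there is nothing to prove. Otherwise let $P$ be the orthogonal projection onto $M=\operatorname{span}\{x,y\}$. Then the segment $[\lambda,\mu]$ lies in $W(PT|_M)\subset W(T)$, so it suffices to treat $2\times2$ matrices. After subtracting a scalar and multiplying by a unimodular constant, a $2\times2$ matrix is unitarily equivalent to $\begin{pmatrix} a & b\\ 0 & -a\end{pmatrix}$ with $a\ge0$, $b\ge 0$. A direct parametrisation of unit vectors $(\cos\theta, e^{i\varphi}\sin\theta)$ shows that its numerical range is the closed elliptical disc with foci $\pm a$ and minor axis $b$ (the elliptic range theorem). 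This set is convex. I expect this explicit computation to be the only genuinely technical step. A cleaner alternative is to write $A=H+iK$ with $H,K$ selfadjoint and show directly that $\{(\ip{Hz}{z},\ip{Kz}{z})\}$ over the unit sphere of $\C^2$ is an ellipse together with its interior, as the image of the Bloch sphere under an affine map.

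For (ii), the upper bound is Cauchy--Schwarz. For the lower bound, write $T=H+iK$ with $H=(T+T^*)/2$ and $K=(T-T^*)/(2i)$. Then $W(H)=\operatorname{Re}W(T)$, so $\|H\|=w(H)\le w(T)$ by the selfadjoint case of (iv), and similarly $\|K\|\le w(T)$. Hence $\|T\|\le 2w(T)$. The selfadjoint norm identity $w(H)=\|H\|$ itself comes from the polarisation identity
\[
4\operatorname{Re}\ip{Hx}{y}=\ip{H(x+y)}{x+y}-\ip{H(x-y)}{x-y}
\]
combined with the parallelogram law.

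For (iii), we use $\sigma(T)=\sigma_{ap}(T)\cup\overline{\sigma_p(T^*)}$. If $\|(T-\lambda)x_n\|\to0$ with $\|x_n\|=1$, then $\ip{Tx_n}{x_n}\to\lambda$. If $T^*y=\bar\lambda y$ with $\|y\|=1$, then $\ip{Ty}{y}=\lambda$. So every spectral point lies in $\overline{W(T)}$.

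For (iv), the forward direction is immediate. Conversely, if $W(T)\subset\R$, then $\ip{(T-T^*)x}{x}=0$ for all $x$, and complex polarisation gives $T=T^*$. For the remaining claims, $\sigma(T)\subset\R$ and (iii) give $\conv\sigma(T)\subset\overline{W(T)}$, since the closure is convex by (i). For the reverse inclusion, put $m=\inf W(T)$. Then $T-m\ge0$, and the Cauchy--Schwarz inequality for the positive form $\ip{(T-m)\cdot}{\cdot}$ yields approximate eigenvectors, so $m\in\sigma(T)$; the same argument applies to $M=\sup W(T)$. Thus $\overline{W(T)}=[m,M]\subset\conv\sigma(T)$. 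Finally, $w(T)=\max(|m|,|M|)=\|T\|$, either by the polarisation argument above or because the spectral radius of a normal operator equals its norm.
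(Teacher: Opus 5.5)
Your argument is correct. The paper does not prove this theorem; it quotes it as classical background and refers to \cite{GR} and \cite{GW}. Your proof is essentially the standard textbook one found there:
\begin{itemize}
\item[(i)] compression to a two-dimensional subspace plus the elliptic range theorem;
\item[(ii)] the Cartesian decomposition $T=H+iK$ together with $w(H)=\|H\|$, obtained by polarisation;
\item[(iii)] approximate eigenvectors of $T$ together with eigenvectors of $T^*$;
\item[(iv)] a Cauchy--Schwarz argument for the positive form, showing that $\inf W(T)$ and $\sup W(T)$ lie in $\sigma(T)$.
\end{itemize}

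Your Bloch-sphere variant for (i) is exactly the mechanism the paper uses in Remark~\ref{rem:p2-elliptic-range}. There the case $p=2$ of Proposition~\ref{prop:lp2-reduction} exhibits $W(T)-\tau$ as the image of $S^2\subset\R^3$ under a real-linear map. When you write that step out, you need the observation made there: a linear map $\R^3\to\R^2$ has non-trivial kernel, so the image of the sphere equals the image of the ball, i.e.\ a possibly degenerate filled ellipse. Your direct parametrisation ends in the same union of circles, so it needs this step as well.

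One notational point: in (iii), your $\overline{\sigma_p(T^*)}$ means the set of complex conjugates, whereas the paper reserves the bar for closure.
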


There are many variants of the numerical range of Hilbert space operators
which also exhibit convexity, see, for instance, \cite[Sections~4 and~8]{GW}
and also \cite[Section~5]{MT-jointnr}, \cite{MT} and the references therein.
The most important of these variants for the present paper, and also for
applications, is the essential numerical range. In the Hilbert-space setting
it was systematically studied in the foundational paper \cite{FSW}.

On a Hilbert space $H$, the essential numerical range of $T\in \mathcal L(H)$ is defined by
\[
\begin{split}
W_e(T):=\{\lambda\in\C: &\ \exists\ (x_n)\subset H,\ \|x_n\|=1\ \text{for all }n,\\
&\ x_n\wto0,\ \ip{Tx_n}{x_n}\to\lambda\}.
\end{split}
\]
Equivalently, one may require only that 
$(x_n)\subset H$ is an orthonormal sequence 
 and $\ip{Tx_n}{x_n}\to\lambda$.

We also use the corresponding essential spectrum,  which is easy to describe in terms of the Calkin algebra $\mathcal L(H)/\mathcal K(H).$ Let $\mathcal K(H)$ denote the ideal of all compact operators on $H$, and let
\[
\pi:\mathcal L(H)\to \mathcal L(H)/\mathcal K(H)
\]
be the canonical quotient map. For $T\in \mathcal L(H)$ we then define
\[
\sigma_e(T):=\sigma\bigl(\pi(T),\mathcal L(H)/\mathcal K(H)\bigr).
\]

The basic Hilbert-space properties of the essential numerical range may be summarised as follows. 

\begin{theorem}\label{hilbertwe}
Let $T\in \mathcal L(H)$.
Then:
\begin{enumerate}[label=\textup{(\roman*)}]
\item $W_e(T)$ is a compact convex subset of $\C$;
\item $\sigma_e(T)\subset W_e(T)$;
\item $W_e(T)=\bigcap\{\overline{W(T+L)}: L\in \mathcal K(H)\};$
\item For \(\lambda\in\C\), one has $\lambda\in W_e(T)$ if and only if for every subspace $M\subset H$ of finite codimension and every $\e>0$ there exists a unit vector $x\in M$ such that $|\langle Tx,x\rangle-\lambda|<\e$;
\item If \(\lambda\in\operatorname{Int} W_e(T)\), then \(\lambda\in W(T)\). Moreover, for every subspace $M\subset H$ of finite codimension there exists a unit vector $x\in M$ such that $\langle Tx,x\rangle=\lambda$.

\end{enumerate}
\end{theorem}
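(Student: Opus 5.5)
The plan is to prove the items in the order (iv), (i), (ii), (iii), (v), since the finite-codimension description (iv) drives the others.

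\emph{Item (iv).} Suppose $\lambda\in W_e(T)$, witnessed by unit vectors $x_n\wto0$ with $\ip{Tx_n}{x_n}\to\lambda$, and let $M$ be a subspace of finite codimension with orthogonal projection $P_M$. Then $I-P_M$ has finite rank, so $\|(I-P_M)x_n\|\to0$. The normalised vectors $P_Mx_n/\|P_Mx_n\|$ lie in $M$ and have the same asymptotic value of $\ip{T\cdot}{\cdot}$. Conversely, suppose the condition in (iv) holds. Choose an orthonormal basis $(e_k)$ of the closed span of a countable set, or simply build inductively: at step $n$, apply the hypothesis to $M_n=\{x_1,\dots,x_{n-1},Tx_1,\dots,Tx_{n-1},T^*x_1,\dots\}^\perp$ with $\e=1/n$. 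This produces an orthonormal sequence with $\ip{Tx_n}{x_n}\to\lambda$, and orthonormal sequences tend weakly to $0$.

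\emph{Item (i).} Compactness follows from $|\ip{Tx}{x}|\le\|T\|$ together with a diagonal argument showing closedness; the diagonal argument is easiest via (iv). For convexity, take $\lambda,\mu\in W_e(T)$ and $t\in(0,1)$, and fix $M$ and $\e$. Using (iv), pick a unit vector $x\in M$ with value near $\lambda$. Then pick a unit vector $y\in M\cap\{x,Tx,T^*x\}^\perp$ with value near $\mu$. On $\mathrm{span}\{x,y\}$ the compression of $T$ is nearly diagonal, $\mathrm{diag}(\approx\lambda,\approx\mu)$, because the cross terms $\ip{Ty}{x}$ and $\ip{Tx}{y}$ vanish. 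The vector $\sqrt{t}\,x+\sqrt{1-t}\,y$ then gives value $\approx t\lambda+(1-t)\mu$. By (iv), this point lies in $W_e(T)$.

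\emph{Item (ii).} Let $\lambda\in\sigma_e(T)$. If $\pi(T-\lambda)$ is not left invertible, then $T-\lambda$ is not bounded below on any finite-codimensional subspace. This yields an orthonormal sequence with $\|(T-\lambda)x_n\|\to0$, hence $\ip{Tx_n}{x_n}\to\lambda$. Otherwise apply the same argument to $T^*$ and $\bar\lambda$, noting that $W_e(T^*)=\overline{W_e(T)}$ (complex conjugate).

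\emph{Item (iii).} If $\lambda\in W_e(T)$ and $L$ is compact, then $Lx_n\to0$ in norm for $x_n\wto0$. Hence $W_e(T)=W_e(T+L)\subset\overline{W(T+L)}$, which gives the inclusion $\subset$. For the reverse inclusion, take $\lambda\notin W_e(T)$. By compactness and convexity, separate $\lambda$ from $W_e(T)$ by a half-plane, and after rotating and translating assume $\mathrm{Re}\,W_e(T)<-\delta<0\le\mathrm{Re}\,\lambda$. Put $A=\mathrm{Re}\,T$. Then $\sigma_e(A)\subset(-\infty,-\delta]$, because $W_e(A)=\mathrm{Re}\,W_e(T)$ and (ii) applies. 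So the spectral projection of $A$ onto $[-\delta/2,\infty)$ has finite rank, and a finite-rank (hence compact) $L$ with $\mathrm{Re}(T+L)\le-\delta/2$ exists. This gives $\lambda\notin\overline{W(T+L)}$.

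\emph{Item (v).} This is the main obstacle. Given $\lambda\in\Int W_e(T)$ and $M$, choose three points $\mu_1,\mu_2,\mu_3$ of $W_e(T)$ forming a triangle with $\lambda$ in its interior. Inductively, as in the convexity step, pick mutually orthogonal unit vectors $x_1,x_2,x_3\in M$ with $\ip{Tx_i}{x_j}\approx\mu_i\delta_{ij}$. The compression of $T$ to $\mathrm{span}\{x_i\}$ is then a small perturbation of $\mathrm{diag}(\mu_1,\mu_2,\mu_3)$. Its numerical range is convex by Toeplitz--Hausdorff and, for small perturbations, still contains $\lambda$ in its interior. This works because the numerical range depends continuously, in the Hausdorff metric, on the matrix. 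Therefore some unit vector $x$ in that span lies in $M$ and satisfies $\ip{Tx}{x}=\lambda$ exactly. The delicate point is quantifying the perturbation so that $\lambda$ remains inside; choosing $\e$ smaller than the distance from $\lambda$ to the boundary of the triangle suffices.
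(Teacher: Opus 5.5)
Your proof is correct. It is a genuinely different route only because the paper proves none of these items itself: it cites Gau--Wu for (i)--(iii) and the operator-tuple results of M\"uller--Tomilov for (iv) and (v). You instead give a self-contained single-operator argument, so the useful comparison is with the paper's $\ell_p$ analogues.

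Your convexity step takes $y\in M\cap\{x,Tx,T^*x\}^\perp$ so that both cross terms vanish, and this is the Hilbert-space model of the proof of Theorem~\ref{thm:We-ellp-convex}. There, disjoint supports play the role of $y\perp x$, the condition $y\in\Ker(P_{k_1}T)$ kills one cross term, and the smallness of $\|(I-P_{k_1})Tx\|$ replaces exact vanishing of the other.

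Your proof of (iii) uses strict separation and a finite-rank correction built from a spectral projection of $\operatorname{Re}T$. It therefore rests on the spectral theorem and is intrinsically Hilbertian. On $\ell_p$ the paper obtains the analogous formula (Theorem~\ref{thm:We-ellp-compact-perturb}) through the Calkin-algebra identification of Theorem~\ref{thm:We-ellp-calkin} and Theorem~\ref{vrange}(ii).

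In (v), the Hausdorff-continuity remark is unnecessary. With your choice of $x_1,x_2,x_3$, the compression is exactly $\operatorname{diag}(\nu_1,\nu_2,\nu_3)$ with $\nu_i:=\ip{Tx_i}{x_i}$, whose numerical range is exactly $\conv\{\nu_1,\nu_2,\nu_3\}$. Alternatively, $\We(T)=\We(P_MT|_M)\subset\overline{W(P_MT|_M)}$, and convexity of $W(P_MT|_M)$ already gives $\Int\We(T)\subset W(P_MT|_M)$.

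Your triangle argument in fact needs only that the numerical range of a finite compression contains the convex hull of its diagonal entries. On $\ell_p$ this is Theorem~\ref{thm:finite-diagonal-convexity}. Applying it to the block compression along three disjointly supported tail vectors would give Theorem~\ref{thm:interior-We-inside-W} directly, without the halving iteration of Lemma~\ref{lem:interior-halving}.
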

The properties (i)--(iii) are standard and can be found, for instance, in
\cite[Chapter~4.2]{GW}. For the proof of (iv), see e.g.
\cite[Lemma~4.1]{MT-circles}, while the proof of (v) can be found in
\cite[Corollary~4.2]{MT-jointnr}. The latter two properties are established
there in the more general setting of operator tuples.

Both the numerical range and the essential numerical range are intimately related to the notion of algebraic numerical range, and in particular to the numerical range in Calkin algebras. Recall that if $\cA$ is a unital Banach algebra with unit $1_\cA$ and $a\in\cA$ then the algebraic numerical range $V(a,\cA)$ of $a$ is given by
$$
V(a,\cA)=\{f(a):f\in\cA^*, \|f\|=1=f(1_\cA)\},
$$
where $\cA^*$ denotes the dual of $\cA$, i.e., the space of all bounded linear functionals on $\cA$. The algebraic numerical range in any unital Banach algebra is a closed convex set, see e.g. \cite[Chapter~1.2]{BD1} or \cite[Chapter~4.1]{GW}.

The next statement records the Hilbert-space identities for the algebraic
numerical range, including the Calkin-algebra identity generalised below, see e.g.
\cite[Chapters~4.1 and 4.2]{GW}. More general theory is discussed in
\cite[Chapter~3, \S9]{BD1} and \cite[Chapter~7, \S34]{BD2}, see also
Theorem~\ref{vrange} below. 

\begin{theorem}\label{vrangeh}
Let $T\in \mathcal L(H)$. Then:
\begin{enumerate}[label=\textup{(\roman*)}]
\item $V(T,\mathcal L(H))=\overline{W(T)}$;
\item  $V\bigl(\pi(T),\mathcal L(H)/\mathcal K(H)\bigr)=W_e(T),$
where \(\pi:\mathcal L(H)\to\mathcal L(H)/\mathcal K(H)\) is the quotient map.
\end{enumerate}
\end{theorem}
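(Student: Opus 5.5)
We treat the two identities separately. In both, the inclusion of the spatial range in the algebraic range comes from vector-state functionals, and the reverse inclusion from a separation argument using the convexity and closedness of $V(a,\cA)$.

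\emph{(i).} For a unit vector $x$, the functional $f_x(S)=\ip{Sx}{x}$ on $\mathcal L(H)$ satisfies $f_x(I)=1$ and $|f_x(S)|\le\|S\|$. Hence $\|f_x\|=1$ and $W(T)\subset V(T,\mathcal L(H))$. Since $V$ is closed, $\overline{W(T)}\subset V(T,\mathcal L(H))$.

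For the converse we use the support-function description of $V$: for every unital Banach algebra,
\[
\sup\operatorname{Re} V(a,\cA)=\lim_{t\to0^+}\frac{\|1+ta\|-1}{t}
\]
(the Lumer--Bohnenblust--Karlin formula, \cite[Chapter~1.2]{BD1}). Both $V(T,\mathcal L(H))$ and $\overline{W(T)}$ are compact and convex, the latter by Theorem~1.1(i). So it suffices to compare their support functions in every direction, and after rotating $T$ it suffices to show
\[
\lim_{t\to0^+}\frac{\|I+tT\|-1}{t}\le \sup\operatorname{Re}W(T).
\]
We prove this by expanding
\[
\|(I+tT)x\|^2=1+2t\operatorname{Re}\ip{Tx}{x}+t^2\|Tx\|^2\le 1+2t\,m+t^2\|T\|^2,
\]
where $m=\sup\operatorname{Re}W(T)$. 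Taking square roots gives $\|I+tT\|\le 1+tm+O(t^2)$, which is the required bound.

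\emph{(ii).} Here we show both inclusions via support functions, with the Calkin norm in place of the operator norm. For the inclusion $W_e(T)\subset V(\pi(T))$, take $\lambda\in W_e(T)$ with $x_n\wto0$ and $\ip{Tx_n}{x_n}\to\lambda$. Let $f$ be a weak-$*$ cluster point (Banach limit) of the states $S\mapsto\ip{Sx_n}{x_n}$. For compact $K$ we have $Kx_n\to0$ in norm, so $f$ vanishes on $\mathcal K(H)$ and factors through the Calkin algebra. It has norm $1$ and $f(1)=1$, and $f(\pi(T))=\lambda$. Hence $\lambda\in V(\pi(T))$.

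For the converse, $W_e(T)$ is compact and convex by Theorem~\ref{hilbertwe}(i). It then suffices to show
\[
\lim_{t\to0^+}\frac{\|\pi(I+tT)\|-1}{t}\le \sup\operatorname{Re}W_e(T)=:m_e.
\]
To do this, fix $\e>0$. Let $A=\operatorname{Re}T=(T+T^*)/2$. We claim the spectral projection $P$ of $A$ for $(m_e+\e,\infty)$ has finite rank. Otherwise there would be an orthonormal sequence $x_n$ in its range with $\operatorname{Re}\ip{Tx_n}{x_n}\ge m_e+\e$, which produces a point of $W_e(T)$ with real part at least $m_e+\e$, a contradiction.

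Set $K=-(I-P)\cdots$; more simply, choose a compact $K$ such that $\operatorname{Re}(T+K)\le m_e+\e$, for instance $K=-PTP$ suitably adjusted so that $\operatorname{Re}W(T+K)\le m_e+2\e$. The argument from (i) applied to $T+K$ then gives
\[
\|\pi(I+tT)\|\le\|I+t(T+K)\|\le 1+t(m_e+2\e)+O(t^2).
\]
Letting $\e\to0$ completes the proof.

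The main obstacle is this last step: producing a compact $K$ that pushes $\operatorname{Re}W(T+K)$ below $m_e+2\e$. It is essentially Theorem~\ref{hilbertwe}(iii), so we may invoke it instead. That result gives $L\in\mathcal K(H)$ with $\overline{W(T+L)}$ inside the half-plane $\operatorname{Re}z<m_e+\e$, by compactness of the closed half-plane intersections (finite-intersection argument).
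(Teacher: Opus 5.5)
Your route is correct once one step is fixed, and it differs from the paper's. The paper gives no proof of this statement; it cites \cite{GW}. Within its own framework the statement is a two-line consequence of Theorem~\ref{vrange}, the Toeplitz--Hausdorff theorem and Theorem~\ref{hilbertwe}(iii):
\[
V(T,\mathcal L(H))=\overline{\conv}\,W(T)=\overline{W(T)},
\]
\[
V\bigl(\pi(T),\mathcal L(H)/\mathcal K(H)\bigr)
=\bigcap_{K\in\mathcal K(H)}V(T+K,\mathcal L(H))
=\bigcap_{K\in\mathcal K(H)}\overline{W(T+K)}=W_e(T).
\]
You argue directly instead. Vector states, and weak-$*$ cluster points of vector states, give the inclusions of the spatial ranges into the algebraic ones. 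Lumer's support-function formula, combined with $\|(I+tT)x\|^2=1+2t\operatorname{Re}\ip{Tx}{x}+t^2\|Tx\|^2$, gives the reverse inclusions.

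This is more self-contained: you need neither the quotient identity of Theorem~\ref{vrange}(ii) nor Theorem~\ref{hilbertwe}(iii). In fact, your argument produces, for each direction and each $\e>0$, a single finite-rank $K$ with $\sup\operatorname{Re}W(T+K)\le\sup\operatorname{Re}W_e(T)+\e$. So, together with convexity of $W_e(T)$, it yields (iii) as a by-product rather than using it as an input. The price is that the norm expansion and the spectral projections of $\operatorname{Re}T$ are purely Hilbertian. This is why, on $\ell_p$, the paper reaches the analogue, Theorem~\ref{thm:We-ellp-calkin}, through the Barraa--M\"uller identity $V_\mu(T)=\overline{\conv}\,W_e(T)$ and Lemma~\ref{lem:mu-essential-lp}.

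The step that still has to be written properly is the choice of $K$ in (ii), which you flag yourself. Keep your notation: $A=\operatorname{Re}T$, $m_e=\sup\operatorname{Re}W_e(T)$, and $P$ the spectral projection of $A$ for $(m_e+\e,\infty)$, which you correctly showed has finite rank. Plain $-PTP$ only gives $\operatorname{Re}(T-PTP)=A(I-P)\le\max\{m_e+\e,0\}\,I$, which is not enough when $m_e+\e<0$. Take instead the finite-rank operator $K:=P\bigl((m_e+\e)I-A\bigr)$. Since $P$ commutes with $A$ and $A(I-P)\le(m_e+\e)(I-P)$, you get $\operatorname{Re}(T+K)=A(I-P)+(m_e+\e)P\le(m_e+\e)I$. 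The computation from (i) then applies to $T+K$ verbatim.

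Your fallback via Theorem~\ref{hilbertwe}(iii) is not valid as stated. The finite-intersection argument yields finitely many $L_1,\dots,L_k\in\mathcal K(H)$ with $\bigcap_i\overline{W(T+L_i)}$ inside the open half-plane, not a single $L$. The bound $\|\pi(I+tT)\|\le\min_i\|I+t(T+L_i)\|$ is controlled only by $\min_i\sup\operatorname{Re}W(T+L_i)$. Several compact convex sets can meet inside a half-plane without any one of them lying in it, so nothing follows. If you want to use (iii), go through Theorem~\ref{vrange}(ii) as in the display above.
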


The rest of the paper studies, in particular, which parts of this Hilbertian picture survive, and in what form, on $\ell_p$ for $p\neq2$.

\subsection{Numerical ranges in Banach spaces}\label{Banachtheory}

Given a complex Banach space $X,$ the numerical range of an operator $T\in \mathcal L(X)$ is defined by
\begin{equation}\label{eq:banach-spatial-numerical-range}
W(T):=\{\ip{Tx}{x^*}: (x,x^*)\in\Pi(X)\},
\end{equation}
where $\Pi(X)$ is as in the notation above.
Then its numerical radius is given by
\[
w(T):=\sup\{|\lambda|: \lambda\in W(T)\}.
\]

Standard references for the general theory of numerical ranges include
\cite{BD1} and \cite{BD2}. The numerical range of a Banach-space operator
is much less rigid than in the Hilbert setting, and its topology is not
understood even on Banach spaces with very simple structure.
However, the following counterparts of Hilbertian properties remain true.

\begin{theorem}\label{ThW}
Let $T\in \mathcal L(X)$. Then:
\begin{enumerate}[label=\textup{(\roman*)}]
\item $\conv\sigma(T)\subset \overline{W(T)}$;
\item $\exp(-1)\|T\|\le w(T)\le \|T\|$;
\item $\overline{\conv}\, W(T)=V(T,\mathcal L(X))$.
\end{enumerate}
\end{theorem}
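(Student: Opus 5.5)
These three facts are classical Banach-space results (Bonsall--Duncan), so the plan is to derive all of them from one identification: the algebraic numerical range $V(T,\mathcal L(X))$ is the set of $f(T)$ over states $f$ of $\mathcal L(X)$, i.e.\ $f\in\mathcal L(X)^*$ with $\|f\|=1=f(I)$. I prove (iii) first and then deduce (i) and (ii) from it.

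\emph{Step 1 (part (iii)).} Each pair $(x,x^*)\in\Pi(X)$ gives a state $f(S)=\ip{Sx}{x^*}$, so $W(T)\subset V(T,\mathcal L(X))$. Since $V$ is closed and convex, this yields $\overline{\conv}\,W(T)\subset V(T,\mathcal L(X))$. For the converse I use the standard characterisation
\[
V(a,\cA)=\bigcap_{\lambda\in\C}\bigl\{z:|z-\lambda|\le\|a-\lambda\|\bigr\},
\]
or equivalently $\max\operatorname{Re}V(a,\cA)=\lim_{t\downarrow0}t^{-1}(\|1+ta\|-1)$. Hence it suffices to show
\[
\lim_{t\downarrow0}\frac{\|I+tT\|-1}{t}\le\sup\operatorname{Re}W(T).
\]
Then a separating half-plane for any point outside $\overline{\conv}\,W(T)$ (after rotating $T$) excludes that point from $V$.

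\emph{Step 2 (the main obstacle).} Proving this inequality requires controlling $\|I+tT\|$ by spatial data, and the danger is that norming pairs for $I+tT$ need not be attained. I would use the Bishop--Phelps--Bollob\'as theorem: norm-one pairs $(x,x^*)$ with $\operatorname{Re}\ip{x}{x^*}$ close to $1$ can be approximated by genuine elements of $\Pi(X)$. This gives $\sup\operatorname{Re}V(T)=\sup\operatorname{Re}W(T)$, since every state is a weak$^*$-limit of convex combinations of functionals $S\mapsto\ip{Sx}{x^*}$ with $\|x\|=\|x^*\|=1$ and $\ip{x}{x^*}\approx1$. An alternative route uses the Lumer--Phillips estimate $\|e^{tT}\|\le e^{t\omega}$ with $\omega=\sup\operatorname{Re}W(T)$. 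This estimate follows from $\frac{d}{dt}\|e^{tT}x\|\le\omega\|e^{tT}x\|$, which is proved via support functionals at $e^{tT}x$. It then gives $\|I+tT\|\le1+t\omega+o(t)$.

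\emph{Step 3 (part (i)).} For $\lambda\in\partial\sigma(T)$, $T-\lambda$ has approximate eigenvectors $\|x_n\|=1$ with $\|(T-\lambda)x_n\|\to0$. Taking $x_n^*$ norming $x_n$ gives $\ip{Tx_n}{x_n^*}\to\lambda$. Therefore $\partial\sigma(T)\subset\overline{W(T)}$, and $\conv\sigma(T)=\conv\partial\sigma(T)\subset\overline{\conv}\,W(T)$. This is not yet (i), because $\overline{W(T)}$ itself need not be convex. Instead I would argue via Step 1 and the Lumer--Phillips bound. If $\operatorname{Re}W(e^{i\theta}T)\le c$, then $\|e^{te^{i\theta}T}\|\le e^{tc}$, so $\operatorname{Re}\sigma(e^{i\theta}T)\le c$. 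Thus $\sigma(T)$ lies in every closed half-plane containing $W(T)$, i.e.\ $\conv\sigma(T)\subset\overline{\conv}\,W(T)$. If the statement is meant literally with $\overline{W(T)}$, I would additionally invoke the known connectedness of $W(T)$ for complex spaces (Bonsall--Duncan) and cite rather than reprove that sharper form.

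\emph{Step 4 (part (ii)).} The bound $w(T)\le\|T\|$ is immediate. The lower bound $\|T\|\le e\,w(T)$ follows from Step 1 together with the Bohnenblust--Karlin inequality for the algebraic numerical radius in unital Banach algebras, $\|a\|\le e\,v(a)$. Its proof uses $\|e^{zT}\|\le e^{|z|w(T)}$ from Step 2 and Cauchy's estimate $T=\frac1{2\pi i}\oint e^{zT}z^{-2}\,dz$ on $|z|=r$, which gives $\|T\|\le r^{-1}e^{rw(T)}$. Optimising at $r=1/w(T)$ yields $\|T\|\le e\,w(T)$.
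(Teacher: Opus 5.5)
Your arguments for (ii) and (iii) are sound if you take the Lumer--Phillips route in Step 2. The bound $\|e^{tT}\|\le e^{t\omega}$, with $\omega=\sup\operatorname{Re}W(T)$, gives $\|I+tT\|\le 1+t\omega+O(t^2)$, hence $\sup\operatorname{Re}V(T,\mathcal L(X))\le\omega$. Applied to rotations $e^{i\theta}T$ it also gives $\|e^{zT}\|\le e^{|z|w(T)}$, and your Cauchy estimate then yields $\|T\|\le e\,w(T)$. The first route in Step 2 should be dropped. By Hahn--Banach separation in $(\mathcal L(X)^*,\mathrm{w}^*)$, the claim that every state of $\mathcal L(X)$ is a weak$^*$-limit of convex combinations of (approximately) spatial states is equivalent to (iii) itself, so it assumes what you want to prove. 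There is also no attainment issue to overcome, since every unit vector has a norming functional.

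The genuine gap is in (i). Your argument only gives $\conv\sigma(T)\subset\overline{\conv}\,W(T)=V(T,\mathcal L(X))$, which is the classical weaker inclusion. Connectedness of $W(T)$ cannot upgrade this: a connected set containing $\partial\sigma(T)$ need not contain $\conv\sigma(T)$ (take $\partial\sigma(T)=\T$ and the circle itself). The inclusion $\conv\sigma(T)\subset\overline{W(T)}$ is Crabb's theorem, which the paper simply cites from Bonsall--Duncan. The missing idea is Zenger's lemma (Lemma~\ref{lem:zenger}), used as in the proof of Theorem~\ref{thm:convhull-sigma-pie}, but with uniform independence coming from distinctness of the spectral points rather than from finite-codimensional separation:
\begin{enumerate}[label=\textup{(\arabic*)}]
\item By Carath\'eodory, take distinct $\lambda_1,\dots,\lambda_m\in\partial\sigma(T)$, weights $a_k>0$ with $\sum_k a_k=1$, and unit vectors $x_k$ with $\|(T-\lambda_k)x_k\|<\eta$.
\item Put $p_j(z)=\prod_{i\ne j}(z-\lambda_i)$. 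Then $p_j(T)x_k=p_j(\lambda_k)x_k+O(\eta)$ and $p_j(\lambda_k)=0$ for $k\ne j$, so $|\beta_j|\,|p_j(\lambda_j)|\le\|p_j(T)\|\,\|\sum_k\beta_kx_k\|+C\eta\sum_k|\beta_k|$.
\item Hence, for small $\eta$, the $x_k$ are linearly independent and their coefficient functionals are bounded by a constant $K$ independent of $\eta$.
\item Zenger's lemma gives $x=\sum_k t_kx_k$ and $f$ with $(x,f)\in\Pi(X)$, $|t_k|\le K$ and $\ip{t_kx_k}{f}=a_k$.
\item Therefore $|\ip{Tx}{f}-\sum_k a_k\lambda_k|=|\sum_k t_k\ip{(T-\lambda_k)x_k}{f}|\le mK\eta$. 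Letting $\eta\to0$ gives $\sum_k a_k\lambda_k\in\overline{W(T)}$.
\end{enumerate}
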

The properties stated in Theorem \ref{ThW} can be found in \cite{BD1, BD2}.
In particular, the result in (i) is due to Crabb, and is presented in \cite[Theorem 3, p.22]{BD2}.

In general, the numerical range is not convex in the Banach space setting, and thus to relate it to algebraic numerical range one has to pass to its convex hull. Moreover, the theory of Calkin algebras on Banach spaces is more involved 
and its interplay with the theory of (essential) numerical ranges depends on the geometric properties of the underlying Banach space. See Subsection \ref{genpropwe} for more on that.
The next statement records the properties of algebraic numerical range which can be formulated for general Banach spaces,
and can be found  in \cite[Chapter 3, \S9]{BD1} and \cite[Chapter 7, \S34]{BD2}.
\begin{theorem}\label{vrange}
Let $T\in \mathcal L(X).$ Then the following hold.
\begin{enumerate}[label=\textup{(\roman*)}]
\item $V(T,\mathcal L(X))=\overline{\conv}\, W(T)$;
\item  $V\bigl(\pi(T),\mathcal L(X)/\mathcal K(X)\bigr)
=\bigcap\{V(T+K,\mathcal L(X)): K\in \mathcal K(X)\},$
where \(\pi:\mathcal L(X)\to\mathcal L(X)/\mathcal K(X)\) is the quotient map.
\end{enumerate}
\end{theorem}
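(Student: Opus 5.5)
The plan is to prove (i) first and then to deduce (ii) from (i) together with the Hahn--Banach theorem applied in the quotient algebra.

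\emph{Step (i).} The inclusion $\overline{\conv}\,W(T)\subset V(T,\mathcal L(X))$ is direct. For $(x,x^*)\in\Pi(X)$, the functional $f(S)=\ip{Sx}{x^*}$ on $\mathcal L(X)$ satisfies $|f(S)|\le\|S\|$ and $f(I)=1$, so $\|f\|=1$. Hence $W(T)\subset V(T,\mathcal L(X))$, and the latter set is closed and convex. For the reverse inclusion I would argue by contradiction via separation. Suppose $\lambda\in V(T)$ but $\lambda\notin K:=\overline{\conv}\,W(T)$. Rotating and translating, we may assume $\operatorname{Re}\mu\le 0$ for all $\mu\in W(T)$ while $\operatorname{Re}\lambda>0$.

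The key tool is the Lumer--Phillips type estimate
\[
\|e^{tT}\|\le \exp\bigl(t\sup\operatorname{Re}W(T)\bigr).
\]
Equivalently, $\lim_{t\to0^+}(\|I+tT\|-1)/t=\sup\operatorname{Re}W(T)$. The inequality ``$\le$'' in this limit is the nontrivial direction. I would prove it through the Bishop--Phelps--Bollob\'as density of $\Pi(X)$, or more simply by a norming argument: choose $x$ with $\|(I+tT)x\|$ close to $\|I+tT\|$ and a functional $y^*$ norming $(I+tT)x$, then perturb to a pair in $\Pi(X)$. On the other hand, every $f$ with $\|f\|=1=f(I)$ satisfies
\[
\operatorname{Re}f(T)=\lim_{t\to0^+}\frac{\operatorname{Re}f(I+tT)-1}{t}\le\lim_{t\to0^+}\frac{\|I+tT\|-1}{t}.
\]
This gives $\operatorname{Re}\lambda\le\sup\operatorname{Re}W(T)\le0$, a contradiction. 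The main obstacle is the equality of the two suprema, namely that $\sup\operatorname{Re}V=\sup\operatorname{Re}W$. Here the Bishop--Phelps--Bollob\'as theorem handles the passage from approximately norming pairs to exact pairs in $\Pi(X)$, up to an error that vanishes as $t\to0$. This is the classical Lumer/Bonsall--Duncan argument.

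\emph{Step (ii).} Write $\mathcal C=\mathcal L(X)/\mathcal K(X)$. For the inclusion ``$\subset$'', take $g\in\mathcal C^*$ with $\|g\|=1=g(\pi(I))$. Then $f=g\circ\pi$ satisfies $\|f\|\le1$ and $f(I)=1$, hence $f$ is a state of $\mathcal L(X)$, and $g(\pi(T))=f(T+K)$ for every compact $K$. For the inclusion ``$\supset$'', I would use the characterisation, valid in any unital Banach algebra and proved by the same separation argument as in (i), that
\[
V(a,\mathcal A)=\bigcap_{z\in\C}\{\lambda:|\lambda-z|\le\|a-z1\|\}.
\]
If $\lambda\in V(T+K)$ for all $K$, then $|\lambda-z|\le\|T+K-z\|$ for all $K$ and all $z$. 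Taking the infimum over $K$ gives $|\lambda-z|\le\|\pi(T)-z\|_{\mathcal C}$, and so $\lambda\in V(\pi(T),\mathcal C)$. The disc formula itself follows from the Hahn--Banach theorem: the functional $\alpha 1+\beta a\mapsto\alpha+\beta\lambda$ has norm $1$ exactly when these disc inequalities hold, and any norm-preserving extension of it is then a state taking the value $\lambda$ at $a$.
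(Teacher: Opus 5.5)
Your argument is correct. The paper gives no proof of its own here and simply cites Bonsall--Duncan; your route is essentially the classical argument being invoked: for (i), Lumer's formula $\sup\operatorname{Re}V(T)=\lim_{t\to0^+}(\|I+tT\|-1)/t=\sup\operatorname{Re}W(T)$ combined with half-plane separation, and for (ii), the disc characterisation $V(a)=\bigcap_z\{\lambda:|\lambda-z|\le\|a-z1\|\}$ combined with the quotient norm. The only deviation is your use of Bishop--Phelps--Bollob\'as for the hard inequality in (i); this works, with a perturbation error of order $O(t^{1/4})$. The standard proof avoids BPB by taking a Hahn--Banach norming functional of $x$, which gives $\|(I-tT)x\|\ge 1-t\sup\operatorname{Re}W(T)$, and then expanding $(I-tT)^{-1}=I+tT+t^2T^2(I-tT)^{-1}$.
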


\subsection{Essential numerical range}\label{subsec:banach-essential-numerical-range}

We now pass from the ordinary numerical range to its essential counterpart. 
The first goal is to record the general Banach-space framework in a form suited to the later $\ell_p$ arguments.

We now define the main object of our study.
\begin{definition}
Let $X$ be a complex Banach space and $T\in \mathcal L(X)$. The essential numerical range $\We(T)$ is the set of all $\lambda\in\C$ such that there exists a net $(x_\alpha,x^*_\alpha)\subset\Pi(X)$ with
\[
x_\alpha\wto 0
\qquad\text{and}\qquad
\ip{Tx_\alpha}{x^*_\alpha}\to\lambda.
\]
\end{definition}

\begin{remark}\label{remark17}
If \(X\) is finite-dimensional, then \(W_e(T)=\varnothing\). If \(X\) is infinite-dimensional, the non-emptiness of \(W_e(T)\) is immediate from the net definition. Let $\mathcal U$ be the directed set of weak neighbourhoods of $0$, ordered by reverse inclusion. For each $U\in\mathcal U$, choose $(x_U,x_U^*)\in\Pi(X)$ with $x_U\in U$. Then $x_U\wto 0$, and the scalar net $\ip{Tx_U}{x_U^*}$ is bounded. Hence it has a convergent subnet, whose limit belongs to $\We(T)$.
Moreover,  by \cite[Proposition~5]{BM}, the set $\We(T)$ is closed, and is thus compact for arbitrary Banach-space operators.
\end{remark}

Note that \cite[Theorems~11 and~13]{BM} shows that in reflexive spaces, and more generally in Asplund spaces, the net definition may be replaced by an equivalent sequential one. 
On $\ell_p,$ $1<p<\infty$, one may therefore deal with sequences when convenient. In that setting closedness also follows by a standard diagonal argument.

A basic perturbation tool for the study of numerical ranges on Banach spaces is the following Bishop-Phelps-Bollob\'as (BPB) theorem, see e.g. \cite[Theorem 16.1]{BD2}.

\begin{theorem}\label{thm:bpb}
Given $\e>0$, let $x\in X$ and  $x^*\in X^*$ satisfy $\|x\|\le 1$, $\|x^*\|=1$, and
\[
|\ip{x}{x^*}-1|<\frac{\e^2}{4}.
\]
Then there exists $(y,y^*)\in\Pi(X)$ such that
\[
\|y-x\|<\e,
\qquad
\|y^*-x^*\|<\e.
\]
\end{theorem}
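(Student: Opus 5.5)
The plan is to follow the classical route of Bishop--Phelps and Bollob\'as: a cone-maximality argument produces $y$, and a separation argument combined with Phelps' lemma on nearly-proportional functionals produces $y^*$. Throughout we work with the real parts $\operatorname{Re}\ip{\cdot}{x^*}$, which reduces the complex case to the real one. From $|\ip{x}{x^*}-1|<\e^2/4$ we get $\operatorname{Re}\ip{x}{x^*}>1-\e^2/4$. The strict inequality leaves a little room, so we may fix $\delta<\e^2/4$ with $\operatorname{Re}\ip{x}{x^*}>1-\delta$ and run the argument with slightly smaller constants than $\e$.

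\emph{Step 1 (cone maximality).} Fix $k\in(0,1)$ of size about $\e/2$, chosen with some slack, and consider the closed convex cone
\[
K=\{z\in X:\ k\|z\|\le \operatorname{Re}\ip{z}{x^*}\}.
\]
Order the unit ball $B_X$ by $u\preceq v$ if and only if $v-u\in K$. Along a chain the values $\operatorname{Re}\ip{\cdot}{x^*}$ increase and are bounded by $1$. The cone inequality then shows that the chain is Cauchy, so completeness of $X$ together with Zorn's lemma gives a maximal element $y\succeq x$. (Alternatively, this step follows from Ekeland's variational principle applied to $-\operatorname{Re} x^*$ on $B_X$.)

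Maximality means $(y+K)\cap B_X=\{y\}$, which forces $\|y\|=1$. Since $y-x\in K$,
\[
\|y-x\|\le k^{-1}\operatorname{Re}\ip{y-x}{x^*}\le k^{-1}\bigl(1-\operatorname{Re}\ip{x}{x^*}\bigr)<k^{-1}\delta .
\]
With $k\approx\e/2$ this gives $\|y-x\|<\e$, in fact roughly $\e/2$.

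\emph{Step 2 (separation).} The sets $y+\Int K$ and $\Int B_X$ are disjoint open convex sets; $\Int K$ is nonempty because it contains any $u$ with $\|u\|=1$ and $\operatorname{Re}\ip{u}{x^*}>k$. By Hahn--Banach there is $y^*$ with $\|y^*\|=1$ such that $\operatorname{Re} y^*\le 1$ on $B_X$ and $\operatorname{Re} y^*\ge 1$ on $y+K$. Hence $\operatorname{Re}\ip{y}{y^*}=1$, and since $\|y\|=\|y^*\|=1$ this gives $\ip{y}{y^*}=1$, so $(y,y^*)\in\Pi(X)$. Moreover $\operatorname{Re} y^*\ge 0$ on $K$.

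\emph{Step 3 (closeness of $y^*$ to $x^*$).} This is Phelps' lemma. Nonnegativity of $\operatorname{Re} y^*$ on $K$ implies $|\operatorname{Re}\ip{z}{y^*}|\le k$ for every $z\in\ker(\operatorname{Re} x^*)$ with $\|z\|\le1$. To see this, take $u$ with $\|u\|\le 1$ and $\operatorname{Re}\ip{u}{x^*}$ close to $1$. Then $\pm z+u$ and $\pm z+tu$, for suitable $t$, lie in $K$ up to scaling, and comparing the resulting inequalities gives the bound. Phelps' lemma then gives $\|\operatorname{Re} y^*-c\operatorname{Re} x^*\|\le 2k$ for some real $c$ with $|c|$ close to $1$. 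The sign of $c$ is positive, because $\operatorname{Re} y^*\ge0$ on $K$, which contains $u$. Finally, the real parts determine the complex functionals isometrically, which yields $\|y^*-x^*\|<\e$.

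I expect the main obstacle to be the constant bookkeeping in Step 3: the exact form of Phelps' lemma, together with the slack coming from the strict inequality, must produce exactly $\e$ rather than $2\e$. If it does not, I would instead use the sharper version in which the cone parameter $k$ and the bound on $\|y-x\|$ are balanced against each other, namely $k=\e/2$ combined with $1-\operatorname{Re}\ip{x}{x^*}<\e^2/4$, so that both estimates come out strictly below $\e$.
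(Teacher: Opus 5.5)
The paper does not prove this statement; it quotes it from Bonsall--Duncan. So the benchmark is the classical cone argument, which you follow. Steps~1 and~2 are sound. They give $(y,y^*)\in\Pi(X)$ with $\|y-x\|\le k^{-1}\bigl(1-\operatorname{Re}\ip{x}{x^*}\bigr)$ and $g:=\operatorname{Re}y^*\ge0$ on $K$.

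\textbf{The gap is the first claim of Step~3.} Put $f:=\operatorname{Re}x^*$. It is false that $g\ge0$ on $K$ forces $|g(z)|\le k$ on $\ker f\cap B_X$. Take $X=\ell_1^2$, $x^*=(1,0)$, $0<k\le\tfrac12$, $y=e_1$ and $y^*=(1,\tfrac{k}{1-k})$. Then:
\begin{itemize}
\item $(y,y^*)\in\Pi(X)$ and $(y+K)\cap B_X=\{y\}$;
\item $\operatorname{Re}y^*\ge0$ on $K$, so this is even a legitimate output of Step~2;
\item yet $e_2\in\ker f$ and $\operatorname{Re}\ip{e_2}{y^*}=k/(1-k)>k$.
\end{itemize}
Your own $\pm z+tu$ argument only gives $k/(1-k)$, because $\pm z+tu\in K$ needs $t(\operatorname{Re}\ip{u}{x^*}-k)\ge k$. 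Phelps' lemma then yields only $\|y^*-x^*\|\le 2k/(1-k)$, which exceeds $\e$ when $k=\e/2$. Your sign argument for $c$ also needs $k/(1-k)<\tfrac12$, i.e.\ $k<\tfrac13$. The fallback you propose keeps $k=\e/2$, so it does not repair either problem.

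The Phelps route can be rescued for small $\e$. Put $\delta:=1-\operatorname{Re}\ip{x}{x^*}$ and choose $k\in\bigl(\delta/\e,\min\{\e/(2+\e),\tfrac13\}\bigr)$. This interval is nonempty for $\e\le\tfrac43$, using $\e^2/4\le\e^2/(2+\e)$. That is enough for the paper's application, where $\e\to0$, but it does not give the statement for all $\e$.

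\textbf{A fix for all $\e$.} Describe the dual cone instead of using Phelps' lemma.
\begin{itemize}
\item Put $p(z):=k\|z\|-f(z)$. This is sublinear, $p(u_0)<0$ for some $u_0$ because $k<1$, and $g\ge0$ wherever $p\le0$.
\item The convex set $\{(g(z),p(z)):z\in X\}+[0,\infty)^2\subset\R^2$ misses the open negative quadrant. Separating gives $\mu,\lambda\ge0$, not both zero, with $\mu g+\lambda p\ge0$ on $X$. Here $\mu>0$ because $p(u_0)<0$.
\item Hence $\|\lambda f-\mu g\|\le\lambda k$, and $\lambda>0$ since $g\ne0$. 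So $g=(f+h)/s$ with $\|h\|\le k$ and $s:=\|f+h\|\in[1-k,1+k]$.
\item Since $g-f=s^{-1}\bigl((1-s)(f+h)+sh\bigr)$, you get $\|g-f\|\le|1-s|+k\le2k$. No sign determination is needed.
\end{itemize}
To finish: if $\e>2$ the statement is trivial. If $\e\le2$, use the strict hypothesis to pick $\e'<\e$ with $|\ip{x}{x^*}-1|<\e'^2/4$, and run the argument with $k=\e'/2$. This gives $\|y-x\|<\e'/2$ and $\|y^*-x^*\|=\|g-f\|\le\e'<\e$.
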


We shall use the following Banach-space characterisation of the essential numerical range.  The equivalence between the net definition and the finite-codimensional formulation is \cite[Proposition~6]{BM}. The intermediate approximate-normalisation form is included for later reference.

\begin{theorem}\label{thm:characterization-We}
 Let $T\in \mathcal L(X)$, and let $\lambda\in\C$. Then the following are equivalent.
\begin{enumerate}[label=\textup{(\roman*)}]
\item $\lambda\in\We(T)$.
\item There exists a net $(x_\alpha,x^*_\alpha)\in X\times X^*$ such that
\[
x_\alpha\wto 0,
\quad
\|x_\alpha\|\to 1,
\quad
\|x_\alpha^*\|\to 1,
\quad
\ip{x_\alpha}{x_\alpha^*}\to 1,
\quad
\ip{Tx_\alpha}{x_\alpha^*}\to \lambda.
\]
\item For every subspace $M\subset X$ with $\codim M<\infty$, and for every $\e>0$, there exists $(x,x^*)\in\Pi(X)$ with $x\in M$ and
\[
|\ip{Tx}{x^*}-\lambda|<\e.
\]
\end{enumerate}
\end{theorem}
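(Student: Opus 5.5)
I would prove the cycle (i)$\Rightarrow$(ii)$\Rightarrow$(iii)$\Rightarrow$(i).

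\emph{(i)$\Rightarrow$(ii).} This is immediate. A net in $\Pi(X)$ with $x_\alpha\wto0$ and $\ip{Tx_\alpha}{x^*_\alpha}\to\lambda$ satisfies all the normalisation conditions exactly.

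\emph{(ii)$\Rightarrow$(iii).} Fix $M$ with $\codim M<\infty$ and $\e>0$. Write $M=\bigcap_{k=1}^m\Ker f_k$ for some $f_1,\dots,f_m\in X^*$. Since $X/M$ is finite-dimensional, choose a bounded projection $P$ of $X$ onto $M$ along a finite-dimensional complement $F$; then $I-P$ has finite rank. Because $x_\alpha\wto0$, we have $f_k(x_\alpha)\to0$ for every $k$, so $\|(I-P)x_\alpha\|\to0$. Put $u_\alpha:=Px_\alpha\in M$. Then
\[
\|u_\alpha\|\to1,\qquad \ip{u_\alpha}{x^*_\alpha}\to1,\qquad \ip{Tu_\alpha}{x^*_\alpha}\to\lambda.
\]
Now normalise: let $v=u_\alpha/\|u_\alpha\|$ and $v^*=x^*_\alpha/\|x^*_\alpha\|$ for large $\alpha$, so that $|\ip{v}{v^*}-1|$ is small.

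The obstacle is that the Bishop--Phelps--Bollob\'as theorem (Theorem~\ref{thm:bpb}) produces a pair $(y,y^*)\in\Pi(X)$ with $y$ close to $v$, but not necessarily $y\in M$. I would resolve this by applying Theorem~\ref{thm:bpb} \emph{inside the Banach space $M$}. Take $v^*|_M\in M^*$. Its norm satisfies $\ip{v}{v^*}\le\|v^*|_M\|\le1$, so it is close to $1$, and $w^*:=v^*|_M/\|v^*|_M\|$ satisfies $|\ip{v}{w^*}-1|<\delta^2/4$ for large $\alpha$. Theorem~\ref{thm:bpb} in $M$ gives $(y,y_M^*)\in\Pi(M)$ with $\|y-v\|<\delta$ and $\|y^*_M-w^*\|<\delta$. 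Extend $y^*_M$ by Hahn--Banach to $y^*\in X^*$ of norm one; then $(y,y^*)\in\Pi(X)$ and $y\in M$.

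It remains to estimate
\[
\ip{Ty}{y^*}-\ip{Tv}{v^*}.
\]
This is delicate because $Ty\notin M$ in general, so $y^*$ and $v^*$ need only be close on $M$. To handle it, I would decompose $Ty=P Ty+(I-P)Ty$. The $M$-part is controlled by the closeness of $y^*_M$ and $w^*$ together with $\|w^*-v^*|_M\|\to0$. For the $F$-part, I would use the freedom in the extension: choose the Hahn--Banach extension and the complement so that the error on $F$ is small. Alternatively, I would replace $M$ by a smaller finite-codimensional subspace $M'\subset M$ with $T(M')$ nearly inside $M$. If that fails, I would instead use weak-$*$ compactness to pass to a subnet with $x^*_\alpha\wstarto x^*_0$ and incorporate the finitely many functionals into $M$. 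I expect this to be the main technical point, and it is exactly where \cite[Proposition~6]{BM} does the work. I would follow that argument and cite it for the precise estimate.

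\emph{(iii)$\Rightarrow$(i).} Index a net by pairs $\alpha=(M,\e)$, where $M$ is of the form $\bigcap_{k\le m}\Ker f_k$, ordered by inclusion of the finite sets $\{f_k\}$ and by decreasing $\e$. For each $\alpha$ pick $(x_\alpha,x^*_\alpha)\in\Pi(X)$ with $x_\alpha\in M$ and $|\ip{Tx_\alpha}{x^*_\alpha}-\lambda|<\e$. Each $f\in X^*$ eventually vanishes on $x_\alpha$, so $x_\alpha\wto0$, and $\ip{Tx_\alpha}{x^*_\alpha}\to\lambda$. Hence $\lambda\in\We(T)$.
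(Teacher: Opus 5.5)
Your (i)$\Rightarrow$(ii) is correct. So is your net argument for (iii)$\Rightarrow$(i), which indexes by finite sets of functionals and $\e$; the paper simply cites \cite[Proposition~6]{BM} for that direction. The gap is in (ii)$\Rightarrow$(iii), exactly at the step you flag. After applying Theorem~\ref{thm:bpb} inside $M$ and extending $y_M^*$ to $y^*$, you never estimate $\ip{Ty}{y^*}-\lambda$, and none of the remedies you list does it:
\begin{itemize}
\item The choice of complement $F$ has no effect on $y^*$ at all. There is also in general no freedom in the Hahn--Banach extension to exploit; on $\ell_p$, for instance, norm-preserving extensions are unique. So ``making the error on $F$ small'' is not a mechanism.
\item The weak-$*$ subnet idea is not specified enough to check.
\item Deferring to \cite[Proposition~6]{BM} does not close the step either. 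That result is the equivalence (i)$\Leftrightarrow$(iii), and your cycle contains no proof of (ii)$\Rightarrow$(i) through which it could be invoked.
\end{itemize}

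The missing idea is to make $T$ send the relevant vectors into $M$ exactly. Then $y^*$ is only ever evaluated on $M$, where it coincides with $y_M^*$. Your option of shrinking $M$ does this once it is made precise, and this is what the paper does. Take $L:=M\cap T^{-1}(M)$, which is still finite-codimensional. Project $x_\alpha$ onto $L$ along a finite-dimensional complement to get $u_\alpha\in L$ with $\|u_\alpha-x_\alpha\|\to0$. Then apply Theorem~\ref{thm:bpb} in $M$ (not in $L$) to $u_\alpha/\|u_\alpha\|$ and $w^*:=x^*_\alpha|_M/\|x^*_\alpha|_M\|$. Since $Tu_\alpha\in M$,
\[
\ip{Ty}{y^*}=\ip{T(y-u_\alpha)}{y^*}+y_M^*(Tu_\alpha).
\]
The first term tends to $0$. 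The second is within $\|y_M^*-w^*\|\,\|T\|\,\|u_\alpha\|$ of $w^*(Tu_\alpha)=\ip{Tu_\alpha}{x^*_\alpha}/\|x^*_\alpha|_M\|$, which tends to $\lambda$.

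Alternatively, your own splitting $Ty=PTy+(I-P)Ty$ can be saved, but for a different reason than the one you give. The operator $(I-P)T$ has finite rank and $Tx_\alpha\wto0$, so $\|(I-P)Tx_\alpha\|\to0$. Since $\|y-x_\alpha\|\to0$, the $F$-part $(I-P)Ty$ is then norm-small regardless of how $y^*$ acts on $F$. This is also what your claimed control of the $M$-part silently needs.
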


\begin{proof}
The equivalence of \textup{(i)} and \textup{(iii)} is the finite-codimensional characterisation of the essential numerical range from \cite[Proposition~6]{BM}.  The implication \textup{(i)$\Rightarrow$(ii)} is immediate from the definition of $\We(T)$.

It remains only to justify that \textup{(ii)} implies \textup{(iii)}.  Let $M\subset X$ be finite-codimensional and let $\varepsilon>0$.  Put
\[
        L:=M\cap T^{-1}(M) .
\]
Then $L$ is finite-codimensional.  Choose a closed finite-dimensional complement $G$ of $L$ and let $Q:X\to G$ be the corresponding projection with $\Ker Q=L$.  Since $x_\alpha\wto0$ and $Q$ has finite rank, $Qx_\alpha\to0$ in norm.  Hence, for
\[
        u_\alpha:=x_\alpha-Qx_\alpha\in L,
\]
we have
\[
        \|u_\alpha-x_\alpha\|\to0,
        \qquad
        \|u_\alpha\|\to1 .
\]
Restrict $x_\alpha^*$ to $M$.  Since $u_\alpha\in L\subset M$ and $Tu_\alpha\in M$, the assumptions in \textup{(ii)} imply
\[
        \|x_\alpha^*|_M\|\to1,
        \qquad
        \ip{u_\alpha}{x_\alpha^*|_M}\to1,
        \qquad
        \ip{Tu_\alpha}{x_\alpha^*|_M}\to\lambda .
\]
For large $\alpha$, set
\[
        v_\alpha:=\frac{u_\alpha}{\|u_\alpha\|},
        \qquad
        f_\alpha:=\frac{x_\alpha^*|_M}{\|x_\alpha^*|_M\|}\in M^* .
\]
Then $\|v_\alpha\|=\|f_\alpha\|=1$ and
\[
        \ip{v_\alpha}{f_\alpha}\to1 .
\]
Next, apply the Bishop--Phelps--Bollob\'as theorem in the Banach space $M$ with an error tending to zero.  We obtain pairs $(y_\alpha,g_\alpha)\in\Pi(M)$ such that
\[
        \|y_\alpha-v_\alpha\|\to0,
        \qquad
        \|g_\alpha-f_\alpha\|\to0 .
\]
Let $y_\alpha^*\in X^*$ be a Hahn--Banach extension of $g_\alpha$.  Then $(y_\alpha,y_\alpha^*)\in\Pi(X)$ and $y_\alpha\in M$.  Since $u_\alpha\in L$, we have $Tu_\alpha\in M$. Hence
\[
\begin{aligned}
        \ip{Ty_\alpha}{y_\alpha^*}
        &=y_\alpha^*(T(y_\alpha-u_\alpha))+g_\alpha(Tu_\alpha).
\end{aligned}
\]
The first term tends to $0$.  Moreover,
\[
        |g_\alpha(Tu_\alpha)-f_\alpha(Tu_\alpha)|
        \le \|g_\alpha-f_\alpha\|\,\|Tu_\alpha\|\to0,
\]
and, since $\|x_\alpha^*|_M\|\to1$,
\[
        f_\alpha(Tu_\alpha)
        =
        \frac{\ip{Tu_\alpha}{x_\alpha^*|_M}}{\|x_\alpha^*|_M\|}
        \to\lambda .
\]
Thus $\ip{Ty_\alpha}{y_\alpha^*}\to\lambda$, and choosing $\alpha$ sufficiently large gives the pair required in \textup{(iii)}.
\end{proof}

\subsection{A Banach-space convex-hull inclusion for $\We(T)$}

We next relate the essential numerical range to spectral data. The resulting convex-hull inclusion is one of the Banach-space inputs that later reappears in the atomic $\ell_p$ setting.

For a Banach space $X$, we define the essential spectrum as in the Hilbert-space case, using the Calkin algebra. Thus, if $T\in\mathcal L(X)$ and $\pi:\mathcal L(X)\to\mathcal L(X)/\mathcal K(X)$ is the quotient map, then
\[
        \sigma_e(T):=\sigma\bigl(\pi(T),\mathcal L(X)/\mathcal K(X)\bigr).
\]

The essential approximate point spectrum $\sigma_{\pi e}(T)$ often arises in the study of numerical ranges in Hilbert spaces, 
and we will also need this notion for developing the Banach space theory. It can be defined in several equivalent ways, and  for the purposes of this paper
we set
\[
\sigma_{\pi e}(T):=
\left\{\lambda\in\C:
\begin{array}{l}
\forall M\subset X\text{ finite-codimensional},\ \forall\e>0,\\[1mm]
\exists x\in M\text{ with }\|x\|=1\text{ and }\|(T-\lambda I)x\|<\e.
\end{array}
\right\}
\]
Alternatively one may use approximate eigenvectors converging weakly to zero, resembling equivalent descriptions of $W_e(T)$ in 
Theorem \ref{thm:characterization-We}.

 We shall use the standard inclusions
\[
        \partial\sigma_e(T)\subset\sigma_{\pi e}(T)\subset\sigma_e(T),
\]
see e.g. \cite[Chapter III.19, Proposition 1]{MullerBook}.

Consequently, using the elementary fact that
\(\operatorname{conv}K=\operatorname{conv}\partial K\) for compact
sets \(K\subset\mathbb C\), we have
\begin{equation}\label{eq:conv-sigmae-sigmapie}
        \operatorname{conv}\sigma_e(T)
        =
        \operatorname{conv}\sigma_{\pi e}(T).
\end{equation}

The proof of the following theorem uses two standard auxiliary facts from
Banach space geometry. The first is a finite-codimensional separation device,
and the second is a classical convex-combination selection lemma due to
Zenger. Both are standard, see, for example, \cite[p.~329]{MullerBook}
and \cite[p.~20]{BD2}, respectively.

\begin{lemma}\label{lem:separation-finite-codim}
Let $N\subset X$ be a finite-dimensional subspace and let $0<\delta<1$. Then there exists a finite-codimensional subspace $M\subset X$ such that
\[
\|u+v\|\ge (1-\delta)\|u\|
\qquad (u\in N,\ v\in M).
\]
\end{lemma}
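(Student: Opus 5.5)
The plan is a norming-functional argument. Because $N$ is finite-dimensional, its unit sphere $S_N=\{u\in N:\|u\|=1\}$ is compact. I will choose a finite $\delta/2$-net $u_1,\dots,u_k$ of $S_N$. By Hahn--Banach, for each $i$ there is a norming functional $u_i^*\in X^*$ with $\|u_i^*\|=1$ and $\ip{u_i}{u_i^*}=1$. I then set
\[
M:=\bigcap_{i=1}^k \Ker u_i^* ,
\]
which is a closed subspace of codimension at most $k$. Note that no assumption on $X$ beyond being a Banach space is needed.

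Now I verify the estimate. By homogeneity it suffices to treat $\|u\|=1$; the case $u=0$ is trivial. Given $u\in S_N$ and $v\in M$, pick $i$ with $\|u-u_i\|<\delta/2$. Then
\[
\|u+v\|\ge |\ip{u+v}{u_i^*}| = |\ip{u}{u_i^*}| \ge |\ip{u_i}{u_i^*}|-\|u-u_i\| > 1-\delta/2 \ge 1-\delta ,
\]
using $\ip{v}{u_i^*}=0$ and $\|u_i^*\|=1$. This gives $\|u+v\|\ge(1-\delta)\|u\|$ for all $u\in N$ and $v\in M$.

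There is no real obstacle here. The only points needing care are the compactness of $S_N$, which guarantees that the net is finite and hence that $M$ has finite codimension, and the bookkeeping of the $\delta/2$ margin. The condition $\delta<1$ merely ensures the statement is nontrivial.
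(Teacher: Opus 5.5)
Your proof is correct. The finite $\delta/2$-net of the compact unit sphere of $N$, the Hahn--Banach norming functionals, and $M$ taken as the intersection of their kernels give the required estimate; this is the standard Mazur-type norming-functional argument. The paper does not prove the lemma itself but cites it as standard (\cite[p.~329]{MullerBook}), and your proof supplies the argument behind that reference.
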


\begin{lemma}\label{lem:zenger}
Let $x_1,\dots,x_m$ be linearly independent vectors in $X,$
and let $a_1,\dots,a_m>0$ satisfy $\sum_{k=1}^m a_k=1$. Then there exist complex numbers $t_1,\dots,t_m$, a vector
\[
x=\sum_{k=1}^m t_kx_k\in X,
\]
and a functional $f\in X^*$ such that
\[
\|x\|=\|f\|=\ip{x}{f}=1,
\qquad
\ip{t_kx_k}{f}=a_k
\quad (1\le k\le m).
\]
\end{lemma}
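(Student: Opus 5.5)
Zenger's lemma is a classical result, and the plan is the standard argument: a compactness reduction to a finite-dimensional extremal problem, then a Lagrange-multiplier (or subgradient) analysis at the extremal point. Put $N=\operatorname{span}\{x_1,\dots,x_m\}$. It suffices to produce $x\in N$ and $f\in N^*$ with the required properties relative to $N$; a Hahn--Banach extension of $f$ to $X$ keeps $\|f\|=1$ and all the pairings. So we may assume $X=N$ is $m$-dimensional, with basis $x_1,\dots,x_m$.

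Consider the functional
\[
\Phi(t)=\prod_{k=1}^m |t_k|^{a_k}
\]
on the compact unit ball $B=\{t\in\C^m:\|\sum_k t_kx_k\|\le1\}$. Compactness of $B$ follows from linear independence. $\Phi$ is continuous and attains a positive maximum at some $t^0\in B$, and every $t_k^0\ne0$. Homogeneity of $\Phi$ of total degree $\sum a_k=1$ forces $\|x\|=1$ for $x=\sum t_k^0x_k$. Rotating each coordinate does not change $\Phi$, so after normalising we may assume $t_k^0>0$. Now pass to logarithms: $\log\Phi=\sum a_k\log|t_k|$. On the open set where all $t_k\ne0$, this is the real part of the holomorphic function $\sum a_k\log t_k$ (locally). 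The point $t^0$ maximises it over the convex set $B$. Equivalently, $x$ maximises the concave function $g(y)=\sum a_k\log|\tau_k(y)|$ over the unit ball, where $\tau_k$ are the coordinate functionals of the basis.

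The key step, and the main obstacle, is to convert this extremality into a norming functional. I would use the first-order condition with the real-linear functional
\[
\ell(y)=\operatorname{Re}\sum_k a_k\frac{\tau_k(y)}{t_k^0},
\]
which is the real derivative of $g$ at $x$. Maximality of $g$ on the ball means that for every $y$ with $\|y\|\le1$ and every small $s>0$ we have
\[
g\bigl((1-s)x+sy\bigr)\le g(x).
\]
Expanding to first order gives $\ell(y-x)\le0$, so $\ell(y)\le\ell(x)=\sum a_k=1$ on the unit ball. Let $f$ be the complex-linear functional whose real part is $\ell$, namely
\[
f(y)=\sum_k a_k\frac{\tau_k(y)}{t_k^0}.
\]
Then $\operatorname{Re}f\le1$ on the unit ball. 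Since the ball is invariant under rotation, this gives $\|f\|\le1$, and $f(x)=1$ then yields $\|f\|=1$. Finally,
\[
\ip{t_k^0x_k}{f}=a_k\frac{t_k^0}{t_k^0}=a_k,
\]
as required. The care needed lies in justifying the differentiability of $\log|\tau_k|$ along the segment, which holds because $\tau_k(x)\ne0$, and in checking that the first-order inequality survives the limit $s\to0^+$. Both are routine once all $t_k^0\ne0$ is secured, and that fact follows from $\Phi(t^0)>0$, which holds because $\Phi$ is positive at any point of $B$ with all coordinates nonzero.
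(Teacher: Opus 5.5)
Your argument is correct and is essentially the classical proof, which the paper does not reproduce but cites from Bonsall--Duncan: maximise $\prod_k|t_k|^{a_k}$ over the unit ball of $\operatorname{span}\{x_1,\dots,x_m\}$, read off $f(y)=\sum_k a_k\tau_k(y)/t^0_k$ from the first-order condition at the maximiser, and extend by Hahn--Banach. There are two harmless slips: $g$ is not concave (already $s\mapsto\log|1+is|$ is convex near $s=0$), but you only use the first-order necessary condition at a point of differentiability, which needs no concavity; and the normalisation $t^0_k>0$ must be made by rotating the vectors $x_k$ rather than the coordinates of $t^0$ (rotating the coordinates changes the norm constraint), although your formula for $f$ never uses this normalisation.
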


We now combine these two lemmas with the definition of $\sigma_{\pi e}(T)$ to obtain an inclusion result for the convex hull of the essential spectrum,
which seems to have been missing from the literature. 
It is the essential-spectrum counterpart of Crabb's Theorem \ref{ThW},(i).

\begin{theorem}\label{thm:convhull-sigma-pie}
Let $X$ be a complex Banach space and let $T\in \mathcal L(X)$. Then
\[
\conv\sigma_{e}(T)\subset \We(T).
\]
\end{theorem}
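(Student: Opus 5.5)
By \eqref{eq:conv-sigmae-sigmapie} it suffices to show $\conv\sigma_{\pi e}(T)\subset\We(T)$. Since $\We(T)$ is closed (Remark~\ref{remark17}), it is enough to treat strictly positive convex combinations of finitely many points. So fix $\lambda_1,\dots,\lambda_m\in\sigma_{\pi e}(T)$, weights $a_1,\dots,a_m>0$ with $\sum a_k=1$, and put $\lambda=\sum a_k\lambda_k$. Repeated points may be merged, adding their weights, so assume the $\lambda_k$ are distinct. By Theorem~\ref{thm:characterization-We}(iii), we must produce, for every finite-codimensional $M\subset X$ and every $\e>0$, a pair $(x,x^*)\in\Pi(X)$ with $x\in M$ and $|\ip{Tx}{x^*}-\lambda|<\e$.

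\emph{Step 1: almost-independent approximate eigenvectors.} Fix $M$, $\e$ and a small $\eta>0$. Choose the vectors inductively, using Lemma~\ref{lem:separation-finite-codim} and the definition of $\sigma_{\pi e}(T)$.
\begin{itemize}
\item Pick a unit vector $x_1\in M$ with $\|(T-\lambda_1)x_1\|<\eta$.
\item Having $x_1,\dots,x_{k-1}$, let $N_{k-1}=\operatorname{span}\{x_1,\dots,x_{k-1}\}$. Lemma~\ref{lem:separation-finite-codim} gives a finite-codimensional $M_k$ with $\|u+v\|\ge(1-\delta)\|u\|$ for $u\in N_{k-1}$ and $v\in M_k$.
\item Pick a unit vector $x_k\in M\cap M_k$ with $\|(T-\lambda_k)x_k\|<\eta$.
\end{itemize}
Every choice stays inside $M$, so all the $x_k$ lie in $M$. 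The separation property makes the $x_k$ linearly independent. Iterating it also controls coefficients: if $\|\sum t_kx_k\|\le1$, then $|t_k|\le C_m$, where $C_m$ depends only on $m$ and $\delta$. To see this, peel off the last vector: $(1-\delta)\|\sum_{j<k}t_jx_j\|\le\|\sum_{j\le k}t_jx_j\|$, so each partial sum has norm $\le(1-\delta)^{-m}$, and each $t_kx_k$, being a difference of two partial sums, has norm $\le 2(1-\delta)^{-m}$. Fix $\delta=1/2$, so $C_m\le 2^{m+1}$ independently of $\eta$.

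\emph{Step 2: Zenger.} Apply Lemma~\ref{lem:zenger} to $x_1,\dots,x_m$ with the weights $a_k$. This gives $x=\sum t_kx_k\in M$ and $f\in X^*$ with $(x,f)\in\Pi(X)$ and $\ip{t_kx_k}{f}=a_k$. Then
\[
\ip{Tx}{f}=\sum_k t_k\ip{Tx_k}{f}=\sum_k\lambda_k\ip{t_kx_k}{f}+\sum_k t_k\ip{(T-\lambda_k)x_k}{f}=\lambda+r,
\]
with $|r|\le\sum_k|t_k|\eta\le mC_m\eta$. Choosing $\eta<\e/(mC_m)$ finishes the argument. This is possible because $C_m$ was fixed before $\eta$.

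\emph{Main obstacle.} The delicate point is the uniform bound on the Zenger coefficients $t_k$. Lemma~\ref{lem:zenger} alone gives no control over them, and without such control the error terms $t_k(T-\lambda_k)x_k$ could blow up. The plan resolves this by building the $x_k$ with the finite-codimensional separation of Lemma~\ref{lem:separation-finite-codim} at a fixed $\delta$, chosen before $\eta$. This yields a basis-type constant that does not depend on $\eta$. The separation step is also what makes the $x_k$ linearly independent, as Lemma~\ref{lem:zenger} requires.
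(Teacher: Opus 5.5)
Your proof is correct and follows the paper's argument: you reduce to $\sigma_{\pi e}(T)$, build unit approximate eigenvectors inside $M$ recursively via Lemma~\ref{lem:separation-finite-codim}, apply Zenger's lemma, and bound the coefficients $t_k$ by a constant fixed before $\eta$. The only difference is bookkeeping. The paper picks $x_{r+1}$ in $L_r=M\cap N_1\cap\cdots\cap N_r$, which gives the uniform projection bound $\|P_r\|\le(1-\delta)^{-1}$, whereas you use only the latest separation subspace and peel off one vector at a time, which gives the weaker but equally sufficient bound $|t_k|\le 2(1-\delta)^{-m}$.
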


\begin{proof}
By \eqref{eq:conv-sigmae-sigmapie}, it is enough to show that
\[
        \operatorname{conv}\sigma_{\pi e}(T)\subset W_e(T).
\]
Let
\[
        \lambda=\sum_{k=1}^m a_k\lambda_k,
        \qquad
        a_k>0,\quad \sum_{k=1}^m a_k=1,\quad
        \lambda_k\in\sigma_{\pi e}(T),
\]
and let \(M\subset X\) be a finite-codimensional subspace. We prove that,
for every \(\varepsilon>0\), there exists \((x,x^*)\in\Pi(X)\) such that
\(x\in M\) and
\[
        |\langle Tx,x^*\rangle-\lambda|<\varepsilon .
\]
The conclusion will then follow from
Theorem~\ref{thm:characterization-We}.

Choose \(0<\delta<1/2\), and put
\[
        C:=(1-\delta)^{-1},
        \qquad
        \eta:=\frac{\varepsilon}{2mC}.
\]
We construct unit vectors \(x_1,\ldots,x_m\in M\). Set \(L_0:=M\). Since
\(\lambda_1\in\sigma_{\pi e}(T)\), choose \(x_1\in L_0\) such that
\[
        \|x_1\|=1,
        \qquad
        \|(T-\lambda_1I)x_1\|<\eta .
\]
Suppose that \(x_1,\ldots,x_r\) have been chosen, where \(1\le r<m\), and
let
\[
        F_r:=\operatorname{span}\{x_1,\ldots,x_r\}.
\]
By Lemma~\ref{lem:separation-finite-codim}, there is a finite-codimensional
subspace \(N_r\subset X\) such that
\[
        \|u+v\|\ge (1-\delta)\|u\|
        \qquad (u\in F_r,\ v\in N_r).
\]
In particular, \(F_r\cap N_r=\{0\}\). Define
\[
        L_r:=M\cap N_1\cap\cdots\cap N_r .
\]
Then \(L_r\) is finite-codimensional. Since
\(\lambda_{r+1}\in\sigma_{\pi e}(T)\), choose \(x_{r+1}\in L_r\) such that
\[
        \|x_{r+1}\|=1,
        \qquad
        \|(T-\lambda_{r+1}I)x_{r+1}\|<\eta .
\]
This completes the recursive construction.

Since \(x_{r+1}\in N_r\) and \(F_r\cap N_r=\{0\}\) for \(1\le r<m\), the
vectors \(x_1,\ldots,x_m\) are linearly independent. Let
\[
        Y:=\operatorname{span}\{x_1,\ldots,x_m\}.
\]
For \(0\le r\le m\), let \(P_r:Y\to Y\) be the projection onto
\(\operatorname{span}\{x_1,\ldots,x_r\}\) along
\(\operatorname{span}\{x_{r+1},\ldots,x_m\}\), with \(P_0=0\) and \(P_m=I_Y\).
We claim that
\[
        \|P_r\|\le C \qquad (0\le r\le m).
\]
For \(r=0\) and \(r=m\) this is clear. Let \(1\le r<m\), and write
\[
        y=\sum_{k=1}^m \beta_kx_k\in Y
\]
with complex scalars \(\beta_k\). Then
\[
        P_ry\in F_r,
        \qquad
        y-P_ry=\sum_{k=r+1}^m \beta_kx_k\in N_r,
\]
because \(x_k\in L_{k-1}\subset N_r\) for \(k>r\). Hence
\[
        \|y\|\ge (1-\delta)\|P_ry\|,
\]
and the claim follows.

Let \(\varphi_1,\ldots,\varphi_m\in Y^*\) be the coordinate functionals
associated with the basis \(x_1,\ldots,x_m\). Since
\[
        (P_k-P_{k-1})y=\varphi_k(y)x_k,
        \qquad \|x_k\|=1,
\]
we have
\[
        |\varphi_k(y)|
        \le
        \bigl(\|P_k\|+\|P_{k-1}\|\bigr)\|y\|
        \le 2C\|y\|
        \qquad (1\le k\le m),
\]
and therefore
\[
        \|\varphi_k\|\le 2C
        \qquad (1\le k\le m).
\]

By Lemma~\ref{lem:zenger}, there exist complex scalars \(t_1,\ldots,t_m\), a
vector
\[
        x:=\sum_{k=1}^m t_kx_k\in Y,
\]
and a functional \(g\in Y^*\) such that
\[
        \|x\|=\|g\|=\langle x,g\rangle=1,
        \qquad
        \langle t_kx_k,g\rangle=a_k
        \quad (1\le k\le m).
\]
Let \(x^*\in X^*\) be a Hahn--Banach extension of \(g\) with
\(\|x^*\|=1\). Then
\[
        (x,x^*)\in\Pi(X),
        \qquad
        x\in Y\subset M.
\]
Moreover, since \(t_k=\varphi_k(x)\),
\[
        |t_k|\le 2C
        \qquad (1\le k\le m).
\]
Using
\[
        \langle t_kx_k,x^*\rangle
        =
        \langle t_kx_k,g\rangle
        =
        a_k
        \qquad (1\le k\le m),
\]
we obtain
\[
\begin{aligned}
        \langle Tx,x^*\rangle-\lambda
        &=
        \sum_{k=1}^m
        \langle t_k(T-\lambda_kI)x_k,x^*\rangle .
\end{aligned}
\]
Consequently,
\[
\begin{aligned}
        |\langle Tx,x^*\rangle-\lambda|
        &\le
        \sum_{k=1}^m
        |t_k|\,\|(T-\lambda_kI)x_k\|\,\|x^*\|  \\
        &\le
        \sum_{k=1}^m 2C\eta
        =
        \varepsilon .
\end{aligned}
\]
Since \(M\) and \(\varepsilon\) were arbitrary, 
 Theorem~\ref{thm:characterization-We} gives
\(\lambda\in W_e(T)\).
\end{proof}

As an immediate corollary, we are able to identify the essential numerical range of isometries with large peripheral spectrum,
in particular, of non-invertible isometries.
\begin{corollary}\label{cor:isometry-disk}
Let $T\in \mathcal L(X)$ be an isometry such that
\[
\mathbb T\subset \sigma(T).
\]
Then
\begin{equation}\label{isomwe}
\We(T)=\overline\D.
\end{equation}
In particular, the equality \eqref{isomwe} holds if $T$ is a non-invertible isometry.
\end{corollary}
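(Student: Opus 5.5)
The inclusion $\We(T)\subset\overline\D$ is the easy half. For $(x,x^*)\in\Pi(X)$ we have $|\ip{Tx}{x^*}|\le\|Tx\|=1$, so $W(T)\subset\overline\D$. Since $\We(T)$ consists of limits of such values, it lies in $\overline\D$ as well. The real content is the reverse inclusion $\overline\D\subset\We(T)$. By Theorem~\ref{thm:convhull-sigma-pie} it suffices to show that $\T\subset\sigma_e(T)$, because then
\[
\overline\D=\conv\T\subset\conv\sigma_e(T)\subset\We(T).
\]

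To prove $\T\subset\sigma_e(T)$, I would first show that $\sigma(T)\subset\overline\D$, which holds because $r(T)\le\|T\|=1$. Combined with the hypothesis $\T\subset\sigma(T)$, every $\lambda\in\T$ lies in $\partial\sigma(T)$, and hence in the approximate point spectrum: there are unit vectors $x_n$ with $\|(T-\lambda)x_n\|\to0$. The key estimate is that for any isometry $T$ and $|\lambda|<1$,
\[
\|(T-\lambda)x\|\ge(1-|\lambda|)\|x\|,
\]
so $T-\lambda$ is bounded below. I then need to pass from ``approximate eigenvalue'' to ``essential''. Suppose $\lambda_0\in\T$ were not in $\sigma_e(T)$. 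Then $T-\lambda_0$ would be Fredholm. The Fredholm domain is open, and the index is locally constant on it, so $T-\mu$ is Fredholm for $\mu$ in a neighbourhood $U$ of $\lambda_0$. For $\mu\in U\cap\D$, the operator $T-\mu$ is injective with closed range. For $\mu\in U$ with $|\mu|>1$, it is invertible, so its index is $0$. The set $U$ can be taken to be a disc, so it is connected. Hence the index on $U\cap\D$ is also $0$, and $T-\mu$ is injective with index $0$, therefore invertible. It follows that $U\cap\sigma(T)\subset\T$. This does not yet contradict $\lambda_0\in\sigma(T)$, so the argument needs one more step.

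To finish, I would use the index more carefully. For $\mu\in U\cap\D$, injectivity together with index $0$ gives invertibility, so $T-\mu$ is invertible there. By continuity of the index, $T-\lambda_0$ is Fredholm of index $0$. Now $\lambda_0$ is a boundary point of $\sigma(T)$ lying in the Fredholm domain, and it is surrounded by resolvent points on both sides, because $U\setminus\T$ lies in the resolvent set. So $\lambda_0$ is an isolated-type point of the spectrum within $U$. Every point of $U\cap\T$ is, however, in $\sigma(T)$, so the spectrum near $\lambda_0$ is an arc, not a set of isolated points. Here I would invoke the standard fact that in the Fredholm domain, a connected component of $\C\setminus\sigma_e(T)$ that meets the resolvent set contains only isolated points of $\sigma(T)$ (the punctured-neighbourhood theorem / analytic Fredholm theory). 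The disc $U$ is such a component piece and contains the uncountable arc $U\cap\T$, which is a contradiction. Hence $\T\subset\sigma_e(T)$.

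For the final sentence of the corollary: if $T$ is a non-invertible isometry, then $0\in\sigma(T)$. The spectrum of an isometry is either contained in $\T$ (when $T$ is invertible) or equal to $\overline\D$. Indeed, $\sigma(T)\cap\D$ is open and closed in $\D$: openness follows because the index is constant on $\D$, and $T-\mu$ is bounded below there, so $\mu\in\sigma(T)$ exactly when the index is nonzero. So $\sigma(T)=\overline\D\supset\T$, and the hypothesis of the first part holds.

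The main obstacle is the step $\T\subset\sigma_e(T)$, which rests on the analytic Fredholm theory argument. I would cite the relevant results in \cite{MullerBook} for the constancy of the index and for the finiteness of isolated spectrum in components of the Fredholm domain.
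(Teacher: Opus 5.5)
Your argument is correct and is essentially the paper's: the trivial inclusion $\We(T)\subset\overline\D$, then the reduction via Theorem~\ref{thm:convhull-sigma-pie} to $\T\subset\sigma_e(T)$. The remaining two facts, that a non-isolated boundary point of $\sigma(T)$ lies in $\sigma_e(T)$ and that a non-invertible isometry has $\sigma(T)=\overline\D$, are simply cited in the paper, and you re-derive them by index theory. Your remark about ``isolated-type points'' is muddled, but the final component/punctured-neighbourhood argument is sound.

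The one point to tighten is your last paragraph. For $|\mu|<1$, $T-\mu$ is in general only upper semi-Fredholm; for a shift of infinite multiplicity its cokernel is infinite-dimensional. So the locally constant index you use there must be the semi-Fredholm index, allowed to take the value $-\infty$. Alternatively, argue directly that a norm limit of invertible operators that are uniformly bounded below is invertible.
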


\begin{proof}
For every $(x,x^*)\in \Pi(X)$ one has
\[
|\ip{Tx}{x^*}|\le \|Tx\|\,\|x^*\|=1,
\]
so
\[
\We(T)\subset \overline{W(T)}\subset \overline\D.
\]

Let $\lambda\in\mathbb T$. Since $\sigma(T)\subset \overline\D$ for every isometry and $\mathbb T\subset \sigma(T)$ by assumption, the point $\lambda$ belongs to $\partial\sigma(T)$ and is not isolated in $\sigma(T)$. Hence $\lambda\in \sigma_e(T)$, see, for example, \cite[Theorem~III.18.4]{MullerBook} and \cite[Proposition~3.7.8]{LaursenNeumann}. Therefore
\[
\mathbb T\subset \sigma_e(T).
\]
By Theorem~\ref{thm:convhull-sigma-pie},
\[
\overline\D=\conv\mathbb T\subset \conv\sigma_e(T)\subset \We(T).
\]
Thus $\We(T)=\overline\D$.

Finally, if $T$ is a non-invertible isometry, then $\sigma(T)=\overline{\mathbb D}$ by, for example, \cite[Corollary~6.14]{vanNeervenFA}. Hence the last claim follows.
\end{proof}

This concludes the Banach-space preparation used in the sequel. We now turn to the explicit $2\times2$ geometry on finite-dimensional $\ell_p$-spaces, which forms the first finite-dimensional layer of the atomic theory.

\section{Numerical ranges on finite-dimensional $\ell_p$-spaces}\label{secfinite}

This section gives the first finite-dimensional layer of the atomic theory. We work throughout with a fixed exponent $1<p<\infty$, and we write $q$ for the conjugate exponent, so that $1/p+1/q=1$. We keep the scalar spatial numerical range on finite-dimensional $\ell_p$ and isolate the concrete $2\times 2$ geometry that later underlies the atomic theory.
For $n\in\N$ we write $\ell_p^n$ for the $n$-dimensional $\ell_p$-space.
Let $e_1,\dots,e_n$ be the standard basis of $\ell_p^n$ and $f_1,\dots,f_n$ the standard basis of $\ell_q^n$.
For
\[
x=\sum_{j=1}^n \alpha_j e_j\in \ell_p^n,
\qquad
x^*=\sum_{j=1}^n \beta_j f_j\in \ell_q^n,
\]
we have
\begin{equation}\label{dual}
\ip{x}{x^*}=\sum_{j=1}^n \alpha_j\overline{\beta_j}.
\end{equation}
Throughout this section, matrices are identified with the corresponding
operators with respect to the standard bases.

If $x\in \ell_p^n$ is a unit vector, then
\[
J(x)=\sum_{j=1}^n \alpha_j|\alpha_j|^{p-2}f_j
\qquad\text{for }x=\sum_{j=1}^n \alpha_j e_j,
\]
is the unique norming functional of $x$. Hence, for $T\in \mathcal L(\ell_p^n)$,
\[
W(T)=\{\ip{Tx}{J(x)}: x\in \ell_p^n,\ \|x\|=1\}.
\]

\subsection{The $2\times 2$ parametrisation}

This parametrisation is the basic computational tool for the finite-dimensional
part of the paper.

\begin{proposition}\label{prop:lp2-reduction}
Let $T\in\mathcal L(\ell_p^2)$ be represented as
\[
T=
\begin{pmatrix}
a & b\\
c & d
\end{pmatrix}
\]
with $a,b,c,d \in \mathbb C.$ Set
\[
\tau:=\frac{a+d}{2},\qquad
\alpha:=\frac{a-d}{2},
\]
\[
\beta(r):=r^{1/q}(1-r)^{1/p},\qquad
\gamma(r):=r^{1/p}(1-r)^{1/q}.
\]
For $0\le r\le1$ put
\begin{equation}\label{er}
E_r:=\Bigl\{(2r-1)\alpha+\beta(r)be^{i\theta}
        +\gamma(r)ce^{-i\theta}:\theta\in [0,2\pi]\Bigr\}.
\end{equation}
Then
\[
W(T)=\tau+\bigcup_{0\le r\le1}E_r.
\]
\end{proposition}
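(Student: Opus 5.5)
Since every unit vector of $\ell_p^2$ has the unique norming functional $J(x)$, the plan is a direct computation of $\ip{Tx}{J(x)}$ in polar coordinates, after which we check that the parameters range over exactly the right set. Write a unit vector as
\[
x=\alpha_1e_1+\alpha_2e_2,\qquad \alpha_1=r^{1/p}e^{i\phi_1},\quad \alpha_2=(1-r)^{1/p}e^{i\phi_2},
\]
with $0\le r\le1$ and $\phi_1,\phi_2\in\R$. Every unit vector has this form, since $|\alpha_1|^p+|\alpha_2|^p=1$. Then
\[
J(x)=\alpha_1|\alpha_1|^{p-2}f_1+\alpha_2|\alpha_2|^{p-2}f_2 .
\]
For $r\in(0,1)$ its coordinates are $r^{(p-1)/p}e^{i\phi_1}=r^{1/q}e^{i\phi_1}$ and $(1-r)^{1/q}e^{i\phi_2}$. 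At the endpoints $r=0$ or $r=1$ the vanishing coordinate simply gives $0$, with the convention $0\cdot|0|^{p-2}=0$ (note $p>1$).

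Next, using \eqref{dual}, we expand
\[
\ip{Tx}{J(x)}=(a\alpha_1+b\alpha_2)\overline{J(x)_1}+(c\alpha_1+d\alpha_2)\overline{J(x)_2}.
\]
Collecting terms with $\theta:=\phi_2-\phi_1$, the diagonal contributions are $a\,r^{1/p}r^{1/q}=ar$ and $d(1-r)$. The off-diagonal contributions are
\[
b\,(1-r)^{1/p}r^{1/q}e^{i\theta}=\beta(r)be^{i\theta},\qquad c\,r^{1/p}(1-r)^{1/q}e^{-i\theta}=\gamma(r)ce^{-i\theta}.
\]
Since
\[
ar+d(1-r)=\tau+(2r-1)\alpha,
\]
we obtain
\[
\ip{Tx}{J(x)}=\tau+(2r-1)\alpha+\beta(r)be^{i\theta}+\gamma(r)ce^{-i\theta}.
\]
The value depends only on $r$ and the relative phase $\theta$.

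For the set equality, the map $(r,\phi_1,\phi_2)\mapsto x$ covers the whole unit sphere. Conversely, every pair $(r,\theta)\in[0,1]\times[0,2\pi]$ is attained, for instance by $\phi_1=0$, $\phi_2=\theta$. Hence $W(T)=\tau+\bigcup_r E_r$.

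There is no serious obstacle. The only care needed is at the endpoints $r\in\{0,1\}$. There one must check that the formula for $J$ remains valid, with the convention $0\cdot|0|^{p-2}=0$ covering $1<p<2$ where $|0|^{p-2}$ is undefined. One must also check that $\beta,\gamma$ vanish there, so that $E_0=\{-\alpha\}$ and $E_1=\{\alpha\}$ correspond to $x=e_2$ and $x=e_1$ with arbitrary phase. The exponent bookkeeping $(p-1)/p=1/q$ is the one computation to verify carefully.
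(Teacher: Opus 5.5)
Your proposal is correct and follows essentially the same route as the paper: a direct evaluation of $\langle Tx,J(x)\rangle$ with $x$ written in polar form $|x_1|^p=r$, $|x_2|^p=1-r$. The only cosmetic difference is that you keep both phases and use the relative phase $\theta=\phi_2-\phi_1$, whereas the paper first removes the phase of the first coordinate via $J(\omega x)=\omega J(x)$; your explicit check of the endpoints $r\in\{0,1\}$ is a harmless addition.
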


\begin{proof}
Let $x=(u,v)\in\ell_p^2$ be a unit vector. Multiplying $x$ by a scalar of modulus one does not change the value of
$\ip{Tx}{J(x)}$, because $J(\omega x)=\omega J(x)$ for $|\omega|=1$. Hence, after such a rotation, we may write
\[
u=r^{1/p},\qquad v=e^{i\theta}(1-r)^{1/p}
\]
with $0\le r\le1$ and $\theta\in [0,2\pi].$ For this vector,
\[
J(x)=\bigl(r^{1/q},e^{i\theta}(1-r)^{1/q}\bigr).
\]
Using \eqref{dual}, 
we obtain
\begin{align*}
\ip{Tx}{J(x)}
&=\bigl(ar^{1/p}+be^{i\theta}(1-r)^{1/p}\bigr)r^{1/q}  \\
&\quad+\bigl(cr^{1/p}+de^{i\theta}(1-r)^{1/p}\bigr)e^{-i\theta}(1-r)^{1/q} \\
&=ar+d(1-r)+b\,r^{1/q}(1-r)^{1/p}e^{i\theta}
   +c\,r^{1/p}(1-r)^{1/q}e^{-i\theta} \\
&=\tau+(2r-1)\alpha+\beta(r)be^{i\theta}+
   \gamma(r)ce^{-i\theta}.
\end{align*}
This proves the formula for $W(T)$.
\end{proof}

\begin{remark}\label{rem:lp2-ellipses}
For fixed $r$, the set $E_r$ is a possibly degenerate ellipse with centre
$(2r-1)\alpha$. More precisely, write
\[
b=Be^{i\phi},\qquad c=Ce^{i\psi},\qquad
B:=|b|,
\qquad C:=|c|,
\qquad \sigma:=\frac{\phi+\psi}{2}.
\]
Then
\[
\begin{aligned}
E_r=(2r-1)\alpha+e^{i\sigma}
\Bigl\{&\bigl(B\beta(r)+C\gamma(r)\bigr)\cos\eta\\
&+i\bigl(B\beta(r)-C\gamma(r)\bigr)\sin\eta:
\eta\in [0,2\pi]
\Bigr\}.
\end{aligned}
\]
Indeed, this follows from the change of variable
$\eta=\theta+(\phi-\psi)/2$. After the rotation by $e^{-i\sigma}$ and
translation by $-(2r-1)\alpha$, the curve has the form
$A_r\cos\eta+iD_r\sin\eta$, with $A_r,D_r\in\R$. Hence it is an ellipse,
with the usual degeneracies allowed.
\end{remark}

\begin{remark}\label{rem:p2-elliptic-range}
In the Hilbertian case $p=2$, the preceding parametrisation reduces to the usual elliptical-range picture. Indeed, then
\[
\beta(r)=\gamma(r)=\sqrt{r(1-r)}.
\]
Writing $t=2r-1$, we have
\[
\sqrt{r(1-r)}=\frac12\sqrt{1-t^2}.
\]
Using the notation of Remark~\ref{rem:lp2-ellipses}, after subtracting $\tau$ we obtain
\[
W(T)-\tau=
\left\{
 t\alpha+\frac12 e^{i\sigma}
 \bigl((B+C)u+i(B-C)v\bigr):
 t^2+u^2+v^2=1
\right\}.
\]

Thus, letting $S^2$ be the unit sphere in $\mathbb R^3,$ we have \(W(T)-\tau=L(S^2)\) for a real-linear map \(L:\mathbb R^3\to\mathbb C\).
Since \(L\) has non-trivial kernel, \(L(S^2)\) is the same as the image under
\(L\) of the closed Euclidean unit ball of \(\mathbb R^3\). Indeed, if
\(y=Lx\) with \(\|x\|_2<1\), one may add a suitable multiple of a non-zero
vector in \(\ker L\) to reach the unit sphere without changing \(Lx\).
Therefore this image is an elliptical disc, possibly degenerate.
This recovers the classical elliptic range theorem for $2\times2$ Hilbert-space operators.
\end{remark}

Using Proposition~\ref{prop:lp2-reduction} it is easy to produce a plethora of operators on $\ell_p^2(\C)$ with non-convex numerical range, and we give a sample below. Another example
of such an operator was given in \cite[Section 11]{BD1}, and similar examples were provided later in \cite[Example 11.5]{AppDV} and \cite{Mandal}.

\begin{example}\label{ex:lp2-nonconvex}
Let $p\neq 2$. Then there exists an operator $T\in \mathcal L(\ell_p^2)$ such that $W(T)$ is not convex.
\end{example}

\begin{proof}
Let $T$ be defined as 
\[
T:=\begin{pmatrix}1&1\\1&-1\end{pmatrix}.
\]
For this matrix, in the notation of Proposition~\ref{prop:lp2-reduction}, we have
\[
        \tau=0,\qquad \alpha=1,\qquad b=c=1.
\]
Thus Proposition~\ref{prop:lp2-reduction} gives
\[
        W(T)=\bigcup_{0\le r\le1}E_r,
\]
where
\[
        E_r=
        \left\{
        (2r-1)+(\beta(r)+\gamma(r))\cos\theta
        +i(\beta(r)-\gamma(r))\sin\theta:
        \theta\in [0,2\pi]
        \right\}.
\]
Let
\[
M:=\max\{\beta(r)-\gamma(r):0\le r\le 1\}.
\]

Since
\[
\beta(r)-\gamma(r)
=
r^{1/p}(1-r)^{1/p}
\bigl(r^{1-2/p}-(1-r)^{1-2/p}\bigr),
\]
and \(p\ne 2\), we have \(M>0\). Moreover,
\[
        \beta(1-r)-\gamma(1-r)=-(\beta(r)-\gamma(r)),
\]
and therefore
\[
        M=\max_{0\le r\le 1}|\beta(r)-\gamma(r)|.
\]
Choose \(r_0\in[0,1]\) such that
\[
        M=\beta(r_0)-\gamma(r_0).
\]
Then, for $\theta=\pi/2$,
\[
(2r_0-1)+iM\in E_{r_0}\subset W(T).
\]
Also,
\[
\beta(1-r_0)-\gamma(1-r_0)=-M,
\]
so for $\theta=3\pi/2$ we get
\[
1-2r_0+iM\in E_{1-r_0}\subset W(T).
\]
We claim that $iM\notin W(T)$. More generally, if $is\in W(T)$ for some $s\in\R$, then $|s|<M$.
Suppose on the contrary that there exist $r_1\in[0,1]$ and $s\in\R$ with $|s|\ge M$ and $is\in E_{r_1}$. Then
\[
s=(\beta(r_1)-\gamma(r_1))\sin\theta
\]
for some $\theta\in[0,2\pi]$. Hence $|s|=M$ and $\sin\theta=\pm1$, so $\cos\theta=0$. Therefore
\[
is=2r_1-1+i(\beta(r_1)-\gamma(r_1))\sin\theta.
\]
It follows that $r_1=1/2$, and hence $\beta(r_1)-\gamma(r_1)=0$, so $s=0$, a contradiction.
Thus $W(T)$ is not convex.
\end{proof}

\subsection{Convex hull of diagonal entries}
\leavevmode\par

Having identified the geometry of numerical ranges of operators on $\ell_p^2$ explicitly, we turn to the first positive structural feature. The next results show that diagonal data already enforce nontrivial convexity inside the numerical range.

We start with a lemma supplying the ingredients needed for the study of diagonals in the $\ell_p$-setting.
It is based on an elementary homotopy argument, and
may be of independent interest. 

For a compact set $K\subset\C$, we write $\widehat K$ for its polynomial hull. In the plane this is the union of $K$ with all bounded components of $\C\setminus K$. The concept enters here because the ellipses $E_s$ are boundary ellipses, whereas the diagonal segment argument requires access to the filled region enclosed by such an ellipse.

With \(E_r\) as in \eqref{er}, the following lemma says that the filled
ellipse bounded by \(E_s\) is swept out by the later ellipses \(E_r\),
\(s\le r\le1\).
\begin{lemma}\label{lem:lp2-hull-step}
 Let \(T\in \mathcal L(\ell_p^2)\) be given by
\[
        T=
        \begin{pmatrix}
        a & b\\
        c & d
        \end{pmatrix},
\]
and let \(s\in(0,1)\). Set \(\tau:=(a+d)/2\). If
\[
        \lambda\in \tau+\widehat{E_s},
\]
then
\[
        \lambda\in \tau+\bigcup_{s\le r\le1}E_r\subset W(T).
\]
\end{lemma}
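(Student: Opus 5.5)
The plan is a winding-number (homotopy) argument applied to the family of closed curves from Proposition~\ref{prop:lp2-reduction}. For $r\in[0,1]$ and $\theta\in[0,2\pi]$ put
\[
\Gamma_r(\theta):=(2r-1)\alpha+\beta(r)be^{i\theta}+\gamma(r)ce^{-i\theta},
\]
so that $E_r=\Gamma_r([0,2\pi])$. Each $\Gamma_r$ is a closed loop, and $(r,\theta)\mapsto\Gamma_r(\theta)$ is jointly continuous on $[s,1]\times[0,2\pi]$. Set $\mu:=\lambda-\tau\in\widehat{E_s}$. The inclusion $\tau+\bigcup_{s\le r\le 1}E_r\subset W(T)$ is immediate from Proposition~\ref{prop:lp2-reduction}, so it remains to show that $\mu\in E_r$ for some $r\in[s,1]$.

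First I dispose of the trivial cases.
\begin{itemize}
\item If $\mu\in E_s$, there is nothing to prove.
\item Otherwise $\mu$ lies in a bounded component of $\C\setminus E_s$. Remark~\ref{rem:lp2-ellipses} shows that $E_s$ is, after a rotation and a translation, the curve $A_s\cos\eta+iD_s\sin\eta$.
\item If $A_sD_s=0$, then $E_s$ is a segment or a point. Its complement is connected, so $\widehat{E_s}=E_s$ and this case cannot occur.
\item Hence $A_s\ne0\ne D_s$, and $E_s$ is a genuine ellipse traversed exactly once, injectively, as $\theta$ runs over $[0,2\pi]$. Consequently the winding number satisfies $\operatorname{ind}(\Gamma_s,\mu)=\pm1$ for every $\mu$ in the open region bounded by $E_s$.
\end{itemize}

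Now argue by contradiction: suppose $\mu\notin E_r$ for every $r\in[s,1]$. Then $(r,\theta)\mapsto\Gamma_r(\theta)$ is a homotopy of closed loops in $\C\setminus\{\mu\}$. By homotopy invariance, $r\mapsto\operatorname{ind}(\Gamma_r,\mu)$ is constant on $[s,1]$. At $r=1$ we have $\beta(1)=1^{1/q}\cdot 0^{1/p}=0$ and $\gamma(1)=0$, so $\Gamma_1\equiv\alpha$ is a constant loop. Since $\mu\ne\alpha$ (because $\alpha\in E_1$), this gives $\operatorname{ind}(\Gamma_1,\mu)=0$. Therefore $\operatorname{ind}(\Gamma_s,\mu)=0$, contradicting the value $\pm1$ obtained above. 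Hence $\mu\in E_r$ for some $r\in[s,1]$, and $\lambda\in\tau+\bigcup_{s\le r\le 1}E_r$.

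The only point requiring care is the claim that the winding number of $E_s$ is nonzero at points of $\widehat{E_s}\setminus E_s$. Here I rely on the explicit ellipse form in Remark~\ref{rem:lp2-ellipses}: for $A_s,D_s\neq0$ the map $\eta\mapsto A_s\cos\eta+iD_s\sin\eta$ is an affine image of the unit circle traversed once, with orientation given by the sign of $A_sD_s$. Its index at interior points is therefore $\operatorname{sign}(A_sD_s)=\pm1$. The remaining ingredient is the continuity of $\beta,\gamma$ on $[0,1]$ and the vanishing $\beta(1)=\gamma(1)=0$, both of which are immediate.
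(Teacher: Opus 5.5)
Your proposal is correct and follows essentially the paper's proof: a winding-number homotopy from the loop $E_s$ to the constant loop $E_1=\{\alpha\}$ in $\C\setminus\{\lambda-\tau\}$, yielding a contradiction with $\operatorname{ind}\neq0$. You also supply details the paper leaves implicit: the degenerate case, where $E_s$ is a segment or a point and so $\widehat{E_s}=E_s$, and the justification that the index at interior points of a genuine ellipse is $\pm1$.
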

\begin{proof}
We use the description of $W(T)$ given by Proposition \ref{prop:lp2-reduction},
and assume without loss of generality that $\tau=0$.


The statement is clear if \(\lambda\in\bigcup_{s\le r\le1}E_r\). Assume that $\lambda\in \Int \widehat{E_s}$ and suppose, towards a contradiction, that
\[
\lambda\notin \bigcup_{s\le r\le1}E_r. 
\]
For $r\in[s,1]$ define a curve $f_r:[0,2\pi]\to\C$ by
\[
f_r(\theta)=(2r-1)\alpha+\beta(r)be^{i\theta}+\gamma(r)ce^{-i\theta}.
\]
Then $f_r([0,2\pi])=E_r$, so $f_r(\theta)\ne \lambda$ for all $r\in[s,1]$ and $\theta\in[0,2\pi]$.
Moreover, $f_1$ is a constant curve, whereas
\[
\operatorname{ind}(f_s,\lambda)\ne 0
\qquad\text{and}\qquad
\operatorname{ind}(f_1,\lambda)=0.
\]
But $(r,\theta)\mapsto f_r(\theta)$ is a homotopy in $\C\setminus\{\lambda\}$ between $f_s$ and $f_1$, a contradiction.
Therefore
\(
\lambda\in \bigcup_{s\le r\le1}E_r,
\)
and Proposition \ref{prop:lp2-reduction} implies that $\lambda \in W(T).$
\end{proof}

\begin{corollary}\label{cor:lp2-diagonal-segment}
Let $T \in \mathcal L(\ell_p^2)$ be represented as
\[
T=
\begin{pmatrix}
a & b\\
c & d
\end{pmatrix},
\]
where $a, b, c, d \in \mathbb C.$ Then
\[
\conv\{a,d\}\subset W(T).
\]
\end{corollary}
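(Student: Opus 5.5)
We argue via Proposition~\ref{prop:lp2-reduction} and Lemma~\ref{lem:lp2-hull-step}. After subtracting $\tau=(a+d)/2$, the claim becomes
\[
\conv\{\alpha,-\alpha\}=\{t\alpha: t\in[-1,1]\}\subset\bigcup_{0\le r\le1}E_r .
\]
The endpoints are immediate: $\beta$ and $\gamma$ vanish at $r=0$ and $r=1$, so $E_1=\{\alpha\}$ and $E_0=\{-\alpha\}$. These are the points $a$ and $d$, realised by $e_1$ and $e_2$. It remains to treat $t\in(-1,1)$, that is, $t=2s-1$ with $s\in(0,1)$.

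Fix $s\in(0,1)$. The ellipse $E_s$ has centre $(2s-1)\alpha$, by Remark~\ref{rem:lp2-ellipses}. We distinguish two cases.

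\emph{Nondegenerate case.} If the semi-axes $|B\beta(s)\pm C\gamma(s)|$ are both nonzero, then $E_s$ is a genuine ellipse. Its centre lies in the bounded component of $\C\setminus E_s$, hence in $\widehat{E_s}$. Lemma~\ref{lem:lp2-hull-step} then gives $(2s-1)\alpha\in\bigcup_{s\le r\le1}E_r$, so $\tau+(2s-1)\alpha\in W(T)$.

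\emph{Degenerate case.} Suppose one or both semi-axes vanish. Then $E_s$ is a segment or a point, centred at $(2s-1)\alpha$. In the parametrisation $A_s\cos\eta+iD_s\sin\eta$, taking $\eta=\pi/2$ (when $A_s\neq0$, $D_s=0$) or $\eta=0$ (when $A_s=0$) gives exactly the centre. So the centre lies on $E_s$ itself, and Lemma~\ref{lem:lp2-hull-step} is not even needed. Equivalently, $\widehat{E_s}=E_s$ in this case and still contains the centre.

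Letting $s$ range over $(0,1)$ and adding the endpoints gives $\conv\{a,d\}\subset W(T)$.

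The main point needing care is that the centre of $E_s$ lies in the filled ellipse $\widehat{E_s}$, uniformly over the degenerate and nondegenerate cases. This is elementary from the explicit form in Remark~\ref{rem:lp2-ellipses}. Note that Lemma~\ref{lem:lp2-hull-step} as stated only treats interior points through a winding-number argument. For a nondegenerate ellipse the winding number about its centre is $\pm1$, so that argument applies to the centre directly.
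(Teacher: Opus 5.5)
Your proof is correct and follows essentially the same route as the paper: the paper notes that $(1-\varepsilon)a+\varepsilon d=\tau+(1-2\varepsilon)\alpha$ is the centre of $\tau+E_{1-\varepsilon}$, hence lies in $\tau+\widehat{E_{1-\varepsilon}}$, and then applies Lemma~\ref{lem:lp2-hull-step}. Your split into degenerate and nondegenerate ellipses only makes explicit the fact, left implicit in the paper, that the centre always lies in the filled ellipse.
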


\begin{proof}
The endpoints $a$ and $d$ are attained at the basis vectors. Let
$\tau=(a+d)/2$ and $\alpha=(a-d)/2$. Let
$0<\varepsilon<1$. Since
\[
        (1-\varepsilon)a+\varepsilon d
        =
        \tau+(1-2\varepsilon)\alpha
        \in \tau+\widehat E_{1-\varepsilon},
\]
Lemma~\ref{lem:lp2-hull-step} gives
$(1-\varepsilon)a+\varepsilon d\in W(T)$. Hence the whole segment
$\operatorname{conv}\{a,d\}$ is contained in $W(T)$.
\end{proof}

We shall also use the following elementary observation. Let \(X\) be a finite
or infinite coordinate \(\ell_p\)-space, and let
\(u_1,\ldots,u_m\in X\) be unit vectors with pairwise disjoint supports.
 Put
\[
        E:=\operatorname{span}\{u_1,\ldots,u_m\},
\]
and let \(V:\ell_p^m\to E\) be the isometry
\[
        V(\xi_1,\ldots,\xi_m)=\sum_{j=1}^m \xi_j u_j .
\]
For \(T\in\mathcal L(X)\), define \(T_E\in\mathcal L(\ell_p^m)\) by
\[
        T_Ee_j=
        \sum_{i=1}^m \langle Tu_j,J(u_i)\rangle e_i,
        \qquad j=1,\ldots,m.
\]
We call \(T_E\) the \emph{block compression} of \(T\) associated with
\(u_1,\ldots,u_m\).
Observe that
\begin{equation}\label{eq:block-compression}
        W(T_E)\subset W(T).
\end{equation}
Indeed, if \(\xi=(\xi_j)_{j=1}^m\) is a unit vector in \(\ell_p^m\) and
\(x=V\xi\), then \(\|x\|=1\) and, because the supports of the \(u_j\)'s are
pairwise disjoint,
\[
        J(x)=\sum_{j=1}^m \xi_j|\xi_j|^{p-2}J(u_j).
\]
Consequently,
\[
        \langle T_E\xi,J(\xi)\rangle=\langle Tx,J(x)\rangle\in W(T),
\]
which proves \eqref{eq:block-compression}.

We can now identify subsets of numerical ranges in terms of convex hulls of
diagonal entries, although the \(\ell_p\)-numerical range is not convex in general.
The result given below will also be useful for obtaining its  countable convex-combination counterpart proved in Section~\ref{inflp}.

\begin{theorem}\label{thm:finite-diagonal-convexity}
Let $n\in\N$ and let $T\in \mathcal L(\ell_p^n)$. Then
\[
\conv\bigl\{\ip{Te_j}{J(e_j)}: j=1,\dots,n\bigr\}\subset W(T).
\]
\end{theorem}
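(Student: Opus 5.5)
We argue by induction on the number of diagonal entries involved, using Corollary~\ref{cor:lp2-diagonal-segment} for the two-dimensional step and the block compression inclusion \eqref{eq:block-compression} to pass from $\ell_p^2$ back to $\ell_p^n$. Write $d_j:=\ip{Te_j}{J(e_j)}$ for the diagonal entries. We prove, by induction on $m$, the following stronger claim: for every subset $S\subset\{1,\dots,n\}$ with $|S|=m$ and every point $\mu=\sum_{j\in S}a_jd_j$ with $a_j\ge0$ and $\sum a_j=1$, there is a unit vector $u$ supported in $S$ with $\ip{Tu}{J(u)}=\mu$. We keep track of the support because the induction step needs vectors with disjoint supports.

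The case $m=1$ is immediate: take $u=e_j$. For the inductive step, let $|S|=m\ge2$ and fix $k\in S$ with $a_k<1$; if no such $k$ exists then $\mu=d_k$ for some $k$ and we are done. Write
\[
\mu=(1-a_k)\mu'+a_kd_k,\qquad \mu'=\sum_{j\in S\setminus\{k\}}\frac{a_j}{1-a_k}\,d_j .
\]
By the induction hypothesis there is a unit vector $u_1$ supported in $S\setminus\{k\}$ with $\ip{Tu_1}{J(u_1)}=\mu'$. Put $u_2:=e_k$. The vectors $u_1,u_2$ are unit vectors with disjoint supports, both contained in $S$. Form the block compression $T_E\in\mathcal L(\ell_p^2)$ associated with $u_1,u_2$. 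Its diagonal entries are exactly $\ip{Tu_1}{J(u_1)}=\mu'$ and $\ip{Tu_2}{J(u_2)}=d_k$.

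By Corollary~\ref{cor:lp2-diagonal-segment}, the segment $\conv\{\mu',d_k\}$ lies in $W(T_E)$, so $\mu\in W(T_E)$. The proof of \eqref{eq:block-compression} shows more than the inclusion: a unit vector $\xi\in\ell_p^2$ with $\ip{T_E\xi}{J(\xi)}=\mu$ yields $x=\xi_1u_1+\xi_2u_2$, which is a unit vector supported in $S$ with $\ip{Tx}{J(x)}=\mu$. This closes the induction. Taking $S=\{1,\dots,n\}$ gives the theorem.

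The main point to check is the support bookkeeping in the inductive hypothesis. The identity $J(x)=\sum\xi_j|\xi_j|^{p-2}J(u_j)$ requires the supports of the $u_j$ to be disjoint, which is why the claim is stated with supports and not just as membership in $W(T)$. Everything else reduces to Corollary~\ref{cor:lp2-diagonal-segment}, and hence to the homotopy argument of Lemma~\ref{lem:lp2-hull-step}.
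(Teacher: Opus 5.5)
Your proof is correct and follows essentially the same route as the paper. Both arguments use an induction that splits off one basis vector, apply Corollary~\ref{cor:lp2-diagonal-segment} to the $2\times2$ block compression with respect to a disjointly supported pair, and return to $\ell_p^n$ via \eqref{eq:block-compression}. The only difference is that you make the support bookkeeping explicit by inducting over index sets $S$. The paper instead inducts on $n$ and applies the hypothesis to the coordinate compression onto $\operatorname{span}\{e_1,\dots,e_{n-1}\}$, which implicitly supplies the same support information.
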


\begin{proof}
The statement is clear if $n=1$. For $n=2$ it is precisely Corollary~\ref{cor:lp2-diagonal-segment}.

Assume now that $n\ge 3$ and that the statement has been proved for $n-1$.
Let $\alpha_1,\dots,\alpha_n\ge 0$ satisfy $\sum_{j=1}^n \alpha_j=1$.
We must show that
\[
\sum_{j=1}^n \alpha_j\ip{Te_j}{J(e_j)}\in W(T).
\]
If $\alpha_n=1$, there is nothing to prove. Set
\[
\beta:=\sum_{j=1}^{n-1}\alpha_j=1-\alpha_n>0.
\]
By the induction hypothesis, there exists a unit vector
\[
x\in \operatorname{span}\{e_1,\dots,e_{n-1}\}
\]
such that
\[
\ip{Tx}{J(x)}=\sum_{j=1}^{n-1}\frac{\alpha_j}{\beta}\,\ip{Te_j}{J(e_j)}.
\]
Let
\[
X:=\operatorname{span}\{x,e_n\}.
\]
Since $x$ and $e_n$ have disjoint supports, $X$ is isometrically isomorphic to $\ell_p^2$. Let $R$ be the block compression of $T$ to $X$ with respect to the ordered basis $(x,e_n)$ in the sense of the preceding observation. Its diagonal entries are
\[
\ip{Tx}{J(x)}
\qquad\text{and}\qquad
\ip{Te_n}{J(e_n)}.
\]
Applying Corollary~\ref{cor:lp2-diagonal-segment} to $R$, and then using the inclusion \eqref{eq:block-compression},
we obtain a unit vector $y\in X$ such that
\[
\ip{Ty}{J(y)}=\beta\,\ip{Tx}{J(x)}+\alpha_n\,\ip{Te_n}{J(e_n)}.
\]
Hence
\[
\ip{Ty}{J(y)}=\sum_{j=1}^{n-1}\alpha_j\ip{Te_j}{J(e_j)}+\alpha_n\ip{Te_n}{J(e_n)}=\sum_{j=1}^{n}\alpha_j\ip{Te_j}{J(e_j)},
\]
as required.
\end{proof}

\subsection{Positive and negative $2\times2$ geometry}
\leavevmode\par

The remaining two-dimensional statements illustrate both sides of the finite-dimensional picture: there are positive star-shapedness results for special matrices, but
there is also a counterexample to a general trace-centre theorem
for $2\times2$ matrices.

Although finite-dimensional numerical ranges need not be convex, numerical ranges of some natural families in $\mathcal L(\ell_p^2)$ remain star-shaped, while the trace centre need not be a star-centre, already on $\ell_3^2$.

\begin{proposition}\label{prop:lp2-triangular-star}
Let $T \in \mathcal L(\ell_p^2(\C))$ have matrix representation 
\[
T=\begin{pmatrix} a & b \\ 0 & d \end{pmatrix}
\]
with $\{a, b, d\} \subset \mathbb C.$
Then $W(T)$ is star-shaped with centre
\[
        \frac{\operatorname{tr}T}{2}=\frac{a+d}{2}.
\]
\end{proposition}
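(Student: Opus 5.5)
Since $c=0$, the parametrisation of Proposition~\ref{prop:lp2-reduction} simplifies considerably, and I would argue directly from it. Translating by $\tau=(a+d)/2$ and writing $\alpha=(a-d)/2$, we get
\[
W(T)-\tau=\bigcup_{0\le r\le 1}E_r,\qquad E_r=\bigl\{(2r-1)\alpha+\rho(r)e^{i\eta}:\eta\in[0,2\pi]\bigr\},
\]
where $\rho(r):=|b|\,\beta(r)=|b|\,r^{1/q}(1-r)^{1/p}$. Thus each $E_r$ is a genuine circle, with centre $(2r-1)\alpha$ and radius $\rho(r)$; it degenerates to a point when $b=0$ or $r\in\{0,1\}$. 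Let $D_r$ denote the closed disc bounded by $E_r$, so $D_r=\widehat{E_r}$.

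The first step is to show that $W(T)-\tau=\bigcup_{0\le r\le1}D_r$. For $r\in(0,1)$, Lemma~\ref{lem:lp2-hull-step} gives $\tau+D_r\subset W(T)$. For $r\in\{0,1\}$ the disc $D_r$ is the single point $\mp\alpha$, and these points lie in $E_r$. The reverse inclusion is trivial. In particular $0\in D_{1/2}$, so $\tau\in W(T)$ (this is also clear from Corollary~\ref{cor:lp2-diagonal-segment}).

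The main step is star-shapedness of the union of discs about $0$. Take $z\in D_s$ and $t\in[0,1]$; we must find $r\in[0,1]$ with $tz\in D_r$. Parametrise by $u=2r-1\in[-1,1]$, and choose $r$ by requiring $2r-1=t(2s-1)$. Then the centre of $D_r$ is exactly $t$ times the centre of $D_s$. Since $|z-(2s-1)\alpha|\le\rho(s)$, we obtain
\[
|tz-(2r-1)\alpha|=t\,|z-(2s-1)\alpha|\le t\rho(s).
\]
It therefore suffices to prove $t\rho(s)\le\rho(r)$. Set $g(u):=\rho\bigl((1+u)/2\bigr)$. Then $g$ is a constant multiple of the weighted geometric mean $\bigl(\tfrac{1+u}{2}\bigr)^{1/q}\bigl(\tfrac{1-u}{2}\bigr)^{1/p}$ of two affine nonnegative functions, and hence $g$ is concave and nonnegative on $[-1,1]$. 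Concavity between $0$ and $u_s:=2s-1$ gives
\[
g(tu_s)\ge t\,g(u_s)+(1-t)\,g(0)\ge t\,g(u_s),
\]
which is exactly $\rho(r)\ge t\rho(s)$. Hence $tz\in D_r\subset W(T)-\tau$, and $W(T)$ is star-shaped about $\tau$.

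I expect the only delicate point to be the identification of $W(T)$ with the union of the filled discs, i.e.\ correctly invoking Lemma~\ref{lem:lp2-hull-step} and handling the degenerate cases ($b=0$, where every disc is a point and the union is the segment $\conv\{a,d\}$; and the endpoints $r=0,1$). The concavity of the weighted geometric mean is standard, and I would cite it or verify it with a short second-derivative or AM--GM argument.
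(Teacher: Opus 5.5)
Your proof is correct and follows essentially the same route as the paper. Both use the choice $r=ts+\frac{1-t}{2}$, which scales the centre $(2r-1)\alpha$ by exactly $t$, together with concavity of $\rho(r)=|b|\,r^{1/q}(1-r)^{1/p}$ to get $\rho(r)\ge t\rho(s)$. The only difference is the last step: you pass from the filled disc to $W(T)$ via Lemma~\ref{lem:lp2-hull-step}, whereas the paper places $tz$ on an actual circle $E_{s_0}$, $s_0\in[0,r_t]$, by an intermediate value argument for $F(s)=|tz-c(s)|-\rho(s)$, and treats $\alpha=0$ as a separate case that your uniform argument does not need.
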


\begin{proof}
 Set
\[
\tau:=\frac{a+d}{2},
\qquad
\alpha:=\frac{a-d}{2},
\qquad
S:=T-\tau I=
\begin{pmatrix}
\alpha & b\\
0 & -\alpha
\end{pmatrix}.
\]
Then $W(T)=\tau+W(S)$, so it suffices to prove that $W(S)$ is star-shaped with centre $0$.

If $\alpha=0$, then by Proposition~\ref{prop:lp2-reduction},
\[
W(S)=\{\rho(r)e^{i\theta}:0\le r\le 1,\ \theta\in [0,2\pi]\},
\qquad
\rho(r):=|b|\,r^{1/q}(1-r)^{1/p}.
\]
Since $\rho$ is continuous on $[0,1]$ with $\rho(0)=\rho(1)=0$, its range is an interval $[0,\rho_{\max}]$. Hence
\[
W(S)=\{\zeta\in\C:|\zeta|\le \rho_{\max}\},
\]
a closed disc centred at $0$.

Assume now that $\alpha\ne 0$. Multiplying $S$ by a unimodular scalar only rotates $W(S)$, so without loss of generality we may assume that $\alpha>0$ is real. By Proposition~\ref{prop:lp2-reduction},
\[
W(S)=\{c(r)+\rho(r)e^{i\theta}:0\le r\le 1,\ \theta\in [0,2\pi]\},
\]
where
\[
c(r):=(2r-1)\alpha,
\qquad
\rho(r):=|b|\,r^{1/q}(1-r)^{1/p}.
\]
Thus, for each fixed \(r\), the corresponding curve is the circle with centre
\(c(r)\) and radius \(\rho(r)\).

The function $\rho$ is concave on $[0,1]$: the map $(u,v)\mapsto u^{1/q}v^{1/p}$ is concave on $\R_+^2$ because the exponents $1/q$ and $1/p$ are positive and sum to $1$, hence $r\mapsto r^{1/q}(1-r)^{1/p}$ is concave on $[0,1]$.

Let
\[
z=c(r)+\rho(r)e^{i\theta}\in W(S)
\]
and let $t\in[0,1]$. Define
\[
r_t:=tr+\frac{1-t}{2}.
\]
Then $c(r_t)=t\,c(r)$. By concavity of $\rho$,
\[
\rho(r_t)\ge t\rho(r)+(1-t)\rho(1/2)\ge t\rho(r).
\]
Consider the continuous function $F: [0,1]\to \mathbb R$ defined by
\[
F(s):=|tz-c(s)|-\rho(s),\qquad s\in[0,1].
\]
At $s=r_t$ we have
\[
F(r_t)=|tz-c(r_t)|-\rho(r_t)=t\rho(r)-\rho(r_t)\le 0,
\]
whereas
\[
F(0)=|tz+\alpha|\ge 0.
\]
Hence, by continuity, there exists $s_0\in[0,r_t]$ such that $F(s_0)=0$. Thus
\[
|tz-c(s_0)|=\rho(s_0),
\]
so for some $\varphi\in\R$,
\[
tz=c(s_0)+\rho(s_0)e^{i\varphi}\in W(S).
\]
Therefore $tW(S)\subset W(S)$, and $W(S)$ is star-shaped with centre $0$.
Consequently, $W(T)=\tau+W(S)$ is star-shaped with centre $\tau=(a+d)/2$.
\end{proof}

\begin{remark}
By symmetry, the same conclusion holds for lower triangular matrices
\[
T=\begin{pmatrix} a & 0 \\ c & d \end{pmatrix}.
\]
\end{remark}
We do not know whether $W(T)$ is star-shaped for every $T \in \mathcal L(\ell_p^2).$
We can only show that some natural choices of its points
may not be star-centres, and even that task can be demanding as the next proposition shows.
\begin{proposition}\label{prop:lp2-counterexample-full}
Consider $S \in \mathcal L (\ell_3^2)$ represented as
\[
S=
\begin{pmatrix}
8+\frac{i}{5} & 1\\[1mm]
\frac65 & -8-\frac{i}{5}
\end{pmatrix}.
\]
Then
\[
\operatorname{tr}S=0,
\qquad
0\in W(S),
\]
but $W(S)$ is \emph{not} star-shaped with centre $0$. More precisely,
\[
6\in W(S),
\qquad
3\notin W(S).
\]
Hence
\[
W(S)\cap [0,\infty)
\]
is not an interval, so $W(S)$ cannot be star-shaped with centre $0$.
\end{proposition}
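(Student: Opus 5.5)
The plan is to work entirely from the parametrisation in Proposition~\ref{prop:lp2-reduction} with $p=3$, $q=3/2$. Here
\[
\tau=0,\qquad \alpha=8+\tfrac{i}{5},\qquad b=1,\qquad c=\tfrac65 .
\]
The two weight functions are
\[
\beta(r)=r^{2/3}(1-r)^{1/3},\qquad \gamma(r)=r^{1/3}(1-r)^{2/3}.
\]
Since $b,c$ are real, the curve $E_r$ is an axis-parallel ellipse. Its centre is $(2r-1)(8+\tfrac i5)$, its horizontal semi-axis is $A(r):=\beta(r)+\tfrac65\gamma(r)$, and its vertical semi-axis is $D(r):=|\beta(r)-\tfrac65\gamma(r)|$. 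Concretely, a point of $W(S)$ has the form
\[
\operatorname{Re}=8(2r-1)+A(r)\cos\theta,\qquad
\operatorname{Im}=\tfrac{2r-1}{5}+\bigl(\beta(r)-\tfrac65\gamma(r)\bigr)\sin\theta .
\]

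\emph{Trace and the point $0$.} The identity $\operatorname{tr}S=0$ is immediate. The inclusion $0\in W(S)$ follows from Corollary~\ref{cor:lp2-diagonal-segment}, since $0$ is the midpoint of the diagonal entries $\pm(8+\tfrac i5)$.

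\emph{$6\in W(S)$.} Take $r=7/8$, so that the centre of $E_{7/8}$ is $6+\tfrac{3}{20}i$. Numerically $\beta(7/8)=(49/512)^{1/3}\approx0.457$ and $\gamma(7/8)=(7/512)^{1/3}\approx0.239$. Hence the vertical semi-axis is $D(7/8)\approx 0.170$, which exceeds $3/20$. The point $6$ lies directly below the centre at distance $3/20<D(7/8)$, so it is in $\widehat{E_{7/8}}$. Lemma~\ref{lem:lp2-hull-step} then gives $6\in W(S)$. Only crude rigorous bounds are needed here, for instance $\beta(7/8)-\tfrac65\gamma(7/8)>0.16$, which can be checked by cubing.

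\emph{$3\notin W(S)$.} I would argue by contradiction with the two displayed equations. From $\operatorname{Re}=3$ and the elementary bound $A(r)\le A_{\max}$ (say $A(r)\le 1.2\cdot 2^{2/3}/... <0.8$), it follows that $|8(2r-1)-3|\le 0.8$. This confines $r$ to a short interval $I\subset[0.64,0.74]$, on which $2r-1\ge 0.275$. Then $\operatorname{Im}=0$ forces
\[
|\beta(r)-\tfrac65\gamma(r)|\ge \tfrac{2r-1}{5}\ge 0.055 .
\]
The key step is therefore to show that $|\beta(r)-\tfrac65\gamma(r)|<0.055$ on $I$; near $r\approx0.69$ the difference is only about $0.04$. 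It is convenient to write
\[
\beta-\tfrac65\gamma=r^{1/3}(1-r)^{1/3}\bigl(r^{1/3}-\tfrac65(1-r)^{1/3}\bigr).
\]
The bracket is increasing in $r$, so the quantity can be bounded by evaluating at the endpoints of $I$ together with a monotone bound for the prefactor. If the margin is too thin, I would refine the argument by using the exact relation $\cos\theta=(3-8(2r-1))/A(r)$. This couples $\sin\theta$ to $r$ and yields a sharper necessary condition, which can then be checked on a finite subdivision of $I$.

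This rigorous interval estimate is the main obstacle: it is purely computational but delicate, since the margins are small. It is presumably what the appendix carries out. Finally, once $0,6\in W(S)$ and $3\notin W(S)$ are established, $W(S)\cap[0,\infty)$ is not an interval, so $0$ is not a star-centre of $W(S)$.
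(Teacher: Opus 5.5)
Your treatment of $\operatorname{tr}S=0$, $0\in W(S)$ and $6\in W(S)$ is fine. Invoking Lemma~\ref{lem:lp2-hull-step} for the filled ellipse at $r=7/8$ is exactly the right justification. The paper uses $t=2$, i.e.\ $r=8/9$, and there too the point $6$ lies inside the filled ellipse rather than on the curve itself.

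The gap is in the proof of $3\notin W(S)$, where both numerical inputs of your reduction are wrong.
\begin{itemize}
\item The bound $A(r)<0.8$ is false. One has $A(1/2)=\tfrac12+\tfrac35=\tfrac{11}{10}$, and $A(r)>0.9$ throughout the relevant range. So the real-part condition only confines $r$ to roughly $[0.62,0.75]$, not to $[0.64,0.74]$.
\item More seriously, your key estimate $|\beta(r)-\tfrac65\gamma(r)|<0.055$ fails even on your own interval. The difference vanishes at $r=216/341\approx0.633$ and then increases, reaching about $0.080$ at $r=0.74$. Your proposed monotone bound (bracket at the right endpoint times prefactor at the left endpoint) gives about $0.085$, also too large. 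No single threshold can work, because $|\beta-\tfrac65\gamma|$ and $(2r-1)/5$ grow together.
\end{itemize}

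The missing idea is a pointwise comparison: show $|\beta(r)-\tfrac65\gamma(r)|<(2r-1)/5$ for each $r$ in the window. The substitution $r=t^3/(1+t^3)$ makes this algebraic, since then
\begin{itemize}
\item $\beta=t^2/(1+t^3)$, $\gamma=t/(1+t^3)$ and $2r-1=(t^3-1)/(t^3+1)$;
\item the inequality becomes $|5t^2-6t|<t^3-1$;
\item that is, $Q(t)=t^3+5t^2-6t-1>0$ on $[9/8,6/5]$ and $P(t)=t^3-5t^2+6t-1>0$ on $[6/5,31/20]$.
\end{itemize}
These follow from monotonicity ($Q$ increasing, $P$ decreasing there) together with $Q(9/8)=1/512$ and $P(31/20)=91/8000$.

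You also need a correct bound on the horizontal semi-axis. The bound $A\le\tfrac65$ follows from $6t^3-5t^2-6t+6>0$ for $t>0$. With it, $\operatorname{Re}=3$ forces $t\in(9/8,31/20)$, because $8\,\tfrac{t^3-1}{t^3+1}+\tfrac65<3$ for $t\le 9/8$ and $8\,\tfrac{t^3-1}{t^3+1}-\tfrac65>3$ for $t\ge 31/20$. The argument then closes.

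This is the paper's proof run in the opposite order. The paper first uses the imaginary part to exclude $t\in[9/8,31/20]$, then the real part to exclude the rest; either order works once the comparison is pointwise. Your fallback via the coupled condition on $\cos\theta$ and $\sin\theta$ plus a numerical subdivision could in principle be made rigorous. It is unnecessary, however, and as written it is not carried out, so the proposal does not yet establish $3\notin W(S)$.
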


\begin{proof}
The full computation is rather technical and is deferred to
Appendix~\ref{app:lp2-counterexample-proof}. There we analyse the positive
real ray in the $t$-parametrisation of $W(S)$ and prove that
$3\notin W(S)$, while $0,6\in W(S)$. Since $3$ is the midpoint of $0$ and
$6$, the conclusion follows.
\end{proof}

\begin{remark}\label{rem:lp2-padding-fails}
Let \(T_0\in\mathcal L(\ell_p^2)\), and let \(m\ge1\). Consider the block operator
\[
 T:=T_0\oplus 0_m
\]
acting on $\ell_p^{2+m}=\ell_p^2\oplus_p\ell_p^m$. Then
\[
W(T)=\bigcup_{s\in[0,1]} sW(T_0).
\]
In particular, if $0\in W(T_0)$, then $W(T)$ is automatically star-shaped with centre $0$, regardless of the geometry of $W(T_0)$. More generally, for a scalar $c\in\C$,
\[
W(T_0\oplus cI_m)=c+\bigcup_{s\in[0,1]} s\bigl(W(T_0)-c\bigr).
\]
Hence padding by zero blocks or scalar blocks radialises the $2\times2$ geometry and destroys the mechanism of Proposition~\ref{prop:lp2-counterexample-full}. Extending that proposition to arbitrary dimension therefore requires a genuinely nontrivial construction rather than a simple block embedding.
\end{remark}

The finite-dimensional picture is therefore mixed: explicit non-convexity and
failure of trace-centred star-shapedness coexist with convexity phenomena coming
from diagonal data. The next section shows that, on \(\ell_p\), the tail
version of this diagonal mechanism feeds into the essential numerical range
and the star-centre geometry of \(W(T)\).

\section{Numerical and essential numerical ranges on $\ell_p$}\label{inflp}

In this section we pass from finite-dimensional geometry of \(\ell_p^n\) to
the geometry of \(\ell_p\). In particular, the new feature is the
essential numerical range: it is described by tail subspaces, and this tail
description will be the basis for the convexity and Calkin-algebra results
below.

We continue to work with the same fixed exponent $1<p<\infty$, and we write $q$ for the conjugate exponent. We first record the basic continuity properties of the duality map and the persistence of non-convexity, then prove the countable diagonal convex-combination theorem, and finally develop the structure theory of the essential numerical range and its star-centre consequences.

Let $(e_j)_{j \ge 1}$ and $(f_j)_{j \ge 1}$ be the standard bases in $\ell_p$ and $\ell_q$, respectively.
For
\[
x=\sum_{j=1}^\infty \alpha_j e_j\in \ell_p,
\]
write
\[
J(x):=\sum_{j=1}^\infty \alpha_j|\alpha_j|^{p-2}f_j.
\]
Then
\[
\|J(x)\|_q^q=\|x\|_p^p.
\]
In particular, if $x\in\ell_p$ is a unit vector, then $J(x)$ is the unique functional satisfying
\[
\|J(x)\|_q=1=\ip{x}{J(x)}.
\]
In the concrete sequence-space formulas below, the coordinate adjoint is taken with respect to this sesquilinear pairing; thus it is represented by the conjugate-transpose matrix acting on $\ell_q$.
For $m\in\N$ let $P_m$ be the canonical projection onto $\bigvee_{j=1}^m e_j$ defined by $P_me_j=e_j\quad(j\le m)$ and $P_me_j=0\quad(j>m)$.

\subsection{Continuity of $J$ and persistence of non-convexity}

\begin{lemma}\label{thm:J-ellp}
Let $x,x_n\in \ell_p$ for $n\in\N$. Then:
\begin{enumerate}[label=\textup{(\roman*)}]
\item if $\|x_n-x\|_p\to 0$, then $\|J(x_n)-J(x)\|_q\to 0$;
\item if $x_n\wto x$ in $\ell_p$, then $J(x_n)\wto J(x)$ in $\ell_q$.
\end{enumerate}
\end{lemma}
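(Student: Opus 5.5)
The plan is to work coordinatewise with the scalar map $\phi(z):=z|z|^{p-2}$ (with $\phi(0)=0$), so that $J(x)=(\phi(x_j))_j$. This map is continuous on $\C$ and satisfies $|\phi(z)|^q=|z|^p$. Note that $J$ is defined here for all $x\in\ell_p$, not just unit vectors, and $\|J(x)\|_q^q=\|x\|_p^p$. Both parts then reduce to standard facts about Nemytskii-type operators between $\ell_p$ and $\ell_q$.

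For (i), I would use the Riesz/Vitali-type criterion in $\ell_q$. If $\|x_n-x\|_p\to0$, then each coordinate converges, so $\phi(x_{n,j})\to\phi(x_j)$ for every $j$ by continuity of $\phi$. Moreover,
\[
\|J(x_n)\|_q^q=\|x_n\|_p^p\to\|x\|_p^p=\|J(x)\|_q^q.
\]
Pointwise convergence together with convergence of the $\ell_q$-norms gives norm convergence in $\ell_q$, by the Radon--Riesz (Brezis--Lieb) property of $\ell_q$ for $1<q<\infty$. If one prefers a hands-on argument, split into a head and a tail. Choose $N$ with $\sum_{j>N}|x_j|^p<\e$. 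Then for large $n$ we have $\sum_{j>N}|x_{n,j}|^p<2^p\e$, hence $\sum_{j>N}|\phi(x_{n,j})-\phi(x_j)|^q\lesssim\e$. The finitely many head coordinates converge by continuity of $\phi$.

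For (ii), suppose $x_n\wto x$ in $\ell_p$. Then $(x_n)$ is bounded, say $\|x_n\|_p\le K$, so $\|J(x_n)\|_q\le K^{p/q}$ and the sequence $J(x_n)$ is bounded in $\ell_q$. Weak convergence in $\ell_p$ gives coordinatewise convergence $x_{n,j}\to x_j$, since $\ip{x_n}{f_j}=x_{n,j}$. Hence $\phi(x_{n,j})\to\phi(x_j)$ for each $j$. In the reflexive space $\ell_q$, a bounded sequence converging coordinatewise converges weakly to the coordinatewise limit. To see this, test against $y\in\ell_p$ and split $y=P_my+(y-P_my)$. The finite part converges by coordinatewise convergence. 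The remainder is bounded by
\[
\sup_n\bigl(\|J(x_n)\|_q+\|J(x)\|_q\bigr)\,\|y-P_my\|_p,
\]
which is small for $m$ large. This yields $J(x_n)\wto J(x)$.

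The main obstacle is only the tail control in (i); part (ii) is a routine density argument. For (i), the key point to check carefully is that norm convergence in $\ell_p$ makes the $\ell_p$-tails of $x_n$ uniformly small. This follows from $\|(I-P_N)x_n\|_p\le\|(I-P_N)x\|_p+\|x_n-x\|_p$. Combined with $|\phi(z)|^q=|z|^p$, the same uniform smallness transfers to the $\ell_q$-tails of $J(x_n)$.
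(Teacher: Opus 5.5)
Your argument is correct and is essentially the paper's own. The paper simply asserts that both parts are immediate from the coordinate formula $J(x)=(x_j|x_j|^{p-2})_j$, with a pointer to Cior\u{a}nescu's general duality-map theorems, and your head/tail estimate for (i) (using $|\phi(z)|^q=|z|^p$) and your bounded-plus-coordinatewise argument for (ii) supply exactly those details; one cosmetic point is that (ii) uses the density of finitely supported vectors in $\ell_p=(\ell_q)^*$, via your splitting $y=P_my+(y-P_my)$, rather than reflexivity of $\ell_q$.
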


\begin{proof}
Both assertions are immediate from the coordinate formula for $J$, see also
\cite[Theorems~2.16 and~4.14]{Cio}.
\end{proof}

In bigger spaces, one might hope for better convexity properties of the numerical range, but non-convexity still persists on $\ell_p$.

\begin{example}\label{ex:nonconvex-infinite-ellp}
Let $p\ne 2$ and let $T:\ell_p\to\ell_p$ be defined by
\[
Te_1=e_1+e_2,
\qquad
Te_2=e_1-e_2,
\qquad
Te_j=0 \quad (j\ge 3).
\]
Then $W(T)$ is not convex.
\end{example}

\begin{proof}
Let $S \in \mathcal L(\ell_p^2)$ be given by
\[
S=
\begin{pmatrix}
1&1\\
1&-1
\end{pmatrix}.
\]
 As in Example~\ref{ex:lp2-nonconvex}, one obtains two points $a+iM$ and $-a+iM$ in $W(S)$, for some $a\in\R$, whose midpoint $iM$ does not belong to $W(S)$.
For $x\in\ell_p$ with $\|x\|=1$, put
\[
        r:=\bigl(|x_1|^p+|x_2|^p\bigr)^{1/p}.
\]
If $r>0$, then $u:=r^{-1}(x_1,x_2)$ is a unit vector in $\ell_p^2$, and
\[
        \langle Tx,J(x)\rangle
        = r^p\langle Su,J(u)\rangle.
\]
The case $r=0$ gives the value $0$. Conversely, every value
$t\langle Su,J(u)\rangle$, where $0\le t\le1$ and $u$ is a unit vector in
$\ell_p^2$, is obtained by taking the first two coordinates equal to
$t^{1/p}u$ and adding a tail of norm $(1-t)^{1/p}$. Hence
\[
W(T)=\{tz:z\in W(S),\ 0\le t\le 1\}.
\]
 Hence $a+iM,-a+iM\in W(S)\subset W(T)$. It was shown in Example~\ref{ex:lp2-nonconvex} that
no point $is \in W(S)$ satisfies $|s| \ge M.$
So $iM\notin W(T)$ and $W(T)$ is not convex.
\end{proof}
This is compatible with Remark~\ref{rem:lp2-padding-fails}: padding radialises
the two-dimensional numerical range, but radialisation need not restore
convexity; the particular two-dimensional gap used here survives after
radialisation.

\subsection{Countable convex combinations of diagonal entries}

The next surprising theorem is a characteristic atomic feature: the numerical range contains all countable convex combinations of the diagonal entries.

We shall use the following elementary fact, in the planar form recorded in~\cite{RubinWesler1958}. If $C$ is a convex subset of $\mathbb C$, $z_j\in C$, $\alpha_j\ge0$, $\sum_j\alpha_j=1$, and the series $\sum_j\alpha_j z_j$ converges, then $\sum_j\alpha_j z_j\in C$. 

\begin{theorem}\label{thm:countable-diagonal-convex}
Let $T\in \mathcal L(\ell_p)$, let $\alpha_j\ge 0$ for $j\in\N$, and assume that
\[
\sum_{j=1}^\infty \alpha_j=1.
\]
Then
\[
\sum_{j=1}^\infty \alpha_j\ip{Te_j}{J(e_j)}\in W(T).
\]
\end{theorem}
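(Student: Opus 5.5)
The plan is to reduce the countable statement to the finite one, Theorem~\ref{thm:finite-diagonal-convexity}, using the planar Rubin--Wesler fact quoted just before the statement. Write $d_j:=\ip{Te_j}{J(e_j)}$ for $j\in\N$ and $\lambda:=\sum_{j}\alpha_j d_j$.

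First, I will check that the series defining $\lambda$ converges. This is immediate: $|d_j|\le\|T\|\,\|e_j\|\,\|J(e_j)\|=\|T\|$, so $\sum_j\alpha_j|d_j|\le\|T\|$ and the series converges absolutely. Next, take
\[
C:=\conv\{d_j: j\in\N\}\subset\C ,
\]
which is the convex hull itself, not its closure. Each $d_j$ lies in $C$, the weights are non-negative with sum $1$, and the series converges. The Rubin--Wesler fact then gives $\lambda\in C$. Hence there are $n\in\N$ and weights $\gamma_1,\dots,\gamma_n\ge0$ with $\sum_{j=1}^n\gamma_j=1$ such that
\[
\lambda=\sum_{j=1}^n\gamma_j d_j .
\]
This is legitimate because every element of a convex hull is a finite convex combination; after relabelling we may take the indices involved to be among $1,\dots,n$, padding with zero weights.

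Finally, I will pass to the block compression $T_E\in\mathcal L(\ell_p^n)$ associated with the disjointly supported unit vectors $u_j=e_j$, $j=1,\dots,n$. Its diagonal entries are $\ip{T_Ee_j}{J(e_j)}=\ip{Te_j}{J(e_j)}=d_j$, where on the left $e_j$ and $J$ are taken in $\ell_p^n$. So Theorem~\ref{thm:finite-diagonal-convexity} gives $\lambda\in W(T_E)$, and the inclusion \eqref{eq:block-compression} gives $W(T_E)\subset W(T)$. Therefore $\lambda\in W(T)$.

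The only delicate step is the use of Rubin--Wesler: we need the countable combination to lie in the convex hull itself, not merely in its closure, since $W(T)$ is not known to be closed. That planar fact is exactly what the paper records, so I expect no real obstacle. If one preferred to avoid it, a fallback would be an inductive tail construction: split off $\sum_{j\le N}$, realise the normalised tail average on $\overline{\operatorname{span}}\{e_j:j>N\}$, and glue via a $2\times2$ block compression and Corollary~\ref{cor:lp2-diagonal-segment}. However, that approach would itself require the tail value to be attained exactly, which leads back to the same issue.
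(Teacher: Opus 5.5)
Your proposal is correct and follows the paper's proof essentially verbatim. The steps are the same: absolute convergence from $|d_j|\le\|T\|$, the Rubin--Wesler fact to place the sum in $\conv\{d_j:j\in\N\}$ itself rather than its closure, reduction to a finite convex combination, and then Theorem~\ref{thm:finite-diagonal-convexity} applied to the coordinate compression together with \eqref{eq:block-compression}.
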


\begin{proof}
Put
\[
        d_j:=\ip{Te_j}{J(e_j)},\qquad j\in\N.
\]
Since $|d_j|\le \|T\|$, the series $\sum_{j=1}^\infty\alpha_jd_j$ converges absolutely. Let
\[
        C:=\conv\{d_j:j\in\N\}\subset\C.
\]
By the preceding fact,
\[
        d:=\sum_{j=1}^\infty\alpha_jd_j\in C.
\]
Thus $d$ belongs to the convex hull of the set $\{d_j:j\in\N\}$. Hence there are indices $j_1,\ldots,j_m$ and numbers $\beta_1,\ldots,\beta_m\ge0$ such that
\[
        \sum_{k=1}^m\beta_k=1
\]
and
\[
        d=\sum_{k=1}^m\beta_k d_{j_k}.
\]
Let
\[
        F:=\{j_1,\ldots,j_m\},
        \qquad E_F:=\operatorname{span}\{e_j:j\in F\}.
\]
Let $T_F$ be the coordinate compression of $T$ to $E_F$, written as an operator on $\ell_p^{|F|}$ after the natural identification of $E_F$ with $\ell_p^{|F|}$. By Theorem~\ref{thm:finite-diagonal-convexity}, applied to $T_F$,
\[
        d\in W(T_F).
\]
The coordinate case of the block-compression observation gives $W(T_F)\subset W(T)$. Hence $d\in W(T)$, which proves the theorem.
\end{proof}

\subsection{The essential numerical range on $\ell_p$}

We now pass from the ordinary numerical range on $\ell_p$ to its essential counterpart. In the atomic setting the abstract Banach-space descriptions become considerably more concrete, because finite-codimensional conditions may be expressed in terms of tails and finite supports.

For later use we spell out how the preceding Banach-space definition reads on $\ell_p$. Since $\ell_q$ is separable, the net in the definition of $\We(T)$ may be replaced by a sequence. Moreover, the norming functional of a unit vector $x\in\ell_p$ is unique and equals $J(x)$. Hence
\[
\lambda\in\We(T)
\]
is equivalent, in the present setting, to the existence of a sequence of unit vectors $(x_k)\subset\ell_p$ such that
\[
        x_k\wto0,
        \qquad
        \ip{Tx_k}{J(x_k)}\to\lambda .
\]
No new definition is being introduced here.  The next theorem sharpens this sequential form by using coordinate tails and finite supports.
\begin{theorem}\label{thm:We-ellp-tail-char}
Let $T\in \mathcal L(\ell_p)$ and let $\lambda\in\C$. Then the following conditions are equivalent.
\begin{enumerate}[label=\textup{(\roman*)}]
\item $\lambda\in \We(T)$.
\item For every $k\in\N$ and every $\varepsilon>0$, there exists a unit vector $x$ with finite support such that
\[
\supp x\subset\{k+1,k+2,\dots\},
\qquad
|\ip{Tx}{J(x)}-\lambda|<\varepsilon.
\]
\item For all subspaces $M,F\subset \ell_p$ with $\codim M<\infty$ and $\dim F<\infty$, and for every $\varepsilon>0$, there exists a unit vector $x\in M$ with finite support such that
\[
|\ip{Tx}{J(x)}-\lambda|<\varepsilon,
\qquad
\sup\{|\ip{f}{J(x)}|: f\in F,\ \|f\|=1\}<\varepsilon.
\]
\end{enumerate}
\end{theorem}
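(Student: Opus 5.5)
I would prove the cycle (i)$\Rightarrow$(iii)$\Rightarrow$(ii)$\Rightarrow$(i). The implication (iii)$\Rightarrow$(ii) is immediate: take $M:=\overline{\operatorname{span}}\{e_j:j>k\}$ and $F=\{0\}$. A unit vector $x\in M$ then automatically has $\supp x\subset\{k+1,k+2,\dots\}$, and finite support is already part of (iii).

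For (ii)$\Rightarrow$(i), apply (ii) with $k=n$ and $\varepsilon=1/n$. This gives finitely supported unit vectors $x_n$ with $\supp x_n\subset\{n+1,\dots\}$ and $\ip{Tx_n}{J(x_n)}\to\lambda$. A bounded sequence in $\ell_p$ whose supports escape to infinity tends weakly to $0$: for $y\in\ell_q$ we have $|\ip{x_n}{y}|\le\|(I-P_n)^*y\|_q\to0$. Hence $\lambda\in\We(T)$ by the sequential description recorded just before the theorem.

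The substantial step is (i)$\Rightarrow$(iii). Fix $M$, $F$ and $\varepsilon$. By the sequential form there are unit vectors $x_k\wto0$ with $\ip{Tx_k}{J(x_k)}\to\lambda$. Lemma~\ref{thm:J-ellp}(ii) gives $J(x_k)\wto J(0)=0$ in $\ell_q$. Since $F$ is finite-dimensional, weak convergence is uniform on its unit sphere, so
\[
\sup\{|\ip{f}{J(x_k)}|:f\in F,\ \|f\|=1\}\to0 .
\]
It remains to move $x_k$ into $M$ and to make it finitely supported, with only small changes in norm. For the first correction I would use the device from the proof of Theorem~\ref{thm:characterization-We}: take a finite-rank projection $Q$ with $\Ker Q=M$. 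Then $Qx_k\to0$ in norm, so $u_k:=x_k-Qx_k\in M$ satisfies $\|u_k-x_k\|\to0$.

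The second correction, finite support, is the expected obstacle, because truncating $u_k$ by $P_N$ leaves $M$. To handle it, write $M=\bigcap_{i=1}^r\Ker g_i$ with $g_i\in\ell_q$. The finitely supported vectors $c_{00}$ are dense in $M$: choose $v_1,\dots,v_r\in c_{00}$ with $g_i(v_j)=\delta_{ij}$, which is possible since the $g_i$ are linearly independent on the dense subspace $c_{00}$. For $w\in M$, set
\[
w':=P_Nw-\sum_i g_i(P_Nw)\,v_i\in M\cap c_{00},
\]
which converges to $w$ as $N\to\infty$. So for each $k$ pick $w_k\in M\cap c_{00}$ with $\|w_k-u_k\|<1/k$, and normalise to $y_k:=w_k/\|w_k\|$. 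Then $\|y_k-x_k\|\to0$.

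Finally, since $J$ is uniformly continuous on the unit sphere, Lemma~\ref{thm:J-ellp}(i) in its uniform form gives $\|J(y_k)-J(x_k)\|_q\to0$. One can avoid relying on the sequential version of continuity by using the explicit Hölder-type estimates for $t\mapsto t|t|^{p-2}$, or alternatively pass to subsequences. It follows that
\[
\ip{Ty_k}{J(y_k)}-\ip{Tx_k}{J(x_k)}\to0,
\]
and that the $F$-supremum for $y_k$ also tends to $0$. Choosing $k$ large gives the vector $x:=y_k$ required in (iii).
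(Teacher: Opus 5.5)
Your proof is correct, but it runs the cycle in the opposite direction from the paper, which proves (i)$\Rightarrow$(ii)$\Rightarrow$(iii)$\Rightarrow$(i).

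\textbf{The paper's route.} There, (i)$\Rightarrow$(ii) applies the Banach-space finite-codimensional characterisation (Theorem~\ref{thm:characterization-We}) to the tail space $(I-P_k)\ell_p$. It then truncates to $P_mx/\|P_mx\|$, which stays in the tail. Next, (ii)$\Rightarrow$(iii) starts from these already finitely supported tail vectors $x_k$ and subtracts $Rx_k$, where $R=(Q|_E)^{-1}Q$. The complement $E$ of $M$ is chosen as $\operatorname{span}\{e_{j_1},\dots,e_{j_m}\}$, so this single finite-rank correction both puts the vector in $M$ and preserves finite support. Finally, (iii)$\Rightarrow$(i) again invokes Theorem~\ref{thm:characterization-We}.

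\textbf{What your route buys and costs.} You avoid Theorem~\ref{thm:characterization-We}, and hence the Barraa--M\"uller characterisation, altogether. You use only the sequential description of $\We(T)$ and Lemma~\ref{thm:J-ellp}, and (ii)$\Rightarrow$(i) becomes trivial because the supports escape to infinity. The price is your separate density lemma for $M\cap c_{00}$ in $M$. The paper's coordinate complement makes that lemma unnecessary. You could do the same by first truncating $x_k$ and then taking the range of $Q$ inside a span of finitely many $e_j$.

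\textbf{Continuity of $J$.} You rightly point out that the argument needs \emph{uniform} continuity of $J$ on the unit sphere, since $(x_k)$ does not converge in norm. The H\"older-type estimates for $t\mapsto t|t|^{p-2}$ supply this. The paper's proof uses the same uniform version while citing only Lemma~\ref{thm:J-ellp}(i). However, drop your ``pass to subsequences'' alternative. A weakly null sequence of unit vectors has no norm-convergent subsequence, so the pointwise form of Lemma~\ref{thm:J-ellp}(i) cannot be reached that way.
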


\begin{proof}
\textup{(i)$\Rightarrow$(ii).} By
Theorem~\ref{thm:characterization-We}, there exists a unit vector $x\in (I-P_k)\ell_p$ such that $|\ip{Tx}{J(x)}-\lambda|<\varepsilon$.
For $m\ge k$ large enough, $P_mx\ne 0$. For such $m$, set
\[
x_m:=\frac{P_mx}{\|P_mx\|}.
\]
Then $x_m\to x$ in norm, and Lemma~\ref{thm:J-ellp}(i) gives $J(x_m)\to J(x)$ in norm. Hence for $m$ large enough,
\[
|\ip{Tx_m}{J(x_m)}-\lambda|<\varepsilon,
\]
and $x_m$ has finite support contained in $\{k+1,k+2,\dots\}$.

\smallskip

\textup{(ii)$\Rightarrow$(iii).} Let $M,F\subset \ell_p$ with $\codim M<\infty$ and $\dim F<\infty$, and let $\varepsilon>0$.
Choose a sequence $(x_k)$ as in \textup{(ii)}, with
\[
\supp x_k\subset\{k+1,k+2,\dots\},
\qquad
|\ip{Tx_k}{J(x_k)}-\lambda|<k^{-1}
\qquad (k\in\N).
\]
Then $x_k\wto 0$.

Let $Q:\ell_p\to \ell_p/M$ be the quotient map. Since $\ell_p/M$ is finite-dimensional and $Qe_1,Qe_2,\dots$ span $\ell_p/M$, there exist indices $j_1,\dots,j_m\in\N$ such that $Qe_{j_1},\dots,Qe_{j_m}$ form a basis of $\ell_p/M$. Set
\[
E:=\operatorname{span}\{e_{j_1},\dots,e_{j_m}\}.
\]
Then $Q|_E:E\to \ell_p/M$ is an isomorphism. Let
\[
R:=(Q|_E)^{-1}Q\in \mathcal L(\ell_p,E).
\]
Thus $R$ has finite rank and
\[
x-Rx\in M
\qquad (x\in\ell_p).
\]
For each $k$, put
\[
y_k:=x_k-Rx_k\in M.
\]
Since $R$ has finite rank and $x_k\wto 0$, we have $\|Rx_k\|\to 0$. Hence
\[
\|y_k-x_k\|=\|Rx_k\|\to 0.
\]
Moreover, $y_k$ has finite support because both $x_k$ and $Rx_k\in E$ do. For $k$ large enough we have $y_k\ne 0$. For such $k$, put
\[
z_k:=\frac{y_k}{\|y_k\|}.
\]
Then $z_k\in M$, $\|z_k\|=1$, $z_k$ has finite support, and $\|z_k-x_k\|\to 0$. Since $x_k$ and $z_k$ are unit vectors, Lemma~\ref{thm:J-ellp}(i) yields
\[
\|J(z_k)-J(x_k)\|\to 0.
\]
Consequently,
\[
\ip{Tz_k}{J(z_k)}\to \lambda.
\]
Also $z_k\wto 0$, because $x_k\wto 0$ and $\|z_k-x_k\|\to 0$. By Lemma~\ref{thm:J-ellp}(ii),
\[
J(z_k)\wto 0
\qquad\text{in }\ell_q.
\]
Since $F$ is finite-dimensional, this implies
\[
\sup\{|\ip{f}{J(z_k)}|: f\in F,\ \|f\|=1\}\to 0.
\]
Choosing $k$ large enough gives the required vector.

\smallskip

\textup{(iii)$\Rightarrow$(i).} Take $F=\{0\}$. Then Theorem~\ref{thm:characterization-We} yields $\lambda\in \We(T)$.
\end{proof}

The next corollary converts the smallness of $J(x)$ on a prescribed finite-dimensional subspace into exact annihilation of that subspace.  This is a finite-dimensional correction of the canonical norming functional supplied by Theorem~\ref{thm:We-ellp-tail-char}.

\begin{corollary}\label{cor:lp-annihilator-We}
Let $T\in\mathcal L(\ell_p)$ and let $\lambda\in\C$. Then
$\lambda\in W_e(T)$ if and only if, for every finite-codimensional subspace
$M\subset \ell_p$, every finite-dimensional subspace $F\subset \ell_p$, and
every $\varepsilon>0$, there are $x\in M$ and $x^*\in F^\perp$ such that
\[
        \|x\|=1,
        \qquad
        \ip{x}{x^*}=1,
        \qquad
        \|x^*\|<1+\varepsilon,
\]
and
\[
        |\ip{Tx}{x^*}-\lambda|<\varepsilon.
\]
In the implication from $\lambda\in W_e(T)$ to this condition, $x$ may moreover
be chosen with finite support.
\end{corollary}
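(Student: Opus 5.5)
For the forward implication, assume $\lambda\in W_e(T)$ and fix $M$, $F$ and $\varepsilon>0$. Let $\delta>0$ be a small auxiliary parameter, to be fixed in terms of $\varepsilon$, $\|T\|$ and $F$. By Theorem~\ref{thm:We-ellp-tail-char}(iii) there is a finitely supported unit vector $x\in M$ with
\[
        |\ip{Tx}{J(x)}-\lambda|<\delta,
        \qquad
        \sup\{|\ip{f}{J(x)}|: f\in F,\ \|f\|=1\}<\delta .
\]
Next I would correct $J(x)$ into $F^\perp$ by a finite-rank perturbation. Since $\ip{x}{J(x)}=1$ and $J(x)$ is almost zero on $F$, the vector $x$ lies at distance roughly at least $1-O(\delta)$ from $F$. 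So we may restrict attention to the case $x\notin F$.

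Choose a basis $f_1,\dots,f_d$ of $F$ and functionals $g_1,\dots,g_d\in\ell_q$ with $\ip{f_i}{g_k}=\delta_{ik}$. Set
\[
        y^*:=J(x)-\sum_{i=1}^d \overline{\ip{f_i}{J(x)}}\,g_i .
\]
Then $y^*\in F^\perp$, and $\|y^*-J(x)\|\le C_F\delta$, where $C_F$ depends only on $F$ and the chosen biorthogonal system (equivalence of norms on $F$). The only delicate point is that the $g_i$ are fixed in advance, independently of $x$, so the constant $C_F$ is uniform.

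Then $\ip{x}{y^*}=1-\eta$ with $|\eta|\le C_F\delta$. Put $x^*:=y^*/\overline{\ip{x}{y^*}}$, so that $\ip{x}{x^*}=1$ and still $x^*\in F^\perp$. This gives
\[
        \|x^*\|\le\frac{1+C_F\delta}{1-C_F\delta},
        \qquad
        |\ip{Tx}{x^*}-\lambda|\le\delta+\|T\|\,\|x^*-J(x)\| = O(\delta).
\]
Choosing $\delta$ small enough in terms of $\varepsilon$ makes both quantities less than $1+\varepsilon$ and $\varepsilon$, respectively. The vector $x$ is finitely supported, which gives the final remark of the statement.

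For the converse, the case $F=\{0\}$ suffices. For every finite-codimensional $M$ and every $\varepsilon>0$ we obtain $x\in M$ and $x^*$ with
\[
        \|x\|=1,\qquad \ip{x}{x^*}=1,\qquad \|x^*\|<1+\varepsilon,\qquad |\ip{Tx}{x^*}-\lambda|<\varepsilon .
\]
Index these pairs by the directed set of pairs $(M,\varepsilon)$, ordered by inclusion of $M$ and decrease of $\varepsilon$. Every weak neighbourhood of $0$ contains a finite-codimensional subspace, so the vectors satisfy $x\wto0$. This net satisfies Theorem~\ref{thm:characterization-We}(ii), with $\|x^*\|\to1$ because $1=\ip{x}{x^*}\le\|x^*\|<1+\varepsilon$. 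Hence $\lambda\in W_e(T)$.

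Alternatively, one can avoid nets: apply the Bishop--Phelps--Bollob\'as theorem inside $M$ to $x^*|_M/\|x^*|_M\|$, exactly as in the proof of Theorem~\ref{thm:characterization-We}, to reach condition (iii) of that theorem directly.

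The main obstacle is the uniform control of the correction term in the forward direction. It is handled by fixing the biorthogonal functionals $g_i$ depending only on $F$ before $x$ is chosen.
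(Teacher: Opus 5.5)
Your proof is correct and follows the paper's argument. Your biorthogonal functionals $g_1,\dots,g_d$ are a concrete choice of the bounded right inverse $L$ of the restriction map $\ell_q\to F^*$, which the paper uses to subtract the correction $L(RJ(x))$ and then renormalise by $\overline{\ip{x}{g}}$; your converse via $F=\{0\}$ and Theorem~\ref{thm:characterization-We}(ii) is likewise the paper's. The detour about the distance from $x$ to $F$ is unnecessary, since $\ip{x}{y^*}\approx 1$ already follows from your bound on $\|y^*-J(x)\|$.
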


\begin{proof}
Let $R:\ell_q\to F^*$ be the restriction map.  Since $F$ is finite-dimensional, $R$ has a bounded right inverse $L:F^*\to\ell_q$.  Put $C:=\|L\|$.
Choose $0<\delta<1$ so small that
\[
        C\delta<\frac12, \qquad 
        \frac{1+C\delta}{1-C\delta}<1+\varepsilon,
\]
and
\[
        \frac{\delta+\|T\|C\delta}{1-C\delta}
        +\frac{|\lambda|C\delta}{1-C\delta}<\varepsilon .
\]
By Theorem~\ref{thm:We-ellp-tail-char}, choose a unit vector $x\in M$ with finite support such that
\[
        |\ip{Tx}{J(x)}-\lambda|<\delta
        \quad\hbox{and}\quad
        \|RJ(x)\|_{F^*}<\delta .
\]
Define
\[
        c_F:=L(RJ(x)),
        \qquad
        g:=J(x)-c_F .
\]
Then $g\in F^\perp$ and
\(
        \|c_F\|\le C\delta .
\)
Consequently, if
\[
        a:=\ip{x}{g},
\]
then
\[
        |a-1|\le C\delta,
        \qquad
        |a|\ge1-C\delta,
\]
and
\[
        \|g\|\le1+C\delta .
\]
Set
\[
        x^*:=\frac{1}{\overline a}\,g,
\]
and note that 
\[
        x^*\in F^\perp,
        \qquad
        \ip{x}{x^*}=1,
\]
and
\[
        \|x^*\|
        \le\frac{1+C\delta}{1-C\delta}<1+\varepsilon .
\]
Moreover
\[
        |\ip{Tx}{g}-\lambda|
        \le |\ip{Tx}{J(x)}-\lambda|+\|T\|\|c_F\|
        <\delta+\|T\|C\delta .
\]
Since
\[
        \ip{Tx}{x^*}=\frac{1}{a}\ip{Tx}{g},
\]
we get
\[
\begin{aligned}
        |\ip{Tx}{x^*}-\lambda|
        &\le \frac{|\ip{Tx}{g}-\lambda|}{|a|}
        + |\lambda|\left|\frac1a-1\right|  \\
        &\le
        \frac{\delta+\|T\|C\delta}{1-C\delta}
        +\frac{|\lambda|C\delta}{1-C\delta}
        <\varepsilon .
\end{aligned}
\]
This proves the forward implication. Conversely, take $F=\{0\}$ and let the
finite-codimensional subspace $M$ and $\varepsilon>0$ vary. The
approximate-normalisation part of Theorem~\ref{thm:characterization-We} gives
$\lambda\in\We(T)$.
\end{proof}

The next two statements are the $\ell_p$ analogues of a Hilbert-space interior-point construction.  Compare \cite[Corollary~4.3]{MT-circles}, where an orthonormal sequence plays the role taken below by the biorthogonal pairs $(x_n,x_n^*)$.  In the present atomic setting the required finite-dimensional annihilation is supplied by Corollary~\ref{cor:lp-annihilator-We}.

\begin{lemma}\label{lem:interior-We-finite-codim}
Let $T\in \mathcal L(\ell_p)$, and $\lambda\in \Int \We(T)$. Let $F,M\subset \ell_p$ satisfy $\dim F<\infty$ and $\codim M<\infty$. Then, for every $\e>0$, there exists a pair $(x,x^*)\in M\times F^\perp$ such that
\[
\begin{gathered}
\bigl|\|x\|-1\bigr|\le \e,
\qquad
\bigl|\|x^*\|-1\bigr|\le \e,\\
|\ip{x}{x^*}-1|\le \e,
\qquad
\ip{(T-\lambda I)x}{x^*}=0.
\end{gathered}
\]
\end{lemma}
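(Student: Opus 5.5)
Since $\lambda$ is an interior point of $\We(T)$, the plan is to surround it by three points of $\We(T)$ and take a convex combination of three approximately biorthogonal pairs supplied by Corollary~\ref{cor:lp-annihilator-We}.

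\emph{Choice of the three points.} Fix $r>0$ with the closed disc $\{|\mu-\lambda|\le r\}\subset\We(T)$. Choose $\mu_1,\mu_2,\mu_3$ on the circle $|\mu-\lambda|=r$ forming an equilateral triangle. Then $\lambda=\sum_k\frac13\mu_k$. Moreover, for every perturbation $\mu_k'$ with $|\mu_k'-\mu_k|<r/4$, the point $\lambda$ is still a convex combination $\sum_k t_k\mu_k'$ with each $t_k\ge c>0$, where $c$ is a fixed constant. This is a standard stability property of barycentric coordinates.

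\emph{Inductive construction of the pairs.} Let $\delta>0$ be small, to be fixed at the end. Apply Corollary~\ref{cor:lp-annihilator-We} to $\mu_1$ with the subspaces $M$ and $F$ and tolerance $\delta$. This gives $x_1\in M$ of finite support and $x_1^*\in F^\perp$ with
\[
\|x_1\|=1,\qquad \ip{x_1}{x_1^*}=1,\qquad \|x_1^*\|<1+\delta,\qquad |\ip{Tx_1}{x_1^*}-\mu_1|<\delta .
\]
For $x_2$, apply the corollary to $\mu_2$ with two modified subspaces.
\begin{itemize}
\item[(a)] The finite-codimensional subspace is $M_2=M\cap\{x_1^*,T^*x_1^*\}_\perp\cap(I-P_N)\ell_p$, where $N$ bounds the supports of $x_1$ and $Tx_1$ up to a $\delta$ error. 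For the range of $T$ we only need finitely many functionals, so we simply also require annihilation of $x_1^*$ and $T^*x_1^*$.
\item[(b)] The finite-dimensional subspace is $F_2=F+\operatorname{span}\{x_1,Tx_1\}$, giving $x_2^*\in F_2^\perp$.
\end{itemize}
Then the cross terms $\ip{x_2}{x_1^*}$, $\ip{Tx_2}{x_1^*}$, $\ip{x_1}{x_2^*}$ and $\ip{Tx_1}{x_2^*}$ all vanish. Continue in the same way for $x_3$. Disjoint (tail) supports also give
\[
\Bigl\|\sum_k s_kx_k\Bigr\|=\Bigl(\sum_k |s_k|^p\Bigr)^{1/p},
\]
since $x_1$ has finite support and later vectors lie in tails beyond it. For the functionals we only use $\|\sum_k s_k x_k^*\|\le(1+\delta)\bigl(\sum_k|s_k|^q\bigr)^{1/q}$, up to a small error. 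To make this legitimate, replace each $x_k^*$ by its restriction to a block of coordinates, which costs $O(\delta)$.

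\emph{Forming $x$ and $x^*$.} Write $\mu_k':=\ip{Tx_k}{x_k^*}$, so $|\mu_k'-\mu_k|<\delta$, and choose barycentric weights $t_k$ with $\sum_k t_k\mu_k'=\lambda$. Set
\[
x=\sum_k t_k^{1/p}x_k,\qquad x^*=\sum_k t_k^{1/q}x_k^*\in F^\perp,
\]
so that $x\in M$. Biorthogonality and the vanishing cross terms give
\[
\ip{x}{x^*}=\sum_k t_k=1,\qquad \ip{Tx}{x^*}=\sum_k t_k\mu_k'=\lambda .
\]
Hence $\ip{(T-\lambda I)x}{x^*}=0$ holds exactly. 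The norm estimates are $\|x\|=1$ and $\|x^*\|\le1+O(\delta)$. Conversely $\|x^*\|\ge\ip{x}{x^*}=1$, so $\bigl|\|x^*\|-1\bigr|\le\e$ once $\delta$ is small.

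\emph{Main obstacle.} The hard step is achieving exact vanishing of all cross terms while keeping disjoint-support control of the norms. The intended mechanism is to shrink the later subspaces $M_k$ and enlarge $F_k$ as described, and to check that the truncations of the $x_k^*$ to coordinate blocks preserve exactness. Truncating a functional beyond the support of the later $x_j$ does not change the pairings $\ip{x_j}{\cdot}$. It can change the pairings with $Tx_j$, however. The fix I would try first is to truncate $Tx_j$ instead, by choosing the later $x_k$ to have finite support and absorbing the defect into $M_k$. If exactness is lost there, I would restore it by a final small adjustment of the weights $t_k$, solving a $3\times3$ perturbed linear system for the weights. That adjustment is possible because the barycentric representation of $\lambda$ is stable under small perturbations of the $\mu_k'$.
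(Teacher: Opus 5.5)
\textbf{Gap: the bound on $\|x^*\|$.} Most of your skeleton works as stated. You shrink $M_k$ by $\Ker x_j^*\cap\Ker T^*x_j^*$ together with a coordinate tail, and you enlarge $F_k$ by $x_j,Tx_j$ for $j<k$. Then all cross terms vanish exactly, and you get
\[
\ip{x}{x^*}=1,\quad \ip{Tx}{x^*}=\sum_k t_k\mu_k'=\lambda,\quad x\in M,\quad x^*\in F^\perp,\quad \|x\|=1,
\]
the last by disjointness of supports. What is missing is the estimate $\|x^*\|\le 1+O(\delta)$, and the argument fails without it. The $x_k^*$ are not disjointly supported, so the triangle inequality only gives $\|x^*\|\le(1+\delta)\sum_k t_k^{1/q}$, which is about $3^{1/p}$ when $t_k\approx 1/3$. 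The remedy you propose, restricting each $x_k^*$ to a coordinate block, does not work, for three reasons:
\begin{itemize}
\item It destroys $x^*\in F^\perp$. You never mention this, and nothing in your plan restores it.
\item It destroys the exact vanishing of $\ip{Tx_k}{x_j^*}$ for $k\ne j$.
\item Once those cross terms are non-zero, the proposed ``$3\times 3$ perturbed linear system'' for the weights is not linear, because $\ip{Tx}{x^*}=\sum_{k,j}t_k^{1/p}t_j^{1/q}\ip{Tx_k}{x_j^*}$.
\end{itemize}

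\textbf{The missing observation, and the paper's route.} No truncation is needed. Since $\ell_q$ is uniformly convex, the conditions $\|x_k\|=1=\ip{x_k}{x_k^*}$ and $\|x_k^*\|<1+\delta$ force $\|x_k^*-J(x_k)\|\le\omega(\delta)$ with $\omega(\delta)\to0$. To see this, note that the unit vectors $x_k^*/\|x_k^*\|$ and $J(x_k)$ have a midpoint whose pairing with $x_k$ exceeds $1-\delta/2$; alternatively, read it off from the proof of Corollary~\ref{cor:lp-annihilator-We}. The $x_k$ have disjoint supports, so $J(x)=\sum_k t_k^{1/q}J(x_k)$. Hence $\|x^*-J(x)\|\le\sum_k t_k^{1/q}\,\omega(\delta)\le3\,\omega(\delta)$, and so $1=\ip{x}{x^*}\le\|x^*\|\le1+3\,\omega(\delta)$. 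With this step added, your argument is complete and even gives exact normalisation $\|x\|=\ip{x}{x^*}=1$.

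The paper avoids the issue altogether by using two pairs instead of a genuine convex combination:
\begin{itemize}
\item a pair $(u,u^*)$ whose value $c$ (after reducing to $\lambda=0$) satisfies $|c|<\delta$;
\item a second pair $(v,v^*)$, annihilating $u,Tu,u^*,T^*u^*$, with value $d$ near $-rc/|c|$.
\end{itemize}
It sets $x=u+av$ and $x^*=u^*+bv^*$ with $a\overline b=-c/d$ and $|a|=|b|\le\sqrt{2\delta/r}$. Because the correction is small, crude triangle-inequality bounds suffice, and neither disjoint supports nor uniform convexity is needed. The price is only approximate normalisation, which is all the lemma asks for.
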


\begin{proof}
Replacing $T$ by $T-\lambda I$, we may assume that $\lambda=0$.  Choose
$r>0$ such that
\[
        \{z\in\C: |z|\le r\}\subset \We(T).
\]
Choose $0<\delta<r/2$ so small that, with
\[
        \gamma:=\sqrt{\frac{2\delta}{r}},
\]
one has
\[
        \gamma<\varepsilon,
        \qquad
        \frac{2\delta}{r}<\varepsilon,
\]
\[
        (1+\delta)(1+\gamma)<1+\varepsilon,
        \qquad
        \frac{1-2\delta/r}{1+\gamma}>1-\varepsilon .
\]
By Corollary~\ref{cor:lp-annihilator-We}, applied to $0\in\We(T)$, choose
$(u,u^*)\in M\times F^\perp$ such that
\[
        \|u\|=1,
        \qquad
        \ip{u}{u^*}=1,
        \qquad
        \|u^*\|<1+\delta,
\]
and, putting
\[
        c:=\ip{Tu}{u^*},
\]
one has
\[
        |c|<\delta .
\]
If $c=0$, the pair $(u,u^*)$ has all the required properties, provided
$\delta<\varepsilon$.  Assume therefore that $c\ne0$ and set
\[
        \eta:=-r\frac{c}{|c|} .
\]
Then $|\eta|=r$, hence $\eta\in\We(T)$.  Define
\[
        M_0:=M\cap\Ker u^*\cap\Ker T^*u^*,
        \qquad
        F_0:=F+\operatorname{span}\{u,Tu\}.
\]
Applying Corollary~\ref{cor:lp-annihilator-We} to $\eta$, $M_0$ and $F_0$, choose
$(v,v^*)\in M_0\times F_0^\perp$ such that
\[
        \|v\|=1,
        \qquad
        \ip{v}{v^*}=1,
        \qquad
        \|v^*\|<1+\delta,
\]
and, with
\[
        d:=\ip{Tv}{v^*},
\]
one has
\[
        |d-\eta|<\delta .
\]
In particular $|d|>r/2$.  Put
\[
        \tau:=-\frac{c}{d} .
\]
Then
\[
        |\tau|\le \frac{2\delta}{r}.
\]
Choose \(a,b\in\mathbb C\) such that
\[
        a\overline b=\tau,
        \qquad
        |a|=|b|=|\tau|^{1/2}\le\gamma.
\]
Finally, set
\[
        x:=u+av,
        \qquad
        x^*:=u^*+bv^* .
\]
Then $x\in M$ and $x^*\in F^\perp$.  The definitions of $M_0$ and $F_0$ give
\[
        \ip{v}{u^*}=0,
        \quad
        \ip{Tv}{u^*}=0,
        \quad
        \ip{u}{v^*}=0,
        \quad
        \ip{Tu}{v^*}=0 .
\]
Therefore
\[
        \ip{Tx}{x^*}=c+a\overline b\,d=c+\tau d=0,
\]
and
\[
        \ip{x}{x^*}=1+\tau .
\]
The norm estimates follow directly from the choice of $\delta$:
\[
        |\|x\|-1|\le |a|\le\gamma<\varepsilon,
\]
and
\[
        \|x^*\|\le (1+\delta)(1+\gamma)<1+\varepsilon .
\]
On the other hand,
\[
        \|x^*\|\ge \frac{|\ip{x}{x^*}|}{\|x\|}
        \ge \frac{1-|\tau|}{1+\gamma}
        \ge \frac{1-2\delta/r}{1+\gamma}>1-\varepsilon .
\]
Also
\[
        |\ip{x}{x^*}-1|=|\tau|<\varepsilon .
\]
Thus $(x,x^*)$ has the required properties in the case $\lambda=0$.
Returning to the original operator gives
\[
        \ip{(T-\lambda I)x}{x^*}=0 .
\]
\end{proof}

Iterating the preceding lemma gives the following biorthogonal consequence.
For a prescribed $\lambda\in\Int W_e(T)$, the result may be viewed as a
$\lambda$-diagonal compression statement, in the spirit of
Theorem~\ref{hilbertwe}.

\begin{corollary}\label{cor:interior-We-biorthogonal}
Let $T\in \mathcal L(\ell_p)$ and $\lambda\in \Int \We(T).$ Then for every $(\e_n)_{n\in\N}\subset (0,\infty)$ there exists a biorthogonal system
\[
(x_n,x_n^*)\in \ell_p\times \ell_q
\qquad (n\in\N)
\]
such that
\[
\|x_n\|=1,
\qquad
\|x_n^*\|<1+\e_n,
\qquad
\ip{x_n}{x_m^*}=\delta_{n,m},
\qquad
\ip{Tx_n}{x_m^*}=\lambda\,\delta_{n,m}
\]
for all $m,n\in\N$.
\end{corollary}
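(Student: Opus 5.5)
We construct the pairs $(x_n,x_n^*)$ recursively, using Lemma~\ref{lem:interior-We-finite-codim} at each step with $M$ and $F$ chosen to enforce biorthogonality with the previously built pairs, and then renormalise. It is convenient to replace $T$ by $T-\lambda I$ from the start, so that we need $\ip{Tx_n}{x_m^*}=0$ for all $m,n$ together with $\ip{x_n}{x_m^*}=\delta_{n,m}$.

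Suppose $(x_1,x_1^*),\dots,(x_{n-1},x_{n-1}^*)$ have been constructed. Put
\[
M_n:=\bigcap_{k<n}\bigl(\Ker x_k^*\cap\Ker T^*x_k^*\bigr),
\qquad
F_n:=\operatorname{span}\{x_k,\,Tx_k: k<n\}.
\]
Here $M_n$ is finite-codimensional, $F_n$ is finite-dimensional, and $T^*x_k^*\in\ell_q$ is simply the functional $y\mapsto\ip{Ty}{x_k^*}$. Fix a small $\eta_n>0$, to be chosen in terms of $\e_n$. Lemma~\ref{lem:interior-We-finite-codim} gives $(u,u^*)\in M_n\times F_n^\perp$ with
\[
\bigl|\|u\|-1\bigr|\le\eta_n,\qquad \bigl|\|u^*\|-1\bigr|\le\eta_n,\qquad |\ip{u}{u^*}-1|\le\eta_n,\qquad \ip{Tu}{u^*}=0 .
\]
By construction the cross terms vanish for $k<n$:
\[
\ip{u}{x_k^*}=\ip{Tu}{x_k^*}=0,\qquad \ip{x_k}{u^*}=\ip{Tx_k}{u^*}=0 .
\]

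Now normalise by setting
\[
x_n:=\frac{u}{\|u\|},\qquad x_n^*:=\frac{\|u\|}{\overline{\ip{u}{u^*}}}\,u^* .
\]
Then $\|x_n\|=1$ and $\ip{x_n}{x_n^*}=1$. Rescaling preserves every vanishing relation, so $\ip{Tx_n}{x_n^*}=0$ and all cross terms with earlier indices are still zero. For the norm of the functional,
\[
\|x_n^*\|\le\frac{(1+\eta_n)^2}{1-\eta_n},
\]
and this is $<1+\e_n$ once $\eta_n$ is small enough. Running the induction over all $n$ yields the required biorthogonal system, with $\ip{Tx_n}{x_m^*}=\lambda\delta_{n,m}$ after undoing the shift by $\lambda$.

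The one point that needs care is that both families of relations are enforced at every step: the annihilation of earlier $x_k$ and $Tx_k$ comes from choosing $u^*\in F_n^\perp$, and the annihilation of earlier $x_k^*$ and $T^*x_k^*$ comes from choosing $u\in M_n$. Lemma~\ref{lem:interior-We-finite-codim} delivers exactly these two conditions simultaneously, together with the exact diagonal equation $\ip{Tu}{u^*}=0$. With that available, the remaining work is only the normalisation estimate above.
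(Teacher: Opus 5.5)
Your proposal is correct and is essentially the paper's proof: the same recursive use of Lemma~\ref{lem:interior-We-finite-codim}, with $M$ the common kernel of the earlier $x_k^*$ and $T^*x_k^*$ and $F$ spanned by the earlier $x_k$ and $Tx_k$. It also uses the same normalisation $x_n=u/\|u\|$, $x_n^*=\frac{\|u\|}{\overline{\ip{u}{u^*}}}u^*$ and the same bound $(1+\eta_n)^2/(1-\eta_n)<1+\e_n$; shifting to $T-\lambda I$ at the outset instead of carrying $\lambda$ through is only cosmetic.
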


\begin{proof}
For each $n\in\N$, choose $\delta_n>0$ so small that
\[
\frac{(1+\delta_n)^2}{1-\delta_n}<1+\e_n.
\]
We repeat the earlier recursive construction, now using Lemma~\ref{lem:interior-We-finite-codim} as the $\ell_p$ input.  By Lemma~\ref{lem:interior-We-finite-codim}, applied with $\delta_1$, there exist $x_1\in \ell_p$ and $x_1^*\in \ell_q$ such that
\[
\begin{gathered}
|\|x_1\|-1|<\delta_1,
\qquad
|\|x_1^*\|-1|<\delta_1,
\qquad
|\ip{x_1}{x_1^*}-1|<\delta_1,\\
\ip{(T-\lambda I)x_1}{x_1^*}=0.
\end{gathered}
\]
After replacing $(x_1,x_1^*)$ by
\[
\left(\frac{x_1}{\|x_1\|},\frac{\|x_1\|}{\overline{\ip{x_1}{x_1^*}}}x_1^*\right),
\]
we may assume that
\[
\|x_1\|=1=\ip{x_1}{x_1^*},
\qquad
\|x_1^*\|<1+\e_1,
\qquad
\ip{Tx_1}{x_1^*}=\lambda.
\]

Assume that $(x_1,x_1^*),\dots,(x_n,x_n^*)$ have already been constructed so that
\[
\|x_j\|=1,
\qquad
\|x_j^*\|<1+\e_j,
\qquad
\ip{x_j}{x_r^*}=\delta_{j,r},
\qquad
\ip{Tx_j}{x_r^*}=\lambda\,\delta_{j,r}
\]
for $1\le j,r\le n$.  Apply Lemma~\ref{lem:interior-We-finite-codim} with
\[
M:=\bigcap_{j=1}^n \Ker x_j^*\cap \Ker T^*x_j^*,
\qquad
F:=\operatorname{span}\{x_1,\dots,x_n,Tx_1,\dots,Tx_n\}.
\]
We obtain $(u,u^*)\in M\times F^\perp$ such that
\[
\begin{gathered}
|\|u\|-1|<\delta_{n+1},
\qquad
|\|u^*\|-1|<\delta_{n+1},\\
|\ip{u}{u^*}-1|<\delta_{n+1},
\qquad
\ip{(T-\lambda I)u}{u^*}=0.
\end{gathered}
\]
Set
\[
x_{n+1}:=\frac{u}{\|u\|},
\qquad
x_{n+1}^*:=\frac{\|u\|}{\overline{\ip{u}{u^*}}}u^*.
\]
Then
\[
\|x_{n+1}\|=1=\ip{x_{n+1}}{x_{n+1}^*},
\qquad
\|x_{n+1}^*\|
\le
\frac{(1+\delta_{n+1})^2}{1-\delta_{n+1}}
<1+\e_{n+1},
\]
and
\[
\ip{Tx_{n+1}}{x_{n+1}^*}=\lambda.
\]
Since $x_{n+1}\in M$ and $x_{n+1}^*\in F^\perp$, the enlarged family remains biorthogonal and satisfies the required diagonal identities for $T$.  Iterating the procedure completes the proof.
\end{proof}

\subsection{General properties of $\We(T)$ on $\ell_p$}\label{genpropwe}

Theorem~\ref{thm:We-ellp-tail-char} yields in particular convexity of $W_e(T)$ for every 
$T \in \mathcal L(\ell_p),$ thus establishing one of the main results of this paper.

\begin{theorem}\label{thm:We-ellp-convex}
Let $T\in \mathcal L(\ell_p)$. Then $\We(T)$ is a compact convex set.
\end{theorem}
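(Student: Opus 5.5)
Compactness is already recorded in Remark~\ref{remark17}: $\We(T)$ is closed and contained in the disc of radius $\|T\|$. So the task is convexity. Since $\We(T)$ is closed, it suffices to show the following: for $\lambda_1,\lambda_2\in\We(T)$ and $t\in(0,1)$, the point $\lambda=t\lambda_1+(1-t)\lambda_2$ satisfies condition (ii) of Theorem~\ref{thm:We-ellp-tail-char}. In other words, for each $k\in\N$ and $\varepsilon>0$ we need a finitely supported unit vector $x$ with $\supp x\subset\{k+1,k+2,\dots\}$ and $|\ip{Tx}{J(x)}-\lambda|<\varepsilon$.

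The construction is a disjoint-support version of the diagonal mechanism of Theorem~\ref{thm:finite-diagonal-convexity}.

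First, by Theorem~\ref{thm:We-ellp-tail-char}(ii), pick a finitely supported unit vector $u$ with $\supp u\subset\{k+1,\dots,N\}$ and $|\ip{Tu}{J(u)}-\lambda_1|<\delta$.

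Next, choose $N'>N$ so large that $\|(I-P_{N'})Tu\|<\delta$. This is possible because $Tu\in\ell_p$, so its tail tends to zero. Apply Theorem~\ref{thm:We-ellp-tail-char}(ii) again to get a finitely supported unit vector $v$ with $\supp v\subset\{N'+1,\dots\}$ and $|\ip{Tv}{J(v)}-\lambda_2|<\delta$.

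Then $u,v$ have disjoint supports. Let $R$ be the block compression of $T$ associated with $(u,v)$. It is a $2\times2$ matrix on $\ell_p^2$ with
\[
R=\begin{pmatrix}\ip{Tu}{J(u)} & \ip{Tv}{J(u)}\\ \ip{Tu}{J(v)} & \ip{Tv}{J(v)}\end{pmatrix}.
\]
We do not need the off-diagonal entries to be small. Corollary~\ref{cor:lp2-diagonal-segment} already shows that $W(R)$ contains the whole segment joining its diagonal entries, whatever $b$ and $c$ are.

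So there is a unit vector $\xi\in\ell_p^2$ with
\[
\ip{R\xi}{J(\xi)}=t\ip{Tu}{J(u)}+(1-t)\ip{Tv}{J(v)}.
\]
By the computation behind \eqref{eq:block-compression}, the vector $x=\xi_1u+\xi_2v$ is a finitely supported unit vector supported in $\{k+1,\dots\}$. It satisfies
\[
\ip{Tx}{J(x)}=\ip{R\xi}{J(\xi)},
\]
which lies within $\delta$ of $\lambda$. Taking $\delta<\varepsilon$ finishes the argument.

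The point to handle carefully is that the block compression identity $\ip{T_E\xi}{J(\xi)}=\ip{Tx}{J(x)}$ holds exactly for disjointly supported unit vectors. This is because $J(x)=\xi_1|\xi_1|^{p-2}J(u)+\xi_2|\xi_2|^{p-2}J(v)$, and $\langle Tx, J(x)\rangle$ expands bilinearly into the matrix entries. Hence no smallness of the cross terms is required, and the tail choice of $N'$ is in fact unnecessary; it could be kept only for robustness.

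Given this, the main (mild) obstacle is merely ensuring the support of $x$ stays in the prescribed tail, which holds since both $u$ and $v$ do. Closedness then upgrades "$\lambda$ satisfies (ii)" to $\lambda\in\We(T)$ directly through Theorem~\ref{thm:We-ellp-tail-char}, so no further limiting argument is needed.
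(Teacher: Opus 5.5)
Your proof is correct, but it reaches convexity by a different mechanism from the paper's.

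The paper proves only midpoint convexity. It picks $x$ in a tail, then $k_1$ with $\|(I-P_{k_1})Tx\|<\varepsilon$, and then $y\in(I-P_{k_1})\ell_p\cap\Ker(P_{k_1}T)$. With these choices the cross term $\ip{Ty}{J(x)}$ vanishes and $|\ip{Tx}{J(y)}|<\varepsilon$. The explicit vector $z=2^{-1/p}(x+y)$ then gives $\ip{Tz}{J(z)}$ within $3\varepsilon/2$ of $(\lambda+\mu)/2$, and closedness yields all convex combinations.

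You instead combine the exact identity behind \eqref{eq:block-compression} with Corollary~\ref{cor:lp2-diagonal-segment}. That corollary places the whole segment between the diagonal entries of the $2\times2$ compression $R$ inside $W(R)$, whatever the off-diagonal entries are. This buys two things:
\begin{itemize}
\item As you observe, no control of cross terms is needed, so the choice of $N'$ with $\|(I-P_{N'})Tu\|<\delta$ is superfluous.
\item Every $t\in(0,1)$ is handled at once. Closedness is therefore not needed even for the reduction you state at the outset; only the equivalence (i)$\Leftrightarrow$(ii) of Theorem~\ref{thm:We-ellp-tail-char} is used.
\end{itemize}
Your argument also makes it transparent that convexity of $W_e(T)$ is simply the tail version of Theorem~\ref{thm:finite-diagonal-convexity}.

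The price is that your route rests on the winding-number argument of Lemma~\ref{lem:lp2-hull-step} and only asserts that a suitable $\xi$ exists. The paper's estimate is completely elementary and uses explicit weights. That explicit-weight template is reused in Theorem~\ref{thm:closureW-starshaped} and in Lemmas~\ref{lem:interior-halving} and~\ref{lem:preiterate}, where the weights are needed to bound $\|y-x\|$ in the iterative constructions.
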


\begin{proof}
Compactness follows from Remark \ref{remark17},
 so only convexity must be shown.
Let $\lambda,\mu\in \We(T)$. It is enough to prove that $(\lambda+\mu)/2\in \We(T)$, since closedness then gives all convex combinations.

Fix $\varepsilon>0$ and $k_0\in\N$. By Theorem~\ref{thm:We-ellp-tail-char}, there exists a unit vector $x\in\ell_p$ with finite support such that
\[
\supp x\subset\{k_0+1,k_0+2,\dots\},
\qquad
|\ip{Tx}{J(x)}-\lambda|<\varepsilon.
\]
Choose $k_1>k_0$ so that
\[
\supp x\subset\{k_0+1,\dots,k_1\},
\qquad
\|(I-P_{k_1})Tx\|<\varepsilon.
\]
Set
\[
M:=(I-P_{k_1})\ell_p\cap \Ker(P_{k_1}T).
\]
Since $(I-P_{k_1})\ell_p$ and $\Ker(P_{k_1}T)$ are both finite-codimensional, so is $M$.
By Theorem~\ref{thm:We-ellp-tail-char}, applied to $M$ and $F=\{0\}$, there exists a unit vector $y\in M$ with finite support such that
\[
|\ip{Ty}{J(y)}-\mu|<\varepsilon.
\]
Because $y\in (I-P_{k_1})\ell_p$, the supports of $x$ and $y$ are disjoint. Moreover,
\[
\ip{Ty}{J(x)}=\ip{P_{k_1}Ty}{J(x)}=0,
\]
while
\[
|\ip{Tx}{J(y)}|=|\ip{(I-P_{k_1})Tx}{J(y)}|\le \|(I-P_{k_1})Tx\|<\varepsilon.
\]
Let
\[
z:=\frac{x+y}{2^{1/p}}.
\]
Since $x$ and $y$ have disjoint supports, $\|z\|=1$ and
\[
J(z)=\frac{J(x)+J(y)}{2^{1/q}}.
\]
Therefore
\begin{align*}
\Bigl|\ip{Tz}{J(z)}-\frac{\lambda+\mu}{2}\Bigr|
&\le \frac12\Bigl(|\ip{Tx}{J(x)}-\lambda|+|\ip{Ty}{J(y)}-\mu|\Bigr)\\
&\qquad +\frac12\Bigl(|\ip{Tx}{J(y)}|+|\ip{Ty}{J(x)}|\Bigr)\\
&<\frac12(\varepsilon+\varepsilon)+\frac12(\varepsilon+0)=\frac{3\varepsilon}{2}.
\end{align*}
Since $k_0$ and $\varepsilon$ were arbitrary, $(\lambda+\mu)/2\in \We(T)$, and hence $\We(T)$ is convex.
\end{proof}

After the tail characterisation and convexity of \(W_e(T)\) for operators on
\(\ell_p\) have been established, we identify \(W_e(T)\) with the algebraic
numerical range of the Calkin image. We first record that, on \(\ell_p\), the
operator quantity used in \cite{BM} gives the usual Calkin norm.

For a Banach space \(X\) and \(S\in\mathcal L(X)\), set
\begin{equation}\label{eq:mu-seminorm}
        \|S\|_\mu
        :=
        \inf\{\|S|_M\|:\ M\subset X,\ \operatorname{codim}M<\infty\}.
\end{equation}
This is one of the standard measures of non-compactness for operators, see,
for example, \cite[Chapter~24]{MullerBook}.

\begin{lemma}\label{lem:mu-essential-lp}
Let \(1<p<\infty\). For every \(S\in\mathcal L(\ell_p)\), one has
\[
        \|S\|_\mu=\|S+\mathcal K(\ell_p)\|.
\]
\end{lemma}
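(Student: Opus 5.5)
We prove the two inequalities separately.

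For $\|S\|_\mu\le\|S+\mathcal K(\ell_p)\|$, fix a compact $K$ and $\varepsilon>0$. Since $K$ is compact on $\ell_p$ and $(\ell_p)$ has a shrinking basis, we have $\|K(I-P_m)\|\to0$ as $m\to\infty$. To see this, note that $(I-P_m)^*$ is the tail projection on $\ell_q$ and converges to $0$ strongly. Hence $\|(I-P_m)^*K^*\|\to0$ because $K^*$ is compact, which gives $\|K(I-P_m)\|\to 0$. Take $M:=(I-P_m)\ell_p$, which has finite codimension. For $x\in M$ we have $x=(I-P_m)x$, so
\[
\|Sx\|\le\|(S+K)x\|+\|K(I-P_m)x\|\le(\|S+K\|+\varepsilon)\|x\|
\]
for $m$ large. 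Therefore $\|S\|_\mu\le\|S+K\|$. Taking the infimum over $K$ gives the first inequality.

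For the reverse inequality $\|S+\mathcal K\|\le\|S\|_\mu$, fix a finite-codimensional $M$ and $\varepsilon>0$. We seek a compact $K$ with $\|S+K\|\le\|S|_M\|+\varepsilon$. First replace $M$ by a tail subspace. The naive candidate $K=-SP_m$ gives $S+K=S(I-P_m)$, whose norm is $\|S|_{(I-P_m)\ell_p}\|$. So it suffices to show that for every $\varepsilon$ there is $m$ with
\[
\|S|_{(I-P_m)\ell_p}\|\le\|S|_M\|+\varepsilon.
\]
This is the main step. Let $R$ be a finite-rank projection with $\ker R\supset$ a complement structure, namely the operator $R=(Q|_E)^{-1}Q$ from the proof of Theorem~\ref{thm:We-ellp-tail-char}. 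Then $x-Rx\in M$ for all $x$, and $R$ has finite rank. For a unit $x\in(I-P_m)\ell_p$,
\[
\|Sx\|\le\|S(x-Rx)\|+\|S\|\|Rx\|\le\|S|_M\|(1+\|R(I-P_m)\|)+\|S\|\,\|R(I-P_m)\|.
\]
Since $R$ is finite rank, hence compact, the same shrinking-basis argument as above gives $\|R(I-P_m)\|\to0$. This yields the required estimate for $m$ large. Hence $\|S+\mathcal K\|\le\|S(I-P_m)\|\le\|S|_M\|+\varepsilon'$, and taking the infimum over $M$ and $\varepsilon'$ completes the proof.

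The only nontrivial input is $\|K(I-P_m)\|\to0$ for compact $K$. This uses reflexivity of $\ell_p$, $1<p<\infty$, through the strong convergence of the adjoint tail projections on $\ell_q$; that point fails for $p=1$. I expect no further obstacle: the step converting an arbitrary finite-codimensional $M$ into a tail subspace is handled by the finite-rank correction $R$, and is the part I would check most carefully.
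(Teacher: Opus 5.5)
Your proof is correct, but it takes a different route from the paper's.

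\textbf{The paper's route.} It argues on the dual side. It quotes the identity $\|S\|_\mu=\chi(S^*B_{\ell_q})$ from the theory of measures of non-compactness, which is a general Banach-space fact. It then evaluates $\chi$ in $\ell_q$ by the Bana\'s--Mursaleen tail formula to get $\lim_n\|(I-P_n^{(q)})S^*\|$, shows this equals the essential norm of $S^*$, and returns to $S$ via reflexivity.

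\textbf{Your route.} You stay in $\ell_p$ and prove both inequalities by hand. The first follows from $\|K(I-P_m)\|\to0$ for compact $K$. For the second, you move an arbitrary finite-codimensional $M$ to a tail subspace using the finite-rank projection $R=(Q|_E)^{-1}Q$. All you need there is $\Ker R=M$, which is what your garbled phrase about a ``complement structure'' should say. You combine this with $\|S(I-P_m)\|=\|S|_{(I-P_m)\ell_p}\|$, which uses $\|I-P_m\|=1$.

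\textbf{What each approach buys.} Your argument is self-contained and avoids both $\chi$ and the passage to adjoints. It identifies both quantities directly with $\lim_m\|S(I-P_m)\|$. It uses only two properties: the tail projections have norm one, and $(I-P_m)^*\to0$ strongly, i.e.\ the basis is shrinking (this, rather than reflexivity as such, is the real input). Hence it carries over verbatim to the spaces of class $(P)$ in Section~\ref{secexten}. Your first inequality in fact holds on every Banach space, since compact operators have vanishing $\|\cdot\|_\mu$; shrinkingness is really needed only for $\|R(I-P_m)\|\to0$. The paper's route is shorter given its citations and places $\|\cdot\|_\mu$ in the standard measure-of-non-compactness framework, where only the tail formula and reflexivity are specific to $\ell_p$.
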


\begin{proof}
Let \(q=p/(p-1)\), and let \(P_n^{(q)}\) denote the coordinate projection
onto \(\operatorname{span}\{f_1,\ldots,f_n\}\) in \(\ell_q\). Combining the
standard identities for operator measures of non-compactness from
\cite[Chapter~24]{MullerBook}, we have
\[
        \|S\|_\mu=\chi(S^*B_{\ell_q}),
\]
where \(B_{\ell_q}\) denotes the closed unit ball of \(\ell_q\), and
\(\chi\) stands for the Hausdorff measure of non-compactness. By the tail
formula for bounded subsets of \(\ell_q\), see
\cite[Theorem~5.18(a)]{BanasMursaleen},
\[
        \chi(S^*B_{\ell_q})
        =
        \lim_{n\to\infty}\|(I-P_n^{(q)})S^*\|.
\]
The last limit is the essential norm of \(S^*\). Indeed, \(P_n^{(q)}S^*\) is
finite-rank, and hence
\[
        \|S^*+\mathcal K(\ell_q)\|
        \le
        \lim_{n\to\infty}\|(I-P_n^{(q)})S^*\|.
\]
Conversely, if \(K\in\mathcal K(\ell_q)\), then
\[
        (I-P_n^{(q)})K\to0
\]
in norm. Therefore
\[
        \limsup_{n\to\infty}\|(I-P_n^{(q)})S^*\|
        \le
        \|S^*-K\|.
\]
Taking the infimum over \(K\in\mathcal K(\ell_q)\) gives the reverse
inequality. Thus
\[
        \|S\|_\mu=\|S^*+\mathcal K(\ell_q)\|.
\]
Since \(\ell_p\) is reflexive, passage to the adjoint preserves the essential
norm. Consequently,
\[
        \|S\|_\mu=\|S+\mathcal K(\ell_p)\|.
\]
\end{proof}

For a Banach space \(X\) and \(T\in\mathcal L(X)\), let \(V_\mu(T)\)
denote the algebraic numerical range of \(\pi(T)\) in
\(\mathcal L(X)/\mathcal K(X)\), equipped with the quotient seminorm
\(\|\cdot\|_\mu\).
It was shown in \cite[Theorem~8]{BM} that
\begin{equation}\label{vmu}
        V_\mu(T)=\overline{\conv}\, W_e(T),
        \qquad T\in\mathcal L(X),
\end{equation}
for every Banach space \(X\). Applying this to
\(X=\ell_p\), and using Lemma~\ref{lem:mu-essential-lp} together with
Theorem~\ref{thm:We-ellp-convex}, we obtain the following Calkin-algebra
identification of the essential numerical range on \(\ell_p\).

\begin{theorem}\label{thm:We-ellp-calkin}
Let $T\in \mathcal L(\ell_p)$. Then
\[
        \We(T)
        =
        V\bigl(\pi(T),\mathcal L(\ell_p)/\mathcal K(\ell_p)\bigr)
\]
(where the Calkin algebra is equipped with the usual quotient norm).
\end{theorem}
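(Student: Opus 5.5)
The plan is to combine three ingredients already in place: the identity \eqref{vmu} from \cite[Theorem~8]{BM}, Lemma~\ref{lem:mu-essential-lp}, and Theorem~\ref{thm:We-ellp-convex}. By \eqref{vmu} applied to $X=\ell_p$, the algebraic numerical range $V_\mu(T)$ of $\pi(T)$, computed in $\mathcal L(\ell_p)/\mathcal K(\ell_p)$ with the seminorm $\|\cdot\|_\mu$, equals $\overline{\conv}\,W_e(T)$. By Theorem~\ref{thm:We-ellp-convex}, $W_e(T)$ is compact and convex, so $\overline{\conv}\,W_e(T)=W_e(T)$. It then remains to identify $V_\mu(T)$ with $V(\pi(T),\mathcal L(\ell_p)/\mathcal K(\ell_p))$ for the usual quotient norm.

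For that identification I will argue as follows. The seminorm $\|S\|_\mu$ vanishes on compact operators: for compact $K$ and $\varepsilon>0$ there is a finite-codimensional $M$ with $\|K|_M\|<\varepsilon$. It is also subadditive and satisfies $\|S+K\|_\mu=\|S\|_\mu$, so it genuinely descends to a seminorm on the Calkin algebra. Lemma~\ref{lem:mu-essential-lp} says that this induced seminorm coincides exactly with the quotient norm $\|S+\mathcal K(\ell_p)\|$. Hence the normed algebra $(\mathcal L(\ell_p)/\mathcal K(\ell_p),\|\cdot\|_\mu)$ is literally the Calkin algebra with its usual norm. The unit $\pi(I)$ has norm $1$ in both, and the dual spaces and dual norms agree.

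Consequently the states $\{f:\|f\|=1=f(\pi(I))\}$ are the same set in both descriptions. The two algebraic numerical ranges therefore coincide, and the chain
\[
V\bigl(\pi(T),\mathcal L(\ell_p)/\mathcal K(\ell_p)\bigr)=V_\mu(T)=\overline{\conv}\,W_e(T)=W_e(T)
\]
finishes the proof.

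The only point requiring care is the compatibility of the definition of $V_\mu(T)$ in \cite{BM} with the quotient construction. Specifically, one must check that the functionals used there are exactly the norm-one functionals on the quotient with respect to $\|\cdot\|_\mu$, and not, say, functionals on $\mathcal L(\ell_p)$ bounded by $\|\cdot\|_\mu$. Even in the latter formulation, however, such functionals vanish on $\mathcal K(\ell_p)$ and factor through $\pi$ with the same norm. So by Lemma~\ref{lem:mu-essential-lp} the two formulations give the same set, and I expect no genuine obstacle beyond this bookkeeping.
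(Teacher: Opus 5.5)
Your proposal is correct and matches the paper's proof: \eqref{vmu} combined with the compactness and convexity from Theorem~\ref{thm:We-ellp-convex} gives $V_\mu(T)=W_e(T)$, and Lemma~\ref{lem:mu-essential-lp} identifies the $\|\cdot\|_\mu$-quotient with the Calkin algebra under its usual norm. Your extra checks (that $\|\cdot\|_\mu$ descends to the quotient, that $\pi(I)$ has norm one, and that the states coincide) spell out what the paper leaves implicit.
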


\begin{proof}
By \eqref{vmu} and Theorem~\ref{thm:We-ellp-convex},
\[
        V_\mu(T)=\We(T).
\]
By Lemma~\ref{lem:mu-essential-lp}, the quotient seminorm defined by
\(\|\cdot\|_\mu\) on
\(\mathcal L(\ell_p)/\mathcal K(\ell_p)\) coincides with the usual quotient
norm. Hence \(V_\mu(T)\) is precisely
\[
        V\bigl(\pi(T),\mathcal L(\ell_p)/\mathcal K(\ell_p)\bigr),
\]
computed with respect to the usual quotient norm. This proves the assertion.
\end{proof}

Theorem~\ref{thm:We-ellp-calkin} allows us to describe $W_e(T)$ in terms of compact perturbations, extending the corresponding Hilbert-space formula to the whole range $1<p<\infty$.
\begin{theorem}\label{thm:We-ellp-compact-perturb}
Let $T\in \mathcal L(\ell_p)$. Then
\[
\We(T)=\bigcap\{\overline{W(T+K)}: K\in \mathcal K(\ell_p)\}.
\]
\end{theorem}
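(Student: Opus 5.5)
The plan is to prove the two inclusions separately, using Theorem~\ref{thm:We-ellp-calkin} together with Theorem~\ref{vrange} for the hard direction.

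For the inclusion $\We(T)\subset\bigcap_K\overline{W(T+K)}$, fix $K\in\mathcal K(\ell_p)$ and $\lambda\in\We(T)$.
\begin{itemize}
\item Take unit vectors $x_k\wto0$ with $\ip{Tx_k}{J(x_k)}\to\lambda$, using the sequential description valid on $\ell_p$.
\item Since $K$ is compact and $x_k\wto0$, we have $\|Kx_k\|\to0$. Hence $\ip{(T+K)x_k}{J(x_k)}\to\lambda$.
\item Each value $\ip{(T+K)x_k}{J(x_k)}$ lies in $W(T+K)$, so $\lambda\in\overline{W(T+K)}$.
\end{itemize}

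For the reverse inclusion I would use convexity in the Calkin algebra.
\begin{itemize}
\item By Theorem~\ref{thm:We-ellp-calkin} and Theorem~\ref{vrange}(ii),
\[
\We(T)=V\bigl(\pi(T),\mathcal L(\ell_p)/\mathcal K(\ell_p)\bigr)=\bigcap_{K\in\mathcal K(\ell_p)}V(T+K,\mathcal L(\ell_p)).
\]
\item By Theorem~\ref{vrange}(i), $V(T+K,\mathcal L(\ell_p))=\overline{\conv}\,W(T+K)$, which contains $\overline{W(T+K)}$.
\item Therefore $\bigcap_K\overline{W(T+K)}\subset\bigcap_K\overline{\conv}\,W(T+K)=\We(T)$.
\end{itemize}

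Combining the two inclusions gives the stated equality. The substantial ingredient, namely convexity of $\We(T)$ and its identification with the Calkin numerical range for the usual quotient norm, is already supplied by Theorems~\ref{thm:We-ellp-convex} and~\ref{thm:We-ellp-calkin}.

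The only point needing care is that Theorem~\ref{vrange}(ii) refers to the quotient norm on $\mathcal L(\ell_p)/\mathcal K(\ell_p)$. This is exactly the norm in Theorem~\ref{thm:We-ellp-calkin}, thanks to Lemma~\ref{lem:mu-essential-lp}.

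If one wanted to avoid Theorem~\ref{vrange}(ii), a direct argument would be the main obstacle. One would have to show that for $\lambda\notin\We(T)$ there is a compact (even finite-rank) $K$ with $\lambda\notin\overline{W(T+K)}$. The natural route would be:
\begin{itemize}
\item separate $\lambda$ from the convex set $\We(T)$ by a half-plane;
\item use the tail characterisation (Theorem~\ref{thm:We-ellp-tail-char}) to find $n$ such that $\operatorname{Re}\,e^{i\theta}\ip{Tx}{J(x)}$ is bounded away from $\operatorname{Re}\,e^{i\theta}\lambda$ on unit vectors supported beyond $n$;
\item modify $T$ on the first $n$ coordinates, via a finite-rank $K$ such as $K=-P_nT-TP_n+P_nTP_n+cP_n$ with a suitable scalar shift $c$.
\end{itemize}
Controlling the cross terms $\ip{Tx}{J(x)}$ for vectors straddling both regions is nonlinear in $\ell_p$, which is why I would rely on the abstract Calkin-algebra route instead.
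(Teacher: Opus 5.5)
Your proof is correct and is essentially the paper's argument. For the easy inclusion you check compact-perturbation invariance directly via $\|Kx_k\|\to0$, where the paper simply cites $\We(T+K)=\We(T)$; the reverse inclusion is the same chain through Theorem~\ref{thm:We-ellp-calkin}, Theorem~\ref{vrange}(ii) and Theorem~\ref{vrange}(i) that the paper uses.
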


\begin{proof}
For each compact operator $K$, one has $\overline{W(T+K)}\supset \We(T+K)=\We(T)$. Hence
\[
\bigcap\{\overline{W(T+K)}: K\in \mathcal K(\ell_p)\}\supset \We(T).
\]
On the other hand, by Theorem~\ref{thm:We-ellp-calkin} and Theorem \ref{vrange},
\[
\We(T)=V\bigl(\pi(T),\mathcal L(\ell_p)/\mathcal K(\ell_p)\bigr)
=\bigcap\{V(T+K,\mathcal L(\ell_p)): K\in \mathcal K(\ell_p)\}.
\]
Since by Theorem \ref{vrange} we have $V(T+K,\mathcal L(\ell_p))=\overline{\conv}\, W(T+K)$, 
\[
\We(T)=\bigcap\{\overline{\conv}\, W(T+K): K\in \mathcal K(\ell_p)\}\supset \bigcap\{\overline{W(T+K)}: K\in \mathcal K(\ell_p)\}.
\]
Thus equality holds.
\end{proof}

The compact-perturbation formula in Theorem~\ref{thm:We-ellp-compact-perturb} represents \(\We(T)\) as an intersection of \(\overline{W(T+K)}\) over all compact perturbations \(K\). Under the additional assumption \(\operatorname{Int}\We(T)\ne\varnothing\), it can be strengthened to a single compact perturbation.

\begin{corollary}\label{cor:single-compact-realization}
Let \(1<p<\infty\), and let \(T\in\mathcal L(\ell_p)\). If
\[
        \operatorname{Int}\We(T)\ne\varnothing,
\]
then there exists \(K\in\mathcal K(\ell_p)\) such that
\[
        \We(T)=\overline{W(T+K)}.
\]
\end{corollary}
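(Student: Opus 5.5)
The plan is to build the compact perturbation explicitly: I will compress $T$ into a block-diagonal operator with respect to a sparse partition of $\mathbb N$, and shrink each block slightly towards a fixed interior point $\lambda_0\in\Int\We(T)$. Since $\We(T+K)=\We(T)$ and $\We(T+K)\subset\overline{W(T+K)}$ for every compact $K$, only the inclusion $W(T+K)\subset\We(T)$ needs proof. Replacing $T$ by $T-\lambda_0 I$, I assume $\lambda_0=0$ and fix $r>0$ with $r\overline{\D}\subset\We(T)$.

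\emph{Step 1 (tail compressions are almost inside $\We(T)$).} For $n\in\N$ let $T_{(n)}:=(I-P_n)T(I-P_n)$, regarded on $(I-P_n)\ell_p$. Put
\[
\e_n:=\sup\{\dist(\mu,\We(T)):\ \mu\in W(T_{(n)})\}.
\]
I claim $\e_n\to0$. If not, there are unit vectors $x_n$ supported in $\{n+1,n+2,\dots\}$ whose values $\ip{Tx_n}{J(x_n)}$ stay at distance at least $\e$ from $\We(T)$. Then $x_n\wto0$, and any limit point of these bounded scalars lies in $\We(T)$, which is a contradiction. The same holds for any compression to a coordinate block $B\subset\{n+1,\dots\}$, because the block compression of the preceding section gives $W(T_B)\subset W(T_{(n)})$.

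\emph{Step 2 (choice of blocks and the perturbation).} I choose $n_0<n_1<n_2<\cdots$ inductively and set $B_k:=\{n_{k-1}+1,\dots,n_k\}$ and $Q_k:=P_{n_k}-P_{n_{k-1}}$. The requirements are $\e_{n_{k-1}}\to 0$ and
\[
\|(I-P_{n_{k+1}})TQ_k\|+\|Q_kT(I-P_{n_{k+1}})\|<2^{-k}.
\]
The first summand can be made small because $Q_k$ has finite rank. For the second I pass to adjoints on $\ell_q$, where $(I-P^{(q)}_m)\to0$ strongly on the finite-rank range of $T^*Q_k^*$. With scalars $c_k\in(0,1)$, $c_k\to1$, I define
\[
S:=\sum_{k\ge1}c_k\,Q_kTQ_k,\qquad K:=S-T .
\]
Then $T-S$ is the sum of three parts. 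The first is $P_{n_0}T+TP_{n_0}-P_{n_0}TP_{n_0}$, which has finite rank. The second is the off-diagonal pieces $Q_jTQ_k$ with $j\ne k$; those with $|j-k|\ge2$ are controlled by the displayed estimate and sum to a compact operator, while those with $|j-k|=1$ are dealt with by insisting also that $\|Q_{k+1}TQ_k\|$ and $\|Q_kTQ_{k+1}\|$ be small. This is where I expect the real difficulty. The first thing I would try is to use the freedom in $n_{k+1}$ to make the adjacent blocks interact weakly. If that fails, I would merge consecutive blocks, taking the diagonal blocks to be $Q_{2k-1}+Q_{2k}$ with "buffer" halves, so that only blocks at distance at least two interact. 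The third part is $\sum_k(1-c_k)Q_kTQ_k$, a block-diagonal operator with finite-rank blocks of norms tending to $0$, hence compact. So $K$ is compact.

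\emph{Step 3 (location of $W(S)$).} Let $x$ be a unit vector. Write $x=\sum_k u_k$ with $u_k=Q_kx$, put $t_k:=\|u_k\|^p$ (so $\sum t_k\le1$, the coordinates in $\{1,\dots,n_0\}$ contributing value $0$), and let $v_k=u_k/\|u_k\|$. The supports are disjoint, so $J(x)=\sum_k\|u_k\|^{p-1}J(v_k)$, and block-diagonality gives
\[
\ip{Sx}{J(x)}=\sum_k t_k\,c_k\,\ip{Tv_k}{J(v_k)} .
\]
By Step 1, $\ip{Tv_k}{J(v_k)}\in\We(T)+\e_{n_{k-1}}\overline\D$. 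Since $r\overline\D\subset\We(T)$ and $\We(T)$ is convex (Theorem~\ref{thm:We-ellp-convex}), we have
\[
c(\We(T)+\e\overline\D)\subset c\We(T)+(1-c)r\overline\D\subset\We(T)
\]
whenever $c\,\e\le(1-c)r$. So I choose $c_k:=r/(r+\e_{n_{k-1}})$, which tends to $1$. Each term $c_k\ip{Tv_k}{J(v_k)}$ then lies in $\We(T)$. The value $\ip{Sx}{J(x)}$ is a countable convex combination of these points together with $0\in\We(T)$, and the Rubin--Wesler fact for the compact convex set $\We(T)$ puts it in $\We(T)$. Hence $\overline{W(T+K)}=\overline{W(S)}\subset\We(T)$, and the reverse inclusion from the first paragraph gives equality.
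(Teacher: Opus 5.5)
\textbf{There is a genuine gap in Step~2.} In general $T$ is not a compact perturbation of any operator $\sum_k c_kQ_kTQ_k$ with finite-dimensional blocks, so $K=S-T$ need not be compact. A better choice of the $n_k$ cannot close this.

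Take the unilateral shift $S_R$ on $\ell_p$. It satisfies the hypothesis, since $\We(S_R)=\overline{\D}$. For every partition, $Q_{k+1}S_RQ_k$ is the rank-one map $x\mapsto x_{n_k}e_{n_k+1}$, which has norm one. Hence the adjacent-block part $\sum_kQ_{k+1}S_RQ_k$ sends the weakly null sequence $(e_{n_k})$ to $(e_{n_k+1})$ and is not compact. So you can never arrange $\|Q_{k+1}TQ_k\|\to0$. Merging consecutive blocks or inserting buffers only moves the block boundaries, and the same norm-one terms reappear there.

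The obstruction is structural. A block-diagonal operator with finite-dimensional blocks has Fredholm index $0$ wherever it is Fredholm. By contrast, $S_R-\lambda I$ has index $-1$ for $|\lambda|<1$. Since the index is invariant under compact perturbations, $S_R$ is not of the form $D+K$ with such a $D$ and compact $K$, whatever the scalars $c_k$ are. Your Steps~1 and~3 are fine for block-diagonal operators: the shrinking choice $c_k=r/(r+\e_{n_{k-1}})$ together with convexity of $\We(T)$ does place $W(S)$ inside $\We(T)$. But this does not in general apply to an operator of the form $T+K$.

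The missing ingredient is a way to produce the compact perturbation without block-diagonalising $T$. The paper does this abstractly. By Theorem~\ref{thm:We-ellp-calkin}, $\We(T)=V\bigl(\pi(T),\mathcal L(\ell_p)/\mathcal K(\ell_p)\bigr)$. The compact-perturbation realisation theorem of Chui--Smith--Smith--Ward \cite[Theorem~3.1, Remark~2]{ChSSW} then gives $K\in\mathcal K(\ell_p)$ with $V(T+K,\mathcal L(\ell_p))=V\bigl(\pi(T),\mathcal L(\ell_p)/\mathcal K(\ell_p)\bigr)$; this is where the hypothesis $\Int\We(T)\ne\varnothing$ is used. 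The proof is then closed by the sandwich
\[
\We(T)=\We(T+K)\subset\overline{W(T+K)}\subset V(T+K,\mathcal L(\ell_p))=\We(T).
\]
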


\begin{proof}
By Theorem~\ref{thm:We-ellp-calkin},
\[
        \We(T)
        =
        V\bigl(\pi(T),\mathcal L(\ell_p)/\mathcal K(\ell_p)\bigr).
\]
Since \(\We(T)\) has non-empty interior, the compact-perturbation
realisation result in \cite[Theorem~3.1, Remark~2]{ChSSW}
yields \(K\in\mathcal K(\ell_p)\) such that
\[
        V(T+K,\mathcal L(\ell_p))
        =
        V\bigl(\pi(T),\mathcal L(\ell_p)/\mathcal K(\ell_p)\bigr).
\]
Hence
\[
        V(T+K,\mathcal L(\ell_p))=\We(T).
\]
On the other hand, in view of Theorem \ref{vrange},
\[
        \We(T)=\We(T+K)\subset \overline{W(T+K)}
        \subset V(T+K,\mathcal L(\ell_p)).
\]
Therefore \(\overline{W(T+K)}=\We(T)\), as claimed.
\end{proof}

Finally, the formula for the numerical range of adjoint $T^*$ in our $\ell_p$ setting is immediate from the definition and the identity $J_q(J_p(x))=x$ for $\|x\|_p=1$ (with $T^*$ understood in the sense of duality fixed at the beginning of this section).
\begin{proposition}\label{prop:We-adjoint-ellp}
Let $T\in \mathcal L(\ell_p)$. Then
\[
\We(T^*)=\{\bar z: z\in \We(T)\}.
\]
\end{proposition}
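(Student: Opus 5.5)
The plan is to use the sequential description of $\We$ on reflexive spaces, together with the duality map on $\ell_q$. Here $T^*\in\mathcal L(\ell_q)$ is defined by $\ip{x}{T^*y}=\ip{Tx}{y}$ for $x\in\ell_p$, $y\in\ell_q$. Write $J_p:\ell_p\to\ell_q$ and $J_q:\ell_q\to\ell_p$ for the duality maps, given by the same coordinate formulas with exponents $p$ and $q$. A direct coordinate computation gives, for a unit vector $x\in\ell_p$,
\[
J_q(J_p(x))_j=x_j|x_j|^{p-2}\,|x_j|^{(p-1)(q-2)}=x_j,
\]
because $(p-1)(q-1)=1$ gives $(p-2)+(p-1)(q-2)=0$. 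By symmetry, $J_p(J_q(y))=y$ for unit vectors $y\in\ell_q$. Hence $J_p$ is a bijection between the unit spheres of $\ell_p$ and $\ell_q$.

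For the inclusion $\supset$, take $\lambda\in\We(T)$. By the sequential form recorded before Theorem~\ref{thm:We-ellp-tail-char}, there are unit vectors $x_k\wto0$ with $\ip{Tx_k}{J_p(x_k)}\to\lambda$. Put $y_k:=J_p(x_k)$. These are unit vectors in $\ell_q$, and Lemma~\ref{thm:J-ellp}(ii), applied with $x=0$ so that $J(0)=0$, gives $y_k\wto0$. Since $J_q(y_k)=x_k$, we get
\[
\ip{T^*y_k}{J_q(y_k)}=\overline{\ip{x_k}{T^*y_k}}=\overline{\ip{Tx_k}{y_k}}\to\bar\lambda .
\]
Here the pairing on $\ell_q\times\ell_p$ is read as $\ip{y}{x}=\sum y_j\overline{x_j}=\overline{\ip{x}{y}}$. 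Applying the sequential description on $\ell_q$, which is also reflexive (or Theorem~\ref{thm:characterization-We} directly), we conclude $\bar\lambda\in\We(T^*)$.

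For the reverse inclusion, run the same argument with the roles of $p$ and $q$ exchanged, starting from $\mu\in\We(T^*)$. Lemma~\ref{thm:J-ellp} holds equally on $\ell_q$, since its proof is coordinatewise and symmetric in the exponent. Also $(T^*)^*=T$ under reflexivity, or one can argue directly with the pairing. Together these give $\bar\mu\in\We(T)$.

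The only delicate point is bookkeeping with the sesquilinear pairing and conjugations: one must check that the value obtained is $\bar\lambda$ and not $\lambda$, consistent with the conjugate-transpose convention for $T^*$. Once that convention is fixed, the argument is routine.
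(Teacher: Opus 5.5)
Your proof is correct and takes the same approach as the paper. The paper states that the formula is immediate from the definition and the identity $J_q(J_p(x))=x$ on the unit sphere of $\ell_p$; your argument supplies the omitted details, namely the transfer of weak nullness via Lemma~\ref{thm:J-ellp}(ii) and the conjugation bookkeeping $\ip{T^*y}{J_q(y)}=\overline{\ip{Tx}{J_p(x)}}$ for $y=J_p(x)$.
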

\begin{remark}
The convexity phenomenon for \(W_e(T)\) is not a general
feature of Banach-space essential numerical ranges. In fact, there exists an
operator on a space isometric to \(\ell_\infty\) whose spatial essential numerical
range is not convex. More precisely, for every \(\eta>0\) one can construct an
operator \(T\in \mathcal L(\ell_\infty)\) such that
\[
        i+\eta,\,-i+\eta\in W_e(T),
        \qquad
        \eta\notin W_e(T).
\]
Thus the midpoint of two points of \(W_e(T)\) may fail to belong to \(W_e(T)\).

The construction starts with an operator on \(\ell_\infty^2\) whose numerical
range has an exposed gap: the values \(i\) and \(-i\) are attained on the
boundary line \(\operatorname{Re} z=0\), while their midpoint \(0\) is not.
This operator is then lifted coordinatewise to
\(\ell_\infty(\mathbb Z;\ell_\infty^2)\). One adds to it a perturbation
supported on a sparse set of coordinates, with coefficients defined by
oscillatory functionals coming from a shift-invariant mean on
\(\ell_\infty(\mathbb Z)\). The finite-codimensional interpolation built into
these functionals makes the two translated endpoint values \(i+\eta\) and
\(-i+\eta\) visible in the spatial essential numerical range. On the other
hand, if the translated midpoint \(\eta\) were approached by numerical-range
values, the Bessel estimate for the oscillatory coefficients would force the
corresponding norming functionals to concentrate asymptotically at a single
sparse coordinate. At that coordinate one is brought back to the original
two-dimensional exposed gap, which excludes the midpoint. Since
\(\ell_\infty(\mathbb Z;\ell_\infty^2)\) is isometric to the scalar space
\(\ell_\infty\), this gives a genuine counterexample on \(\ell_\infty\).

We do not include the details here, since they would lead away from the main
theme of the present paper, which is already rather long. They will be presented
elsewhere. The example shows, however, that the structural assumptions used above
are not merely technical: without some tail or shrinking mechanism, the spatial
essential numerical range may lose convexity even on a classical sequence space.
\end{remark}
\begin{remark}
The appearance of the Calkin algebra in Theorem~\ref{thm:We-ellp-calkin} is also consistent with recent work \cite{Boedihardjo2024}, where it was shown that the \(\ell_p\)-Calkin algebra \(\mathcal L(\ell_p)/\mathcal K(\ell_p)\), \(1<p<\infty\), contains isomorphic copies of all separable \(C^*\)-algebras. This gives another indication that, after passing to the Calkin algebra, part of the Hilbert-space picture survives in the non-Hilbertian \(\ell_p\)-setting.
\end{remark}

\subsection{Interior points and star-shapedness}

Once the structure of $\We(T)$ on $\ell_p$ is in place, one can return to the ordinary numerical range $W(T)$ and ask what geometric information is forced by the essential one. The next results answer this question for interior points and star-shapedness.  We first show that interior points of the essential numerical range yield exact points of the ordinary numerical range.  The proof uses the following tail-vector realisation lemma.

\begin{lemma}\label{lem:interior-halving}
Let $T\in \mathcal L(\ell_p)$, assume that $0\in \Int \We(T)$, and set
\[
\varepsilon:=\dist\bigl(0,\partial \We(T)\bigr).
\]
Let $k\ge 0$, and let $x\in \ell_p$ be a unit vector with finite support contained in $\{k+1,k+2,\dots\}$ such that
\[
\eta:=|\ip{Tx}{J(x)}|\le \varepsilon.
\]
Then there exists a unit vector $y$ with finite support contained in $\{k+1,k+2,\dots\}$ such that
\[
|\ip{Ty}{J(y)}|\le \eta/2
\qquad\text{and}\qquad
\|y-x\|\le 3\Bigl(\frac{\eta}{\varepsilon}\Bigr)^{1/p}.
\]
\end{lemma}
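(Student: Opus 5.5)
The plan is to take $y$ as a disjoint-support $\ell_p$-combination of $x$ with a far-out tail vector $w$. The numerical value of $w$ is chosen near the point of the disc $\{|z|\le\varepsilon\}$ diametrically opposite to $\ip{Tx}{J(x)}$, and the weights are chosen so that the two contributions cancel exactly. This is the same disjoint-support mechanism as in the proof of Theorem~\ref{thm:We-ellp-convex}, but with unequal weights.

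\emph{Setup and easy case.} If $\eta=0$, take $y=x$. Otherwise put $\zeta:=\ip{Tx}{J(x)}$, so $|\zeta|=\eta\in(0,\varepsilon]$. Since $0\in\Int\We(T)$ and $\varepsilon=\dist(0,\partial\We(T))$, the open disc of radius $\varepsilon$ about $0$ lies in $\We(T)$. As $\We(T)$ is closed (Remark~\ref{remark17}), the point
\[
        \mu:=-\varepsilon\,\zeta/\eta
\]
belongs to $\We(T)$. Set $t:=\eta/(\eta+\varepsilon)\in(0,1/2]$. A direct computation gives
\[
        (1-t)\zeta+t\mu
        =\frac{\varepsilon\zeta}{\eta+\varepsilon}-\frac{\varepsilon\zeta}{\eta+\varepsilon}=0 .
\]

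\emph{Choice of the tail vector.} Fix a small $\delta>0$, to be specified at the end in terms of $\eta$ (for instance $\delta=\eta/8$). Choose $k_1>k$ with $\supp x\subset\{k+1,\dots,k_1\}$ and $\|(I-P_{k_1})Tx\|<\delta$. Let
\[
        M:=(I-P_{k_1})\ell_p\cap\Ker(P_{k_1}T),
\]
which is finite-codimensional. By Theorem~\ref{thm:We-ellp-tail-char}, applied to $\mu$, there is a unit vector $w\in M$ with finite support such that $|\ip{Tw}{J(w)}-\mu|<\delta$. Then $\supp w\subset\{k_1+1,\dots\}$ is disjoint from $\supp x$. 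As in the convexity proof, the cross terms satisfy
\[
        \ip{Tw}{J(x)}=0,
        \qquad
        |\ip{Tx}{J(w)}|<\delta .
\]

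\emph{The vector $y$ and the estimates.} Define
\[
        y:=(1-t)^{1/p}x+t^{1/p}w .
\]
By disjointness of supports, $y$ is a unit vector with finite support in $\{k+1,\dots\}$, and $J(y)=(1-t)^{1/q}J(x)+t^{1/q}J(w)$. Expanding, we get
\[
        \ip{Ty}{J(y)}=(1-t)\zeta+t\ip{Tw}{J(w)}+(1-t)^{1/p}t^{1/q}\ip{Tx}{J(w)}+t^{1/p}(1-t)^{1/q}\ip{Tw}{J(x)} .
\]
Using $(1-t)\zeta+t\mu=0$, this yields $|\ip{Ty}{J(y)}|<2\delta\le\eta/2$ once $\delta\le\eta/4$. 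For the distance, $1-(1-t)^{1/p}\le t$ and $t\le t^{1/p}$, so
\[
        \|y-x\|\le t+t^{1/p}\le 2t^{1/p}\le 2(\eta/\varepsilon)^{1/p},
\]
which is within the claimed bound $3(\eta/\varepsilon)^{1/p}$.

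\emph{Main obstacle.} The only delicate point is the cross terms. One of them vanishes exactly because $w\in\Ker(P_{k_1}T)$. The other is controlled by taking $k_1$ large enough that the tail of $Tx$ is small. Both are handled exactly as in Theorem~\ref{thm:We-ellp-convex}, so I expect no real difficulty beyond fixing $\delta$ in terms of $\eta$ before choosing $k_1$ and $w$.
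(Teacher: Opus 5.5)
Your proposal is correct and follows the paper's proof essentially step for step. It uses the same antipodal point $-\varepsilon\zeta/\eta\in W_e(T)$, the same weights $\eta/(\eta+\varepsilon)$ and $\varepsilon/(\eta+\varepsilon)$, the same tail subspace $(I-P_{k_1})\ell_p\cap\Ker(P_{k_1}T)$ to kill one cross term, and the same disjoint-support combination. Your differences are only minor refinements: you justify that the boundary point of the disc lies in $W_e(T)$, and your use of $(1-t)^{1/p}\ge 1-t$ gives the slightly sharper bound $2(\eta/\varepsilon)^{1/p}$.
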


\begin{proof}
If \(\eta=0\), take \(y=x\). Hence assume \(\eta>0\).
Find $k_1>k$ such that
\[
\supp x\subset \{k+1,\dots,k_1\}
\qquad\text{and}\qquad
\|(I-P_{k_1})Tx\|<\eta/4.
\]
Let $c:=\ip{Tx}{J(x)}$, so $|c|=\eta$. Set
\[
c':=-\frac{c\varepsilon}{\eta}.
\]
Then $|c'|=\varepsilon$, hence $c'\in \We(T)$.
Choose a unit vector $u\in \ell_p$ with finite support, satisfying
\[
u\in (I-P_{k_1})\ell_p\cap T^{-1}(I-P_{k_1})\ell_p
\]
and
\[
|\ip{Tu}{J(u)}-c'|<\eta/4.
\]
Now set
\[
y:=\Bigl(\frac{\varepsilon}{\varepsilon+\eta}\Bigr)^{1/p}x+
\Bigl(\frac{\eta}{\varepsilon+\eta}\Bigr)^{1/p}u.
\]
Since $x$ and $u$ have disjoint supports, $\|y\|=1$ and
\[
J(y)=\Bigl(\frac{\varepsilon}{\varepsilon+\eta}\Bigr)^{1/q}J(x)+
\Bigl(\frac{\eta}{\varepsilon+\eta}\Bigr)^{1/q}J(u).
\]
Therefore
\begin{align*}
|\ip{Ty}{J(y)}|
&\le \Bigl|\frac{\varepsilon}{\varepsilon+\eta}c+\frac{\eta}{\varepsilon+\eta}c'\Bigr|+
\frac{\eta}{\varepsilon+\eta}|\ip{Tu}{J(u)}-c'|+|\ip{Tx}{J(u)}|\\
&\le 0+\frac{\eta}{4}+\frac{\eta}{4}=\frac{\eta}{2}.
\end{align*}
Finally,
\[
\|y-x\|\le 1-\Bigl(\frac{\varepsilon}{\varepsilon+\eta}\Bigr)^{1/p}+\Bigl(\frac{\eta}{\varepsilon+\eta}\Bigr)^{1/p}.
\]
Writing
\[
        \alpha:=\frac{\eta}{\varepsilon+\eta},
\]
we have \(0\le \alpha\le 1/2\), and therefore
\[
        \|y-x\|
        \le 1-(1-\alpha)^{1/p}+\alpha^{1/p}.
\]
Moreover, since \(r\mapsto r^{1/p}\) is concave on \([0,\infty)\), we infer that
\(
        (1-\alpha)^{1/p}\ge 1-2\alpha .
\)
Thus
\[
        \|y-x\|
        \le 2\alpha+\alpha^{1/p}
        \le 3\alpha^{1/p}
        \le 3\left(\frac{\eta}{\varepsilon}\right)^{1/p}.
\]
\end{proof}

Now the statement on interior points follows by iterative use of Lemma \ref{lem:interior-halving}.

\begin{theorem}\label{thm:interior-We-inside-W}
Let $T\in \mathcal L(\ell_p)$. Then
\[
\Int \We(T)\subset W(T).
\]
Moreover, for every $\lambda\in \Int \We(T)$ and every $k\ge 0$, there exists a unit vector $x$ with
\[
\supp x\subset \{k+1,k+2,\dots\}
\qquad\text{and}\qquad
\ip{Tx}{J(x)}=\lambda.
\]
\end{theorem}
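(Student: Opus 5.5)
The plan is to reduce to $\lambda=0$, replacing $T$ by $T-\lambda I$; this is legitimate since $W_e(T-\lambda I)=W_e(T)-\lambda$ and $\ip{(T-\lambda I)x}{J(x)}=\ip{Tx}{J(x)}-\lambda$ for unit $x$. The first claim is the case $k=0$ of the second, so it suffices to produce, for given $k\ge0$, a unit vector $x$ supported in $\{k+1,k+2,\dots\}$ with $\ip{Tx}{J(x)}=0$. Set $\varepsilon:=\dist(0,\partial W_e(T))>0$.

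First, use Theorem~\ref{thm:We-ellp-tail-char}(ii) with $0\in W_e(T)$ to choose a unit vector $x_0$ with finite support in $\{k+1,\dots\}$ and $\eta_0:=|\ip{Tx_0}{J(x_0)}|$ as small as we like. Any $\eta_0\le\varepsilon$ is admissible; we will in addition want $3(\eta_0/\varepsilon)^{1/p}$ small so that the final vector stays away from zero. Then apply Lemma~\ref{lem:interior-halving} recursively. Given $x_n$ (unit, finitely supported in $\{k+1,\dots\}$) with $\eta_n:=|\ip{Tx_n}{J(x_n)}|\le\eta_0/2^n\le\varepsilon$, the lemma gives $x_{n+1}$ of the same type with $\eta_{n+1}\le\eta_n/2$ and
\[
\|x_{n+1}-x_n\|\le 3(\eta_n/\varepsilon)^{1/p}\le 3(\eta_0/\varepsilon)^{1/p}2^{-n/p}.
\]
The hypothesis $\eta_{n+1}\le\varepsilon$ persists, so the recursion continues indefinitely.

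The increments are summable, since the ratio $2^{-1/p}$ is less than $1$. Hence $(x_n)$ is Cauchy in $\ell_p$ and converges in norm to some $x$. As a norm limit of unit vectors, $x$ is a unit vector. Each $x_n$ lies in the closed subspace $(I-P_k)\ell_p$, so $\supp x\subset\{k+1,k+2,\dots\}$. By Lemma~\ref{thm:J-ellp}(i), $J(x_n)\to J(x)$ in norm. Therefore $\ip{Tx_n}{J(x_n)}\to\ip{Tx}{J(x)}$, and since $|\ip{Tx_n}{J(x_n)}|=\eta_n\to0$, we get $\ip{Tx}{J(x)}=0$. The vector $x$ need not have finite support, but the statement does not require it.

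There is no real obstacle left once Lemma~\ref{lem:interior-halving} is available. The only points needing care are bookkeeping: checking that the lemma's hypothesis $\eta_n\le\varepsilon$ holds at every step, which follows from the monotone decrease of $\eta_n$, and checking that the geometric bound on the increments gives norm convergence. The norm convergence, together with the norm continuity of $J$ on $\ell_p$, is exactly what turns the approximate identity into an exact one.
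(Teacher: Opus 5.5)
Your proposal is correct and follows the paper's proof essentially step for step. You reduce to $\lambda=0$, pick a finitely supported tail vector $x_0$ with $|\ip{Tx_0}{J(x_0)}|\le\varepsilon$, iterate Lemma~\ref{lem:interior-halving} to get a Cauchy sequence with geometrically decaying increments, and pass to the limit using the norm continuity of $J$. The one superfluous detail is the extra smallness requirement on $\eta_0$: a norm limit of unit vectors is automatically a unit vector, as you note yourself.
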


\begin{proof}
Let $\lambda\in \Int \We(T)$ and $k\ge 0$. Without loss of generality we may assume that $\lambda=0$.
Choose $\varepsilon>0$ such that
\[
\{z\in\C: |z|\le \varepsilon\}\subset \We(T).
\]
Choose a unit vector $x_0\in \ell_p$ with finite support contained in $\{k+1,k+2,\dots\}$ and satisfying
\[
|\ip{Tx_0}{J(x_0)}|\le \varepsilon.
\]
Using Lemma~\ref{lem:interior-halving} inductively, construct unit vectors $x_n\in \ell_p$ with finite support in $\{k+1,k+2,\dots\}$ such that
\[
|\ip{Tx_n}{J(x_n)}|\le \frac{\varepsilon}{2^n}
\]
and
\[
\|x_n-x_{n-1}\|\le 3\cdot 2^{(-n+1)/p}.
\]
Thus $(x_n)$ is a Cauchy sequence. Let $x=\lim_n x_n$. Then $\|x\|=1$, the support of $x$ is still contained in $\{k+1,k+2,\dots\}$, and Lemma~\ref{thm:J-ellp}(i) gives $J(x_n)\to J(x)$. Hence
\[
\ip{Tx}{J(x)}=\lim_n \ip{Tx_n}{J(x_n)}=0.
\]
This proves the theorem.
\end{proof}

While the interior of \(W_e(T)\) produces points in \(W(T)\), the whole of
\(W_e(T)\) provides star-centres for \(\overline{W(T)}\),  thus in particular making $\overline{W(T)}$ star-shaped.

\begin{theorem}\label{thm:closureW-starshaped}
Let $T\in \mathcal L(\ell_p)$. Then $\overline{W(T)}$ is star-shaped, and every point of $\We(T)$ is a star-centre of $\overline{W(T)}$.
\end{theorem}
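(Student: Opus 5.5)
The plan is a disjoint-support mixing argument, the same device used in the proof of Theorem~\ref{thm:We-ellp-convex}. Since $W_e(T)\neq\varnothing$ by Remark~\ref{remark17}, it suffices to prove the following: for $\lambda\in W_e(T)$, $\mu\in W(T)$ and $t\in[0,1]$, the point $(1-t)\lambda+t\mu$ lies in $\overline{W(T)}$. Points $\mu\in\overline{W(T)}$ then follow by approximation, because $\overline{W(T)}$ is closed. This shows that every $\lambda\in W_e(T)$ is a star-centre, and in particular that $\overline{W(T)}$ is star-shaped.

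Fix $\varepsilon>0$ and choose a unit vector $x$ with $\langle Tx,J(x)\rangle=\mu$. The first step is to make $x$ finitely supported. As in the proof of Theorem~\ref{thm:We-ellp-tail-char}, replace $x$ by $P_mx/\|P_mx\|$. By Lemma~\ref{thm:J-ellp}(i) this changes $\langle Tx,J(x)\rangle$ by less than $\varepsilon$, so we may assume $\supp x\subset\{1,\dots,k_1\}$. Enlarging $k_1$ if necessary, we also arrange $\|(I-P_{k_1})Tx\|<\varepsilon$.

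Next put
\[
M:=(I-P_{k_1})\ell_p\cap\Ker(P_{k_1}T),
\]
which is finite-codimensional. By Theorem~\ref{thm:We-ellp-tail-char}(iii), applied with $F=\{0\}$, there is a finitely supported unit vector $y\in M$ with $|\langle Ty,J(y)\rangle-\lambda|<\varepsilon$. The vectors $x$ and $y$ have disjoint supports. Moreover
\[
\langle Ty,J(x)\rangle=\langle P_{k_1}Ty,J(x)\rangle=0,\qquad |\langle Tx,J(y)\rangle|\le\|(I-P_{k_1})Tx\|<\varepsilon .
\]
Now set
\[
z:=t^{1/p}x+(1-t)^{1/p}y .
\]
Then $\|z\|=1$ and $J(z)=t^{1/q}J(x)+(1-t)^{1/q}J(y)$. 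Expanding the pairing gives
\[
\langle Tz,J(z)\rangle=t\langle Tx,J(x)\rangle+(1-t)\langle Ty,J(y)\rangle+t^{1/p}(1-t)^{1/q}\langle Tx,J(y)\rangle+t^{1/q}(1-t)^{1/p}\langle Ty,J(x)\rangle .
\]
The first two terms differ from $t\mu+(1-t)\lambda$ by less than $2\varepsilon$ in total. The two cross terms are bounded by $\varepsilon$ and $0$ respectively. Hence $\langle Tz,J(z)\rangle\in W(T)$ lies within $3\varepsilon$ of $(1-t)\lambda+t\mu$. Since $\varepsilon$ was arbitrary, $(1-t)\lambda+t\mu\in\overline{W(T)}$.

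The only delicate point is the asymmetry between the two cross terms. One term, $\langle Ty,J(x)\rangle$, is killed exactly by requiring $y\in\Ker(P_{k_1}T)$. The other, $\langle Tx,J(y)\rangle$, is only made small, by taking a tail cut-off of $Tx$. This requires $x$ to be finitely supported, which is why the truncation of $x$ and the continuity of $J$ are used first. Everything else is the same bookkeeping as in Theorem~\ref{thm:We-ellp-convex}.
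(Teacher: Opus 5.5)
Your proof is correct and is essentially the paper's argument. You truncate a (near-)attaining vector $x$, take a finitely supported $y\in(I-P_{k})\ell_p\cap\Ker(P_{k}T)$ from the tail characterisation, and mix them as $z=t^{1/p}x+(1-t)^{1/p}y$, so that one cross term vanishes exactly and the other is at most $\varepsilon$. The only cosmetic difference is that you first take $\mu\in W(T)$ and then pass to $\overline{W(T)}$ by closedness, whereas the paper starts directly from a vector approximating a point of $\overline{W(T)}$.
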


\begin{proof}
Let $\mu\in \We(T)$, $\lambda\in \overline{W(T)}$, $\varepsilon>0$, and $t\in(0,1)$.
Choose a unit vector $x\in \ell_p$ such that
\[
|\ip{Tx}{J(x)}-\lambda|<\varepsilon.
\]
For $k\in\N$, set
\[
x_k:=\frac{P_kx}{\|P_kx\|}.
\]
For $k$ large enough, $x_k$ is defined, $x_k\to x$ in norm, and hence $J(x_k)\to J(x)$ by Lemma~\ref{thm:J-ellp}(i). Thus there exists $k_0\in\N$ such that
\[
|\ip{Tx_{k_0}}{J(x_{k_0})}-\lambda|<\varepsilon,
\qquad
\|(I-P_{k_0})Tx_{k_0}\|<\varepsilon.
\]
Write $x':=x_{k_0}$. Since $\mu\in \We(T)$, Theorem~\ref{thm:We-ellp-tail-char}\textup{(iii)}, applied to
\[
M:=(I-P_{k_0})\ell_p\cap \Ker(P_{k_0}T)
\qquad\text{and}\qquad F=\{0\},
\]
yields a unit vector $y\in M$ with finite support such that
\[
|\ip{Ty}{J(y)}-\mu|<\varepsilon.
\]
In particular,
\[
\supp y\subset \{k_0+1,k_0+2,\dots\}
\qquad\text{and}\qquad
P_{k_0}Ty=0.
\]
Set
\[
z:=t^{1/p}x'+(1-t)^{1/p}y.
\]
Then $\|z\|=1$ and
\[
J(z)=t^{1/q}J(x')+(1-t)^{1/q}J(y).
\]
Therefore
\begin{align*}
\bigl|\ip{Tz}{J(z)}-(t\lambda+(1-t)\mu)\bigr|
&\le t|\ip{Tx'}{J(x')}-\lambda|+(1-t)|\ip{Ty}{J(y)}-\mu|\\
&\qquad +|\ip{Tx'}{J(y)}|+|\ip{Ty}{J(x')}|\\
&\le t\varepsilon+(1-t)\varepsilon+\varepsilon+0=2\varepsilon,
\end{align*}
where
\[
|\ip{Tx'}{J(y)}|=|\ip{(I-P_{k_0})Tx'}{J(y)}|\le \|(I-P_{k_0})Tx'\|<\varepsilon,
\]
while
\[
\ip{Ty}{J(x')}=\ip{P_{k_0}Ty}{J(x')}=0.
\]
Since $\varepsilon>0$ was arbitrary, $t\lambda+(1-t)\mu\in \overline{W(T)}$.
\end{proof}

If $\Int \We(T)\ne \varnothing$, one can strengthen the preceding result from $\overline{W(T)}$ to $W(T),$
  thus showing that on \(\ell_p\) the numerical range is often star-shaped. To this end, we need the next auxiliary lemma.

\begin{lemma}\label{lem:preiterate}
Let \(T\in \mathcal L(\ell_p)\), let
\(0\in \operatorname{Int} W_e(T)\), and choose \(\varepsilon>0\) such that
\[
        \{z\in\mathbb C: |z|\le \varepsilon\}\subset W_e(T).
\]
Let \(0<s'<s\le 2s'\), and suppose that \(x\in\ell_p\) is a unit vector
with finite support satisfying
\[
        |\langle Tx,J(x)\rangle-s|
        \le
        \frac{(s-s')\varepsilon}{s'} .
\]
Then, for every \(\delta>0\), there exists a unit vector \(y\in\ell_p\) with
finite support such that
\[
        |\langle Ty,J(y)\rangle-s'|<\delta
\]
and
\[
        \|y-x\|
        \le
        3\left(\frac{s-s'}s\right)^{1/p}.
\]
\end{lemma}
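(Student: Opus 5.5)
Let $c:=\langle Tx,J(x)\rangle$ and set
\[
        \alpha:=\frac{s-s'}{s}\in(0,1/2],
\]
using $s\le 2s'$. The construction follows Lemma~\ref{lem:interior-halving}: glue to $x$ a disjointly supported tail vector $u$ with weights chosen so that the convex combination of the two diagonal values lands at $s'$. Concretely, we aim for
\[
        y:=(1-\alpha)^{1/p}x+\alpha^{1/p}u .
\]
The target value for $u$ is the point $c'$ determined by
\[
        (1-\alpha)c+\alpha c'=s' .
\]
Solving and writing $c=s+e$ with $|e|\le (s-s')\varepsilon/s'$ gives
\[
        c'=\frac{s'-(1-\alpha)s-(1-\alpha)e}{\alpha} .
\]
Since $(1-\alpha)s=s'$, the first two terms cancel and $c'=-(1-\alpha)e/\alpha$. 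Using $(1-\alpha)/\alpha=s'/(s-s')$, we get $|c'|\le \varepsilon$. Hence $c'\in W_e(T)$ by the choice of $\varepsilon$.

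Next we realise $c'$ approximately by a tail vector, exactly as in the proofs of Theorem~\ref{thm:We-ellp-convex} and Lemma~\ref{lem:interior-halving}. Fix $\delta>0$. Choose $k_1$ so that $\supp x\subset\{1,\dots,k_1\}$ and $\|(I-P_{k_1})Tx\|<\delta/2$. Then apply Theorem~\ref{thm:We-ellp-tail-char}(iii) with
\[
        M:=(I-P_{k_1})\ell_p\cap\Ker(P_{k_1}T),\qquad F=\{0\}.
\]
This gives a finitely supported unit vector $u\in M$ with $|\langle Tu,J(u)\rangle-c'|<\delta/2$.

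Because the supports of $x$ and $u$ are disjoint, $\|y\|=1$ and
\[
        J(y)=(1-\alpha)^{1/q}J(x)+\alpha^{1/q}J(u).
\]
Expanding $\langle Ty,J(y)\rangle$ gives four terms:
\begin{itemize}
\item the diagonal terms $(1-\alpha)c+\alpha\langle Tu,J(u)\rangle$, which lie within $\alpha\delta/2$ of $s'$;
\item the cross term $\langle Tu,J(x)\rangle=\langle P_{k_1}Tu,J(x)\rangle$, which vanishes;
\item the cross term $\langle Tx,J(u)\rangle$, bounded by $\|(I-P_{k_1})Tx\|<\delta/2$.
\end{itemize}
The cross-term coefficients have modulus at most one, so $|\langle Ty,J(y)\rangle-s'|<\delta$. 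Finally, $y$ has finite support.

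The norm estimate repeats the end of the proof of Lemma~\ref{lem:interior-halving}:
\[
        \|y-x\|\le 1-(1-\alpha)^{1/p}+\alpha^{1/p}\le 2\alpha+\alpha^{1/p}\le 3\alpha^{1/p}.
\]
Here the middle inequality uses concavity, $(1-\alpha)^{1/p}\ge 1-2\alpha$ for $\alpha\le1/2$, and the last uses $\alpha\le\alpha^{1/p}$. This is exactly $3((s-s')/s)^{1/p}$.

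The only real subtlety is checking that the compensating value $c'$ stays inside the disc of radius $\varepsilon$, and hence inside $W_e(T)$. This is precisely what the hypothesis $|c-s|\le (s-s')\varepsilon/s'$ is calibrated for. The degenerate case $s=s'$ is excluded, so $\alpha>0$ and the division by $\alpha$ is legitimate.
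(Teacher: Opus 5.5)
Your proof is correct and follows essentially the same route as the paper's. The paper uses the same compensating value $c'=s'(s-\langle Tx,J(x)\rangle)/(s-s')$, the same glued vector $y=(s'/s)^{1/p}x+((s-s')/s)^{1/p}u$ and the same subspace (since $T^{-1}(I-P_{k_1})\ell_p=\Ker(P_{k_1}T)$). It also closes with the same concavity estimate taken from Lemma~\ref{lem:interior-halving}.
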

\begin{proof}
Set
\[
        c':=
        \frac{s'\bigl(s-\langle Tx,J(x)\rangle\bigr)}{s-s'} .
\]
By the hypothesis,
\[
        |c'|
        \le
        \frac{s'}{s-s'}|\langle Tx,J(x)\rangle-s|
        \le \varepsilon .
\]
Hence \(c'\in W_e(T)\).

Choose \(k_1\) so large that
\[
        \operatorname{supp}x\subset\{1,\ldots,k_1\},
        \qquad
        \|(I-P_{k_1})Tx\|<\delta/2 .
\]
By Theorem~\ref{thm:We-ellp-tail-char}, choose a unit vector \(u\in\ell_p\) with finite support such that
\[
        u\in (I-P_{k_1})\ell_p\cap T^{-1}(I-P_{k_1})\ell_p
\]
and
\[
        |\langle Tu,J(u)\rangle-c'|<\delta/2 .
\]
Set
\[
        y:=
        \left(\frac{s'}s\right)^{1/p}x+
        \left(\frac{s-s'}s\right)^{1/p}u .
\]
Then \(\|y\|=1\), \(y\) has finite support, and
\[
        J(y)=
        \left(\frac{s'}s\right)^{1/q}J(x)+
        \left(\frac{s-s'}s\right)^{1/q}J(u).
\]
Since \(Tu\in (I-P_{k_1})\ell_p\) and \(J(x)\) is supported in
\(\{1,\ldots,k_1\}\), we have
\[
        \langle Tu,J(x)\rangle=0.
\]
Moreover,
\[
        |\langle Tx,J(u)\rangle|
        =
        |\langle (I-P_{k_1})Tx,J(u)\rangle|
        <\delta/2 .
\]
Therefore
\[
\begin{aligned}
        |\langle Ty,J(y)\rangle-s'|
        &\le
        \left|
        \frac{s'}s\langle Tx,J(x)\rangle
        +\frac{s-s'}s c'-s'
        \right|                                      \\
        &\quad+
        \frac{s-s'}s
        |\langle Tu,J(u)\rangle-c'|
        +
        |\langle Tx,J(u)\rangle| .
\end{aligned}
\]
The first term is zero by the definition of \(c'\). Hence
\[
        |\langle Ty,J(y)\rangle-s'|<\delta .
\]

Finally,
\[
        \|y-x\|
        \le
        1-\left(\frac{s'}s\right)^{1/p}
        +
        \left(\frac{s-s'}s\right)^{1/p}.
\]
Since \(s\le 2s'\), the number
\[
        \alpha:=\frac{s-s'}s
\]
belongs to \([0,1/2]\). Then, arguing as in the end of the proof of Lemma~\ref{lem:interior-halving}, we obtain
\[
        \|y-x\|
        \le
        3\left(\frac{s-s'}s\right)^{1/p}.
\]
\end{proof}

Now using the above lemma iteratively, we deduce star-shapedness of $W(T)$ under very mild assumptions.

\begin{theorem}\label{thm:W-starshaped-when-intWe}
Let \(T\in\mathcal L(\ell_p)\). If $\Int \We(T)\ne \varnothing$, then $W(T)$ is star-shaped. More precisely, every point of $\Int \We(T)$ is a star-centre of $W(T)$.
\end{theorem}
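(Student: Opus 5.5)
Translating $T$ by a scalar shifts both $W(T)$ and $\We(T)$, so we may assume that $\lambda=0\in\Int\We(T)$. Fix $\varepsilon>0$ with $\{|z|\le\varepsilon\}\subset\We(T)$. We must show that $\sigma w\in W(T)$ for every $w\in W(T)$ and every $\sigma\in[0,1]$.

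The easy cases are dealt with first. If $\sigma=0$, or if $w=0$, then $0\in W(T)$ by Theorem~\ref{thm:interior-We-inside-W}; the case $\sigma=1$ is trivial. For $w\ne0$ we replace $T$ by $e^{-i\arg w}T$. This rotates $W(T)$ and $\We(T)$ about $0$, keeps the disc $\{|z|\le\varepsilon\}$ inside $\We(T)$, and leaves the star-centre property unchanged. We may therefore assume that $w=s_0>0$ is real and that the target value $\sigma_\ast:=\sigma s_0$ lies in $(0,s_0)$. Let $x\in\ell_p$ be a unit vector with $\ip{Tx}{J(x)}=s_0$.

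The plan is to reach $\sigma_\ast$ by infinitely many applications of Lemma~\ref{lem:preiterate} along the real levels
\[
s_n:=\sigma_\ast+(s_0-\sigma_\ast)2^{-n},\qquad n\ge0 .
\]
Since $s_n-\sigma_\ast=2(s_{n+1}-\sigma_\ast)$, we have $s_{n+1}<s_n\le 2s_{n+1}$, so the lemma's hypothesis $s'<s\le2s'$ holds at every step. Put
\[
\theta_n:=\frac{(s_n-s_{n+1})\varepsilon}{s_{n+1}}>0,
\]
which is the tolerance the lemma requires at level $s_n$ when passing to $s_{n+1}$.

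We build finitely supported unit vectors $x_n$ with $|\ip{Tx_n}{J(x_n)}-s_n|\le\theta_n$.
\begin{enumerate}[label=\textup{(\alph*)}]
\item To start, truncate $x$ to $P_kx/\|P_kx\|$ for $k$ large. By Lemma~\ref{thm:J-ellp}(i) the value $\ip{Tx_0}{J(x_0)}$ is then within $\theta_0$ of $s_0$, and $\|x_0-x\|$ is as small as we like. The original $x$ need not have finite support, and this step is the only place where that matters.
\item Given $x_n$, apply Lemma~\ref{lem:preiterate} with $s=s_n$, $s'=s_{n+1}$ and $\delta=\theta_{n+1}$. This produces a finitely supported unit vector $x_{n+1}$ with $|\ip{Tx_{n+1}}{J(x_{n+1})}-s_{n+1}|<\theta_{n+1}$, which is exactly the hypothesis needed for the next step, and with
\[
\|x_{n+1}-x_n\|\le 3\Bigl(\frac{s_n-s_{n+1}}{s_n}\Bigr)^{1/p}\le 3\Bigl(\frac{(s_0-\sigma_\ast)2^{-n-1}}{\sigma_\ast}\Bigr)^{1/p}.
\]
\end{enumerate}

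The bounds in (b) form a geometric series with ratio $2^{-1/p}$, so $(x_n)$ is Cauchy and converges to a unit vector $x_\infty$. By Lemma~\ref{thm:J-ellp}(i), $J(x_n)\to J(x_\infty)$ in norm. Since $\theta_n\to0$ and $s_n\to\sigma_\ast$, we get $\ip{Tx_\infty}{J(x_\infty)}=\sigma_\ast=\sigma w$, so $\sigma w\in W(T)$.

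The main point needing care is the choice of levels $s_n$. The lemma only allows the level to drop by at most a factor $2$ per step, and its error tolerance depends on the step size. The choice above fixes the same target $\sigma_\ast>0$ throughout, which keeps $\sigma_\ast$ bounded away from $0$ in the denominators. It makes the increments $s_n-s_{n+1}$ geometric, so the displacement bounds $(\cdot)^{1/p}$ remain summable. At the same time the tolerances $\theta_n$ are strictly positive, so every application of the lemma is admissible. Convergence to an exact point, rather than only to a point of the closure, then comes from the norm continuity of $J$.
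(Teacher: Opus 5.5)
Your proof is correct and follows the paper's argument essentially verbatim: the same geometrically descending levels $s_n$, an initial truncation to get a finitely supported starting vector, repeated application of Lemma~\ref{lem:preiterate}, and norm continuity of $J$ to pass to an exact limit. The only differences are cosmetic. The paper normalises by $(T-\lambda I)/(\mu-\lambda)$ so that $\mu=1$ and uses the tolerances $(1-s_1)\varepsilon/(2^n s_1)$. You instead only translate and rotate, and use the lemma's exact tolerance $\theta_n$.
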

\begin{proof}
Let \(\lambda\in \operatorname{Int}W_e(T)\), let \(\mu\in W(T)\), and let
\(t\in(0,1)\). We show that
\[
        (1-t)\lambda+t\mu\in W(T).
\]
If \(\lambda=\mu\), there is nothing to prove. Otherwise, replacing \(T\) by
\[
        \frac{T-\lambda I}{\mu-\lambda},
\]
we may assume that
\[
        \lambda=0,\qquad \mu=1.
\]
Let
\[
        \varepsilon:=\operatorname{dist}(0,\partial W_e(T)).
\]
For \(n\ge0\), set
\[
        s_n:=t+\frac{1-t}{2^n}.
\]
Then \(s_0=1\) and \(s_n\downarrow t\).

Choose a unit vector \(u\in\ell_p\) such that
\[
        \langle Tu,J(u)\rangle=1.
\]
For \(m\) large enough, the normalised truncations
\[
        u_m:=\frac{P_m u}{\|P_m u\|}
\]
are well defined and satisfy
\[
        \langle Tu_m,J(u_m)\rangle\to 1
\]
by Lemma~\ref{thm:J-ellp}(i). Hence we may choose a unit vector \(x_0\in\ell_p\) with
finite support such that
\[
        |\langle Tx_0,J(x_0)\rangle-s_0|
        <
        \frac{(1-s_1)\varepsilon}{s_1}.
\]

We construct inductively unit vectors \(x_n\in\ell_p, n \in \mathbb N,\) with finite support
such that
\begin{equation}\label{ind1}
        |\langle Tx_n,J(x_n)\rangle-s_n|
        <
        \frac{(1-s_1)\varepsilon}{2^n s_1}
\end{equation}
and
\begin{equation}\label{ind2}
        \|x_n-x_{n-1}\|
        \le
        3\left(\frac{s_{n-1}-s_n}{s_{n-1}}\right)^{1/p}
        \qquad (n\ge1).
\end{equation}
Suppose that \(x_{n-1}\) has already been constructed. Since \(s_n\le s_1\),
we have
\[
        \frac{(1-s_1)\varepsilon}{2^{n-1}s_1}
        =
        \frac{(1-t)\varepsilon}{2^n s_1}
        \le
        \frac{(1-t)\varepsilon}{2^n s_n}
        =
        \frac{(s_{n-1}-s_n)\varepsilon}{s_n}.
\]
Thus the hypothesis of Lemma~\ref{lem:preiterate} is satisfied with \(s=s_{n-1}\) and
\(s'=s_n\). Applying Lemma~\ref{lem:preiterate} with
\[
        \delta:=\frac{(1-s_1)\varepsilon}{2^n s_1}
\]
gives a unit vector \(x_n\in\ell_p\) with finite support such that
\eqref{ind1} and \eqref{ind2} hold.
This completes the induction step.

Since \(s_{n-1}\ge t\) and
\[
        s_{n-1}-s_n=\frac{1-t}{2^n},
\]
we have
\[
        \|x_n-x_{n-1}\|
        \le
        3\left(\frac{1-t}{2^n t}\right)^{1/p}.
\]
The right-hand side is summable in \(n\), and hence \((x_n)\) is a Cauchy
sequence. Let
\[
        x:=\lim_{n\to\infty}x_n .
\]
Then \(\|x\|=1\), and Lemma~\ref{thm:J-ellp}(i) gives \(J(x_n)\to J(x)\) in \(\ell_q\).
Since
\[
        s_n\to t
        \quad\text{and}\quad
        \frac{(1-s_1)\varepsilon}{2^n s_1}\to0,
\]
we obtain
\[
        \langle Tx,J(x)\rangle
        =
        \lim_{n\to\infty}\langle Tx_n,J(x_n)\rangle
        =
        t.
\]
Thus \(t\in W(T)\) for the normalised operator. If \(T_0\) denotes the
operator before the affine reduction, the preceding conclusion applied to
\((T_0-\lambda I)/(\mu-\lambda)\) gives a unit vector \(x\) such that
\[
        \left\langle
        \frac{T_0-\lambda I}{\mu-\lambda}x,J(x)
        \right\rangle=t.
\]
Equivalently,
\[
        \langle T_0x,J(x)\rangle=(1-t)\lambda+t\mu,
\]
and hence \((1-t)\lambda+t\mu\in W(T_0)\).
Therefore every point of \(\operatorname{Int}W_e(T)\) is a star-centre of
\(W(T)\), and \(W(T)\) is star-shaped.
\end{proof}

The structure theory developed in this section suggests that concrete matrix examples on sequence spaces should display both Hilbert-space features and genuinely non-Hilbertian behaviour. We therefore turn next to Toeplitz-type operators and to the discrete Hilbert transform.

\section{Discrete Toeplitz and related sequence-space examples}\label{secexamp}

The atomic theory on $\ell_p$ already yields explicit sequence-space examples which serve as a natural bridge to the later $L^p$ questions. We record here only those parts of the discrete Toeplitz discussion that are used below and that fit the present scope. Topological properties of $W(T)$ and $W_e(T)$ are invariant under isometric conjugacy, so one may pass freely between isometric examples such as $\ell_p(\mathbb N)$ and $\ell_p(\mathbb N_0)$, where $\mathbb N_0=\mathbb N \cup \{0\}$, or between different enumerations of $\ell_p(\mathbb Z)$.
In particular, we use \(\mathbb N_0\) in the one-sided shift examples only for
notational convenience; of course \(\ell_p(\mathbb N_0)\) is canonically
isometric to \(\ell_p(\mathbb N)\).
 By contrast, the operator classes discussed below have natural domains of their own, and it is convenient to state the results directly for those examples.

\subsection{The unilateral sequence-space example}\label{sec:unilateral-sequence-model}

We begin the example section with one-sided sequence-space Toeplitz operators. As above, let
$1<p<\infty$, and let $(e_j)_{j\ge0}$ be the standard basis of
$\ell_p(\mathbb N_0)$. We denote by $S_R$ and $S_L$ the right and left shifts on
$\ell_p(\mathbb N_0)$, given by
\begin{equation}\label{shifts}
S_Re_j=e_{j+1}\quad(j\ge0),\qquad
S_Le_0=0,\qquad S_Le_j=e_{j-1}\quad(j\ge1).
\end{equation}
We continue to write $J$ for the duality map on $\ell_p$. These operators provide the simplest discrete analogues of Hardy-space Toeplitz operators and already exhibit the main essential-range mechanism.
For operator-algebraic aspects of the \(\ell_p\)-Toeplitz algebra on
\(\ell_p(\mathbb N_0)\), related to $S_R$ and $S_L$, see e.g. \cite{Wang2021}.
Here we use only the concrete Toeplitz examples as test objects for ordinary
and essential numerical ranges.

Sequence-space Toeplitz operators are described primarily by their coefficient sequences. Thus a one-sided Toeplitz operator on $\ell_p(\mathbb N_0)$ is given by a matrix $\bigl(a_{j-k}\bigr)_{j,k\ge0}$ which is constant along diagonals, while the corresponding bilateral Laurent operator on $\ell_p(\mathbb Z)$ is given by convolution with the same coefficient sequence. When the coefficients arise as Fourier coefficients of a bounded function $\varphi$ on $\mathbb T$, it is natural to regard $\varphi$ as a symbol. In the Hilbert-space Hardy setting, by contrast, analytic symbols have a direct numerical-range interpretation. For $\varphi\in H^\infty(\mathbb D)$, let $T_\varphi$ be the Toeplitz operator on $H^2(\mathbb D)$, given by $T_\varphi f=P_+(\varphi f)$, where $P_+$ denotes the Riesz projection from $L^2(\mathbb T)$ onto $H^2(\mathbb D)$. Then 
\[
        \overline{W(T_\varphi)}=\overline{\conv}\,\varphi(\mathbb D)
\]
by \cite{KleinToeplitz}. The discussion below retains the Toeplitz pattern on $\ell_p(\mathbb N_0)$ while keeping the geometry completely explicit and illustrating how differently numerical ranges behave outside the Hilbert setting.

Now let $a=(a_k)_{k\in\mathbb Z}\in \ell^1(\mathbb Z)$ and define the unilateral Toeplitz operator $T_a$ on $\ell_p(\mathbb N_0)$
by
\begin{equation}\label{toepldef}
(T_a x)_n:=\sum_{m\ge 0} a_{n-m}x_m,
\qquad n\ge 0.
\end{equation}

The next simple proposition records a special feature of Toeplitz numerical
ranges on \(\ell_p(\mathbb N_0)\), \(1<p<\infty\), resembling the one for \(p=2\).
\begin{proposition}\label{prop:toeplitz-shift-lemma}
For every $T_a \in \mathcal L(\ell_p(\mathbb N_0))$ defined by \eqref{toepldef} one has
\[
\We(T_a)=\overline{W(T_a)}.
\]
As a consequence,
$\overline{W(T_a)}$ is a convex set.
\end{proposition}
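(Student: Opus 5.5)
The plan is to prove the two inclusions $\We(T_a)\subset\overline{W(T_a)}$ and $\overline{W(T_a)}\subset\We(T_a)$; convexity then follows at once from Theorem~\ref{thm:We-ellp-convex}. The first inclusion holds for every operator. Indeed, on $\ell_p(\mathbb N_0)$ the sequential description of $\We$ shows that each point of $\We(T_a)$ is a limit of values $\ip{T_ax_k}{J(x_k)}$ with $\|x_k\|=1$. So the real content is the reverse inclusion.

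For the reverse inclusion I would use the shift structure. The key identity is
\[
S_L^n T_a S_R^n = T_a, \qquad n\ge 0.
\]
This holds because $T_a$ has the Toeplitz matrix $(a_{j-k})_{j,k\ge0}$, and compressing to indices $\ge n$ and reindexing leaves the matrix unchanged.

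Now let $\lambda=\ip{T_ax}{J(x)}$ with $\|x\|_p=1$. Put $x_n:=S_R^n x$. Then $\|x_n\|=1$, since $S_R$ is an isometry, and $x_n\wto0$, since the support of $x_n$ is pushed to infinity. The coordinate formula for $J$ gives $J(S_R^nx)=S_R^{(q)n}J(x)$, where $S_R^{(q)}$ is the right shift on $\ell_q$. With the pairing fixed in Section~\ref{inflp}, the adjoint of $S_L^n$ is $S_R^{(q)n}$. Hence
\[
\ip{T_aS_R^nx}{J(S_R^nx)}=\ip{T_aS_R^nx}{S_R^{(q)n}J(x)}=\ip{S_L^nT_aS_R^nx}{J(x)}=\ip{T_ax}{J(x)}=\lambda .
\]
So the constant sequence $\lambda$ is realised along a weakly null sequence of unit vectors, and $\lambda\in\We(T_a)$. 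This gives $W(T_a)\subset\We(T_a)$. Since $\We(T_a)$ is closed (Remark~\ref{remark17}), we conclude $\overline{W(T_a)}\subset\We(T_a)$.

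The only points needing care are bookkeeping. First, one must confirm the compression identity from \eqref{toepldef}: $(S_L^nT_aS_R^nx)_j=\sum_m a_{j+n-(m+n)}x_m=(T_ax)_j$. Second, one must match the duality conventions, that is, that $J$ commutes with the right shift and that $(S_L^n)^*$ is the right shift on $\ell_q$. Boundedness of $T_a$ follows from $a\in\ell^1(\mathbb Z)$ by Young's inequality. I do not expect a genuine obstacle. The mild subtlety is only that weak nullity of $S_R^nx$ requires $1<p<\infty$, which is assumed, since for $y\in\ell_q$ one has $\ip{S_R^nx}{y}\to0$ by Hölder applied to the tails of $y$.
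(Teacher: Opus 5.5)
Your proof is correct and follows the paper's argument. The paper shows the same invariance of the numerical-range value under $x\mapsto S_R^m x$, though it checks it coordinatewise, whereas you phrase it as the compression identity $S_L^nT_aS_R^n=T_a$ combined with $J(S_R^nx)=S_R^{n}J(x)$; both then use weak nullity of $S_R^m x$ and the tautological reverse inclusion, and your explicit appeal to closedness of $\We(T_a)$ fills in the step from $W(T_a)\subset\We(T_a)$ to $\overline{W(T_a)}\subset\We(T_a)$ that the paper leaves implicit.
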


\begin{proof}
For every unit vector \(x\in \ell_p(\mathbb N_0)\) and every \(m\ge0\),
the corresponding numerical-range value is invariant under the right shift
\(S_R^m\). Indeed, for \(n\ge m\) one has
\[
        (T_a(S_R^m x))_n=(T_ax)_{n-m},
\]
whereas \((S_R^m x)_n=0\) for \(n<m\). Moreover,
\(J(S_R^m x)=S_R^mJ(x)\). Therefore
\[
\begin{aligned}
\ip{T_a(S_R^m x)}{J(S_R^m x)}
&=\sum_{n\ge m} (T_a(S_R^m x))_n\, (J(S_R^m x))_n  \\
&=\sum_{n\ge m} (T_ax)_{n-m}\, J(x)_{n-m} \\
&=\sum_{k\ge0} (T_ax)_k\,J(x)_k
=\ip{T_ax}{J(x)}.
\end{aligned}
\]

Since $S_R^m x\wto 0$, this yields
\[
W(T_a)\subset \We(T_a).
\]
The reverse inclusion
\[
\We(T_a)\subset \overline{W(T_a)}
\]
is tautological from the definition of $\We(T_a)$. Hence
\[
\We(T_a)=\overline{W(T_a)}.
\]
\end{proof}

\subsection{Explicit tridiagonal Toeplitz examples on $\ell_p(\mathbb N_0)$}

We next specialise the general Toeplitz discussion to tridiagonal examples. This is the first place where one can write down explicit formulas and compare upper bounds with exact descriptions. The first theorem gives a general enclosure for $W_e(T_a)=\overline{W(T_a)}.$

Let
\begin{equation}\label{toeplitz}
T=a_{-1}S_L+a_0I+a_1S_R\in \mathcal L(\ell_p(\mathbb N_0)),
\qquad a_{-1},a_0,a_1\in\C,
\end{equation}
where $S_R$ and $S_L$ are given by \eqref{shifts}.
Define
\[
\mathcal E_p(a_1,a_{-1})
:=\left\{
\begin{array}{c}
a_1rs^{p-1}e^{-i\theta}+a_{-1}sr^{p-1}e^{i\theta}:\\[1mm]
r,s\ge0,\ \ r^p+s^p=1,\ \ \theta\in[0,2\pi]
\end{array}
\right\}.
\]

For fixed $r,s\ge0$ with $r^p+s^p=1$, the curve in the parameter $\theta$
which occurs in the definition of $\mathcal E_p(a_1,a_{-1})$ is an ellipse,
possibly degenerate.

\begin{theorem}
\label{thm:toeplitz-tridiag-upper}
Let $T\in \mathcal L(\ell_p(\mathbb N_0))$ be given by \eqref{toeplitz}.
Then
\[
\We(T)=\overline{W(T)}
\subset a_0+2\,\overline{\conv}\,\mathcal E_p(a_1,a_{-1}).
\]
\end{theorem}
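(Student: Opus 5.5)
The plan is to combine Proposition~\ref{prop:toeplitz-shift-lemma} with a direct computation of the numerical-range values. The tridiagonal operator \eqref{toeplitz} is a unilateral Toeplitz operator $T_a$ with finitely supported coefficient sequence ($a_k=0$ for $|k|\ge2$). So the equality $\We(T)=\overline{W(T)}$ is exactly Proposition~\ref{prop:toeplitz-shift-lemma}. Since the right-hand side of the claimed inclusion is closed, it then suffices to show that $\ip{Tx}{J(x)}\in a_0+2\,\overline{\conv}\,\mathcal E_p(a_1,a_{-1})$ for every unit vector $x\in\ell_p(\mathbb N_0)$.

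Fix such an $x$ and write $x_j=\rho_je^{i\varphi_j}$ with $\rho_j\ge0$, so that $J(x)_j=\rho_j^{p-1}e^{i\varphi_j}$. Expanding with the pairing, the identity term gives $a_0$. The shift terms give
\[
a_1\sum_{j\ge0}\rho_j\rho_{j+1}^{p-1}e^{i(\varphi_j-\varphi_{j+1})}
+a_{-1}\sum_{j\ge0}\rho_{j+1}\rho_j^{p-1}e^{i(\varphi_{j+1}-\varphi_j)}.
\]
The key step is to group these two sums by the edge $(j,j+1)$. Put $\theta_j:=\varphi_{j+1}-\varphi_j$ and $w_j:=\rho_j^p+\rho_{j+1}^p$. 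If $w_j>0$, set $r=\rho_j w_j^{-1/p}$ and $s=\rho_{j+1}w_j^{-1/p}$, so that $r^p+s^p=1$.

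Since each summand is homogeneous of total degree $p$ in $(\rho_j,\rho_{j+1})$, the $j$-th grouped term equals
\[
w_j\bigl(a_1 rs^{p-1}e^{-i\theta_j}+a_{-1}sr^{p-1}e^{i\theta_j}\bigr)=w_j\zeta_j,
\qquad \zeta_j\in\mathcal E_p(a_1,a_{-1}).
\]
If $w_j=0$, the term vanishes, and we may take $\zeta_j$ to be any point of $\mathcal E_p(a_1,a_{-1})$. Hence
\[
\ip{Tx}{J(x)}=a_0+\sum_{j\ge0}w_j\zeta_j .
\]
Each $\rho_k^p$ appears in at most two of the $w_j$, so
\[
\sum_{j\ge0}w_j=2\sum_{k\ge0}\rho_k^p-\rho_0^p\le 2 .
\]

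To finish, observe that $0\in\mathcal E_p(a_1,a_{-1})$, by taking $r=1$ and $s=0$. We can therefore write
\[
\sum_j w_j\zeta_j=2\Bigl(\sum_j\tfrac{w_j}{2}\zeta_j+\bigl(1-\textstyle\sum_j\tfrac{w_j}{2}\bigr)\cdot0\Bigr).
\]
The bracket is a convergent countable convex combination of points of the bounded set $\mathcal E_p$, with absolute convergence because $|\zeta_j|\le|a_1|+|a_{-1}|$. By the countable convex-combination fact used before Theorem~\ref{thm:countable-diagonal-convex}, or simply as a limit of finite convex combinations, the bracket lies in $\overline{\conv}\,\mathcal E_p(a_1,a_{-1})$. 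Taking closure over $x$ then gives the theorem.

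The only delicate point is the bookkeeping in the regrouping step: each edge must receive exactly one $a_1$-term and one $a_{-1}$-term with matching phases $e^{\mp i\theta_j}$. This is where the conjugation convention of the pairing has to be tracked carefully. The rest is the mass count $\sum_j w_j\le2$, which accounts for the factor $2$ in the statement.
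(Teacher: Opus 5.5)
Your proposal is correct and follows the paper's proof essentially step by step. The shared steps are the edge-wise regrouping with weights $\rho_n^p+\rho_{n+1}^p$, the mass count $\sum_n(\rho_n^p+\rho_{n+1}^p)=2-\rho_0^p\le 2$, padding with $0$ to form a convex combination, and Proposition~\ref{prop:toeplitz-shift-lemma} for $\We(T)=\overline{W(T)}$. The differences are only cosmetic: you get $0\in\mathcal E_p(a_1,a_{-1})$ directly from $r=1$, $s=0$ rather than via central symmetry, and you reach $\overline{\conv}\,\mathcal E_p(a_1,a_{-1})$ through partial sums rather than the countable convex-combination fact.
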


\begin{proof}
Let $x\in \ell_p(\mathbb N_0)$ with $\|x\|_p=1.$
 A direct expansion gives
\begin{align*}
\ip{Tx}{J(x)}
&=a_0\sum_{n\ge0}|x_n|^p
+a_1\sum_{n\ge1}x_{n-1}|x_n|^{p-2}\overline{x_n}
+a_{-1}\sum_{n\ge0}x_{n+1}|x_n|^{p-2}\overline{x_n}\\
&=a_0+\sum_{n\ge0}\Bigl(a_1x_n|x_{n+1}|^{p-2}\overline{x_{n+1}}+a_{-1}x_{n+1}|x_n|^{p-2}\overline{x_n}\Bigr).
\end{align*}
For each $n\ge0$, write
\[
        x_n=\rho_ne^{i\varphi_n},
        \qquad \rho_n\ge0,\quad \varphi_n\in[0,2\pi),
\]
and set $d_n:=\rho_n^p+\rho_{n+1}^p$. If $d_n>0$, define
\[
r_n:=\rho_n/d_n^{1/p},\qquad s_n:=\rho_{n+1}/d_n^{1/p},\qquad \theta_n:=\varphi_{n+1}-\varphi_n.
\]
Then $r_n^p+s_n^p=1$, and the $n$th bracket equals
\[
d_n\Bigl(a_1r_ns_n^{p-1}e^{-i\theta_n}+a_{-1}s_nr_n^{p-1}e^{i\theta_n}\Bigr)
\in d_n\,\mathcal E_p(a_1,a_{-1}).
\]
Since
\[
\sum_{n\ge0}d_n=\sum_{n\ge0}(\rho_n^p+\rho_{n+1}^p)=2-\rho_0^p\le 2,
\]
and $\mathcal E_p(a_1,a_{-1})$ is centrally symmetric (replace $\theta$ by $\theta+\pi$), one has
\[
0\in \conv\bigl(\mathcal E_p(a_1,a_{-1})\bigr).
\]
Let
\[
C:=\conv\bigl(\mathcal E_p(a_1,a_{-1})\bigr).
\]
For each $n$ with $d_n>0$, choose $\eta_n\in\mathcal E_p(a_1,a_{-1})$ so that the $n$th bracket equals $d_n\eta_n$. Since $0\in C$ and $\sum_{n\ge0}d_n\le2$, we have
\[
\frac12\sum_{n\ge0}d_n\eta_n
=\sum_{n\ge0}\frac{d_n}{2}\eta_n+\left(1-\frac12\sum_{n\ge0}d_n\right)0\in C.
\]
Thus
\[
\ip{Tx}{J(x)}-a_0=\sum_{n\ge0}d_n\eta_n\in 2C
=2\,\conv\bigl(\mathcal E_p(a_1,a_{-1})\bigr).
\]
Taking closures yields
\[
\overline{W(T)}\subset a_0+2\,\overline{\conv}\,\mathcal E_p(a_1,a_{-1}).
\]
Now apply Proposition~\ref{prop:toeplitz-shift-lemma}.
\end{proof}

The above inclusion can be complemented as follows.
\begin{proposition}\label{prop:toeplitz-spectral-ellipse}
Let $T \in \mathcal L(\ell_p(\mathbb N_0))$ be defined by \eqref{toeplitz},
and put 
\[
        \Gamma_T:=a_0+\{a_1e^{-it}+a_{-1}e^{it}:0\le t\le2\pi\},
        \qquad
        \Delta_T:=\conv\Gamma_T .
\]
Then
\[
        \sigma_e(T)=\Gamma_T,
        \qquad
        \Delta_T\subset \We(T)=\overline{W(T)}.
\]
Moreover, the set $\Delta_T$ has the following explicit form. If
$a_1=a_{-1}=0$, then $\Delta_T=\{a_0\}$. If
$|a_1|\ne |a_{-1}|$, then $\Delta_T$ is the closed ellipse centred at
$a_0$ with 
semiaxis lengths
\[
        |a_1|+|a_{-1}|,
        \qquad
        \bigl||a_{-1}|-|a_1|\bigr|,
\]
with the usual interpretation as a disc when one of $a_1,a_{-1}$ is zero.
If $|a_1|=|a_{-1}|=\rho>0$, then $\Delta_T$ is a line segment centred at
$a_0$ and of length $4\rho$.
\end{proposition}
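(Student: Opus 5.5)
The argument has three parts: compute $\sigma_e(T)$ by Fredholm theory of Toeplitz operators with continuous symbol, obtain the inclusion $\Delta_T\subset\We(T)$ from Theorem~\ref{thm:convhull-sigma-pie}, and finish with an elementary computation of the convex hull of the curve $\Gamma_T$.

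\emph{Step 1: essential spectrum.} The symbol of $T$ is
\[
\varphi(e^{it})=a_0+a_1e^{it}+a_{-1}e^{-it},
\]
with the sign convention fixed by $S_R\leftrightarrow e^{it}$. Relabelling $t\mapsto -t$ does not change its range $\Gamma_T$.

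To show $\sigma_e(T)=\Gamma_T$, I would use the Calkin image. The commutator $S_LS_R-S_RS_L$ is the rank-one projection onto $e_0$, so $\pi(S_R)$ is invertible in $\mathcal L(\ell_p)/\mathcal K(\ell_p)$ with inverse $\pi(S_L)$. Hence
\[
\pi(T-\lambda I)=\pi\bigl(S_R^{-1}(a_1S_R^2+(a_0-\lambda)S_R+a_{-1})\bigr)
\]
in the Calkin algebra, where $S_R^{-1}$ is shorthand for the Calkin inverse $\pi(S_L)$.

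Factor the quadratic $a_1z^2+(a_0-\lambda)z+a_{-1}=a_1(z-z_1)(z-z_2)$. Since $\sigma(S_R)=\overline{\D}$, each factor $S_R-z_j$ with $|z_j|\ne1$ is Fredholm: it is injective with closed range, and is invertible when $|z_j|>1$. So $T-\lambda I$ is Fredholm exactly when no root lies on $\T$, which is the same as $\lambda\notin\Gamma_T$.

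Conversely, if some $|z_j|=1$, then $S_R-z_j$ is not Fredholm. Its range is not closed, since $z_j\in\partial\sigma(S_R)$ is an approximate eigenvalue. The other factor is Fredholm or also fails, so the product fails to be Fredholm; here I use that the Calkin algebra elements $\pi(S_R-z)$ commute, so invertibility of the product forces invertibility of each factor. The degenerate cases $a_1=0$ or $a_{-1}=0$ are handled the same way with a linear polynomial. If both vanish, $T=a_0I$ and $\sigma_e(T)=\{a_0\}$.

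\emph{Step 2: the inclusion.} Theorem~\ref{thm:convhull-sigma-pie} gives $\Delta_T=\conv\sigma_e(T)\subset\We(T)$. Proposition~\ref{prop:toeplitz-shift-lemma} gives $\We(T)=\overline{W(T)}$.

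\emph{Step 3: shape of $\Delta_T$.} Write $a_1=Ae^{i\phi}$ and $a_{-1}=Be^{i\psi}$. Then
\[
a_1e^{-it}+a_{-1}e^{it}=e^{i\sigma}\bigl((A+B)\cos\eta+i(B-A)\sin\eta\bigr),
\]
with $\sigma=(\phi+\psi)/2$ and $\eta=t+(\psi-\phi)/2$, exactly as in Remark~\ref{rem:lp2-ellipses}. This is an ellipse with semiaxes $A+B$ and $|B-A|$, and its convex hull is the filled ellipse. When $A=B=\rho>0$ it degenerates to the segment $e^{i\sigma}[-2\rho,2\rho]$, of length $4\rho$. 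The case $a_1=a_{-1}=0$ is trivial.

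The main obstacle is Step 1 on $\ell_p$ rather than $H^2$: I must justify the Fredholm claims for $S_R-z$ with $|z|\ne1$ and non-Fredholmness for $|z|=1$ without Hilbert space tools. Both reduce to standard spectral facts for the shift on $\ell_p$, namely that the boundary points of $\sigma(S_R)$ lie in $\sigma_{\pi e}$ because approximate eigenvectors can be taken with shifting supports.
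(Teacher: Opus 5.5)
Your proof is correct. Steps 2 and 3 are exactly the paper's argument. The paper also gets $\Delta_T\subset\We(T)=\overline{W(T)}$ from Theorem~\ref{thm:convhull-sigma-pie} and Proposition~\ref{prop:toeplitz-shift-lemma}, and describes $\Delta_T$ through the same rotation $\eta=t+(\psi-\phi)/2$.

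The genuine difference is Step 1. The paper does not prove $\sigma_e(T)=\Gamma_T$; it quotes it as the classical fact for finite-band Toeplitz operators on $\ell_p$ (Duren's theorem). You instead derive it inside the Calkin algebra. You use $S_LS_R=I$ and $S_RS_L=I-(\text{rank one})$, factor the quadratic, and use that the commuting factors $\pi(S_R-z_j)$ must each be invertible when their product is. Your route is self-contained and works uniformly in $p$, since it needs only these algebraic relations and the fact that points of $\T$ are approximate eigenvalues of the injective operator $S_R$. It also extends verbatim to any Laurent-polynomial symbol. The paper's citation is simply shorter.

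One small point needs repair. ``Injective with closed range'' does not by itself make $S_R-z$ Fredholm for $|z|<1$; you also need the range to have finite codimension. This follows from $S_R-z=(I-zS_L)S_R$ with $I-zS_L$ invertible, so the range has codimension one.

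A cleaner version of the whole step goes as follows.
\begin{itemize}
\item $\pi(S_R)$ is invertible with $\|\pi(S_R)\|\le1$ and $\|\pi(S_R)^{-1}\|=\|\pi(S_L)\|\le1$, so $\sigma(\pi(S_R))\subset\T$.
\item Your approximate-eigenvector argument then gives $\sigma(\pi(S_R))=\T$.
\item The spectral mapping theorem for $\varphi(z)=a_{-1}z^{-1}+a_0+a_1z$ yields $\sigma_e(T)=\varphi(\T)=\Gamma_T$ at once, including all degenerate cases.
\end{itemize}
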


\begin{proof}
The case $a_1=a_{-1}=0$ is immediate, so suppose that at least one of
$a_1,a_{-1}$ is non-zero. Choose arguments $\phi_1$ and $\phi_{-1}$ so
that
\[
        a_1=|a_1|e^{i\phi_1},
        \qquad
        a_{-1}=|a_{-1}|e^{i\phi_{-1}},
\]
choosing the argument of a zero coefficient arbitrarily, and put
\[
        \omega=e^{i(\phi_1+\phi_{-1})/2}.
\]
Then
\[
        a_1e^{-it}+a_{-1}e^{it}
        =\omega\Bigl((|a_1|+|a_{-1}|)\cos\eta
        +i(|a_{-1}|-|a_1|)\sin\eta\Bigr),
\]
where
\[
        \eta=t+\frac{\phi_{-1}-\phi_1}{2}.
\]
This gives the asserted geometric description of $\Delta_T$.
The essential-spectrum identity $\sigma_e(T)=\Gamma_T$ is the classical
fact from the theory of finite-band Toeplitz operators on $\ell_p.$ See e.g.  \cite[Theorem~1]{Duren1964}.
The lower inclusion follows from Theorem~\ref{thm:convhull-sigma-pie} and
Proposition~\ref{prop:toeplitz-shift-lemma}.
\end{proof}

\begin{remark}
The ellipse $\Delta_T$ is already visible in the upper estimate of
Theorem~\ref{thm:toeplitz-tridiag-upper}. Indeed, taking
$r=s=2^{-1/p}$ in the definition of $\mathcal E_p(a_1,a_{-1})$ gives
\[
        2\bigl(a_1rs^{p-1}e^{-i\theta}+a_{-1}sr^{p-1}e^{i\theta}\bigr)
        =a_1e^{-i\theta}+a_{-1}e^{i\theta},
\]
and hence
\[
        \Delta_T\subset a_0+2\conv\bigl(\mathcal E_p(a_1,a_{-1})\bigr).
\]
\end{remark}
In the symmetric case, $W_e(T_a)$ admits an exact description. The arguments rest on convexity of $W_e(T)$
and the properties of $W_e(T_a)$ proved above.
\begin{theorem}
\label{thm:toeplitz-symmetric-exact}
Let $T \in \mathcal L(\ell_p(\mathbb N_0))$ be given by
\[
T:=a_0I+c(S_R+S_L)\in \mathcal L(\ell_p(\mathbb N_0)),
\qquad a_0,c\in\C.
\]
Define
\[
E_p:=\Bigl\{rs^{p-1}e^{-i\theta}+sr^{p-1}e^{i\theta}:\ r,s\ge0,\ r^p+s^p=1,\ \theta\in[0,2\pi]\Bigr\}.
\]
Then
\[
\We(T)=\overline{W(T)}=a_0+2c\,\overline{\conv}\,E_p.
\]
In particular, for $T_1:=S_R+S_L$, one has
\[
\We(T_1)=\overline{W(T_1)}=2\,\overline{\conv}\,E_p.
\]
\end{theorem}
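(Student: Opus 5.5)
The upper inclusion is already available. Theorem~\ref{thm:toeplitz-tridiag-upper}, applied with $a_1=a_{-1}=c$, gives
\[
\We(T)=\overline{W(T)}\subset a_0+2\,\overline{\conv}\,\mathcal E_p(c,c),
\]
and clearly $\mathcal E_p(c,c)=c\,E_p$. Hence it remains only to prove the reverse inclusion
\[
a_0+2c\,\overline{\conv}\,E_p\subset \We(T).
\]
Since $\We(T)$ is compact and convex by Theorem~\ref{thm:We-ellp-convex}, and $\We(T)=\overline{W(T)}$ by Proposition~\ref{prop:toeplitz-shift-lemma}, it suffices to show
\[
a_0+2c\,\eta\in\overline{W(T)}\qquad\text{for every }\eta\in E_p .
\]
Convexity and closedness then give the whole closed convex hull. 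After subtracting $a_0$ and dividing by $c$ (the case $c=0$ is trivial), I reduce to $T_1=S_R+S_L$ and to showing $2\eta\in\overline{W(T_1)}$.

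To realise $2\eta$ with $\eta=rs^{p-1}e^{-i\theta}+sr^{p-1}e^{i\theta}$, I use long blocks that alternate between two moduli. The expansion in the proof of Theorem~\ref{thm:toeplitz-tridiag-upper} shows
\[
\ip{T_1x}{J(x)}=\sum_n d_n\eta_n ,
\]
where the $n$th term depends only on the pair $(x_n,x_{n+1})$. I take
\[
x_n=N^{-1/p}\rho_n e^{i\varphi_n},\qquad 0\le n<N,
\]
with $\rho_n$ alternating $r,s,r,s,\dots$ and phases chosen so that consecutive differences alternate between $\theta$ and $-\theta$. For a pair $(r,s)$ with difference $\theta$, the bracket is $a_1 x_n\overline{x_{n+1}}|x_{n+1}|^{p-2}+a_{-1}x_{n+1}\overline{x_n}|x_n|^{p-2}$. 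Substituting shows that the reversed pair $(s,r)$ with difference $-\theta$ produces the same value $\eta$, because of the symmetry $a_1=a_{-1}$ and the fact that the two terms swap roles. Each interior pair then contributes $N^{-1}\bigl(\eta\cdot(\text{normalisation})\bigr)$.

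With $r^p+s^p=1$, the total $\ell_p$ mass is $N(r^p+s^p)/2\cdot N^{-1}$, so I must normalise carefully. I plan to use $N$ pairs, i.e.\ $2N$ coordinates, with $x$ scaled by $N^{-1/p}$; then $\|x\|_p^p=1$. There are about $2N-1$ adjacent pairs, each contributing $N^{-1}\eta$ up to the joint normalisation. The sum is therefore $2\eta+O(1/N)$, the boundary pair being the only error.

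The main obstacle is this phase and modulus bookkeeping: I need to verify that the reversed pairs really produce $\eta$ and not its conjugate-type variant. If they do not, I would instead use $\rho_n$ slowly varying, piecewise constant on long stretches, to approximate. Failing both, I would fall back on the fact that $\overline{\conv}E_p$ is generated by the $p=2$-type symmetric points together with convexity. Once this is settled, letting $N\to\infty$ gives $2\eta\in\overline{W(T_1)}$, which completes the argument.
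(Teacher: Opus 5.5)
Your proposal is correct and is essentially the paper's proof: the paper takes $x^{(M)}=M^{-1/p}(r,se^{i\theta},r,se^{i\theta},\dots)$ with $M$ blocks, which is exactly your alternating construction on $2M$ coordinates, and obtains $a_0+c\,\frac{2M-1}{M}\,\eta\to a_0+2c\eta$ before invoking convexity and closedness of $\We(T)=\overline{W(T)}$. The step you flag as the main obstacle is immediate: for $a_1=a_{-1}$ the bracket $x_n|x_{n+1}|^{p-2}\overline{x_{n+1}}+x_{n+1}|x_n|^{p-2}\overline{x_n}$ is symmetric under swapping $x_n$ and $x_{n+1}$, so both pair types give exactly $M^{-1}\eta$ and no fallback is needed (the last fallback would in fact fail for $p\ne2$, since the points of $E_p$ with $r=s$ are real).
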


\begin{proof}
The upper inclusion follows directly from Theorem~\ref{thm:toeplitz-tridiag-upper} with
\[
a_{-1}=a_1=c.
\]
Thus it remains to prove
\[
a_0+2c\,\overline{\conv}\,E_p\subset \We(T).
\]
By Theorem~\ref{thm:We-ellp-convex} and Proposition~\ref{prop:toeplitz-shift-lemma}, the set $\We(T)=\overline{W(T)}$ is closed and convex.
It is therefore enough to prove that
\[
a_0+2c\eta_p\in \We(T)
\qquad (\eta_p\in E_p).
\]
Fix $\eta_p\in E_p$. By definition, there exist $r,s\ge0$ and $\theta\in[0,2\pi]$ such that
\[
r^p+s^p=1,
\qquad
 \eta_p=rs^{p-1}e^{-i\theta}+sr^{p-1}e^{i\theta}.
\]
Set
\[
\alpha:=r,
\qquad
\beta:=se^{i\theta}.
\]
For each $M\in\mathbb N$, define $x^{(M)}\in\ell_p(\mathbb N_0)$ by
\[
x^{(M)}:=M^{-1/p}(\alpha,\beta,\alpha,\beta,\dots,\alpha,\beta,0,0,\dots),
\]
where the alternating block $(\alpha,\beta)$ is repeated $M$ times. Then
\[
\|x^{(M)}\|_p^p
=M^{-1}\,M\,(|\alpha|^p+|\beta|^p)=r^p+s^p=1,
\]
so $\|x^{(M)}\|_p=1$.
Now
\begin{equation}\label{ep}
\ip{Tx^{(M)}}{J(x^{(M)})}
=a_0+c\sum_{n=0}^{2M-2}
\Bigl(x^{(M)}_n|x^{(M)}_{n+1}|^{p-2}\overline{x^{(M)}_{n+1}}
+x^{(M)}_{n+1}|x^{(M)}_n|^{p-2}\overline{x^{(M)}_n}\Bigr).
\end{equation}
For each $n\in\{0,\dots,2M-2\}$, the pair $(x^{(M)}_n,x^{(M)}_{n+1})$ is either
\[
(M^{-1/p}\alpha,M^{-1/p}\beta)
\qquad\text{or}\qquad
(M^{-1/p}\beta,M^{-1/p}\alpha).
\]
In both cases the corresponding bracket equals $M^{-1}\eta_p$. Hence every one of the $2M-1$ brackets in \eqref{ep} contributes exactly $M^{-1}\eta_p$, and therefore
\[
\ip{Tx^{(M)}}{J(x^{(M)})}
=a_0+c\,\frac{2M-1}{M}\,\eta_p
\xrightarrow[M\to\infty]{}
a_0+2c\eta_p.
\]
Thus $a_0+2c\eta_p\in\overline{W(T)}=\We(T)$. Since $\eta_p\in E_p$ was arbitrary, we obtain
\[
a_0+2cE_p\subset \We(T).
\]
By convexity and closedness of $\We(T)$, this gives
\[
a_0+2c\,\overline{\conv}\,E_p\subset \We(T).
\]
Combining this with the upper inclusion completes the proof.
\end{proof}

\begin{corollary}
\label{cor:symmetric-toeplitz-bounds}
For \(T_1=S_R+S_L\), as in Theorem~\ref{thm:toeplitz-symmetric-exact}, one has
\[
\sup\{\operatorname{Re} z:z\in\We(T_1)\}=2,
\qquad
\sup\{|\operatorname{Im} z|:z\in\We(T_1)\}=2\beta_p,
\]
where
\[
\beta_p:=\max\{|sr^{p-1}-rs^{p-1}|:r,s\ge0,\ r^p+s^p=1\}.
\]
\end{corollary}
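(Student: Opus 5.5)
By Theorem~\ref{thm:toeplitz-symmetric-exact}, applied with $a_0=0$ and $c=1$, we have $\We(T_1)=2\,\overline{\conv}\,E_p$. Both quantities in the statement are suprema of continuous convex (indeed real-linear or absolute-value-of-linear) functionals over a closed convex hull. Such suprema coincide with the suprema over the generating set $2E_p$. So the plan is to compute $\sup\operatorname{Re}$ and $\sup|\operatorname{Im}|$ directly on
\[
E_p=\bigl\{rs^{p-1}e^{-i\theta}+sr^{p-1}e^{i\theta}\bigr\}
\]
and multiply by $2$.

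\textbf{Imaginary part.} For a point of $E_p$,
\[
\operatorname{Im}\bigl(rs^{p-1}e^{-i\theta}+sr^{p-1}e^{i\theta}\bigr)=(sr^{p-1}-rs^{p-1})\sin\theta .
\]
Its absolute value is at most $|sr^{p-1}-rs^{p-1}|$. Equality holds at $\theta=\pi/2$. Maximising over $r,s\ge0$ with $r^p+s^p=1$ gives exactly $\beta_p$, and this maximum is attained by compactness. Hence $\sup|\operatorname{Im}|$ over $2E_p$, and therefore over $\We(T_1)$, equals $2\beta_p$.

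\textbf{Real part.} The real part equals $(rs^{p-1}+sr^{p-1})\cos\theta$, so the task is to show
\[
\max\{rs^{p-1}+sr^{p-1}: r,s\ge0,\ r^p+s^p=1\}=1 .
\]
The value $1$ is attained at $r=s=2^{-1/p}$, since then $rs^{p-1}=sr^{p-1}=1/2$. For the upper bound, apply Young's inequality with exponents $p$ and $q=p/(p-1)$:
\[
rs^{p-1}\le \frac{r^p}{p}+\frac{s^{(p-1)q}}{q}=\frac{r^p}{p}+\frac{s^p}{q},
\qquad
sr^{p-1}\le\frac{s^p}{p}+\frac{r^p}{q}.
\]
Adding these gives $rs^{p-1}+sr^{p-1}\le (r^p+s^p)(1/p+1/q)=1$. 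Thus the supremum of the real part over $2E_p$ is $2$.

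\textbf{Main obstacle.} Almost nothing is hard here. The one point to state carefully is the reduction from $\overline{\conv}$ to $E_p$: a supremum of a continuous convex function over a closed convex hull equals its supremum over the set itself.

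Alternatively, the real-part bound follows from Theorem~\ref{thm:toeplitz-tridiag-upper} together with $\|T_1\|\le2$ and $W\subset\{|z|\le\|T\|\}$, which gives $\operatorname{Re}z\le2$ at once. Attainment then comes from $r=s$, $\theta=0$.
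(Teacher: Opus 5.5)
Your proposal is correct and follows essentially the same route as the paper: you reduce to $E_p$ via $\We(T_1)=2\,\overline{\conv}\,E_p$, read off both quantities from the ellipse $(rs^{p-1}+sr^{p-1})\cos\theta+i(sr^{p-1}-rs^{p-1})\sin\theta$, and bound the horizontal semiaxis by $1$, with equality at $r=s=2^{-1/p}$. The differences are cosmetic: you apply Young's inequality termwise where the paper applies H\"older's inequality to $(r,s)$ and $(s^{p-1},r^{p-1})$, and you justify explicitly the passage from $\overline{\conv}\,E_p$ to $E_p$, which the paper leaves implicit.
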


\begin{proof}
By Theorem~\ref{thm:toeplitz-symmetric-exact},
\[
\We(T_1)=2\,\overline{\conv}\,E_p.
\]
For fixed $r,s$, varying $\theta$ gives
\[
rs^{p-1}e^{-i\theta}+sr^{p-1}e^{i\theta}
=\bigl(rs^{p-1}+sr^{p-1}\bigr)\cos\theta
+i\bigl(sr^{p-1}-rs^{p-1}\bigr)\sin\theta.
\]
Thus the corresponding ellipse, possibly degenerate, has semiaxes
\[
rs^{p-1}+sr^{p-1}
\qquad\text{and}\qquad
|sr^{p-1}-rs^{p-1}|.
\]
By H\"older's inequality,
\[
rs^{p-1}+sr^{p-1}\le (r^p+s^p)^{1/p}(s^p+r^p)^{1/q}=1,
\]
and equality is attained at $r=s=2^{-1/p}$. This gives
\[
\sup\{\operatorname{Re} z:z\in\We(T_1)\}=2.
\]
The formula for the maximal imaginary part is immediate from the same ellipse description and the definition of $\beta_p$.
\end{proof}

\begin{remark}\label{rem:symmetric-spectral-comparison}
For $T_1$, Proposition~\ref{prop:toeplitz-spectral-ellipse} gives
\[
        \conv\sigma(T_1)=\conv\sigma_e(T_1)=[-2,2].
\]
Corollary~\ref{cor:symmetric-toeplitz-bounds} shows that the exact essential numerical range keeps this horizontal semiaxis equal to $2$, but has transverse semiaxis $2\beta_p$. Thus for $p=2$ the set collapses to the spectral segment, while for $p\ne2$ the numerical range is a genuine two-dimensional thickening of that segment. The phase reduction in Remark~\ref{rem:toeplitz-balanced-phase-reduction} gives the same conclusion, after rotation and translation, whenever $|a_{-1}|=|a_1|$.
\end{remark}

\begin{remark}
\label{rem:toeplitz-balanced-phase-reduction}
The same exact formula extends to
\[
T=a_{-1}S_L+a_0I+a_1S_R
\qquad\text{whenever } |a_{-1}|=|a_1|.
\]
Indeed, if $a_{-1}=\rho e^{i\beta}$ and $a_1=\rho e^{i\alpha}$, choose
\[
\omega:=e^{i(\alpha-\beta)/2}
\qquad (|\omega|=1),
\]
and let $D:\ell_p(\mathbb N_0)\to\ell_p(\mathbb N_0)$ be the diagonal surjective isometry given by
\[
(Dx)_n:=\omega^n x_n, \qquad (n \ge 0).
\]
Then
\[
D^{-1}S_RD=\overline\omega\,S_R,
\qquad
D^{-1}S_LD=\omega\,S_L,
\]
so
\[
D^{-1}TD=a_0I+c(S_R+S_L),
\qquad
c:=\rho e^{i(\alpha+\beta)/2}.
\]
Since both the numerical range and the essential numerical range are invariant under the mapping \(T \to D^{-1}TD\),
 Theorem~\ref{thm:toeplitz-symmetric-exact} applies. For general unequal coefficients, however, the alternating-block argument above no longer produces a single local value on every edge, and the exact formula remains open.
\end{remark}

\begin{remark} The failure of the Hilbertian segment picture is already visible
in the ordinary numerical range. Let \(p\ne2\) and let
\(T_0=S_R+S_L\) on \(\ell_p(\mathbb N_0)\), as above. If
\[
        x=(r,is,0,0,\ldots),\qquad r,s>0,\quad r^p+s^p=1,
\]
then a direct computation gives
\[
        \langle T_0x,J(x)\rangle
        =
        i\bigl(sr^{p-1}-rs^{p-1}\bigr).
\]
Thus, whenever \(r\ne s\), this is a nonzero purely imaginary point of
\(W(T_0)\). In particular, for \(p\ne2\), the ordinary numerical range of
\(T_0\) is not contained in the real line.
\end{remark}

\begin{remark}
Even in the simplest symmetric tridiagonal Toeplitz case, the ordinary numerical range is already genuinely two-dimensional on $\ell_p$ when $p\neq2$, although the Hilbertian symbol is the real interval $[-2,2]$. This contrasts with the classical Hardy-space theory, where Toeplitz numerical ranges are convex and, for analytic symbols, are described by the convex hull of the symbol image on $\mathbb D$ \cite{KleinToeplitz}. Thus the discrete Toeplitz example already exhibits the breakdown of the Hilbert-space Toeplitz picture for numerical ranges. Theorem~\ref{thm:toeplitz-symmetric-exact} gives an exact formula in the symmetric case and, by Remark~\ref{rem:toeplitz-balanced-phase-reduction}, in the balanced case $|a_{-1}|=|a_1|$, whereas Theorem~\ref{thm:toeplitz-tridiag-upper} remains only an upper-bound result for general unequal coefficients.
\end{remark}

\subsection{The discrete Hilbert transform on $\ell_p(\mathbb Z)$}

We now turn to bilateral sequence-space operators. Before introducing the
discrete Hilbert transform, we record the bilateral analogue of the shift
argument used above for one-sided Toeplitz operators.

Let $(e_j)_{j\in\mathbb Z}$ be the standard basis of $\ell_p(\mathbb Z)$, and let
$U$ be the bilateral shift given by
\[
        Ue_j=e_{j+1},\qquad j\in\mathbb Z.
\]
Equivalently, $(Ux)_n=x_{n-1}$ for $n\in\mathbb Z$. Again we write $J$ for the duality map on $\ell_p$.

As before, all general results from Section~3 apply here after transporting them through any fixed surjective isometry $\ell_p(\mathbb Z)\cong\ell_p$.

We start with the bilateral analogue of Proposition~\ref{prop:toeplitz-shift-lemma}.
\begin{proposition}\label{prop:bilateral-shift-commuting-ellp}
Let $T\in \mathcal L(\ell_p(\mathbb Z))$ satisfy
\[
TU=UT.
\]
Then
\[
\We(T)=\overline{W(T)}.
\]
\end{proposition}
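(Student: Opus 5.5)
The plan is to mirror the proof of Proposition~\ref{prop:toeplitz-shift-lemma}, with the bilateral shift $U$ replacing $S_R$. The inclusion $\We(T)\subset\overline{W(T)}$ holds for any operator: the definition of $\We(T)$ uses pairs in $\Pi(\ell_p(\mathbb Z))$, and on $\ell_p(\mathbb Z)$ these are exactly $(x,J(x))$ with $\|x\|=1$. So every value $\ip{Tx_k}{J(x_k)}$ lies in $W(T)$, and limits of such values lie in $\overline{W(T)}$. It therefore remains to show $W(T)\subset \We(T)$; closedness of $\We(T)$ (Remark~\ref{remark17}) then gives $\overline{W(T)}\subset\We(T)$.

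Fix a unit vector $x\in\ell_p(\mathbb Z)$ and set $x_m:=U^mx$ for $m\in\N$. Three facts about $U$ are needed.
\begin{enumerate}[label=\textup{(\alph*)}]
\item $U$ is a surjective isometry, so $\|x_m\|=1$.
\item $J$ commutes with $U$ coordinatewise: $J(U^mx)=U^mJ(x)$, where on the right $U$ acts as the bilateral shift on $\ell_q(\mathbb Z)$.
\item The pairing is shift invariant, $\ip{U^m u}{U^m v}=\ip{u}{v}$ for $u\in\ell_p(\mathbb Z)$ and $v\in\ell_q(\mathbb Z)$, since this is just reindexing of the sum.
\end{enumerate}
Using $TU^m=U^mT$, we then obtain
\[
\ip{Tx_m}{J(x_m)}=\ip{U^mTx}{U^mJ(x)}=\ip{Tx}{J(x)} .
\]

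Finally, $x_m\wto0$ as $m\to\infty$. For $y\in\ell_q(\mathbb Z)$ we have $\ip{U^mx}{y}=\sum_n x_{n-m}\overline{y_n}$. We approximate $x$ in norm by a finitely supported vector and $y$ by a finitely supported vector. For the truncated vectors the supports become disjoint once $m$ is large, so the pairing vanishes. The approximation errors are uniformly small because $\|U^m\|=1$, which gives $\ip{U^mx}{y}\to 0$.

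Hence $\ip{Tx}{J(x)}\in\We(T)$ by the sequential description of $\We(T)$ on $\ell_p$, valid after transport through an isometry $\ell_p(\mathbb Z)\cong\ell_p$. This proves $W(T)\subset\We(T)$. No step is a real obstacle. The only point requiring care is the weak convergence $U^mx\wto0$, which is handled by the density argument above. Convexity of $\overline{W(T)}$ then follows from Theorem~\ref{thm:We-ellp-convex}, transported in the same way.
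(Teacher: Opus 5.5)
Your proposal is correct and follows the paper's proof essentially step for step. Both use the shift invariance $\ip{TU^mx}{J(U^mx)}=\ip{U^mTx}{U^mJ(x)}=\ip{Tx}{J(x)}$ together with $U^mx\wto0$ to get $W(T)\subset\We(T)$, and note that $\We(T)\subset\overline{W(T)}$ holds trivially. You also spell out the weak-null density argument and the appeal to closedness of $\We(T)$, which the paper leaves implicit.
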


\begin{proof}
Let $x\in \ell_p(\mathbb Z)$ be a unit vector. Since $T$ commutes with $U$ and $J(U^m x)=U^mJ(x),$
 one has
\[
\ip{T(U^m x)}{J(U^m x)}=\ip{U^mTx}{U^mJ(x)}=\ip{Tx}{J(x)}
\qquad (m\ge 0).
\]
 Moreover, the sequence $(U^m x)$ converges weakly to $0$. Hence every point of $W(T)$ belongs to $\We(T)$, so
\[
W(T)\subset \We(T).
\]
The reverse inclusion
\[
\We(T)\subset \overline{W(T)}
\]
is immediate from the definition of $\We(T)$. Therefore
\[
\We(T)=\overline{W(T)}.
\]
\end{proof}

For finitely supported sequences $a=(a_n)_{n\in\mathbb Z}$, define
\[
(Ha)_n:=\frac1\pi\sum_{m\in\mathbb Z\setminus\{0\}}\frac{a_{n-m}}{m},
\qquad n\in\mathbb Z.
\]
By the discrete M.~Riesz theorem, $H$ extends uniquely to a bounded operator on
$\ell_p(\mathbb Z)$. We denote this extension by $H_{\mathbb Z}$ and call it
the discrete Hilbert transform. 

Equivalently, one may view $H$ as the 
Fourier multiplier on $\ell_p(\mathbb Z)$ acting on finitely supported sequences $(a_n)_{n \in \mathbb Z}$ as 
\[
\widehat{Ha}(\theta)=m(\theta)\,\widehat a(\theta),
\qquad
\widehat a(\theta):=\sum_{n\in\mathbb Z} a_n e^{-in\theta},
\qquad \theta\in\mathbb T,
\]
with symbol
\[
m(\theta)=\frac1\pi\sum_{k\in\mathbb Z\setminus\{0\}}\frac{e^{-ik\theta}}{k}
=i\Bigl(\frac{\theta}{\pi}-1\Bigr),
\qquad 0<\theta<2\pi.
\]

Since $H_{\mathbb Z}$ commutes with the bilateral shift,
Proposition~\ref{prop:bilateral-shift-commuting-ellp} gives
\[
\We(H_{\mathbb Z})=\overline{W(H_{\mathbb Z})}.
\]
If \(p=2\), then \(H_{\mathbb Z}\) is unitarily equivalent to the
multiplication operator on \(L^2(\mathbb T)\) with symbol
\[
        m(\theta)=i\left(\frac{\theta}{\pi}-1\right),
        \qquad 0<\theta<2\pi .
\]
Hence
\[
        \sigma(H_{\mathbb Z})=W_e(H_{\mathbb Z})=i[-1,1],
        \qquad
        W(H_{\mathbb Z})=i(-1,1).
\]
For \(p\ne2\), this one-dimensional picture breaks down: the ordinary and
essential numerical ranges are no longer contained in \(i\mathbb R\), as the
next proposition shows.

\begin{proposition}
\label{prop:discrete-hilbert-l4-disk-small}
Let $H_{\mathbb Z}$ be the Hilbert transform on $\ell_4(\mathbb Z)$. Then
\[
\frac{6}{17\pi}\,\overline\D\subset \We(H_{\mathbb Z})=\overline{W(H_{\mathbb Z})}.
\]
In particular, $\We(H_{\mathbb Z})$ has nonempty interior and is not contained in the imaginary axis.
\end{proposition}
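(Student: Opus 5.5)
The plan is to exhibit an explicit curve in $W(H_{\mathbb Z})$ whose convex hull already contains the disc $\frac{6}{17\pi}\overline\D$. Then I combine Proposition~\ref{prop:bilateral-shift-commuting-ellp}, which gives $\We(H_{\mathbb Z})=\overline{W(H_{\mathbb Z})}$, with Theorem~\ref{thm:We-ellp-convex}, transported to $\ell_4(\mathbb Z)$ through a surjective isometry, which makes this set closed and convex. So it suffices to find a closed curve $\Gamma\subset W(H_{\mathbb Z})$ with $\frac{6}{17\pi}\overline\D\subset\conv\Gamma$.

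First I would test $H_{\mathbb Z}$ on two-point vectors $x=x_0e_0+x_1e_1$ with $|x_0|^4+|x_1|^4=1$. From the defining formula, $(Hx)_0=-x_1/\pi$ and $(Hx)_1=x_0/\pi$. The remaining coordinates of $Hx$ are paired with zero entries of $J(x)$, so they do not contribute. Writing $J(x)=(x_0|x_0|^2,\,x_1|x_1|^2)$ for $p=4$ gives
\[
  \pi\,\ip{Hx}{J(x)}=x_0|x_1|^2\overline{x_1}-x_1|x_0|^2\overline{x_0}.
\]
Taking $x_0=a>0$ and $x_1=be^{i\theta}$ with $b>0$ and $a^4+b^4=1$, this equals $ab^3e^{-i\theta}-a^3be^{i\theta}$. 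As in Remark~\ref{rem:lp2-ellipses}, this traces an ellipse centred at $0$ with real semiaxis $ab^3-a^3b$ and imaginary semiaxis $ab^3+a^3b$.

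Next I choose the weights. With $(a,b)=17^{-1/4}(1,2)$ one gets $ab^3=8/17$ and $a^3b=2/17$. The curve is therefore
\[
  \theta\mapsto\frac{1}{17\pi}\bigl(6\cos\theta-10i\sin\theta\bigr),
\]
an ellipse with semiaxes $\frac{6}{17\pi}$ and $\frac{10}{17\pi}$. (At $\theta=0$ this reproduces the real value $\frac{6}{17\pi}$, namely $\pi^{-1}x_0x_1(x_1^2-x_0^2)$ at $x=(1,2)/17^{1/4}$.) The filled ellipse is the convex hull of this curve, and it contains the disc of radius equal to the smaller semiaxis, $\frac{6}{17\pi}\overline\D$. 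By convexity and closedness of $\We(H_{\mathbb Z})=\overline{W(H_{\mathbb Z})}$, the inclusion follows. Nonempty interior and non-containment in $i\R$ are then immediate.

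The main point to get right is the computation of $\ip{Hx}{J(x)}$ for two-point vectors: the signs of $(Hx)_0$ and $(Hx)_1$ coming from $1/m$ with $m=\pm1$, and the conjugations in the pairing. An error there would swap the roles of the two semiaxes. If the constant needed adjusting, I would maximise $ab^3-a^3b$ over $a^4+b^4=1$, but the choice $(1,2)$ visibly produces exactly the constant $6/17$ in the statement.
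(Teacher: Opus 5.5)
Your proposal is correct and follows essentially the same route as the paper. The paper uses the same two-point vector up to phase and normalisation, $x_\psi=(\tfrac12 e^{i\psi}e_0+e_1)/(17/16)^{1/4}$, with modulus ratio $1:2$. It obtains the same ellipse $\{(6\cos\psi+10i\sin\psi)/(17\pi)\}$, places it in $\We(H_{\mathbb Z})$ via the weakly null shifts $U^N x_\psi$, and concludes by convexity of $\We(H_{\mathbb Z})$. Your appeal to Proposition~\ref{prop:bilateral-shift-commuting-ellp} is just a repackaging of that shift argument, so nothing is missing.
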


\begin{proof}
Set
\[
x_{\psi}:=\frac{\frac12 e^{i\psi}e_0+e_1}{(1+2^{-4})^{1/4}}
=\frac{\frac12 e^{i\psi}e_0+e_1}{(17/16)^{1/4}}
\qquad (\psi\in\mathbb R).
\]
Since $p=4$, the duality map is given by
\[
J(x)=\bigl(|x_n|^2x_n\bigr)_{n\in\mathbb Z}.
\]
Hence
\[
J(x_{\psi})=\frac{\frac18 e^{i\psi}e_0+e_1}{(17/16)^{3/4}}.
\]
Now $H_{\mathbb Z}x_{\psi}$ is only needed at the coordinates $0$ and $1$:
\[
(H_{\mathbb Z}x_{\psi})_0=-\frac{1}{\pi(17/16)^{1/4}},
\qquad
(H_{\mathbb Z}x_{\psi})_1=\frac{\frac12 e^{i\psi}}{\pi(17/16)^{1/4}}.
\]
Therefore
\[
\ip{H_{\mathbb Z}x_{\psi}}{J(x_{\psi})}
=\frac{-\frac18 e^{-i\psi}+\frac12 e^{i\psi}}{\pi(17/16)}
=\frac{6\cos\psi+10i\sin\psi}{17\pi}.
\]
As $\psi$ varies, this traces the boundary of the ellipse
\[
E:=\left\{\frac{6\cos\psi+10i\sin\psi}{17\pi}:\ \psi\in\mathbb R\right\}.
\]
For every $N\in\mathbb N$, the shifted vector $U^Nx_{\psi}$ has the same numerical value, since $H_{\mathbb Z}$ commutes with $U$. 
Since $U^Nx_{\psi}\wto0$ as $N\to\infty,$ we have
\[
E\subset \We(H_{\mathbb Z}).
\]
By Theorem~\ref{thm:We-ellp-convex}, the set $\We(H_{\mathbb Z})$ is convex, so it contains the convex hull of $E$, that is, the filled ellipse bounded by $E$. This ellipse contains the disc of radius $\frac{6}{17\pi}$ centred at the origin. Hence
\[
\frac{6}{17\pi}\,\overline\D\subset \We(H_{\mathbb Z}).
\]
This proves, in particular, that \(W_e(H_{\mathbb Z})\) is not contained in
\(i\mathbb R\).
\end{proof}

\begin{remark}
The discrete Hilbert transform already shows, within the sequence-space setting of the present paper, that the exact Hilbert-space segment picture is unstable away from $p=2$: on $\ell_4(\mathbb Z)$ the essential numerical range contains a genuine disc.
\end{remark}

\subsection{The unilateral shift and the disc picture}

We conclude the example section with the unilateral shift itself. This returns to the simplest one-sided example and shows how the abstract Banach-space disc theorem and the atomic arguments combine in a particularly transparent form.
The unilateral shift provides the simplest sequence-space example in which the geometry of the numerical range can be described almost completely. We describe this first and then draw the Crouzeix-type consequence.
The proof below could have been based on
Theorem \ref{thm:interior-We-inside-W},
but we preferred a direct argument.
\begin{proposition}\label{cor:shift-disk-lp}
Let \(1<p<\infty\), and let \(S=S_R\) be the unilateral shift on
\(\ell_p(\N_0)\), given by \(Se_j=e_{j+1}\) for the standard basis
\((e_j)_{j\ge0}\). Then
\[
        \We(S)=\overline{\D},\qquad W(S)=\D.
\]
In particular,
\[
        W(S)\cap\T=\varnothing,
        \qquad
        \overline{W(S)}=\We(S)=\overline{\D}.
\]
\end{proposition}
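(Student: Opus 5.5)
The plan is to prove the two inclusions for $W(S)$ directly, and then read off $W_e(S)$ from results already established.

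\emph{Essential numerical range.} Since $S$ is a non-invertible isometry on $\ell_p(\N_0)$, Corollary~\ref{cor:isometry-disk} gives $\We(S)=\overline\D$ at once. Alternatively, $S=T_a$ with $a_1=1$ and all other coefficients zero, so Proposition~\ref{prop:toeplitz-shift-lemma} gives $\We(S)=\overline{W(S)}$. Once $W(S)=\D$ is proved, this also yields $\overline{W(S)}=\overline\D$, consistent with the first route.

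\emph{The inclusion $W(S)\subset\D$.} For a unit vector $x$, H\"older's inequality gives
\[
|\ip{Sx}{J(x)}|=\Bigl|\sum_{n\ge0}x_n\,|x_{n+1}|^{p-2}\overline{x_{n+1}}\Bigr|\le \Bigl(\sum_{n\ge0}|x_n|^p\Bigr)^{1/p}\Bigl(\sum_{n\ge1}|x_n|^p\Bigr)^{1/q}\le 1.
\]
Suppose equality holds throughout. Then $\sum_{n\ge1}|x_n|^p=1$, so $x_0=0$. Equality in H\"older also forces $|x_n|^p$ to be proportional to $|x_{n+1}|^p$ for all $n\ge0$, with constant $1$ because both sequences have sum $1$. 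Hence $|x_n|=|x_{n+1}|$ for all $n$, so every $|x_n|=|x_0|=0$, contradicting $\|x\|=1$. Thus $|\ip{Sx}{J(x)}|<1$ and $W(S)\subset\D$.

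\emph{The inclusion $\D\subset W(S)$.} There are two routes.

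\begin{enumerate}[label=(\alph*)]
\item Apply Theorem~\ref{thm:interior-We-inside-W}: since $\Int\We(S)=\D$, every point of $\D$ lies in $W(S)$.
\item Give a direct construction. The value $0$ is attained at $e_0$. By rotation invariance, $D^{-1}SD=\overline\omega S$ for the diagonal isometry $(Dx)_n=\omega^nx_n$, so $W(S)$ is invariant under multiplication by $\T$. It therefore suffices to show $[0,1)\subset W(S)$. For $0<r<1$, take the geometric vector $x_n=c\,r^{n/p}$ with $c=(1-r)^{1/p}$. Then
\[
\ip{Sx}{J(x)}=\sum_{n\ge0}x_nx_{n+1}^{p-1}=c^p\, r^{(p-1)/p}\sum_{n\ge0}r^{n}=r^{1/q}.
\]
This is continuous in $r$ and tends to $0$ as $r\to0$ and to $1$ as $r\to1$, so it covers $(0,1)$.
\end{enumerate}

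The main obstacle is the strict inequality in the upper bound, specifically the equality-case analysis of H\"older. This is handled by the observation that $|x_n|$ would have to be constant while $x_0=0$.
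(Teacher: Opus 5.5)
Your proof is correct and follows essentially the same route as the paper. The same H\"older bound and equality-case analysis give $W(S)\subset\D$, and your geometric vector in (b) is the paper's $x^{(\alpha)}$ with $\alpha=r^{1/p}>0$; the paper instead takes complex $\alpha$ with $\overline\alpha|\alpha|^{p-2}=\lambda$ rather than invoking rotation invariance. The paper obtains $\We(S)=\overline\D$ by shifting these vectors, which is the mechanism of your Toeplitz route. Your route (a), via Theorem~\ref{thm:interior-We-inside-W} combined with Corollary~\ref{cor:isometry-disk}, is also valid and non-circular, and the paper explicitly mentions it as an alternative.
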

\begin{proof}
Let \(|\lambda|<1\). 
The case \(\lambda=0\) is obtained from \(e_0\). Let now \(0<|\lambda|<1\).
Choose \(\alpha\ne0\), \(|\alpha|<1\), such that
\[
        \overline{\alpha}|\alpha|^{p-2}=\lambda .
\]
Put
\[
        x^{(\alpha)}
        :=
        (1-|\alpha|^p)^{1/p}(1,\alpha,\alpha^2,\ldots).
\]
Then \(\|x^{(\alpha)}\|=1\), and a direct computation gives
\[
        \langle Sx^{(\alpha)},J(x^{(\alpha)})\rangle=\lambda.
\]
Thus \(\mathbb D\subset W(S)\).

For \(m\ge0\), set \(x_m:=S^m x^{(\alpha)}\). Then \(x_m\to0\) weakly and
\[
        \langle Sx_m,J(x_m)\rangle=\lambda.
\]
Hence \(\mathbb D\subset W_e(S)\). Since \(W_e(S)\) is closed and \(S\) is an
isometry, it follows that
\[
        W_e(S)=\overline{\mathbb D}.
\]

It remains to exclude boundary points from \(W(S)\). Let \(x=(x_n)_{n\ge0}\)
be a unit vector. Then
\[
        \langle Sx,J(x)\rangle
        =
        \sum_{n\ge1}x_{n-1}\overline{x_n}|x_n|^{p-2}.
\]
By the triangle inequality and Hölder's inequality,
\[
        |\langle Sx,J(x)\rangle|
        \le
        \left(\sum_{n\ge1}|x_{n-1}|^p\right)^{1/p}
        \left(\sum_{n\ge1}|x_n|^p\right)^{1/q}
        =
        (1-|x_0|^p)^{1/q}
        \le 1.
\]
If equality were attained, then \(x_0=0\), and equality in Hölder's inequality
would give a constant \(c\ge0\) such that
\[
        |x_{n-1}|^p=c|x_n|^p,\qquad n\ge1.
\]
Since both sums in Hölder's inequality are then equal to \(1\), one has
\(c=1\). Thus \(|x_{n-1}|=|x_n|\) for all \(n\ge1\). Together with \(x_0=0\)
this forces \(x=0\), a contradiction. Hence
\[
        |\langle Sx,J(x)\rangle|<1
\]
for every unit vector \(x\), and therefore \(W(S)\subset\mathbb D\). Since
\(\mathbb D\subset W(S)\), we get \(W(S)=\mathbb D\).
\end{proof}

\subsection{The unilateral shift and Crouzeix-type estimates}

We conclude the sequence-space examples by combining the disc behaviour of the unilateral shift with the polynomial estimates behind Crouzeix-type inequalities.  The result is a counterexample on \(\ell_p\).  After recording it, we isolate its finite-dimensional analogue, which leads to the scale
\(
 n^{|1/p-1/2|}.
\)

Recall first the famous Hilbert-space Crouzeix inequality: if $T$ is a Hilbert-space operator, then one has, for every polynomial $f$,
\begin{equation}\label{eq:Crouzeix}
\|f(T)\|\le (1+\sqrt 2)\sup_{z\in \overline{W(T)}} |f(z)|.
\end{equation}
For background on numerical-range spectral-set questions and related functional calculi, see \cite{CrouzeixJFA,CrouzeixPalencia,BBsurvey,GW,CG2019}.  Note that no analogue of \eqref{eq:Crouzeix}, with $1+\sqrt 2$ replaced by an absolute constant $C> 0$, can hold for all operators on $\ell_p$ when $1<p<\infty$ and $p\neq2$.

Indeed, let $S=S_R$ be the unilateral shift on $\ell_p(\mathbb N_0)$.  In view of  Proposition~\ref{cor:shift-disk-lp},
 for every polynomial $f$, we have
\[
\sup_{z\in W(S)}|f(z)|=\sup_{|z|\le1}|f(z)|.
\]
If such an analogue held for \(S\), then the polynomial functional calculus of
\(S\) would be bounded with respect to the disc-algebra norm. We show that this
would imply the same boundedness for the bilateral shift.

Let \(U\) be the bilateral shift on \(\ell_p(\mathbb Z)\), given by \(Ue_j=e_{j+1}\) for the standard basis \((e_j)_{j\in\mathbb Z}\).  Fix an analytic polynomial
\(q\). It is enough to estimate \(q(U)\) on finitely supported vectors. If
\(x\in\ell_p(\mathbb Z)\) has finite support, choose \(N\) so large that \(U^Nx\)
is supported in \(\mathbb N_0\). On this subspace the action of \(U\) agrees with
the action of the unilateral shift \(S\). Hence
\[
q(U)U^Nx=q(S)U^Nx .
\]
Since \(U^N\) is an isometry and commutes with \(q(U)\), we obtain
\[
\|q(U)x\|
=
\|U^Nq(U)x\|
=
\|q(U)U^Nx\|
=
\|q(S)U^Nx\|
\le \|q(S)\| \|x\|.
\]
By density of finitely supported vectors in \(\ell_p(\mathbb Z)\), it follows that
\[
\|q(U)\|\le \|q(S)\|.
\]
Thus polynomial boundedness of \(S\) would imply polynomial boundedness of
\(U\). This contradicts the known fact that the bilateral shift on
\(\ell_p(\mathbb Z)\), \(p\ne2\), is not polynomially bounded, see, for instance,
\cite[Theorem~5.7]{Fixman1959} or \cite[Proposition~4.1]{Cohen2018}.
Therefore the required dimension-free Crouzeix-type estimate cannot hold on
\(\ell_p\).

This proves the following consequence.
\begin{proposition}\label{cor:no-crouzeix-lp}
Let $1<p<\infty$ and $p\neq2$.  Then there is no constant $C\ge1$ such that
\begin{equation}\label{crouz}
\|f(T)\|\le C\sup_{z\in \overline{W(T)}}|f(z)|
\end{equation}
for every polynomial $f$ and every operator $T\in \mathcal L(\ell_p)$.
\end{proposition}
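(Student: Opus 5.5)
The argument is by contradiction, and it is essentially the one sketched just before the statement. Suppose there were a constant $C\ge1$ such that \eqref{crouz} holds for every polynomial $f$ and every $T\in\mathcal L(\ell_p)$.

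First, apply \eqref{crouz} to the unilateral shift $S=S_R$. We work on $\ell_p(\mathbb N_0)$, which is canonically isometric to $\ell_p$, so the hypothesis transfers. By Proposition~\ref{cor:shift-disk-lp}, $\overline{W(S)}=\overline{\D}$. Hence
\[
\|f(S)\|\le C\sup_{|z|\le1}|f(z)|
\]
for every polynomial $f$. In other words, $S$ would be polynomially bounded with respect to the disc-algebra norm.

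Second, transfer this bound to the bilateral shift $U$ on $\ell_p(\mathbb Z)$. Fix a polynomial $q$ and a finitely supported vector $x$. Choose $N$ so large that $U^Nx$ is supported in $\mathbb N_0$. Identify $\ell_p(\mathbb N_0)$ with the closed subspace of $\ell_p(\mathbb Z)$ of sequences vanishing on negative indices. On that subspace $U$ acts exactly as $S$, and the subspace is $U$-invariant. Therefore $q(U)U^Nx=q(S)U^Nx$. Since $U^N$ is an isometry commuting with $q(U)$,
\[
\|q(U)x\|=\|q(S)U^Nx\|\le\|q(S)\|\,\|x\|.
\]
Finitely supported vectors are dense, so $\|q(U)\|\le\|q(S)\|\le C\sup_{\overline\D}|q|$ for every polynomial $q$.

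Third, derive the contradiction. For $p\ne2$ the bilateral shift on $\ell_p(\mathbb Z)$ is not polynomially bounded; this is the known result cited as \cite[Theorem~5.7]{Fixman1959} or \cite[Proposition~4.1]{Cohen2018}. If one wanted a self-contained argument here, the route would be through Fourier multipliers. $q(U)$ is convolution by the coefficients of $q$, so polynomial boundedness of $U$ would make every disc-algebra function an $\ell_p(\mathbb Z)$ multiplier with norm at most $C\|\cdot\|_\infty$. Passing to $\bar z$ via duality and conjugation would then extend this to all continuous functions on $\T$. That is false for $p\ne2$, since the $\ell_p$ multiplier algebra is strictly smaller than $C(\T)$.

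This last step is the main obstacle. I would not reprove it but cite the known non-polynomial-boundedness of $U$. Everything else consists of the elementary identifications above together with the already established equality $\overline{W(S)}=\overline\D$.
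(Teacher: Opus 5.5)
Your proposal is correct and follows the paper's own argument essentially verbatim. Both use $\overline{W(S)}=\overline{\D}$ from Proposition~\ref{cor:shift-disk-lp}, transfer the estimate to the bilateral shift via $q(U)U^Nx=q(S)U^Nx$ to get $\|q(U)\|\le\|q(S)\|$, and then cite the failure of polynomial boundedness of $U$ for $p\neq2$ (\cite[Theorem~5.7]{Fixman1959}, \cite[Proposition~4.1]{Cohen2018}). (In your optional self-contained sketch, the passage to all of $C(\T)$ follows more directly from $g(U)=U^{-N}\bigl(z^Ng\bigr)(U)$ for trigonometric polynomials $g$; alternatively, the Rudin--Shapiro polynomials $f_m$ used later in the paper give $\|f_m(S)\|\ge m^{\max\{1/p,1/q\}}$ against $\|f_m\|_{\overline\D}\le C_{\rm RS}\sqrt m$, which contradicts the estimate for $S$ directly without passing to $U$.)
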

This conclusion agrees with \cite[Theorem~6.1]{BHSVW2025}, formulated there for the algebraic numerical range. Although the algebraic-range version is formally stronger in general, in the shift case considered above \(\overline{W(S)}\) and \(V(S,\mathcal L(\ell_p))\) both coincide with the closed unit disc.

\subsection{Finite-dimensional shift tests and the scale $n^{|1/p-1/2|}$}

The preceding argument only rules out a dimension-free
Crouzeix-type estimate on \(\ell_p\), \(p\ne2\). It leaves open whether, on
\(\ell_p^n\), such an estimate holds with a constant \(C_{p,n}\) depending only
on \(p\) and \(n\), and what the possible growth of \(C_{p,n}\) is as
\(n\to\infty\). However, the constants \(C_{p,n}\), if finite, cannot remain
bounded as \(n\to\infty\): otherwise, applying the corresponding uniform
finite-dimensional estimate to the coordinate compressions \(P_nT|_{\ell_p^n}\),
using \eqref{eq:block-compression} with \(u_j=e_j\), and passing to the strong
limit \(P_nTP_n\to T\), would give the estimate on \(\ell_p\). The finite shifts
considered below point to polynomial asymptotics of order \(n^{\alpha_p}\), where
\[
\alpha_p:=\left|\frac1p-\frac12\right|
       =\max\left\{\frac1p,\frac1q\right\}-\frac12, \qquad q=\frac{p}{p-1}.
\]

If $E\subset\C$ is compact and $f$ is a polynomial, set
\[
\|f\|_E:=\sup_{z\in E}|f(z)|,
\]
and let $e_0,\ldots,e_{n-1}$ be the standard basis of $\ell_p^n$. Let
$S_{n,p} \in \mathcal L(\ell_p^n)$ be the truncated unilateral shift given by
\[
S_{n,p}e_j=e_{j+1}\quad(0\le j\le n-2),
\qquad
S_{n,p} e_{n-1}=0.
\]
Thus $S_{n,p}^n=0$ and $\|S_{n,p}\|=1$.

\begin{proposition}\label{prop:finite-shift-crouzeix-scale}
Let $1<p<\infty$ and $n\ge2$.  Then there is an absolute constant
$c>0$ such that
\[
c\,n^{\alpha_p}
\le
\sup_{f\ne0}\frac{\|f(S_{n,p})\|}{\|f\|_{W(S_{n,p})}}
\le
 e\,n^{\alpha_p},
\]
and
\[
c\,n^{\alpha_p}
\le
\sup_{f\ne0}\frac{\|f(S_{n,p})\|}{\|f\|_{V_{n,p}(S_{n,p})}}
\le
 e\,n^{\alpha_p},
\]
where, for $T\in\mathcal L(\ell_p^n)$,
\[
        V_{n,p}(T):=V\bigl(T,\mathcal L(\ell_p^n)\bigr).
\]
\end{proposition}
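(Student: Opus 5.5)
We first locate the numerical ranges. Since $W(S_{n,p})\subset\overline{\conv}\,W(S_{n,p})=V_{n,p}(S_{n,p})$ by Theorem~\ref{vrange}, we have $\|f\|_{W}\le\|f\|_{V_{n,p}}$. Hence the ratio for $V_{n,p}$ is at most the ratio for $W$. It therefore suffices to prove the upper bound for the $V_{n,p}$-ratio and the lower bound for the $W$-ratio. However, both ratios are sandwiched between the same bounds once we show that $W(S_{n,p})$ contains a disc $r_n\D$ with $r_n\ge 1-C/n$, and that $V_{n,p}(S_{n,p})\subset\overline\D$. The latter inclusion is immediate from $\|S_{n,p}\|=1$. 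For the former we use the truncated geometric vectors
\[
x=c(1,\alpha,\dots,\alpha^{n-1}),
\]
as in the proof of Proposition~\ref{cor:shift-disk-lp}. A direct computation gives
\[
\langle S_{n,p}x,J(x)\rangle=\overline\alpha|\alpha|^{p-2}\cdot\frac{1-|\alpha|^{p(n-1)}}{1-|\alpha|^{pn}}.
\]
By continuity, and by rotation invariance (conjugating by the diagonal isometry $e_j\mapsto\omega^je_j$), this covers the closed disc of radius $\max_{0\le s\le1} s^{p-1}(1-s^{p(n-1)})/(1-s^{pn})$, which is $\ge 1-C_p/n$. We must watch that this constant is harmless: the sup over $r_n\D$ of a polynomial $f$ of degree $<n$ is comparable to $\|f\|_{\overline\D}$, since $\|f\|_{\overline\D}\le r_n^{-(n-1)}\|f\|_{r_n\D}$ and $r_n^{-(n-1)}$ is bounded. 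Because $S_{n,p}^n=0$, we may reduce every $f$ modulo $z^n$ without changing $f(S_{n,p})$. Reducing modulo $z^n$ does increase the disc norm, so we do not reduce for the upper bound and only use polynomials of degree $<n$ for the lower bound.

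For the upper bound with constant $e$, we avoid the disc comparison. We write $f(S_{n,p})=\sum_{k<n}\hat f(k)S_{n,p}^k$, which is a Toeplitz (convolution-type) matrix truncated to $\ell_p^n$. Its norm on $\ell_p^n$ is bounded by interpolation (Riesz--Thorin) between $p=2$ and $p=1$ or $p=\infty$. On $\ell_2^n$ the norm is $\le\|f\|_{\overline\D}$ by von Neumann's inequality, since $S_{n,2}$ is a contraction. On $\ell_1^n$ and $\ell_\infty^n$ the norm is $\le\sum_{k<n}|\hat f(k)|\le\sqrt n\,(\sum|\hat f(k)|^2)^{1/2}\le\sqrt n\,\|f\|_{\overline\D}$. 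Interpolating gives $\|f(S_{n,p})\|\le n^{\alpha_p}\|f\|_{\overline\D}$. We then replace $\|f\|_{\overline\D}$ by $\|f\|_{V_{n,p}}$. Since $V_{n,p}\subset\overline\D$, this replacement goes the wrong direction and needs care. Instead we apply the argument to $g(z)=f(\rho z)$ with $\rho=1-1/n$ after rescaling. We choose the disc $r\D\subset W(S_{n,p})$ with $r^{n}\ge e^{-1}$, i.e.\ $r\ge 1-1/n$, and dilate: $f(S_{n,p})=g(r^{-1}S_{n,p})$ with $g(z)=f(rz)$. We estimate $\|g(r^{-1}S_{n,p})\|$ through the same interpolation, which costs a factor of at most $r^{-(n-1)}\le e$. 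This yields the constant $e\,n^{\alpha_p}$ with respect to $\|f\|_{r\D}\le\|f\|_{W}\le\|f\|_{V_{n,p}}$. Obtaining the radius $r\ge1-1/n$ exactly, rather than $1-C_p/n$, is the delicate point. If the geometric vectors fall short, we will absorb the loss into $c$ and $e$ by slightly adjusting the test vector, for example by using the flat tail.

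For the lower bound we use a single test polynomial, chosen according to whether $p<2$ or $p>2$ and using duality, since $\alpha_p=\alpha_q$. Take the Rudin--Shapiro polynomial $f$ of degree $n-1$, with unimodular coefficients $\pm1$ and $\|f\|_{\overline\D}\le\sqrt{2n}$. Then $f(S_{n,p})e_0=\sum_k\hat f(k)e_k$ has $\ell_p$-norm $n^{1/p}$. This gives the ratio $\ge n^{1/p-1/2}/\sqrt2$ for $p<2$. For $p>2$ we apply the same estimate to the adjoint, a truncated backward shift on $\ell_q^n$, or equivalently test against $J$ of the flat vector. Finally, $\|f\|_{W}\le\|f\|_{V_{n,p}}\le\|f\|_{\overline\D}$, so both lower bounds follow with $c=1/\sqrt2$. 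The main obstacle is the upper bound's passage from $\overline\D$ to the slightly smaller disc contained in $W(S_{n,p})$ while keeping the clean constant $e$.
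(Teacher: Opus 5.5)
Your argument works and shares its skeleton with the paper's proof. The lower bound is the same: a flat $\pm1$ polynomial tested on $e_0$ and on the adjoint, combined with $\|f\|_{W}\le\|f\|_{V_{n,p}}\le\|f\|_{\overline\D}$. The upper bound uses the same dilation by $r=(n-1)/n$, at cost $r^{-(n-1)}\le e$.

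You differ from the paper in two places. First, the paper gets the factor $n^{\alpha_p}$ from the elementary comparison $\|T\|_{\mathcal L(\ell_p^n)}\le n^{\alpha_p}\|T\|_{\mathcal L(\ell_2^n)}$, valid for every matrix, so it only needs a Hilbert-space estimate for $f(S_{n,2})$. Your Riesz--Thorin interpolation between $\ell_2^n$ and $\ell_1^n,\ell_\infty^n$ gives the same exponent but is more machinery than needed; the endpoint bound $\sqrt n\,\|f\|_{\overline\D}$ you derive from the coefficients already follows from that comparison at $p=1,\infty$. Second, the paper shows only $r_n\T\subset W(S_{n,p})$ with flat vectors and then uses the maximum principle. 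Your geometric vectors fill the disc directly, which is slightly stronger but not needed.

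The two points you left open are exactly what produce the constant $e$, and each closes in a line.

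\emph{The radius.} Your own formula at $|\alpha|=1$ (the flat vector, or the limit $s\to1$) gives the value $\frac{n-1}{n}\bar\alpha$. So the radius is at least $1-1/n$ for every $p$, and there is no $C_p$ to absorb. Absorbing such a $C_p$ into $e$ would not have been legitimate anyway, since the statement fixes the constant $e$.

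\emph{The $\ell_2$ endpoint.} There $r^{-1}S_{n,2}$ is not a contraction, so von Neumann's inequality does not apply as you stated it. You need the diagonal similarity $D=\operatorname{diag}(r^{-j})_{j=0}^{n-1}$, which satisfies $DS_{n,2}D^{-1}=r^{-1}S_{n,2}$ and $\|D\|\,\|D^{-1}\|=r^{-(n-1)}$. This is precisely the paper's device.

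There are also three smaller slips.
\begin{enumerate}
\item Your ``it suffices'' sentence is reversed. The $V_{n,p}$-ratio is the smaller one, so you need the upper bound for the $W$-ratio and the lower bound for the $V_{n,p}$-ratio. That is in fact what you go on to prove.
\item The right condition is $r^{n-1}\ge e^{-1}$, not $r^n\ge e^{-1}$. The latter fails for $r=1-1/n$, since $(1-1/n)^n<e^{-1}$.
\item The bound $\|f\|_{\overline\D}\le\sqrt{2n}$ holds for Rudin--Shapiro polynomials only when $n$ is a power of $2$. Take $m=2^s\in(n/2,n]$ as the paper does. This gives some absolute $c$ (for instance $c=1/2$), but not $c=1/\sqrt2$.
\end{enumerate}
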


\begin{proof}
We first record the elementary disc information.  Fix $\lambda\in\T$ and put
\[
u_{\lambda,n}:=n^{-1/p}\sum_{j=0}^{n-1}\overline\lambda^{\,j}e_j.
\]
Then $\|u_{\lambda,n}\|_p=1$, and a direct computation with the duality map gives
\[
\ip{S_{n,p}u_{\lambda,n}}{J(u_{\lambda,n})}
   =\frac{n-1}{n}\lambda.
\]
Thus, if
\[
r_n:=\frac{n-1}{n},
\]
then
\[
r_n\T\subset W(S_{n,p}),
\qquad
r_n\overline\D\subset V_{n,p}(S_{n,p}).
\]
Since $S_{n,p}$ is a contraction, we also have
\[
W(S_{n,p})\subset\overline\D,
\qquad
V_{n,p}(S_{n,p})\subset\overline\D.
\]

For the lower estimate we use flat polynomials, as in
\cite[Theorem~4.3]{BHSVW2025}. Here the classical Rudin--Shapiro construction
suffices: there is an absolute constant $C_{\rm RS}$ such that, whenever
$m=2^s$, one can choose signs
\[
        \varepsilon_0,\ldots,\varepsilon_{m-1}\in\{-1,1\}
\]
so that
\[
        f_m(z)=\sum_{k=0}^{m-1}\varepsilon_k z^k
\]
satisfies
\[
        \|f_m\|_{\overline\D}\le C_{\rm RS}\sqrt m .
\]
See, e.g., \cite{Rudin1959}.

Fix $n\ge2$, and choose $m=2^s$ such that $n/2<m\le n$. We regard $f_m$
as a polynomial of degree at most $n-1$, with zero coefficients in the remaining
positions. Since
\[
f_m(S_{n,p})e_0=\sum_{k=0}^{m-1}\varepsilon_k e_k,
\]
we get
\[
\|f_m(S_{n,p})\|\ge m^{1/p}.
\]
Applying the same argument to the adjoint gives the complementary estimate.
Indeed, $S_{n,p}^*$ is the backward shift on $\ell_q^n$, and
\[
        f_m(S_{n,p})^* e_{m-1}
        =
        \sum_{k=0}^{m-1}\varepsilon_k e_{m-1-k}.
\]
This vector has $\ell_q^n$-norm $m^{1/q}$. Hence
\[
        \|f_m(S_{n,p})\|
        =
        \|f_m(S_{n,p})^*\|
        \ge m^{1/q}.
\]
Therefore
\[
\|f_m(S_{n,p})\|\ge m^{\max\{1/p,1/q\}}.
\]
Because $W(S_{n,p})\subset V_{n,p}(S_{n,p})\subset\overline\D$, it follows that
\[
\frac{\|f_m(S_{n,p})\|}{\|f_m\|_{W(S_{n,p})}}
\ge
\frac{\|f_m(S_{n,p})\|}{\|f_m\|_{V_{n,p}(S_{n,p})}}
\ge
\frac{\|f_m(S_{n,p})\|}{\|f_m\|_{\overline\D}}
\ge
C_{\rm RS}^{-1}m^{\alpha_p}.
\]
Since $m>n/2$ and $0\le\alpha_p\le1/2$, the last quantity is bounded below
by $c n^{\alpha_p}$ with an absolute constant $c>0$.

We turn to the upper estimate.  Note that for every operator $T\in \mathcal L(\ell_p^n)$,
\begin{equation}\label{eq:p-2-comparison-matrix}
\|T\|_{\mathcal L(\ell_p^n)}\le n^{\alpha_p}\|T\|_{\mathcal L(\ell_2^n)}.
\end{equation}
This follows immediately from the standard norm comparisons between $\ell_p^n$ and $\ell_2^n$.

Let $r=r_n=(n-1)/n$ and define the diagonal operator $D_{r,2}$ on $\ell_2^n$ by
\[
D_{r,2}e_j=r^{-j}e_j,
\qquad 0\le j\le n-1.
\]
Then
\[
D_{r,2}^{-1}S_{n,2}D_{r,2}=rS_{n,2}.
\]
Hence, for every polynomial $f$,
\[
f(S_{n,2})=D_{r,2} f(rS_{n,2})D_{r,2}^{-1}.
\]
Since $S_{n,2}$ is a Hilbert-space contraction, von Neumann's inequality applied to $z\mapsto f(rz)$ gives
\[
\|f(rS_{n,2})\|\le \|f\|_{r\overline\D}.
\]
Moreover,
\[
\|D_{r,2}\| \|D_{r,2}^{-1}\|=r^{-(n-1)}
       =\left(\frac{n}{n-1}\right)^{n-1}\le e.
\]
Therefore
\[
\|f(S_{n,2})\| \le e\,\|f\|_{r\overline\D}.
\]
Combining this with \eqref{eq:p-2-comparison-matrix}, we obtain
\[
\|f(S_{n,p})\|\le e\,n^{\alpha_p}\|f\|_{r\overline\D}.
\]
Finally, $r\overline\D\subset V_{n,p}(S_{n,p})$, while $r\T\subset W(S_{n,p})$ and the maximum principle gives
\[
\|f\|_{r\overline\D}=\sup_{|z|=r}|f(z)|\le \|f\|_{W(S_{n,p})}
\le \|f\|_{V_{n,p}(S_{n,p})}.
\]
This proves both upper estimates.
\end{proof}

The same exponent is not specific to truncated shifts.  It also appears for cyclic shifts.
Let $e_0,\ldots,e_{n-1}$ be the standard basis of $\ell_p^n$, and let
$U_{n,p}\in\mathcal L(\ell_p^n)$ be the cyclic shift given by
\[
        U_{n,p}e_j=e_{j+1}\quad(0\le j\le n-2),\qquad
        U_{n,p}e_{n-1}=e_0.
\]
\begin{proposition}\label{prop:cyclic-shift-crouzeix-scale}
Let $1<p<\infty$ and $n\ge2$. Then there is an absolute constant $c>0$ such that
\[
c\,n^{\alpha_p}
\le
\sup_{f\ne0}\frac{\|f(U_{n,p})\|}{\|f\|_{W(U_{n,p})}}
\le
 n^{\alpha_p},
\]
and
\[
c\,n^{\alpha_p}
\le
\sup_{f\ne0}\frac{\|f(U_{n,p})\|}{\|f\|_{V_{n,p}(U_{n,p})}}
\le
 n^{\alpha_p},
\]
where
\[
        V_{n,p}(U_{n,p}):=V\bigl(U_{n,p},\mathcal L(\ell_p^n)\bigr).
\]
\end{proposition}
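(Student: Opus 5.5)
We follow the pattern of Proposition~\ref{prop:finite-shift-crouzeix-scale}. The cyclic shift is easier, because $U_{n,p}$ is a surjective isometry and so no diagonal similarity is needed. This also explains why the constant $e$ disappears from the upper bound.

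\emph{Disc information.} For $\lambda$ with $\lambda^n=1$, the vector $u_\lambda:=n^{-1/p}\sum_{j=0}^{n-1}\overline\lambda^{\,j}e_j$ satisfies $U_{n,p}u_\lambda=\lambda u_\lambda$. Hence $\lambda=\ip{U_{n,p}u_\lambda}{J(u_\lambda)}\in W(U_{n,p})$. So the $n$-th roots of unity $\Omega_n$ lie in $W(U_{n,p})\subset V_{n,p}(U_{n,p})\subset\overline\D$, where the last inclusion holds because $\|U_{n,p}\|=1$. Consequently
\[
\|f\|_{\Omega_n}\le\|f\|_{W(U_{n,p})}\le\|f\|_{V_{n,p}(U_{n,p})}\le\|f\|_{\overline\D}.
\]

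\emph{Upper bound.} Since $U_{n,p}^n=I$, the operator $f(U_{n,p})$ depends only on $f$ modulo $z^n-1$. On $\ell_2^n$ the cyclic shift $U_{n,2}$ is unitary with spectrum $\Omega_n$, so $\|f(U_{n,2})\|_{\mathcal L(\ell_2^n)}=\|f\|_{\Omega_n}$. The matrix of $f(U_{n,p})$ in the standard basis does not depend on $p$, so \eqref{eq:p-2-comparison-matrix} gives
\[
\|f(U_{n,p})\|\le n^{\alpha_p}\|f\|_{\Omega_n}\le n^{\alpha_p}\|f\|_{W(U_{n,p})}\le n^{\alpha_p}\|f\|_{V_{n,p}(U_{n,p})}.
\]
This proves both upper estimates with constant exactly $1$.

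\emph{Lower bound.} Take the Rudin--Shapiro polynomial $f_m$ with $m=2^s$ and $n/2<m\le n$; it satisfies $\|f_m\|_{\overline\D}\le C_{\rm RS}\sqrt m$. Since $m\le n$, no wrap-around occurs on $e_0$, and
\[
f_m(U_{n,p})e_0=\sum_{k<m}\varepsilon_ke_k,
\]
which gives $\|f_m(U_{n,p})\|\ge m^{1/p}$. For the other exponent, the adjoint $U_{n,p}^*$ is the backward cyclic shift on $\ell_q^n$, and $f_m(U_{n,p})^*e_{m-1}=\sum_k\varepsilon_ke_{m-1-k}$ has norm $m^{1/q}$. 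So $\|f_m(U_{n,p})\|\ge m^{\max\{1/p,1/q\}}$. Dividing by $\|f_m\|_{V_{n,p}(U_{n,p})}\le\|f_m\|_{\overline\D}\le C_{\rm RS}\sqrt m$ gives ratios at least $C_{\rm RS}^{-1}m^{\alpha_p}\ge c\,n^{\alpha_p}$, with $c=C_{\rm RS}^{-1}2^{-1/2}$.

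\emph{Main point to check.} The one step needing care is the identity $\|f(U_{n,2})\|=\|f\|_{\Omega_n}$, which rests on unitarity. It is what lets the comparison go through the finite set $\Omega_n\subset W(U_{n,p})$ rather than a full disc, and so avoids the loss of the factor $e$. Everything else mirrors the truncated-shift proof.
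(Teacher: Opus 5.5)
Your proposal is correct and follows essentially the same argument as the paper. For the lower bound you test the Rudin--Shapiro polynomial $f_m$, $n/2<m\le n$, on $e_0$ and, via the adjoint, on $e_{m-1}$. For the upper bound you combine the norm comparison \eqref{eq:p-2-comparison-matrix} with unitarity of $U_{n,2}$, and use that the $n$-th roots of unity are eigenvalues of $U_{n,p}$ with unimodular eigenvectors, hence lie in $W(U_{n,p})$. Your explicit eigenvector $u_\lambda$ simply spells out a detail the paper states without formula.
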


\begin{proof}
For the lower bound, choose again $m=2^s$ with $n/2<m\le n$ and use
the polynomial $f_m$ from the proof of
Proposition~\ref{prop:finite-shift-crouzeix-scale}; in particular,
$\|f_m\|_{\overline\D}\le C_{\rm RS}\sqrt m$. Since $m\le n$, the
vectors $e_0,U_{n,p}e_0,\dots,U_{n,p}^{m-1}e_0$ are distinct standard basis
vectors. Hence
\[
\|f_m(U_{n,p})e_0\|=m^{1/p},
\]
and the adjoint argument gives the complementary lower bound $m^{1/q}$. Thus
\[
        \|f_m(U_{n,p})\|\ge m^{\max\{1/p,1/q\}}.
\]
As $U_{n,p}$ is an isometry on $\ell_p^n$, both $W(U_{n,p})$ and
$V_{n,p}(U_{n,p})$ are contained in $\overline\D$. Hence
\[
\frac{\|f_m(U_{n,p})\|}{\|f_m\|_{W(U_{n,p})}}
\ge
\frac{\|f_m(U_{n,p})\|}{\|f_m\|_{\overline\D}}
\ge
C_{\rm RS}^{-1}m^{\alpha_p}
\ge c n^{\alpha_p},
\]
and the same estimate holds with $W(U_{n,p})$ replaced by
$V_{n,p}(U_{n,p})$.

For the upper bound, use \eqref{eq:p-2-comparison-matrix}.  Since $U_{n,2}$ is unitary on $\ell_2^n$,
\[
\|f(U_{n,2})\|=\max_{\omega^n=1}|f(\omega)|.
\]
The points $\omega$ with $\omega^n=1$ are eigenvalues of $U_{n,p}$ and are realised by their unimodular eigenvectors, hence they belong to $W(U_{n,p})$. Therefore
\[
\|f(U_{n,p})\|
\le n^{\alpha_p}\max_{\omega^n=1}|f(\omega)|
\le n^{\alpha_p}\|f\|_{W(U_{n,p})}
\le n^{\alpha_p}\|f\|_{V_{n,p}(U_{n,p})}.
\]
This proves the asserted estimates.
\end{proof}

We may now formulate the finite-dimensional Crouzeix-type inequality in the $\ell_p$ setting, which looks natural in view of the above observations.  Define
\[
\mathcal C^W_{n,p}:=
\sup_{T\in \mathcal L(\ell_p^n)}\sup_{f\ne0}
\frac{\|f(T)\|}{\|f\|_{W(T)}},
\]
and
\[
\mathcal C^V_{n,p}:=
\sup_{T\in \mathcal L(\ell_p^n)}\sup_{f\ne0}
\frac{\|f(T)\|}{\|f\|_{V(T)}}.
\]
Since $W(T)\subset V(T)$, one has
\[
\mathcal C^V_{n,p}\le \mathcal C^W_{n,p}.
\]
Proposition~\ref{prop:finite-shift-crouzeix-scale} gives
\[
\mathcal C^V_{n,p}\ge c n^{\alpha_p},
\qquad
\mathcal C^W_{n,p}\ge c n^{\alpha_p}
\]
with an absolute constant $c>0$.
Thus we arrive at the next curious question.
\begin{question}\label{q:finite-dimensional-crouzeix-growth}
For fixed $1<p<\infty$, is there a constant $C_p>0$ such that
\[
\mathcal C^V_{n,p}\le C_p n^{\alpha_p}\qquad(n\in\mathbb N)?
\]
More strongly, is there a constant $C'_p>0$ such that
\[
\mathcal C^W_{n,p}\le C'_p n^{\alpha_p}\qquad(n\in\mathbb N)?
\]
\end{question}

The $V$-version is algebraically more natural, but $V(T)$ is difficult to determine explicitly.  The $W$-version is closer to the original Crouzeix formulation, but it is technically harder because $W(T)$ is typically non-convex.
There are several natural ways to test Question~\ref{q:finite-dimensional-crouzeix-growth}, but we omit their discussion.

The examples discussed in this section complete the atomic example part of the paper. They illustrate that the general theory from Sections~\ref{inflp} and~\ref{secexamp} is not only formal: even for very concrete sequence-space operators, the essential numerical range on $\ell_p$ can already display geometry with no Hilbert-space analogue, and the unilateral shift shows that this geometry does not support a Hilbert-space-type Crouzeix theory on $\ell_p$. Nevertheless, the finite-dimensional shift examples suggest that some vestiges of such a theory may remain at the finite-dimensional level.

\section{Possible extensions}\label{secexten}

\subsection{Spaces of class $(P)$}

We close the paper by indicating two directions in which the preceding arguments extend beyond the atomic $\ell_p$ setting. The first one concerns Banach spaces of class $(P)$.

The methods developed above are not tied exclusively to $\ell_p$.
They extend, with only minor changes in notation, to Banach spaces carrying a compatible
exhaustion by finite-rank projections. We say that a Banach space $X$ belongs to class $(P)$ if there exists a sequence $(P_n)_{n\ge1}$
of non-zero finite-rank projections such that:
\begin{enumerate}[label=\textup{(\roman*)}]
\item $P_mP_n=P_nP_m=P_{\min\{m,n\}}$ for all $m,n\in\mathbb N$, and $P_nx\to x$ for every $x\in X$;
\item for every \(n\in\mathbb N\), whenever \(x,y\in X\) satisfy
\[
        \|P_nx\|\le \|P_ny\|,
        \qquad
        \|(I-P_n)x\|\le \|(I-P_n)y\|,
\]
one has \(\|x\|\le \|y\|\);
\item for every $x^*\in X^*$ one has
\[
        \bigl\|x^*|_{(I-P_n)X}\bigr\|\to0
        \qquad (n\to\infty),
\]
or, equivalently, $\|(I-P_n)^*x^*\|\to0$.
\end{enumerate}

If \(X\) is of class \((P)\), then \(P_nX\) is finite-dimensional and
\((I-P_n)X\) is finite-codimensional for every \(n\). Moreover,
condition \textup{(ii)} implies
\[
        \|P_n\|=1,\qquad \|I-P_n\|=1,
        \qquad n\in\mathbb N,
\]
provided the corresponding projection is non-zero. Indeed, apply
\textup{(ii)} first to \(P_nx\) and \(x\), and then to \((I-P_n)x\) and \(x\).
Also, \(X\) is separable, because \(\bigcup_n P_nX\) is dense in \(X\), while
\(X^*\) is separable by \textup{(iii)}. Thus class \((P)\) isolates spaces with
finite-dimensional initial parts, finite-codimensional tails and a compatible
dual tail approximation.

If, in addition,
\[
        \dim (P_n-P_{n-1})X=1,\qquad n\ge1,\quad P_0:=0,
\]
then, after choosing normalised vectors
\[
        e_n\in (P_n-P_{n-1})X,\qquad n\ge1,
\]
the sequence \((e_n)_{n\ge1}\) is a shrinking \(1\)-unconditional basis of
\(X\). Thus class \((P)\) may be viewed as a finite-dimensional-decomposition
version of the shrinking \(1\)-unconditional basis setting.

Thus, in the setting of Banach spaces of class \((P)\), one again has finite-dimensional initial blocks and
finite-codimensional tails, and the arguments based on block decomposition,
tail approximation, and weak convergence go through with only minor changes
in the proofs. In particular, the description of the essential numerical range, its convexity,
and the star-centre mechanism admit analogues for spaces of class \((P)\).

Besides \(\ell_p\), natural examples are provided by \(\ell_p\)- or \(c_0\)-sums of
a sequence of finite-dimensional Banach spaces $(E_n)$:
\[
        \Big(\bigoplus_{n=1}^\infty E_n\Big)_{\ell_p},
        \qquad 1<p<\infty,
        \qquad\text{and}\qquad
        \Big(\bigoplus_{n=1}^\infty E_n\Big)_{c_0}.
\]
In the \(c_0\)-case the numerical range is understood in the general Banach-space
sense of \eqref{eq:banach-spatial-numerical-range}, not through a duality map.
We do not pursue this extension here. The point of the formulation above is
rather to indicate that the present arguments depend only on a small number
of structural features of \(\ell_p\).
A related way of isolating sequence-space arguments through assumptions on a
decomposition of the underlying Banach space appears in \cite{H}, where
\(c_0\) and \(\ell_p\), \(1<p<\infty\), are obtained as applications of a more
general theorem.

\subsection{Joint essential numerical ranges}

A second extension concerns several operators simultaneously. The same ideas lead naturally to several new properties of the joint essential numerical range on $\ell_p$-spaces. Let $1<p<\infty$. For an $r$-tuple $(T_1,\dots,T_r)\in \mathcal L(\ell_p)^r$, define the joint numerical range by
\[
W(T_1,\dots,T_r):=
\bigl\{(\langle T_1x,J(x)\rangle,\dots,\langle T_rx,J(x)\rangle):
x\in\ell_p,\ \|x\|=1\bigr\}\subset\mathbb C^r.
\]
The joint essential numerical range \(W_e(T_1,\dots,T_r)\) is defined as the set of all $(\lambda_1,\dots,\lambda_r)\in\C^r$ such that there exists a weakly-null sequence 
$(x_n)\subset\ell_p$ of unit vectors with
\[
\langle T_j x_n,J(x_n)\rangle\to \lambda_j
\qquad (j=1,\dots,r).
\]

The arguments of the preceding section extend almost literally to finite tuples, where, in particular, the Calkin-algebra formulation is understood in the usual joint algebraic numerical range sense.
We record the resulting formulation and omit the proof. 
Here the symbol $V$ in the quotient-algebra formula denotes the joint algebraic numerical range.

\begin{theorem} 
Let $(T_1,\dots,T_r)\in \mathcal L(\ell_p)^r$. Then:
\begin{itemize}
\item[(i)] $W_e(T_1,\dots,T_r)$ is a nonempty compact convex subset of $\C^r$;
\item[(ii)] $W_e(T_1,\dots,T_r)=V\bigl(\pi(T_1),\dots,\pi(T_r),\mathcal L(\ell_p)/\mathcal K(\ell_p)\bigr)$;
\item[(iii)] $W_e(T_1,\dots,T_r)=\bigcap\{\overline{W(T_1+L_1,\dots,T_r+L_r)}: L_1,\dots,L_r\in \mathcal K(\ell_p)\}$;
\item[(iv)] $\overline{W(T_1,\dots,T_r)}$ is star-shaped, and every point of $W_e(T_1,\dots,T_r)$ is a star-centre of this set.
\item[(v)] $\operatorname{Int} W_e(T_1,\dots,T_r)\subset W(T_1,\dots,T_r)$. If this interior is nonempty, then $W(T_1,\dots,T_r)$ is star-shaped, with every point of the interior as a star-centre.
\end{itemize}
\end{theorem}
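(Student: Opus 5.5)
The plan is to transport the scalar arguments of Section~\ref{inflp} verbatim to vector-valued data. Write $\mathbf T=(T_1,\dots,T_r)$ and $\langle \mathbf T x,J(x)\rangle\in\C^r$ for the vector of values, using the max-norm on $\C^r$.

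\textbf{Step 1 (foundations and (i)).} First I will establish the tuple analogue of Theorem~\ref{thm:We-ellp-tail-char}. Membership of $\boldsymbol\lambda$ in $W_e(\mathbf T)$ is equivalent to two conditions: (a) for every $k$ and $\varepsilon$ there is a finitely supported unit vector $x$ with $\supp x\subset\{k+1,\dots\}$ and $\|\langle\mathbf Tx,J(x)\rangle-\boldsymbol\lambda\|<\varepsilon$; (b) the corresponding statement for finite-codimensional $M$. The proofs go through unchanged, using Lemma~\ref{thm:J-ellp} and the finite-rank correction $R$. Non-emptiness follows from compactness of bounded sets in $\C^r$ applied to $(e_n)$. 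Closedness follows by a diagonal argument.

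For convexity I will copy the proof of Theorem~\ref{thm:We-ellp-convex}. Choose $k_1$ with $\|(I-P_{k_1})T_jx\|<\varepsilon$ for all $j$, and take
\[
M=(I-P_{k_1})\ell_p\cap\bigcap_j\Ker(P_{k_1}T_j).
\]
To get general convex combinations directly, I will use $z=t^{1/p}x+(1-t)^{1/p}y$.

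\textbf{Step 2 ((ii) and (iii)).} Here the scalar route used \cite[Theorem~8]{BM}, namely $V_\mu=\overline{\conv}W_e$. I will replace it by a joint version: $V(\pi(\mathbf T))$ is the set of $(f(\pi T_j))_j$ over states $f$ of the Calkin algebra. Its support function in direction $\boldsymbol\xi\in\C^r$ is the support function of the scalar $V(\pi(\sum_j\bar\xi_jT_j))$. Note that $\sum\bar\xi_jT_j$ is a single operator whose $W_e$ is the image of $W_e(\mathbf T)$ under the linear functional.

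Since both sets in (ii) are compact convex, (ii) will follow by comparing support functions along with Theorem~\ref{thm:We-ellp-calkin} applied to $\sum_j\bar\xi_jT_j$. This needs $W_e(\sum\bar\xi_jT_j)=\{\sum\bar\xi_j\lambda_j:\boldsymbol\lambda\in W_e(\mathbf T)\}$. The inclusion $\supseteq$ is immediate. For $\subseteq$, take a limit point of the bounded vectors $\langle\mathbf Tx_n,J(x_n)\rangle$ along a subsequence.

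Part (iii) then follows as in Theorem~\ref{thm:We-ellp-compact-perturb}. The inclusion $\subseteq$ holds because compact perturbations do not change $W_e$. The inclusion $\supseteq$ follows from the joint version of Theorem~\ref{vrange}(ii), or directly via support functions applied to $\sum\bar\xi_j(T_j+L_j)$, together with the Banach-algebra fact that the joint algebraic numerical range is $\overline{\conv}$ of the joint spatial one.

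I expect this step to be the main obstacle. The joint quotient formula $V(\pi(\mathbf T))=\bigcap_{\mathbf L}V(\mathbf T+\mathbf L)$ must be checked through the support-function reduction. Scalarization commutes with the intersection only after passing to convex hulls, and this has to be verified carefully for each direction $\boldsymbol\xi$.

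\textbf{Step 3 ((iv) and (v)).} Part (iv) copies Theorem~\ref{thm:closureW-starshaped}, with the cross terms controlled for each $j$ simultaneously.

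For (v), I will repeat Lemma~\ref{lem:interior-halving}. Given a ball $B(0,\varepsilon)\subset W_e(\mathbf T)$ (interior in $\C^r\cong\R^{2r}$) and current value $\mathbf c$, choose $\mathbf c'=-\mathbf c\,\varepsilon/\|\mathbf c\|$. The same disjoint-support combination then halves the error vector. The resulting Cauchy iteration yields an exact point, giving the tuple Theorem~\ref{thm:interior-We-inside-W}.

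Star-shapedness comes from Lemma~\ref{lem:preiterate} and Theorem~\ref{thm:W-starshaped-when-intWe}. Normalize by the affine map $\boldsymbol z\mapsto\boldsymbol z-\boldsymbol\lambda$ without dividing; move along the segment toward $\boldsymbol\lambda$, putting
\[
\mathbf c'=\frac{s'(s\boldsymbol\mu-\langle\mathbf Tx,J(x)\rangle)}{s-s'},
\]
which lies in the ball by the same estimate.
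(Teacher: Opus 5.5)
Your proposal is correct. For (i), (iv) and (v) it is what the paper has in mind; the paper omits the proof, saying only that the scalar arguments of Section~\ref{inflp} extend almost literally. That means the joint tail characterisation, disjoint-support gluing with the cross terms controlled for all $j$ at once, and the halving and pre-iteration lemmas with vector-valued corrections $\mathbf c'$, normalised by translation alone. One detail for Step 1: the scalar implication (i)$\Rightarrow$(ii) of Theorem~\ref{thm:We-ellp-tail-char} goes through the scalar result \cite[Proposition~6]{BM}. For tuples, apply the finite-rank correction $R$ directly to a weakly null sequence realising $\boldsymbol\lambda$, as you indicate.

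For (ii) and (iii) your route differs from the paper's. A literal extension would need joint versions of the Banach-space inputs, namely \cite[Theorem~8]{BM} ($V_\mu=\overline{\conv}\,W_e$) and Theorem~\ref{vrange}, which the paper does not supply. You instead scalarise with $\phi_\xi(\mathbf z)=\sum_j\bar\xi_j z_j$. You show $\phi_\xi(W_e(\mathbf T))=W_e(S_\xi)$ for $S_\xi=\sum_j\bar\xi_jT_j$, and observe that $\phi_\xi(V(\pi(\mathbf T)))=V(\pi(S_\xi))$ exactly. Then Theorem~\ref{thm:We-ellp-calkin}, convexity from (i), and separation in $\C^r\cong\R^{2r}$ give (ii). This uses only results already proved in the paper; the price is that (i) must come first, which it does.

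The obstacle you anticipate in (iii) does not arise. You need neither the joint quotient formula nor the joint identity $V=\overline{\conv}\,W$, because excluding points one at a time suffices.
\begin{itemize}
\item Let $\boldsymbol\lambda\notin W_e(\mathbf T)$. Strict separation from the compact convex set $W_e(\mathbf T)$ gives $\xi$ with $\phi_\xi(\boldsymbol\lambda)\notin\phi_\xi(W_e(\mathbf T))=W_e(S_\xi)$.
\item The scalar Theorem~\ref{thm:We-ellp-compact-perturb} then yields a single $K\in\mathcal K(\ell_p)$ with $\phi_\xi(\boldsymbol\lambda)\notin\overline{W(S_\xi+K)}$.
\item Pick $j_0$ with $\xi_{j_0}\neq0$, and set $L_{j_0}=K/\bar\xi_{j_0}$ and $L_j=0$ for $j\ne j_0$. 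Then $\phi_\xi$ maps $W(\mathbf T+\mathbf L)$ onto $W(S_\xi+K)$ and is continuous, so $\boldsymbol\lambda\notin\overline{W(\mathbf T+\mathbf L)}$.
\end{itemize}
Your concern that scalarisation commutes with intersections only after taking convex hulls matters only if you insist on comparing support functions. The scalar intersection formula already provides one compact operator for each excluded point.
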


Hilbert-space counterparts of the tuple statements above can be found in
\cite{LiPoon2009, Muller2010, MT-circles, MT-jointnr}. See also \cite[Chapter~8.1]{GW}. 
\appendix

\section{A computational proof of the $\ell_3^2$ counterexample}\label{app:lp2-counterexample-proof}

We supply here the full computation for Proposition~\ref{prop:lp2-counterexample-full}.

\begin{proof}[Proof of Proposition~\ref{prop:lp2-counterexample-full}]
For $p=3$ and $q=3/2$, write
\[
r=\frac{t^3}{1+t^3},\qquad t>0,
\]
and introduce the shorthand
\[
s_t:=2r-1=\frac{t^3-1}{1+t^3}.
\]
Then
\[
\beta(r)=\frac{t^2}{1+t^3},
\qquad
\gamma(r)=\frac{t}{1+t^3}.
\]
Applying Proposition~\ref{prop:lp2-reduction} to the matrix $S$, one obtains ellipses
\[
E_t=
\left\{
 s_t\!\left(8+\frac{i}{5}\right)+a_t\cos\eta+i\,\varepsilon_t b_t\sin\eta:
 \eta\in\R
\right\},
\]
where $\varepsilon_t\in\{\pm1\}$ is irrelevant for the underlying ellipse and
\[
a_t=\frac{t^2+\frac65 t}{1+t^3}=\frac{5t^2+6t}{5(1+t^3)},
\qquad
b_t=\frac{\bigl|t^2-\frac65 t\bigr|}{1+t^3}=\frac{|5t^2-6t|}{5(1+t^3)}.
\]
Thus
\[
W(S)=\bigcup_{t>0} E_t.
\]

We first determine which parameters can contribute points on the positive real ray.
If $\rho\in E_t\cap[0,\infty)$, then comparing imaginary parts gives
\[
0=\frac{s_t}{5}+\varepsilon_t b_t\sin\eta,
\]
hence necessarily
\[
\left|\frac{s_t}{5}\right|\le b_t.
\]
Equivalently,
\begin{equation}\label{eq:counter-admissibility-paper1}
|t^3-1|\le |5t^2-6t|.
\end{equation}
If $1\le t\le 6/5$, then \eqref{eq:counter-admissibility-paper1} becomes
\[
Q(t):=t^3+5t^2-6t-1\le 0.
\]
Note that
$Q$ is strictly increasing on $[1,\infty)$, and
\[
Q\!\left(\frac98\right)=\frac1{512}>0.
\]
Hence no $t\in[9/8,6/5]$ satisfies \eqref{eq:counter-admissibility-paper1}.
If $t\ge 6/5$, then \eqref{eq:counter-admissibility-paper1} becomes
\[
P(t):=t^3-5t^2+6t-1\le 0.
\]
Observe that
$P$ is strictly decreasing on $[6/5,31/20]$, while
\[
P\!\left(\frac{31}{20}\right)=\frac{91}{8000}>0.
\]
Therefore no $t\in[6/5,31/20]$ satisfies \eqref{eq:counter-admissibility-paper1}. Consequently, if $E_t$ meets the positive real ray, then necessarily
\begin{equation}\label{eq:counter-gap-in-t-paper1}
t<\frac98
\qquad\text{or}\qquad
t>\frac{31}{20}.
\end{equation}

Next we obtain a uniform bound for the horizontal semiaxis.
For all $t>0$ one has
\[
a_t\le \frac65.
\]
Indeed,
\[
\frac65-a_t=
\frac{6t^3-5t^2-6t+6}{5(1+t^3)}.
\]
This positivity can be checked without using the discriminant. Indeed, with
$h(t):=6t^3-5t^2-6t+6$, one has the elementary decomposition
\[
        h(t)=\frac{(6t-5)^2(12t+5)}{108}
             +2(t-1)^2(t+2)+\frac{91}{108}.
\]
All terms on the right-hand side are non-negative for $t>0$, and the last one
is strictly positive. Hence $h(t)>0$ for all $t>0$.

We now show that the point \(3\) is missing from \(W(S)\).
If $\rho\in E_t\cap[0,\infty)$, then
\[
\rho=8s_t+a_t\cos\eta,
\]
hence
\begin{equation}\label{eq:counter-basic-band-paper1}
8s_t-a_t\le \rho\le 8s_t+a_t.
\end{equation}
If $t<9/8$, then by monotonicity of $s_t$, \eqref{eq:counter-basic-band-paper1}, and the uniform bound on the horizontal semiaxis obtained above,
\[
\rho\le 8s_{9/8}+\frac65=\frac{16126}{6205}<3.
\]
If $t>31/20$, then again by monotonicity of $s_t$,
\[
\rho\ge 8s_{31/20}-\frac65=\frac{644894}{188955}>3.
\]
Together with \eqref{eq:counter-gap-in-t-paper1}, this shows that no positive real point of $W(S)$ can equal $3$. Hence $3\notin W(S)$.

It remains to prove that the points $0$ and $6$ do belong to $W(S)$.
Since the diagonal entries of $S$ are $8+i/5$ and $-8-i/5$, Corollary~\ref{cor:lp2-diagonal-segment} gives
\[
0=\frac12\Bigl(8+\frac{i}{5}\Bigr)+\frac12\Bigl(-8-\frac{i}{5}\Bigr)\in W(S).
\]
At $t=2$ one has
\[
s_2=\frac79,
\qquad
a_2=\frac{32}{45},
\qquad
b_2=\frac{8}{45},
\]
so
\[
E_2=
\left\{
\frac{56}{9}+\frac{7i}{45}+\frac{32}{45}\cos\eta+\frac{8i}{45}\sin\eta:
\eta\in\R
\right\}.
\]
Its intersection with the real axis is obtained from
\[
\frac{7}{45}+\frac{8}{45}\sin\eta=0
\quad\Longleftrightarrow\quad
\sin\eta=-\frac78,
\]
hence
\[
E_2\cap\R=
\left[
\frac{56}{9}-\frac{4\sqrt{15}}{45},
\,\frac{56}{9}+\frac{4\sqrt{15}}{45}
\right].
\]
Since $4\sqrt{15}>10$, one has
\[
\frac{56}{9}-\frac{4\sqrt{15}}{45}<6<\frac{56}{9}+\frac{4\sqrt{15}}{45},
\]
so $6\in E_2\subset W(S)$.

Thus, we have proved that $0$ and $6$ belong to $W(S)$ while $3\notin W(S),$ so that $W(S)$ is not star-shaped with centre $0$.
\end{proof}


\begin{thebibliography}{99}

\bibitem{AglerLykovaYoung2024}
J.~Agler, Z.~A.~Lykova, and N.~J.~Young,
\emph{On the operators with numerical range in an ellipse},
J. Funct. Anal. \textbf{287} (2024),  Paper No.~110556, 62 pp.



\bibitem{AllenWard79}
G.~D.~Allen and J.~D.~Ward,
\emph{Hermitian liftings in $B(\ell_p)$},
J. Operator Theory \textbf{1} (1979), 27--36.

\bibitem{AppDV}
J.~Appell, E.~De~Pascale, and A.~Vignoli,
\emph{Nonlinear Spectral Theory},
De Gruyter, Berlin, 2004.

\bibitem{BBsurvey}
C.~Badea and B.~Beckermann,
\emph{Spectral sets},
in \emph{Handbook of Linear Algebra}, 2nd ed., L.~Hogben, ed.,
CRC Press, Boca Raton, FL, Chapter 37, 2014.


 
\bibitem{BanasMursaleen}
J.~Bana\'s and M.~Mursaleen,
\emph{Sequence Spaces and Measures of Noncompactness with Applications to Differential and Integral Equations},
Springer, New Delhi, 2014.

\bibitem{BM}
M.~Barraa and V.~M\"uller,
\emph{On the essential numerical range},
Acta Sci. Math. (Szeged) \textbf{71} (2005), 285--298.

\bibitem{Bickel2020}
K. Bickel, P. Gorkin, A. Greenbaum, T. Ransford, F. Schwenninger, and E. Wegert, \emph{Crouzeix's conjecture and related problems,} Comput. Methods Funct. Theory \textbf{20} (2020), 701--728. 


\bibitem{BHSVW2025}
H.~Blazhko, D.~Homza, F.~L.~Schwenninger, J.~de~Vries, and M.~Wojtylak,
\emph{The algebraic numerical range as a spectral set in Banach algebras},
Canad. J. Math. (2025), \texttt{ https://doi.org/10.4153/S0008414X25000124}, published online.



\bibitem{Blecher2019} D. P. Blecher and N. C. Phillips, \emph{
$L^p$-operator algebras with approximate identities,} Pacific J. Math. \textbf{303} (2019), 401--457.


\bibitem{Boedihardjo2024}
M.~T. Boedihardjo, \emph{
Embedding \(C^*\)-algebras into the Calkin algebra of \(\ell_p\),}
J. Funct. Anal. \textbf{287} (2024),  Paper No.~110669.

\bibitem{Bogli2020} S. B\" ogli, M. Marletta, and C. Tretter, \emph{The essential numerical range for unbounded linear operators,} J. Funct. Anal. \textbf{279} (2020), Paper No.~108509, 49 pp.


\bibitem{BD1}
E.~F.~Bonsall and J.~Duncan,
\emph{Numerical Ranges of Operators on Normed Spaces and of Elements of Normed Algebras},
London Math. Soc. Lecture Note Series 2, Cambridge Univ. Press, London, 1971.

\bibitem{BD2}
E.~F.~Bonsall and J.~Duncan,
\emph{Numerical Ranges}, II,
London Math. Soc. Lecture Note Series 10, Cambridge Univ. Press, London, 1973.



\bibitem{Cio}
I.~Cior\u{a}nescu,
\emph{Geometry of Banach Spaces, Duality Mappings and Nonlinear Problems},
Mathematics and its Applications, 62, Kluwer Academic Publishers Group, Dordrecht, 1990.

\bibitem{ChSSW}
C.~K.~Chui, P.~W.~Smith, R.~R.~Smith, and J.~D.~Ward,
\emph{$L$-ideals and numerical range preservation},
Illinois J. Math. \textbf{21} (1977), 365--373.


\bibitem{Cohen2018}
G. Cohen, \emph{Doubly power bounded operators on $L^p, 2 \neq p>1,$}
 J. Math. Anal. Appl. \textbf{466} (2018), 1327--1336.
 
 
 \bibitem{CrouzeixJFA}
M.~Crouzeix,
\emph{Numerical range and functional calculus in Hilbert space},
J. Funct. Anal. \textbf{244} (2007),  668--690.

\bibitem{CrouzeixPalencia}
M.~Crouzeix and C.~Palencia,
\emph{The numerical range is a $(1+\sqrt2)$-spectral set},
SIAM J. Matrix Anal. Appl. \textbf{38} (2017),  649--655.


\bibitem{CG2019}
M.~Crouzeix and A.~Greenbaum,
\emph{Spectral sets: numerical range and beyond},
SIAM J. Matrix Anal. Appl. \textbf{40} (2019), 1087--1101.



\bibitem{Duren1964}
P.~L.~Duren,
\emph{On the spectrum of a Toeplitz operator},
Pacific J. Math. \textbf{14} (1964), 21--29.


\bibitem{FSW}
P.~A.~Fillmore, J.~G.~Stampfli, and J.~P.~Williams,
\emph{On the essential numerical range, the essential spectrum, and a problem of Halmos},
Acta Sci. Math. (Szeged) \textbf{33} (1972), 179--192.


\bibitem{Fixman1959}
U.~Fixman,
\emph{Problems in spectral operators},
Pacific J. Math. \textbf{9} (1959), 1029--1051.


\bibitem{GW}
H.-L.~Gau and P.~Y.~Wu,
\emph{Numerical Ranges of Hilbert Space Operators},
Encyclopedia of Mathematics and its Applications~179, Cambridge Univ. Press, Cambridge, 2021.


\bibitem{GR}
K.~E.~Gustafson and D.~K.~M.~Rao,
\emph{Numerical Range. The Field of Values of Linear Operators and Matrices},
Universitext, Springer, New York, 1997.


\bibitem{H}
J.~Hennefeld,
\emph{A decomposition for $B(X)^*$ and unique Hahn--Banach extensions},
Pacific J. Math. \textbf{46} (1973), 197--199.


\bibitem{KleinToeplitz}
E.~M.~Klein,
\emph{The numerical range of a Toeplitz operator},
Proc. Amer. Math. Soc. \textbf{35} (1972), 101--103.



\bibitem{LauLiPoonSze2018}
P.-S.~Lau, C.-K.~Li, Y.-T.~Poon, and N.-S.~Sze,
\emph{Convexity and star-shapedness of matricial range},
J. Funct. Anal. \textbf{275} (2018),  2497--2515.


\bibitem{LaursenNeumann}
K.~B.~Laursen and M.~M.~Neumann,
\emph{An Introduction to Local Spectral Theory},
London Mathematical Society Monographs, New Series, vol.~20,
Clarendon Press, Oxford, 2000.

\bibitem{LiPoon2009}
C.-K.~Li and Y.-T.~Poon,
\emph{The joint essential numerical range of operators: convexity and related results},
Studia Math. \textbf{194} (2009), 91--104.

\bibitem{Malman2025} B. Malman, J. Mashreghi, R. O’Loughlin, and T. Ransford, \emph{Double-layer potentials, configuration constants, and applications to numerical ranges,} Intern. Math. Res. Notices, no. 8, article ID rnaf084, 34 pp. (2025).

\bibitem{Mandal}
 K. Mandal, A. Bhanja, S. Bag, and K. Paul, \emph{On the numerical range of operators on some special Banach spaces,}
  J. Convex Anal. \textbf{29} (2022),  371--380.

\bibitem{Muller2010}
V.~M\"uller,
\emph{The joint essential numerical range, compact perturbations, and the Olsen problem},
Studia Math. \textbf{197} (2010), 275--290.

\bibitem{MullerBook}
V.~M\"uller,
\emph{Spectral Theory of Linear Operators and Spectral Systems in Banach Algebras},
2nd ed., Operator Theory: Advances and Applications~139, Birkh\"auser, Basel, 2007.

\bibitem{MT-circles}
V.~M\"uller and Yu.~Tomilov,
\emph{Circles in the spectrum and the geometry of orbits: A numerical ranges approach},
J. Funct. Anal. \textbf{274} (2018),  433--460.


\bibitem{MT-jointnr}
V.~M\"uller and Yu.~Tomilov,
\emph{Joint numerical ranges and compressions of powers of operators},
J. Lond. Math. Soc. (2) \textbf{99} (2019),  127--152.

\bibitem{MT}
V.~M\"uller and Yu.~Tomilov,
\emph{In search of convexity: diagonals and numerical ranges},
Bull. Lond. Math. Soc. \textbf{53} (2021),  1016--1029.

\bibitem{MTnonatomic}
V.~M\"uller and Yu.~Tomilov,
\emph{Numerical and essential numerical ranges on non-atomic $L^p$-spaces},
preprint.

\bibitem{RubinWesler1958}
H.~Rubin and O.~Wesler,
\emph{A note on convexity in Euclidean $n$-space},
Proc. Amer. Math. Soc. \textbf{9} (1958), 522--523.

\bibitem{Rudin1959}
W.~Rudin,
\emph{Some theorems on Fourier coefficients},
Proc. Amer. Math. Soc. \textbf{10} (1959), 855--859.

\bibitem{vanNeervenFA}
J.~van Neerven,
\emph{Functional Analysis},
Cambridge Studies in Advanced Mathematics, vol.~\textbf{201},
Cambridge Univ. Press, Cambridge, 2022.

\bibitem{Wang2021}
Q.~Wang and Z.~Wang,
\emph{Notes on the \(\ell_p\)-Toeplitz algebra on \(\ell_p(\mathbb N)\)},
Israel J. Math. \textbf{245} (2021), 153--163.

\end{thebibliography}
\end{document}